\documentclass[]{amsart}

\usepackage{amsmath, amssymb,amsthm}
\usepackage{mathtools}

\usepackage{hyperref}
\usepackage{pgf,tikz}
\usepackage{amsaddr}

\usepackage{caption}

\newtheorem{lemma}{Lemma}[subsection]
\newtheorem{theorem}[lemma]{Theorem}
\newtheorem{proposition}[lemma]{Proposition}

\newtheorem*{thm*}{Theorem} 

\theoremstyle{definition}
\newtheorem{definition}[lemma]{Definition}
\newtheorem{remark}[lemma]{Remark}
\newtheorem{example}[lemma]{Example}
\newtheorem{assumption}[lemma]{Assumption}

\newtheorem{construction}[lemma]{Construction}

\DeclareMathOperator{\End}{End}
\DeclareMathOperator{\Aut}{Aut}
\DeclareMathOperator{\ad}{ad}
\DeclareMathOperator{\InDer}{InDer}

\DeclareMathOperator{\Lie}{Lie}
\DeclareMathOperator{\Hom}{Hom}

\title{Cubic norm pairs and hermitian cubic norm structures}

\author{Michiel Smet}
\address{Department of Electronics and Information Systems\\ Faculty of Engineering and
	Architecture, Ghent University}
\email{Michiel.Smet@UGent.be}

\subjclass[2020]{17B60, 17B70, 17C99, 17A30, 20G15}

\keywords{(Graded) Lie algebras, Jordan algebras, Structurable algebras, (Exceptional) linear algebraic groups}

\begin{document}

	\maketitle
	
	\begin{abstract}
		
		We generalize cubic norm structures to cubic norm pairs and extend hermitian cubic norm structures to arbitrary commutative unital rings. For the associated ``skew dimension one structurable algebra" of these pairs, we construct a corresponding Lie algebra and a group of automorphisms of the Lie algebra.
	\end{abstract}

	\section{Introduction}
	
	Structurable algebras, as introduced by Allison \cite{allison1978class}, play an important role in the construction of (exceptional) simple Lie algebras, see e.g. \cite{Allison1979ModelsOI}. However, these algebras are only defined over fields of characteristic different from $2$ and $3$. We remark that a generalization of structurable algebras to arbitrary rings has already been considered in \cite{Allison1993NonassociativeCA}, but in the context of a different construction of exceptional Lie algebras than the one we are interested in. The main thing we do, is placing the subclass of skew dimension one structurable algebras in a broader class of algebras coming from hermitian cubic norm structures over arbitrary (commutative, unital) rings. 
	This is a natural thing to do, since over fields of characteristic different from $2$ and $3$, each simple skew dimension one structurable algebra can be obtained from a hermitian cubic norm structure. This was proven in \cite{Tom2019} for the main class of skew dimension one structurable algebras, i.e., forms of matrix structurable algebras with nontrivial norms \cite[Example 1.9.(a)]{All90}.
	Our notion of a hermitian cubic norm structure differs slightly from the one of \cite{Tom2019}, even when restricted to fields, in order to also allow forms of matrix structurable algebras with trivial norms \cite[Example 1.9.(b)]{All90}.
	
	To be precise, given a hermitian cubic norm structure over a commutative unital ring $R$ we introduce a ``structurable" algebra $\mathcal{B}$, using the construction from \cite{Tom2019}. Next, we use the construction of the Lie algebra $\mathcal{L}(\mathcal{B})$ from \cite{Allison1979ModelsOI} for the structurable algebra $\mathcal{B}$. To simplify the verification that a Lie algebra is obtained, we use a slightly different framing of the construction. This Lie algebra carries a $\mathbb{Z}$-grading of a specific form. Specifically, one can decompose $\mathcal{L}(\mathcal{B}) = L_{-2} \oplus L_{-1} \oplus L_0 \oplus L_1 \oplus L_2$ and set $L_k = 0$ for all $k \notin \{-2,-1,0,1,2\}$. That the algebra is graded, corresponds to the condition $[L_i,L_j] \subset L_{i + j}$ for all $i,j \in \mathbb{Z}$. 
	
	This Lie algebra on its own is relatively easy to construct. The challenging part, which will be the focus of our efforts, is constructing a certain group of automorphisms of this Lie algebra.
	The automorphism groups we construct are compatible with scalar extensions.

\begin{thm*}[A]
		Suppose that $J$ is a well behaved hermitian cubic norm structure with associated structurable algebra $\mathcal{B}$ and Lie algebra $\mathcal{L}(\mathcal{B})$.
		There exist functors $G^\pm$ mapping associative, commutative, unital $R$-algebras $K$ to \[G^\pm(K) \subset \Aut_K(\mathcal{L}(\mathcal{B}) \otimes K)\] such that
		\[ \mathcal{L}(\mathcal{B}) \otimes K \cong \Lie(G^-)(K) \oplus L_0 \otimes K \oplus \Lie(G^+)(K)\]
		with $L_0$ the $0$-graded component of $\mathcal{L}(\mathcal{B})$.
		
		Moreover, from the subgroup $G(K)$ generated by $G^\pm(K)$ of $\Aut(\mathcal{L}(\mathcal{B}) \otimes K)$ we can recover $G^\sigma(K)$, for $\sigma = \pm$, using
		\[ G^{\sigma}(K) = \{ 1 + g_1 + g_2 + g_3 + g_4 \in G(K) : g_i \text{ acts as } \sigma i \text{ on the grading}\}.\]
		Furthermore, $[\Lie(G)(R), \Lie(G)(R)] \cong \mathcal{L}(\mathcal{B})$. 
		\begin{proof}
			This is Theorem \ref{thm: hcns lie alg}.
		\end{proof}
	\end{thm*}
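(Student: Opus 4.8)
The plan is to exploit the $5$-grading. Realize $\mathcal{L}(\mathcal{B})=L_{-2}\oplus L_{-1}\oplus L_0\oplus L_1\oplus L_2$ with the graded pieces built explicitly from the cubic norm data: $L_{\pm1}$ essentially a copy of $\mathcal{B}$, $L_{\pm2}$ the rank-one skew part carrying the norm, and $L_0$ assembled from the inner structure operators. The positive part $L_1\oplus L_2$ and the negative part $L_{-1}\oplus L_{-2}$ are two-step nilpotent, so over a $\mathbb{Q}$-algebra the automorphisms $\exp(\ad x)$, $x\in L_1\oplus L_2$, would generate the desired $G^+$; but $\exp(\ad x)$ involves dividing by $2,3,4,24$, which is illegitimate over a general $R$. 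Instead I would write down, for $b\in\mathcal{B}$ and $s$ in the rank-one skew part, explicit $R$-linear endomorphisms $x_+(b), x_+(s)$ of $\mathcal{L}(\mathcal{B})$, specified graded-component by graded-component through polynomial formulas in the cubic-norm and structurable operators (the trace, the adjoint $\sharp$, the $U$- and $V_{x,y}$-operators), arranged so as to specialize to the maps $\exp(\ad x)$ over $\mathbb{Q}$. Then set $G^+(K)$ to be the subgroup of $\Aut_K(\mathcal{L}(\mathcal{B})\otimes K)$ generated by all $x_+(b)\otimes K$ and $x_+(s)\otimes K$, and $G^-(K)$ symmetrically using $L_{-1}\oplus L_{-2}$. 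Functoriality in $K$ and compatibility with base change are immediate, since every defining formula is polynomial over $R$.

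The technical core, and the step I expect to be the \textbf{main obstacle}, is showing that each $x_\pm(\cdot)$ is genuinely a Lie automorphism of $\mathcal{L}(\mathcal{B})\otimes K$ — this is precisely the ``constructing a group of automorphisms'' difficulty flagged in the introduction. Because the formulas are polynomial and functorial one may argue over a generic base, but one must still reduce the requirement $x_+(b)[u,v]=[x_+(b)u,x_+(b)v]$ (and the analogue for $x_+(s)$) to a finite set of identities between structure operators acting across the five graded pieces, and then derive each of them from the hermitian cubic norm structure axioms (chiefly the adjoint identity and its linearizations, together with the compatibility between the hermitian form and the cubic norm) and from the ``well behaved'' hypothesis, which is what makes the remaining non-degeneracy go through in characteristics $2$ and $3$. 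I would organize the verification into: the brackets with $L_0$ (i.e.\ that $x_\pm$ conjugates the inner structure operators correctly); the short brackets $L_{\pm1}\times L_{\pm1}\to L_{\pm2}$ and their mixed variants; and the single ``long'' bracket $L_{-1}\times L_1\to L_0$ and $L_{-2}\times L_2\to L_0$, where the norm genuinely enters.

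Granting that, the Lie-algebra assertion is obtained by differentiating: passing to $K[\varepsilon]/(\varepsilon^2)$, the $\varepsilon$-linear term of $x_+(\varepsilon b)$ is $\ad b$ and that of $x_+(\varepsilon s)$ is $\ad s$, so $\Lie(G^+)(K)=\ad\big((L_1\oplus L_2)\otimes K\big)$, and since $\mathcal{L}(\mathcal{B})\otimes K$ has trivial centre (part of, or a consequence of, well-behavedness) this is isomorphic as a Lie algebra to $(L_1\oplus L_2)\otimes K$; symmetrically for $\Lie(G^-)$. The displayed decomposition $\mathcal{L}(\mathcal{B})\otimes K\cong\Lie(G^-)(K)\oplus L_0\otimes K\oplus\Lie(G^+)(K)$ then restates the grading, once one knows (from the ``long bracket'' computation above) that $[\Lie(G^+)(K),\Lie(G^-)(K)]$ together with the remaining brackets recovers $\ad(L_0\otimes K)$. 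For $[\Lie(G)(R),\Lie(G)(R)]\cong\mathcal{L}(\mathcal{B})$ I would first show $\mathcal{L}(\mathcal{B})$ is perfect — $[L_{\pm1},L_{\pm1}]=L_{\pm2}$ and $[L_1,L_{-1}]+[L_2,L_{-2}]=L_0$, the surjectivity again coming from ``well behaved'' — and centreless; since $\Lie(G)(R)$ is spanned by $\Lie(G^\pm)(R)$ and their iterated brackets, all of which are inner derivations, both $[\Lie(G)(R),\Lie(G)(R)]$ and $\ad(\mathcal{L}(\mathcal{B}))$ coincide, and $\ad$ is injective.

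Finally, for the recovery identity $G^\sigma(K)=\{1+g_1+g_2+g_3+g_4\in G(K): g_i\text{ acts as }\sigma i\text{ on the grading}\}$, the inclusion $\subseteq$ is clear from the explicit shape of the generators of $G^\pm(K)$. For $\supseteq$ with $\sigma=+$: such a $g$ preserves the descending filtration $L_{\geq k}=\bigoplus_{j\geq k}L_j\otimes K$ and induces the identity on the associated graded, so it lies in the ``unipotent part''. I would strip it down by Gaussian elimination along the filtration: the $L_{-1}$-component of $g|_{L_{-2}\otimes K}$ has the form of the relevant operator attached to a unique $b\in\mathcal{B}\otimes K$, whence $x_+(b)^{-1}g$ has vanishing $g_1$; repeating with the $L_0$- and then the $L_1$-components, using the generators $x_+(s)$ and the bracket relations, removes $g_2,g_3,g_4$ in turn and leaves the identity, so $g\in G^+(K)$. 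The one non-formal input here is the injectivity needed to pin $b$ (and the later parameters) down uniquely at each stage — equivalently a normal-form statement for products of generators of $G^+(K)$ — which I would extract from the linear independence of the first-order terms $\ad b,\ad s$, the explicit bracket relations established in proving the automorphism property, and the fact that $G^+(K)\cap G^-(K)=\{1\}$.
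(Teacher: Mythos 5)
There are two genuine gaps, and they sit exactly at the places where the paper has to work hardest. First, the construction itself: you propose to write $x_+(b)$, $x_+(s)$ directly on $\mathcal{L}(\mathcal{B})$ by polynomial formulas specializing to $\exp(\ad x)$ over $\mathbb{Q}$ and to verify the automorphism property by reducing to identities among the cubic-norm operators. Over a general base this is not a routine reduction: the divided-power substitutes for $(\ad b)^2/2$, $(\ad b)^3/6$, $(\ad b)^4/24$ for $b=(k,j)\in\mathcal{B}$ are not determined by $b$ alone (the degree-two part is only pinned down up to a skew element $u$ with $u+\bar u=k\bar k+T(j,j)$, so the group is naturally a vector group parametrized by pairs, not by exponentials of single elements), and the verification in characteristics $2,3$ is precisely what the paper declares infeasible to do head-on. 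The paper instead splits the structure after the faithfully flat extension $K$, uses the resulting $G_2$-grading and the divided power representations of the root pairs $(J'_\gamma,J_{-\gamma})$ and $(R_\gamma,R_{-\gamma})$ to build root-group automorphisms, proves the commutator relations there, and only then descends: $U(K,J)\subset U(K\times K,J\otimes K)$ is shown to preserve $\mathcal{L}(\mathcal{B})\subset\mathcal{L}(\mathcal{B}\otimes K)$ by an explicit computation of the degree-three component (Lemma \ref{lem: help1}, Proposition \ref{prop: action UKJ}). Your outline contains no substitute for this splitting/descent step, and asserting that the finite list of identities ``can be derived from the axioms'' is exactly the unproved core.

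Second, and more seriously, your proof of the recovery statement $G^{\sigma}(K)=\{1+g_1+\cdots+g_4\in G(K):\dots\}$ does not work as described. The Gaussian-elimination-along-the-filtration argument never uses the hypothesis $g\in G(K)$: it tacitly assumes that any automorphism of the shape $1+g_1+g_2+g_3+g_4$ has graded components of the special algebraic form realized by your generators, so that stripping leaves the identity. That is false in general: after removing a generator matching $g_1$ and the skew part of $g_2$, one is left with $1+f_2+f_3+f_4$ where $f_2,f_3$ are graded derivations (of degrees twice or thrice a short root in the refined grading) that need not vanish and need not come from $G^+(K)$; compare Lemma \ref{lem: explanation easy computation}, where such residual automorphisms explicitly appear. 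The actual content of the statement is a Birkhoff-type normal form for words in positive and negative generators, and the paper proves it through the commutator relations (Lemma \ref{lem: com formulas}), the decomposition of products $G^+G^-$ via the $o_{i,j}$ operators (Proposition \ref{lem: oij}, i.e.\ the operator Kantor pair property), and the filtration trick with $\Phi(1+\eta)$ and $\eta^{2^{2n}}$ (Lemmas \ref{lem: g(eta2n) g(-1) in G+} and \ref{lem: helpp}); none of this is replaced by ``linear independence of first-order terms'' plus $G^+(K)\cap G^-(K)=\{1\}$. A smaller but real error: centrelessness of $\mathcal{L}(\mathcal{B})\otimes K$ is neither part of nor a consequence of well-behavedness (degenerate well-behaved examples exist), so the identification $\Lie(G^\pm)(K)\cong L_{\pm1}\oplus L_{\pm2}$ tensored with $K$ should be read off from the explicit vector-group parametrization of $G^\pm$ (as in Lemma \ref{lem: descr G+} and Construction \ref{con: UKJ}), not from injectivity of $\ad$.
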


This group $G$ generated by $G^+$ and $G^-$, or more precisely its closure in $\Aut(\mathcal{L}(\mathcal{B}))$ endowed with Zariski topology, is a smooth connected algebraic group whenever $R$ is a field and $\mathcal{L}(\mathcal{B})$ is finite dimensional.
Moreover, when this $G$ is reductive, the groups $G^+$ and $G^-$ are the unipotent radicals of a pair of opposite parabolic groups.

	Since directly constructing the group $G$ for a general hermitian cubic norm structure is difficult, or even infeasible, we first look at a specific subclass for which we can perform this construction with relative ease. Namely, first we look at split hermitian cubic norm systems, or more precisely, cubic norm pairs. For these, the Lie algebra is $\mathbb{Z}^2$-graded instead of simple $\mathbb{Z}$-graded, which makes the construction of $G$ and proving properties of $G$ a lot easier. The nonzero components in this $\mathbb{Z}^2$-grading form a $G_2$ root system. The main theorem for cubic norm pairs is Theorem \ref{thm: cnp main}.
	
	\subsection*{Cubic norm pairs}
		Cubic norm structures were originally introduced by McCrimmon \cite{McC69} over arbitrary (commutative, unital) rings. They generalized certain algebraic structures used by Freudenthal to construct exceptional Lie algebras of type $E_7$ and $E_8$ \cite{Freu54}. They also extend the class of cubic forms that lead to Jordan algebras over fields of characteristic different from $2$ and $3$, as characterized by Springer \cite{Spri62}. These cubic norm structures are quite well studied; for example, \cite{Petersson86,Petersson86_2} leads to a classification of them over fields. A nice summary of the classification is \cite[Theorem 39.6, Theorem 46.8]{Skip2024}.
	
	We introduce cubic norm pairs, a generalization of cubic norm structures. These pairs already appeared under a weaker axiomatization in \cite{FLK01}. They can be thought of as cubic norm structures without a unit. A second way to see cubic norm pairs, is as split hermitian cubic norm structures, i.e., the ones for which the construction of the Lie algebra and associated group are relatively easy and done explicitly in this article.
	A first reason to use the generalization is their characterizing property, namely having a certain divided power representation (as defined by Faulkner for Jordan pairs \cite{faulkner2000jordan}) involving the cubic norms. These divided power representations are precisely what enable us to define the generating automorphisms of $G$ with relative ease.
	This is also the main reason the pairs are indispensable, as the whole class of cubic norm pairs plays a role in proving the main theorem that cannot be fulfilled by the class of cubic norm structures.
 
	We remark that the group $G$ is quite well studied, whenever $G$ is simple and the cubic norm pair corresponds to a cubic norm structure over a field. Namely, in that case, one has a Tits hexagon, as studied in \cite{muhlherr2022}.
	Moreover, for certain simple Lie algebras over arbitrary fields that correspond to this setting, the group $G$ was also reconstructed in \cite{TOM24}.

	\subsection*{Hermitian cubic norm structures}
	Hermitian cubic norm structures were already introduced and studied over fields of characteristic different from $2$ and $3$ by De Medts \cite{Tom2019} and Allison, Faulkner and Yoshii \cite{All08}. 
	
	We expand the definition of hermitian cubic norm structures so that it works over arbitrary commutative, unital base rings. We also make the one-to-one correspondence between split hermitian cubic norm structures and cubic norm pairs precise, and show that each hermitian cubic norm pair splits after a faithfully flat scalar extension.
	Originally \cite{Tom2019}, split hermitian cubic norm structures correspond to a pairing of certain isotopes of cubic norm structures.
	 Our notion of a hermitian cubic norm structure is, thus, slightly more general than the one of De Medts \cite{Tom2019}, since we allow, for example, trivial norms.
	
	\subsection*{Operator Kantor pairs}
	Operator Kantor pairs \cite{OKP} were introduced to get some grip on pairs of group functors $G^+$ and $G^-$ of the form we obtain here. 
	Proving the main theorem required for us to prove a Lemma \ref{lem: oij}, which basically amounts to proving that we have an operator Kantor pair, cfr., Theorem \ref{thm: cns defines okp}.
	As such, we were also interested in operator Kantor pairs related to hermitian cubic norm structures and cubic norm pairs.
	
	On the one hand, we characterize operator Kantor pairs that have a $G_2$-grading of various sorts.
	This allows us to characterize operator Kantor pairs over $R$ coming from cubic norm pairs over $R$.
	We also examine what happens if an operator Kantor pair is $G_2$-graded in the sense of \cite{Torben}, and see that it implies, as long as there is no $3$-torsion, that we have an operator Kantor pair arising from a cubic norm structure over an $R$-algebra $K$.
	
	We also determine the operator Kantor pair operators in terms of the hermitian cubic norm structure operators. Finally, we will show that all forms of operator Kantor pairs that can be constructed from hermitian cubic norm structures come, in fact, from hermitian cubic norm structures (provided they contain $1$).

	\subsection*{Outline}
	In the second section, we define the notion of a cubic norm pair, the corresponding Jordan pair, and a characterizing divided power representation.
	
	In the third section, we construct a structurable algebra from a cubic norm pair, along with the Lie algebra associated to this structurable algebra, and certain automorphisms of this Lie algebra.
	We also investigate how these automorphisms interact and obtain a version of Theorem (A) for cubic norm pairs.
	
	In the fourth section, we introduce the concept of a hermitian cubic norm structure. We relate these structures to cubic norm pairs and use this relation to work towards the main theorem for hermitian cubic norm structures.
	
	In the fifth section, we construct an operator Kantor pair corresponding to the Lie algebra and group functor. We characterize the operator Kantor pairs one can obtain from cubic norm pairs. We also see that operator Kantor pairs that can be defined from a hermitian cubic norm structure over a faithfully flat scalar extension, are defined from hermitian cubic norm structures over the base ring.
	These constitute two very important recognition theorems for operator Kantor pairs.
	We also compute the main operator Kantor pair operators in terms of the hermitian cubic norm structure and compare them with the operators proposed in \cite{OKP}.
	
	We finish with a brief appendix relating some simpler definitions we use in the article to the more general versions employed in \cite{OKP}.

	\subsection*{Notation} We work over a base ring $R$ which is assumed to be unital and commutative. The category $R\textbf{-alg}$ is the category of unital, commutative, associative $R$-algebras.
	When we write $A \otimes B$, we mean $A \otimes_R B$.

	\section{Cubic norm pairs}
	
	\label{sec: cnp}
	
	\subsection{Cubic norm pairs}
	
	In this subsection, we first define what we mean by homogeneous maps, and in particular cubic maps. 
	Secondly, we give the definition of a cubic norm pair and derive some basic properties that allow us to characterize the cubic norm pairs coming from cubic norm structures.
	
	\begin{definition}
		Consider $R$-modules $J_1$ and $J_2$. We consider the functors $R\textbf{-alg} \longrightarrow \textbf{Set}$ given by $K \mapsto J_i \otimes_R K$. Following \cite[12.7]{Skip2024}, we call a natural transformation \[f : J_1 \otimes \cdot \longrightarrow J_2 \otimes \cdot\] \textit{homogeneous of degree} $i$ if
		\[ f^K(\lambda j) = \lambda^i f^K(j)\]
		for all $\lambda \in K$, $j \in J_1 \otimes K$, and all $K \in R\textbf{-alg}$.
		
	\end{definition}

\begin{remark}
	Homogeneous maps are very often simply homogeneous polynomial maps. Suppose that $J_1 \cong R^n$ and $J_2 \cong M$. For a homogeneous map $f : R^n \longrightarrow M$, define $g(x_1,\dots,x_n) = f^{R[x_1,\dots,x_n]}(x_1 e_1 + \dots x_n e_n)$. By applying an evaluation map $R[x_1,\dots,x_n] \longrightarrow K$, we see that $f^K(x_1 e_1 + \dots + x_n e_n) = g(x_1,\dots,x_n)$ for arbitrary $x_i \in K$.
\end{remark}

	\begin{definition}
		Suppose that $f$ is homogeneous of degree $i$.
		There exist natural transformations
		\[ f^{(k,l)} : (J_1 \otimes \cdot \times J_1 \otimes \cdot  ) \longrightarrow J_2 \otimes \cdot, \]
		called the \textit{linearisations} of $f$, such that $f^{(k,l)}$
		is homogeneous of degree $k$ in the first component, of degree $l$ in the second component,
		and
		\[ f^{K}(\lambda u + \mu v) = \sum_{k + l = i} \lambda^i \mu^j {f^{(k,l)}}^{K}(u,v).\]
		These linearisations can be determined over $K[\lambda,\mu]$. See for example \cite[12.10.(c)]{Skip2024}. Setting $\mu = 0$ shows that ${f^{(i,0)}}^K(u,v) = f^K(u)$.
	\end{definition}

	\begin{remark}		
		One can linearize $f^{(k,l)}$ to obtain $f^{((a,b),l)}$ and $f^{(k,(c,d))}$ by linearising the first and second argument further. We note that \[{f^{((a,b),c)}}^K(u,v,w)= {f^{(a,(b,c))}}^K(u,v,w)\] as both are the terms with coefficient $\lambda^a \mu^b \nu^c$ in $f^K(\lambda u + \mu v + \nu w)$. Hence, we can write $f^{(a,b,c)}$ for $f^{((a,b),c)}$.
		Going further, one can obtain arbitrary linearisations $f^{(a_1,\dots, a_n)}$.
		The same argument also shows that $f^{(a_1,\dots,a_n)}$ is symmetric in its arguments.
		
		We call a homogeneous map of degree $2$ \textit{quadratic} and a homogeneous map of degree $3$ \textit{cubic}.
		Note that if $q$ is quadratic, \[(u,v) \mapsto q(u + v) - q(u) - q(v) = q^{(1,1)}(u,v)\] is bilinear. This linearisation will sometimes be called the \textit{polarization} of the quadratic form.
	\end{remark}

	\begin{definition}
		\label{def: cubic norm pair}
		Consider a commutative unital ring $R$, a $R$-module $J$, and a $R$-module $J'$.
		Suppose that we have cubic maps $N, N' : J, J' \longrightarrow R$, quadratic maps $\sharp : J \longrightarrow J',$ $\sharp' : J'\longrightarrow J$, and a bilinear map $T : J \times J' \longrightarrow R$ and write $T'(a,b) = T(b,a)$ for $T' : J'\times J \longrightarrow R$.
		We use $(u,v) \mapsto u \times v$ to denote the polarization of $\sharp$ and use $u \times' v$ for the polarization of $\sharp'$.
		We call $(J,J',N,N',\sharp, \sharp', T, T')$ a \textit{cubic norm pair} over $R$ if
		\begin{enumerate}
			\item $T(a,b^\sharp) = N^{(1,2)}(a,b)$
			\item $(a^\sharp)^{\sharp'} = N(a)a$,
			\item $(a^\sharp \times' c) \times a = N(a)c + T(a,c) a^\sharp$,
			\item $N(Q_a c) = N(a)^2 N'(c)$ for $Q_a c = T(a,c)a - a^\sharp \times' c$
		\end{enumerate}
	hold for all $a, b \in J \otimes K$ and $c \in J' \otimes K$ and if the same equations with the roles of $J$ and $J'$ reversed, hold as well.
	We call a cubic norm pair \textit{unital} if there exists a $1 \in J$ such that $N(1) = 1$.
	\end{definition}

	\begin{remark}
		\begin{itemize}
			\item 
			Given the symmetry between $J$ and $J'$ we will often drop the $'$. To be precise, we will just use $N(j)$ to denote $N'(j)$ for $j \in J'$, $j^\sharp$ for $j^{\sharp'}$, and $T(k,j)$ for $T'(k,j)$.
			\item 
			We will often abbreviate the cubic norm pair as $(J,J')$ if the operators are clear from context.
			\item Later, the operator $Q$ shall reappear and we will show that $(J,J',Q)$ forms a quadratic Jordan pair.
		\end{itemize}
	\end{remark}

	\begin{lemma}
		\label{lem: basic properties cnp}
		Suppose that $(J,J')$ is a cubic norm pair. For $a, b, c \in J$ and $t \in R$, the following properties hold:
		\begin{enumerate}
			\item $(t a)^\sharp = t^2 a ^\sharp$,
			\item $N(ta) = t^3 N(a)$,
			\item $T(a \times b, c) = T(a,b \times c)$,
			\item $(a + b)^\sharp = a^\sharp + a \times b + b^\sharp$,
			\item $N(a + b) = N(a) + T(a^\sharp,b) + T(a,b^\sharp) + N(b)$,
			\item $T(a,a^\sharp) = 3 N(a)$,
			\item $a^{\sharp\sharp} = N(a)a$,
			\item $a^\sharp \times (a \times b) = N(a)b + T(a^\sharp,b)a$,
			\item $a^\sharp \times b^\sharp + (a \times b)^\sharp = T(a^\sharp,b)b + T(a,b^\sharp)a$.
		\end{enumerate}
		\begin{proof}
			The first two properties follow from the fact that $\sharp$ is quadratic and $N$ is cubic.
			The fourth, fifth, and sixth property follow from the definition of $\times$ and Definition \ref{def: cubic norm pair}.1.
			The third property follows from $T(a,b \times c)= N^{(1,(1,1))}(a,b,c) = N^{((1,1),1)}(a,b,c) = T(a \times b, c)$ since all four terms are the term with coefficient $\lambda \mu \nu$ in $N(\lambda a + \mu b + \nu c)$. 
			The seventh, eight and ninth property all follow from Definition \ref{def: cubic norm pair}.2 and its linearisations.
			For example, if we take the $(3,1)$-linearisation of $N(a)a = (a^\sharp)^\sharp$, we get
			\[ N(a)b + N^{(2,1)}(a,b) a = a^\sharp \times (a \times b).\]
			Taking the $(2,2)$-linearisation yields the ninth equation.
		\end{proof}
	\end{lemma}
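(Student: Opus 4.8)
The plan is to obtain all nine identities by linearising the two genuinely defining equations of a cubic norm pair, namely $T(a,b^\sharp)=N^{(1,2)}(a,b)$ and $(a^\sharp)^{\sharp'}=N(a)a$, together with the bare fact that $\sharp$ and $N$ are homogeneous of degrees $2$ and $3$. Throughout I would pass to a polynomial extension $K[\lambda,\mu,\nu]$ and read each identity off as an equality of coefficients of a monomial in the scalars, exactly as the linearisation formalism recalled in the preceding definitions permits; the conventions identifying $N'$ with $N$, $\sharp'$ with $\sharp$, and $T'$ with $T$ let me argue on the $J$-side and the $J'$-side uniformly.

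Items~(1) and~(2) are nothing but the definitions of ``quadratic'' and ``cubic'' specialised to $\lambda=t$. Item~(4) is the definition of the polarisation $u\times v$: expand $(\lambda a+\mu b)^\sharp=\lambda^2 a^\sharp+\lambda\mu\,(a\times b)+\mu^2 b^\sharp$ and set $\lambda=\mu=1$. Item~(5) follows from $N(\lambda a+\mu b)=\lambda^3 N(a)+\lambda^2\mu\,N^{(2,1)}(a,b)+\lambda\mu^2\,N^{(1,2)}(a,b)+\mu^3 N(b)$ once I rewrite the middle coefficients using the first axiom of Definition~\ref{def: cubic norm pair}: $N^{(1,2)}(a,b)=T(a,b^\sharp)$ directly, and $N^{(2,1)}(a,b)=N^{(1,2)}(b,a)=T(b,a^\sharp)=T(a^\sharp,b)$, using symmetry of the two-variable linearisation, that same axiom applied to $(b,a)$, and the convention $T(a^\sharp,b):=T'(a^\sharp,b)=T(b,a^\sharp)$. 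Item~(6) is the Euler relation: setting $b=a$ in the first axiom gives $T(a,a^\sharp)=N^{(1,2)}(a,a)$, and comparing $N((\lambda+\mu)a)=(\lambda+\mu)^3 N(a)$ with $\sum_{k+l=3}\lambda^k\mu^l N^{(k,l)}(a,a)$ identifies $N^{(1,2)}(a,a)=3N(a)$. Item~(3) says precisely that the fully polarised norm is symmetric: linearising the first axiom in its second argument gives $T(c,x\times y)=N^{(1,1,1)}(c,x,y)$, so both $T(a\times b,c)=T'(a\times b,c)=T(c,a\times b)$ and $T(a,b\times c)$ equal $N^{(1,1,1)}(a,b,c)$ up to the (irrelevant) order of arguments.

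For~(7)–(9) I would linearise the degree-$4$ identity $(a^\sharp)^{\sharp'}=N(a)a$ in two variables $a,b$. Item~(7) is this identity itself, written with the prime dropped. For~(8) and~(9), substitute the expansion of $(\lambda a+\mu b)^\sharp$ from~(4), apply $\sharp$ once more via the three-term polarisation $(x+y+z)^\sharp=x^\sharp+y^\sharp+z^\sharp+x\times y+x\times z+y\times z$, and on the right expand $N(\lambda a+\mu b)\cdot(\lambda a+\mu b)$ using the expansion from~(5). Comparing the coefficient of $\lambda^3\mu$ yields $a^\sharp\times(a\times b)=N(a)b+T(a^\sharp,b)a$, i.e.\ item~(8); comparing the coefficient of $\lambda^2\mu^2$ yields $a^\sharp\times b^\sharp+(a\times b)^\sharp=T(a^\sharp,b)b+T(a,b^\sharp)a$, i.e.\ item~(9).

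The only real difficulty is bookkeeping: tracking which terms lie in $J$ and which in $J'$, reading $T(a^\sharp,b)$ correctly as $T'(a^\sharp,b)=T(b,a^\sharp)$, and neither dropping nor double-counting a cross term when the three-variable polarisation of $\sharp$ is composed with the two-variable linearisation. Past that, nothing beyond the two axioms and the formal properties of linearisations is needed, and the coefficient matching is mechanical.
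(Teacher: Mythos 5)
Your proposal is correct and follows essentially the same route as the paper: items (1)--(6) from homogeneity, the definition of the polarisation, and axiom (1) of Definition \ref{def: cubic norm pair} (including the symmetric-trilinear-form argument for (3)), and items (7)--(9) as the $(3,1)$- and $(2,2)$-linearisations of $(a^\sharp)^{\sharp'}=N(a)a$. The coefficient computations you sketch (the $\lambda^3\mu$ and $\lambda^2\mu^2$ terms) match the paper's linearisations exactly, so there is nothing to add.
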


\begin{definition}
	Following McCrimmon \cite{McC69}, we call $(J,N,\sharp,T, 1)$ with $N : J \longrightarrow R$ cubic, $\sharp : J \longrightarrow J$ quadratic with polarization $\times$, symmetric $T : J \times J \longrightarrow J$, and $1 \in J$ a \textit{cubic norm structure} if
	\begin{itemize}
		\item $x^{\sharp \sharp} = N(x)x$,		
		\item $T(x,y^\sharp) = N^{(1,2)}(x,y)$,
		\item $N(1) = 1$, $ 1^\sharp = 1$, $T(1,y)1 - 1 \times y = y$
	\end{itemize}
	hold for all $x$ and $y$ over all scalar extensions. We call $1$ the unit.
	Nowadays, \cite[33.1, 33.4]{Skip2024} one typically assumes $1$ to be unimodular, i.e., there exists $f \in J^*$ such that $f(1) = 1$.
	This assumption is not useful in our setting. We remark that two typical identities one proves using unimodularity such as \cite[33.8, Equations (19), (21)]{Skip2024} are axioms (3) and (4).
\end{definition}

\begin{example}
	For examples, one can look at \cite[Examples 2.3, 2.4, 2.5]{Petersson86}. These are examples of cubic norm structures $(J,N,\sharp,T,1)$.
	For these examples the unit is unimodular, hence axioms (3) and (4) follow as well.
\end{example}

\begin{example}
	The class of cubic norm pairs is broader than the cubic norm structures. For example, we can immediately think of degenerate cases with $\sharp = 0$ and $N = 0$, equipped with an arbitrary bilinear $T$.
	
	A more interesting example is $(I,I)$ with $I$ an ideal of $R$, under $N(i) = i^3,$ $i^\sharp = i^2$ and $T(i,j) = 3ij$.
\end{example}

	\begin{lemma}
	\label{lem: sharp is non linear endo}
	For a cubic norm pair $(J,J')$ we have $(Q_j k)^\sharp = Q_{j^\sharp} k^\sharp$ for $j \in J$, $k \in J'$.
	\begin{proof}
		We compute
		\begin{align*}
			(Q_jk)^\sharp & = (j^\sharp \times k - T(j,k) j)^\sharp \\
			& = (j^\sharp \times k)^\sharp - T(j,k) (j \times (j^\sharp \times k) - T(j,k) j^\sharp) \\
			& = T(j,k) N(j) k + T(j^\sharp,k^\sharp) j^\sharp - j^{\sharp \sharp} \times k^\sharp - T(j,k) (j \times (j^\sharp \times k) - T(j,k) j^\sharp) \\
			& = T(j^\sharp,k^\sharp) j^\sharp - j^{\sharp \sharp} \times k^\sharp \\
			& = Q_{j^\sharp} k^\sharp
		\end{align*}
		where we rewrote $(j^\sharp \times k)^\sharp$ using properties (9) and (7) from Lemma \ref{lem: basic properties cnp} and used Definition \ref{def: cubic norm pair}.3 to let the terms with $T(j,k)$ disappear. 
	\end{proof}
\end{lemma}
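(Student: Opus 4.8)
The plan is to expand $(Q_jk)^\sharp$ directly, using only that $\sharp$ is quadratic together with the axioms in Definition~\ref{def: cubic norm pair} and the identities collected in Lemma~\ref{lem: basic properties cnp}; after the dust settles the two sides agree up to a visible cancellation. Throughout I keep the convention of suppressing the primes, so that $\sharp$, $\times$ and $T$ always denote the operation appropriate to whichever of $J$, $J'$ the argument belongs to.

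First I would write $Q_jk = T(j,k)\,j - j^\sharp\times k$ and apply the polarization identity Lemma~\ref{lem: basic properties cnp}.(4), together with $(t x)^\sharp = t^2 x^\sharp$ and the bilinearity of $\times$, to obtain $(Q_jk)^\sharp$ as a sum of three contributions: the diagonal term $\bigl(T(j,k)j\bigr)^\sharp = T(j,k)^2\, j^\sharp$, the cross term $-\,T(j,k)\,\bigl(j\times(j^\sharp\times k)\bigr)$, and the remaining term $(j^\sharp\times k)^\sharp$.

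Next I would simplify the last two contributions. For the cross term, Definition~\ref{def: cubic norm pair}.(3) with $a=j$, $c=k$ gives $j\times(j^\sharp\times k)=N(j)k+T(j,k)j^\sharp$, so that term equals $-T(j,k)N(j)k - T(j,k)^2 j^\sharp$. For $(j^\sharp\times k)^\sharp$ — the genuinely nonlinear piece, and the one requiring a little care — I would apply Lemma~\ref{lem: basic properties cnp}.(9) to the pair $j^\sharp$, $k$, and then use Lemma~\ref{lem: basic properties cnp}.(7) to rewrite $(j^\sharp)^\sharp = j^{\sharp\sharp}$ as $N(j)j$; this yields $(j^\sharp\times k)^\sharp = N(j)T(j,k)\,k + T(j^\sharp,k^\sharp)\,j^\sharp - N(j)\,\bigl(j\times k^\sharp\bigr)$. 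Adding the three contributions, the two copies of $T(j,k)^2 j^\sharp$ cancel and the two copies of $T(j,k)N(j)k$ cancel, leaving $(Q_jk)^\sharp = T(j^\sharp,k^\sharp)\,j^\sharp - N(j)\,\bigl(j\times k^\sharp\bigr)$. Finally, expanding $Q_{j^\sharp}k^\sharp = T(j^\sharp,k^\sharp)\,j^\sharp - (j^\sharp)^\sharp\times k^\sharp$ and invoking Lemma~\ref{lem: basic properties cnp}.(7) once more to identify $(j^\sharp)^\sharp\times k^\sharp$ with $N(j)\,(j\times k^\sharp)$ shows the two expressions coincide.

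The computation is short, and I expect the only real obstacle to be the bookkeeping: keeping straight on which side of the pair each intermediate element lives, so that the correct (primed or unprimed) instance of every axiom and identity is used, and checking that the $T(j,k)$-terms cancel \emph{exactly} — which is precisely where Definition~\ref{def: cubic norm pair}.(3) does the work. A more conceptual route would be to note that $(Q_jk)^\sharp = Q_{j^\sharp}k^\sharp$ holds in any quadratic Jordan pair once one knows that $(J,J',Q)$ is one; but since that fact is only established later, the direct expansion above is the cleanest way to prove the statement at this point.
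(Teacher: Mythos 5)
Your proposal is correct and follows essentially the same route as the paper's proof: polarize $\sharp$ applied to $Q_jk$, use Definition \ref{def: cubic norm pair}.(3) to cancel the $T(j,k)$-terms, and handle $(j^\sharp\times k)^\sharp$ via Lemma \ref{lem: basic properties cnp}.(9) together with (7). The only difference is cosmetic bookkeeping (you expand all three polarization terms and cancel at the end, while the paper groups the $T(j,k)$-terms first), so nothing further is needed.
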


	\begin{proposition}
		\label{prop: unit cns}
		Consider a cubic norm pair $(J,J')$ containing an element $a$ for which $N(a)$ is invertible, then the map $Q_a : J' \longrightarrow J$ is a bijection. Moreover, for such $a$, we can define a cubic norm structure $(J_a,N',\sharp', T', a)$ on $J \cong J_a$ with operators
		\begin{itemize}
			\item $N'(x) = N(x)/N(a)$,
			\item $x^{\sharp'} = Q_a x^\sharp / N(a)$,
			\item  $T'(x,y) = T(x, Q_{a^\sharp} y) /N(a)^2$.
		\end{itemize} Moreover $Q_a : J'_{a^\sharp} \longrightarrow J_a$ is an isotopy of cubic norm structures.
		\begin{proof}
			Recall that $Q_a b = T(a,b) a - a^\sharp \times b$.
			First, we will show that $Q_{a^\sharp} Q_a b = N(a)^2 b$ and use this to show that $Q_a$ is a bijection.
			Second, we check that all the axioms for cubic norm structures hold.
			
			We start by computing
			\begin{align*}
				Q_{a^\sharp} Q_a b & = Q_{a^\sharp} (T(a,b) a - a^\sharp \times b) \\
				& = T(a,b) T(a,a^\sharp )a^\sharp - T(a^\sharp, a^\sharp \times b)a^\sharp - N(a)a \times T(a,b)a + N(a)a \times (a^\sharp \times b) \\
				& = 3 N(a) T(a,b)a^\sharp - 2 N(a) T(a,b) a^\sharp - 2 N(a) T(a,b) a^\sharp + N(a)^2 b + N(a) T(a,b) a^\sharp \\
				& = N(a)^2 b,
			\end{align*} 
			using properties (3), (6), and (7) of Lemma \ref{lem: basic properties cnp}, that $(a \times a) = 2a^\sharp$, and Definition \ref{def: cubic norm pair}.3.
			Hence, whenever $N(a)$ is invertible, $Q_a$ has a left inverse. 
			Definition \ref{def: cubic norm pair}.4 implies that $N(a^\sharp)$ is invertible if $N(a)$ is invertible, since 
			\[ N(a)^4 = N(N(a)a) = N(Q_a a^\sharp) = N(a)^2N(a^\sharp).\]
			Therefore, $Q_a$ has a right inverse, namely $Q_{a^\sharp} /N(a)^4$, as well.
			This proves that $Q_a$ is a bijection.
			
			The newly defined $T'$ is symmetric since \[T(x, Q_{a^\sharp} y) = T(x,a^\sharp) T(y,a^\sharp) - T(a, x \times y) N(a).\]
			We note that
			\[ T'(x,y^{\sharp'}) = T(x,y^\sharp)/N(a) = N'^{(1,2)}(x,y)\]
			and
			\[ (x^{\sharp'})^{\sharp'} = Q_a (Q_a x^\sharp)^\sharp / N(a)^3 = Q_a Q_{a^\sharp} N(x)x /N(a)^3 = N'(x) x.\]
			In this cubic norm structure, $a$ fulfills the role of a unit, since \[a^{\sharp'} = Q_{a} a^\sharp/N(a) = T(a,a^\sharp)/N(a) a - a^\sharp \times a^\sharp /N(a) = a\] and since
			\begin{align*}
				N(a)^2 (T'(a,b) a - a \times' b) = & T(a, Q_{a^\sharp} b) a - N(a) Q_a (a \times b) \\
				= & T(a,a^\sharp) T(a^\sharp, b) a - N(a) T(a, a \times b) a \\ & - N(a) T(a,a \times b) a + a^\sharp \times (N(a) a \times b) \\
				= & - T(a^\sharp, b) (a^\sharp)^\sharp  + a^\sharp \times ((a^\sharp)^\sharp) \times b) \\ = & N(a)^2 b.			\end{align*}
			
			Recall that $N(a)^2 = N(a^\sharp)$.
			We verify that $Q_a : J'_{a^\sharp} \longrightarrow J_a$ is an isotopy.
			We use the operators $(N',\sharp', T')$ on $J_a$ and the operators $(N'', \sharp'', T'')$ on $J'_{a^\sharp}$. We compute
			\begin{itemize}
				\item $N'(Q_a x) = N(a)N(x) = N(a)^3 N''(x)$ using 	Definition \ref{def: cubic norm pair}.4,
				\item $(Q_a x)^{\sharp' } = N(a)^3 x^\sharp = N(a) Q_a x^{\sharp ''}$ using Lemma \ref{lem: sharp is non linear endo}, $Q_a Q_{a^\sharp} = N(a)^4 \text{Id}$, and $Q_a x^{\sharp ''} = N(a)^2 x^\sharp$,
				\item  and $T'(Q_a x, Q_a y) = N(a) T(Q_a x, y) = T''(x,y)$.
			\end{itemize} 
			This shows that $J_a \cong J'^{(N(a)a^\sharp)}_{a^\sharp}$, using the definition \cite[33.11]{Skip2024} (or \cite[Theorem 2]{McC69}) of the isotope $J'^{(N(a)a^\sharp)}_{a^\sharp}$.
		\end{proof}
	\end{proposition}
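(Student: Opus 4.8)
My plan is to handle the statement in three stages: first show that $Q_a$ is bijective, then verify the cubic norm structure axioms for $(J_a, N', \sharp', T', a)$, and finally check the identities expressing that $Q_a$ is an isotopy. For bijectivity, the key computation is the identity $Q_{a^\sharp}Q_a = N(a)^2\,\mathrm{Id}_{J'}$. I would prove it by substituting $Q_a b = T(a,b)\,a - a^\sharp\times' b$ into $Q_{a^\sharp}(\,\cdot\,)$, expanding term by term (using that $Q_{a^\sharp}$ is linear in its argument), and simplifying the resulting traces and cross products using parts (3), (6), (7) of Lemma \ref{lem: basic properties cnp}, the relation $a\times a = 2a^\sharp$, and Definition \ref{def: cubic norm pair}.3; the point is that all the $T(a,b)\,a^\sharp$ contributions cancel and only $N(a)^2 b$ survives. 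When $N(a)$ is invertible this gives a left inverse $N(a)^{-2}Q_{a^\sharp}$ of $Q_a$. To upgrade to a two-sided inverse I would compute $Q_a a^\sharp = T(a,a^\sharp)\,a - a^\sharp\times' a^\sharp = 3N(a)a - 2N(a)a = N(a)a$, so Definition \ref{def: cubic norm pair}.4 gives $N(a)^4 = N(Q_a a^\sharp) = N(a)^2 N(a^\sharp)$ and hence $N(a^\sharp)$ is invertible; reading the identity just proved in the pair $(J',J)$ with $a$ replaced by $a^\sharp$ (and using $(a^\sharp)^{\sharp'} = N(a)a$) then yields $Q_a Q_{a^\sharp} = N(a)^2\,\mathrm{Id}_J$ as well, so $Q_a$ is bijective with inverse $N(a)^{-2}Q_{a^\sharp}$.

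For the cubic norm structure axioms I would first record the expansion $T(x, Q_{a^\sharp}y) = T(x,a^\sharp)\,T(y,a^\sharp) - N(a)\,T(a, x\times y)$, which is visibly symmetric and gives the symmetry of $T'$. The axiom $T'(x, y^{\sharp'}) = {N'}^{(1,2)}(x,y)$ reduces, after cancelling the power of $N(a)$, to $T(x, y^\sharp) = N^{(1,2)}(x,y)$, which is Definition \ref{def: cubic norm pair}.1. The adjoint identity $(x^{\sharp'})^{\sharp'} = N'(x)\,x$ follows by expanding $x^{\sharp'} = N(a)^{-1}Q_a x^\sharp$, rewriting $(Q_a x^\sharp)^\sharp = Q_{a^\sharp}(x^{\sharp\sharp}) = Q_{a^\sharp}(N(x)x)$ via Lemma \ref{lem: sharp is non linear endo} together with part (7) of Lemma \ref{lem: basic properties cnp}, and then applying $Q_a Q_{a^\sharp} = N(a)^2\,\mathrm{Id}$. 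The three unit conditions at $a$ are direct: $a^{\sharp'} = a$ because $Q_a a^\sharp = N(a)a$; $N'(a) = 1$ by definition; and $T'(a,b)\,a - a\times' b = b$ by clearing $N(a)^2$ and simplifying $T(a, Q_{a^\sharp}b)\,a - N(a)\,Q_a(a\times b)$ with Definition \ref{def: cubic norm pair}.3 and parts (6), (7) of Lemma \ref{lem: basic properties cnp}. All of these are polynomial identities between natural transformations, so it suffices to verify them over the generic scalar extension.

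For the last claim I would equip $(J')_{a^\sharp}$ with the cubic norm structure built from $a^\sharp \in J'$ in the same way (legitimate since $N(a^\sharp) = N(a)^2$ is invertible), call its operators $(N'', \sharp'', T'')$, and verify the isotopy relations expressing $(N', \sharp', T')$ in terms of $(N'', \sharp'', T'')$ along $Q_a$ --- that $Q_a$ multiplies the norm by the cube $N(a)^3$ and intertwines the adjoints and traces up to the corresponding powers of $N(a)$ --- each of which falls out of a short manipulation with Definition \ref{def: cubic norm pair}.4, Lemma \ref{lem: sharp is non linear endo}, and the two relations $Q_a Q_{a^\sharp} = Q_{a^\sharp}Q_a = N(a)^2\,\mathrm{Id}$; comparing the outcome with the isotope definition in \cite[33.11]{Skip2024} identifies $J_a$ with ${J'}^{(N(a)a^\sharp)}_{a^\sharp}$. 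I expect the only real difficulty to be bookkeeping: keeping the normalizing powers of $N(a)$ consistent across $N'$, $\sharp'$, $T'$ and trusting that the cancellations in the $Q_{a^\sharp}Q_a$ computation and in the last unit axiom actually occur. Once the linchpin identity $Q_{a^\sharp}Q_a = N(a)^2\,\mathrm{Id}$ is in hand, everything else is routine cross-product manipulation.
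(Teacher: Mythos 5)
Your proposal is correct and follows essentially the same route as the paper's proof: the same key computation $Q_{a^\sharp}Q_a = N(a)^2\,\mathrm{Id}$ using Lemma \ref{lem: basic properties cnp} and Definition \ref{def: cubic norm pair}.3, the same verification of the cubic norm structure axioms and of the unit property of $a$, and the same comparison with the isotope ${J'}^{(N(a)a^\sharp)}_{a^\sharp}$. The only deviation is cosmetic: you obtain the two-sided inverse as $N(a)^{-2}Q_{a^\sharp}$ by reading the key identity in the opposite pair with $a^\sharp$ in place of $a$ (together with $N(a^\sharp)=N(a)^2$), which is if anything a cleaner normalization than the one stated in the paper.
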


	\subsection{Jordan pairs and divided power representations}

	In this subsection, we prove that cubic norm pairs correspond precisely to Jordan pairs $(J,J',Q)$ that have a specific divided power representation on the module $R \oplus J' \oplus J \oplus R$. 
	Before we can achieve this goal, however, we first need to define Jordan pairs and divided power representations.
	
	Consider a pair of $R$-modules $(J,J')$ endowed with quadratic maps
	\[ Q : J \longrightarrow \text{Hom}_R(J',J)\]
	and
	\[ Q': J'\longrightarrow \text{Hom}_R(J,J').\]
	We will often write $Q_x y $ for $Q(x)(y)$. Moreover, we will also write $Q$ for $Q'$.
	Define \[D_{x,y} z = Q^{(1,1)}_{x,z} y = Q_{x + z} y - Q_x y - Q_z y\] using the polarization of $Q$.
	
	\begin{definition}[\cite{faulkner2000jordan} axioms JP1-JP3]
		A quadruple $(J,J',Q,Q')$ is called  a \textit{quadratic Jordan pair} if
		\begin{enumerate}
			\item[JP1] $Q_a D'_{b,a} = D_{a,b} Q_a$
			\item[JP2] $D_{Q_a b,b} = D_{a,Q'_b a}$
			\item[JP3] $Q_a Q'_b Q_a = Q_{Q_a b}$
		\end{enumerate}
		hold over all scalar extensions, for all $a \in J$ and $b \in J'$, and if these equations hold with the roles of $(J,Q)$ and $(J',Q')$ reversed.
		If there exists a $1 \in J$ such that $Q_1 : J'\longrightarrow J$ is an isomorphism, we call $J$ (slightly abusively) a \textit{quadratic Jordan algebra}.
	\end{definition}
	
	\begin{remark}
		For a cubic norm pair, both $(R,R)$ with $Q_x y = x^2 y$ and $(J,J')$ with $Q_j k = - j^\sharp \times k + T(j,k) j$ are quadratic Jordan pairs. For cubic norm pairs, this technically has not been proven before (a partial result is \cite[Proposition 1]{FLK01}). In Lemma \ref{lem: dprep is dprep} we establish that $(J,J')$ is a quadratic Jordan pair. One checks that the unit of $R$ is a unit for the corresponding algebra and that $J$ is an algebra if and only if it is unital.
	\end{remark}
	
	Now, we give a preparatory definition that will allow us to define divided power representations.
	
	\begin{definition}[\cite{faulkner2000jordan} Corollary 7]
		Consider an $R$-module $M$ and a unital, associative $R$-algebra $A$.
		A sequence of homogeneous maps \[(\rho_i : M \otimes \cdot \longrightarrow A \otimes \cdot)_{i \in \mathbb{N}}\] with $\rho_i$ homogeneous of degree $i$, is called a \textit{binomial divided power representation} if
		\[ \left(1 + \sum_{i = 1}^\infty t^i \rho^K_i(j)\right)\left(1 + \sum_{i = 1}^\infty t^i \rho^K_i(k)\right) = \left(1 + \sum_{i = 1}^\infty t^i \rho^K_i(j + k)\right)\]
		holds in $(A \otimes K)[[t]]$.
		We also define $\rho_0(j) = 1$. We can also identify a divided power representation with the natural transformation $\rho_{[t]}^K : (M \otimes K) \longrightarrow (A \otimes K)[[t]]$ with $\rho_{[t]}(j) = \sum_{i = 0}^\infty t^i \rho_i(j)$.
		
		In what follows we will mostly suppress the $K$ in $\rho^K_i$.
	\end{definition}

	\begin{remark}
		\label{rem: homogeneous faulkner}
		The definition of a homogeneous map in \cite{faulkner2000jordan} does not resemble our definition. The two definitions are equivalent, however.
		For a homogeneous map in our sense, it is easy to verify \cite[A1-A6]{faulkner2000jordan}.
		Conversely, with some effort, one can show \cite[Theorem 35]{faulkner2000jordan} that any homogeneous map in Faulkners sense is defined over arbitrary scalar extensions $K$. The naturality in $K$ follows from the uniqueness of the scalar extension of the homogeneous map. 
	\end{remark}

	\begin{remark}
		The binomial part of the name, hints at the fact that
		\[ \rho_i(u) \rho_j(u) = \rho_{i+j}(u) \binom{i+j}{i}\]
		for all $i,j \in \mathbb{N}$. The divided power part is also explained in \cite{faulkner2000jordan}.
	\end{remark}
	
	\begin{example}
		\label{ex: dprepr}
		For the cubic norm pair $(J,J')$, we can consider two representations in \[A = \text{End}_{R}\begin{pmatrix}
			R \\ J' \\ J \\ R
		\end{pmatrix} \cong \begin{pmatrix}
		R & (J')^* & J^* & R \\ J' & \End(J') & \text{Hom}(J,J') & J' \\ J & \text{Hom}(J',J) & \End(J) & J \\ R & (J')^* & J^* & R
	\end{pmatrix},\]
writing $X^*$ for the dual of $X$ and $Y$ for $\text{Hom}_R(R,Y)$.
We will construct a natural transformation $\rho^+_{[t]} : J \otimes \cdot \longrightarrow  (A \otimes \cdot)[[t]]$ and a similar natural transformation $\rho^-_{[t]}$ from the images of the homogeneous maps $\text{Id}, \sharp, \sharp'$, and $N$ under certain linear maps.

Thus, we define
		\[(\rho^+_{[t]})^K(j) = \begin{pmatrix}
			1 & t T(j,\cdot) & t^2 T(j^\sharp, \cdot ) & t^3 N(j) \\
			& 1 & t j \times \cdot & t^2 j^\sharp \\
			& & 1 & t j \\
			& & & 1
		\end{pmatrix}\]
		and 
		\[(\rho^-_{[t]})^K(j) = \begin{pmatrix}
			1 \\
			- tj & 1 \\
			t^2 j^\sharp & - t j \times \cdot & 1 \\
			-t^3 N(j) & t^2 T(j^\sharp,\cdot) & - t T(j,\cdot) & 1
		\end{pmatrix}.\]
	Note that if $j = \sum j_i \otimes k_i$, the element $j \times \cdot$ represents the element $\sum (j_i \times \cdot ) \otimes k_i$. We also note that $\rho^\pm_i = 0$ for all $i > 3$.
	
	These are binomial divided power representations if and only if \[(\rho^+)^K_{[t]}(j)(\rho^+)^K_{[t]}(k) = (\rho^+)^K_{[t]}(j + k),\] which we may simply verify using the matrix product.
	That these are binomial divided power representation follows from $N^{(1,2)}(j,k) = T(j,k^\sharp) = T(k^\sharp,j)$, its implication $T(j, k \times \cdot ) = T(j \times k, \cdot),$ the linearisation $\sharp^{(1,1)}(a,b) = a \times b$, and the linearity of the other components.
	\end{example}

Now, we give the definition of what kind of pairing the above pair of representation is, list some properties of such pairings, and then prove that this example satisfies the definition in Lemma \ref{lem: dprep is dprep}. 

\begin{definition}[\cite{faulkner2000jordan} \label{def: dprep2} Equation (10)\footnote{In this definition $Q$ and $Q'$ are supposed to define a Jordan pair, which is an immediate consequence of the definition we give. There is also a minor notational difference, since we did not introduce the adjoint representation $(\ad)_{[t]}$ associated to $\rho_{[t]}$.}]
	Consider a pair of $R$-modules $(J,J')$ equipped with quadratic maps
	\[ Q : J \longrightarrow \text{Hom}(J',J)\]
	and
	\[ Q' : J' \longrightarrow \text{Hom}(J,J'),\]
	 an associative unital algebra $A$, and a pair of binomial divided power representations $(\rho^+_i : J \longrightarrow A)_i$ and $(\rho^- : J'\longrightarrow A)_i$. Such a pair is called a \textit{divided power representation with respect to $Q$ and $Q'$} if 
	\begin{equation}
		\label{def: dprep}
		\sum_{a + b = k} \rho^+_a(u)\rho^-_l(v)\rho^+_b(u) (-1)^b = \begin{cases}
			0 & k > 2l \\
			\rho^+_{l}(Q_u v)  & k = 2l\\
			\text{No assumption} & k < 2l
		\end{cases}
	\end{equation}
	holds over all scalar extensions, for all $(k,l) \in \mathbb{N}^2_{>0}$, and if the same holds whenever we reverse the roles of $+$ and $-$.
	
\end{definition}

\begin{lemma}
	\label{lem: dprep is JP}
	Suppose that $(J,J')$ is a pair of modules and that $Q : J \longrightarrow \text{Hom}(J',J)$ and $Q' : J'\longrightarrow \text{Hom}(J,J')$ are quadratic maps. The pair $(J,J ')$ is a quadratic Jordan pair with operators $Q$ and $Q'$ if and only if there exists a divided power representation with respect to $Q$ and $Q'$ such that $\rho^\pm_1$ is injective over $R$.
	\begin{proof}
		For a quadratic Jordan pair, one has the TKK representation as defined in \cite[Example 3]{faulkner2000jordan}, using $\rho^\pm_{[t]} = 1 + t \ad x + t^2 Q_x$ for the binomial divided power representations. The linear map $x \mapsto \ad x$ and the quadratic map $x \mapsto Q_x$ are easily made into natural transformations. All other $\rho^\pm_i$ are $0$ by definition.
		
		For the converse, we can look at the adjoint representation and adapt the arguments of \cite[Theorem 5]{faulkner2000jordan} to this setting.
		We first prove the quadratic Jordan pair axioms over $R$, thereafter over scalar extensions.
		
		By the adjoint representation we mean $\ad^+_i(x)(a) = \sum_{u+v = i} \rho^+_u(x) a \rho^+_v(x) (-1)^v$ for $a \in A$, which corresponds to a pair of binomial divided power representations that can be checked to form a divided power representation (follow the argument of \cite[Lemma 10, 11, 12, 13, 14]{faulkner2000jordan}). We also consider the adjoint of the adjoint, defined as $\text{Ad}^+_i(x) e = \sum_{a + b = i} (-1)^b \ad^+_a(x) e \ad^+_b(x)$, whenever $e$ is an endomorphism of $A$.
		
		The arguments of \cite[Theorem 5, Page 162]{faulkner2000jordan} can be adapted using these maps. Namely, elements $x^{(i)}$ over there correspond to $\ad^\pm_i(x)$ and elements $A_{x^{(i)}}$ correspond to $\text{Ad}^\pm_i(x)$. The equality $\rho^+_1(D_{x,y} z) = - [[\rho^+_1(x),\rho^-_1(y)], \rho^+_1(z)]$ is a linearisation of Equation \ref{def: dprep}.  First, the proof of (JP1) becomes \[- \rho_1^+(Q_x D_{y,x} z) + \rho^+_1(D_{x,y} Q_x z)  = \text{Ad}^+_3(x)(\ad^-_1(y))(\rho^-_1(z)) = 0(\rho^-_1(z)) = 0,\] using $\ad_3^+(x) \rho^-_1(z) = 0$ by Equation \ref{def: dprep} and $\ad^-_1(y) \rho^-_1(z) = 0$ for any binomial divided power representation.  The proof of (JP3) translates similarly.
		The only difficult part is $\ad^-_2(y) \ad^+_1(x) \rho^-_1(z) = 0$, which follows from
		\begin{align*}
			ad^-_2(y) [\rho^+_1(x),\rho^-_1(z)] & = \sum_{k + l = 2} [\ad^-_k(y) x , \ad^-_l(y) \rho^-_1(z)] \\ & = [\ad^-_2(y) x, \rho^-_1(z)] = \ad^-_1(Q_y x) \rho^-_1(z) = 0.
		\end{align*}
		The proof of the (JP2) carries over as well, once again using \[\rho^+_1(D_{x,y} z) = - [[\rho^+_1(x),\rho^-_1(y)], \rho^+_1(z)]\] so that we can identify $D_{x,y}$ with an element of the algebra in which we have a representation.
		
		Now, if $\rho^\pm_1$ is injective over $R$, it is injective over all flat $R$-algebras $K$. Hence, the quadratic Jordan pair axioms hold for all flat $K$, and thus for all $R$-algebras.
	\end{proof}
\end{lemma}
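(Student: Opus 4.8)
The plan is to prove the two implications separately; the reverse direction carries all the weight. The forward implication I would take off the shelf: for a quadratic Jordan pair, the TKK representation of \cite[Example 3]{faulkner2000jordan}, given by $\rho^\pm_{[t]}(x) = 1 + t\,\ad(x) + t^2 Q_x$ with all higher $\rho^\pm_i$ vanishing, is a pair of binomial divided power representations — this simply encodes linearity of $\ad$ together with the fact that $D$ polarises $Q$ — and it is a divided power representation with respect to $Q$ and $Q'$ by that same example; moreover $\rho^\pm_1 = \ad$ is injective over $R$ (if one prefers, one realises the same data on $R\oplus J'\oplus J\oplus R$ with the tautological degree-one entries, as in Example~\ref{ex: dprepr}, where injectivity is immediate).

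For the converse I would adapt, from Jordan algebras to Jordan pairs, the argument of \cite[Theorem 5]{faulkner2000jordan}. First form the \emph{adjoint} divided power representation $\ad^+_i(x)(a) = \sum_{u+v=i}\rho^+_u(x)\,a\,\rho^+_v(x)(-1)^v$ on $A$, checking along the lines of \cite[Lemmas 10--14]{faulkner2000jordan} that $\ad^\pm$ is again a divided power representation with respect to the \emph{same} $Q$ and $Q'$, and iterate once more to obtain $\text{Ad}^+_i(x)$ acting on $\End_R(A)$. The workhorse identity is
\[\rho^+_1(D_{x,y}z) = -\bigl[[\rho^+_1(x),\rho^-_1(y)],\rho^+_1(z)\bigr],\]
obtained by polarising, in its first argument, the case $(k,l)=(2,1)$ of Equation~\eqref{def: dprep}; it both lets me view $D_{x,y}$ as a genuine element of $A$ and lets me recognise each Jordan-pair axiom inside $A$. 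For JP1 and JP3 I would expand the $\rho^+_1$-image of the difference of the two sides, applied to a test element, and recognise it as $\text{Ad}^+_3(x)$ applied to $\ad^-_1(y)$ applied to $\rho^-_1(z)$, which vanishes since $\ad^+_3(x)\rho^-_1(z)=0$ (the case $k>2l$ of Equation~\eqref{def: dprep}) and $\ad^-_1(y)\rho^-_1(z)=[\rho^-_1(y),\rho^-_1(z)]=0$, the $\rho^-$-images being pairwise commuting; JP2 is handled in the same way, now using the identification of $D_{x,y}$ with an element of $A$. In each case injectivity of $\rho^\pm_1$ over $R$ promotes the equality of images to the desired operator identity over $R$; and since $\rho^\pm_1$ stays injective under base change to any flat $R$-algebra while the whole representation is natural in $K$, the axioms hold over all flat $K$, hence — being identities between homogeneous polynomial maps, so checkable over a polynomial ring over $R$ — over all $R$-algebras.

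The main obstacle I anticipate is essentially organisational: setting up $\ad^\pm$ and $\text{Ad}^\pm$ so that they remain divided power representations with the correct operators, and pushing the JP2 verification through, where $D_{x,y}$ must genuinely be manipulated as an element of $A$ rather than only through its image under $\rho_1$.
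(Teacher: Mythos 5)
Your proposal is correct and follows essentially the same route as the paper: TKK for the forward direction, and for the converse the adjoint representation $\ad^\pm_i$ together with $\mathrm{Ad}^\pm_i$ adapting Faulkner's Theorem 5, the linearised identity $\rho^+_1(D_{x,y}z) = -[[\rho^+_1(x),\rho^-_1(y)],\rho^+_1(z)]$, injectivity of $\rho^\pm_1$, and the flat-then-arbitrary base change step. The only detail the paper spells out that you leave implicit is the verification $\ad^-_2(y)\,\ad^+_1(x)\,\rho^-_1(z)=0$, which follows by a short computation expanding $\ad^-_2(y)[\rho^+_1(x),\rho^-_1(z)]$.
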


\begin{definition}
	For a Jordan pair $(J,J')$ with operators $Q$ and $Q'$, we call a divided power representation with respect to $Q$ and $Q'$ simply a \textit{divided power representation}.
\end{definition}

\begin{lemma}
	\label{lem: dprep is dprep}
	Each cubic norm pair $(J,J')$ forms a quadratic Jordan pair $(J,J',Q)$.
	Moreover, the pair of binomial power representations $\rho^\pm_{[t]}$ of Example \ref{ex: dprepr} form a divided power representation of the quadratic Jordan pair $(J,J')$.
	\begin{proof}	
		By Lemma \ref{lem: dprep is JP} it is sufficient to establish the moreover-part.
		
		We only show that Equation (\ref{def: dprep}) holds in the way it stands there and not with the roles of $+$ and $-$ reversed, as the arguments we would make, would be almost identically the same. The proof we give works for scalar extensions $K$ such that $\End_R(X) \otimes K$ embeds into $\End_K(X \otimes K)$, such as polynomial rings, since we will just multiply the matrices together. For arbitrary $R$-algebras, note that homogeneous maps that agree on all polynomial rings will agree on all $R$-algebras.
		
		 The equation must hold for certain values for the parameters $k$ and $l$. Given that $k - l$  measures the distance to the diagonal, we know that $|k - l| \le 3$ or that the equation holds trivially.
		Hence, we must only check that the equation holds for $(k,l) \in \{(2,1), (3,1),(4,1),(4,2), (6,3)\}.$
		Moreover, for a fixed $(k,l)$ we should only look at what stands at positions $(i,j)$ in the matrix, with $j - i = k - l$. When we do such a check, we will say that we check requirement $(k,l,i,j)$. Since computing what Equation (\ref{def: dprep}) says for $(k,l,i,j)$ is easy, we immediately give the result of said computation and then say why it holds.
		
		We we verify Equation (\ref{def: dprep}) for each case $(k,l,i,j)$:
		\begin{itemize}
			\item $(2,1,1,2)$ holds if $T(j,k) T(j,\cdot)- T(j^\sharp, k \times \cdot ) = T(T(j,k)j,\cdot ) - T(j^\sharp \times k, \cdot)$, which holds since $T$ is symmetric and Lemma \ref{lem: basic properties cnp}.3.
			\item $(2,1,2,3)$ holds if $j \times (k \times (j \times l)) - k T(j^\sharp,l) = T(j,k)j \times l - (j^\sharp \times k) \times l$ holds for all $l$, which is a linearisation of Definition \ref{def: cubic norm pair}.3.
			\item $(2,1,3,4)$ holds if $jT(k,j) - k \times j^\sharp = jT(k,j) - k \times j^\sharp$, which holds trivially.
			\item $(3,1,1,3)$ holds if $ - N(j)T(k,\cdot) + T(j^\sharp, k \times (j \times \cdot)) - T(j,k)T(j^\sharp, \cdot ) = 0$, which is the case since $ -N(j)l + j^\sharp \times (j \times l) - j T(j^\sharp, l) = 0$ for all $l$, as it is a linearisation of Definition \ref{def: cubic norm pair}.2.
			\item $(3,1,2,4)$ holds if $- j \times (k \times j^\sharp) + j^\sharp T(k,j) + kN(j) = 0$, which is precisely Definition \ref{def: cubic norm pair}.3.
			\item $(4,1,1,4)$ holds if $2 N(j)T(k,j) - T(j^\sharp,k \times j^\sharp) = 0$, which is the case since $N(j)j = (j^\sharp)^\sharp$,
			\item $(4,2,1,3)$ holds if $- N(j)T(k^\sharp, j \times \cdot )+ T(j^\sharp,k^\sharp) T(j^\sharp, \cdot ) = T((Q_jk)^\sharp,\cdot)$, which is the case since $(Q_jk)^\sharp = Q_{j^\sharp} k^\sharp$ by Lemma \ref{lem: sharp is non linear endo}.
			\item $(4,2,2,4)$ holds for the same reason.
			\item $(6,3,1,4)$ holds if $N(j)^2N(k) = N(Q_j k)$ which is precisely Definition \ref{def: cubic norm pair}.4.
		\end{itemize}
		This is everything we had to check.
	\end{proof}
\end{lemma}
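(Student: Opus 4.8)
The plan is to reduce everything to the ``moreover'' part via Lemma \ref{lem: dprep is JP}. Both $\rho^+_1$ and $\rho^-_1$ are injective over $R$, since the element $j$ itself occurs as one of the matrix coefficients of $\rho^\pm_1(j)$; so, once we know that the pair $(\rho^+_{[t]},\rho^-_{[t]})$ of Example \ref{ex: dprepr} is a divided power representation with respect to the quadratic maps $Q_jk = T(j,k)j - j^\sharp \times k$ and its mirror on $J'$, Lemma \ref{lem: dprep is JP} gives at once that $(J,J',Q)$ is a quadratic Jordan pair and that $\rho^\pm_{[t]}$ is a divided power representation of it. The ``binomial'' requirement $\rho^\pm_{[t]}(j)\,\rho^\pm_{[t]}(k)=\rho^\pm_{[t]}(j+k)$ was already settled in Example \ref{ex: dprepr}, so only Equation (\ref{def: dprep}) remains, and by the symmetry between $J$ and $J'$ built into Definition \ref{def: cubic norm pair} it suffices to verify it in the form written there.

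First I would cut Equation (\ref{def: dprep}) down to finitely many cases. Since $\rho^\pm_i=0$ for $i>3$ and, more precisely, $\rho^+_a(u)$ is supported on the $a$-th superdiagonal of the $4\times 4$ matrix and $\rho^-_l(v)$ on the $l$-th subdiagonal, every summand $\rho^+_a(u)\rho^-_l(v)\rho^+_b(u)$ is supported on the $(k-l)$-th superdiagonal; hence the identity is vacuous unless $0\le k-l\le 3$, and together with the hypothesis $k\ge 2l$ this leaves only $(k,l)\in\{(2,1),(3,1),(4,1),(5,2),(4,2),(6,3)\}$, of which $(2,1),(4,2),(6,3)$ fall under the ``$k=2l$'' clause (right side $\rho^+_l(Q_uv)$) and $(3,1),(4,1),(5,2)$ under ``$k>2l$'' (right side $0$). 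For each such pair and each position $(i,j)$ on the relevant superdiagonal I would simply multiply out the three matrices and compare the coefficients, using that on that superdiagonal $\rho^+_l(Q_uv)$ contributes the appropriate coefficients of Example \ref{ex: dprepr} (such as $(Q_uv)\times\cdot$, $(Q_uv)^\sharp$, $T((Q_uv)^\sharp,\cdot)$, $N(Q_uv)$), truncated according to $l$.

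Carrying out the cases: the $l=1$ identities unwind to the symmetry of $T$, Lemma \ref{lem: basic properties cnp}, and linearisations of Definition \ref{def: cubic norm pair}.2 and .3; the $(5,2)$ identity collapses to $0$ with nothing else needed; the two $l=2$ identities are exactly the statement $(Q_uv)^\sharp = Q_{u^\sharp}v^\sharp$, which is Lemma \ref{lem: sharp is non linear endo}; and the lone $(6,3)$ identity is exactly $N(Q_uv)=N(u)^2N(v)$, i.e.\ Definition \ref{def: cubic norm pair}.4. Finally, all of these are identities between homogeneous maps, so it is enough to verify them over polynomial rings $K$, where $\End_R(X)\otimes K$ embeds into $\End_K(X\otimes K)$ and the matrix products literally make sense; homogeneous maps that agree on every polynomial ring agree over every $R$-algebra.

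The main obstacle I expect is organisational rather than conceptual: setting up the triangular-support bookkeeping so that the list of surviving pairs $(k,l)$ is provably complete, and, crucially, having Lemma \ref{lem: sharp is non linear endo} established \emph{beforehand}, since the $l=2$ rows genuinely require the identity $(Q_uv)^\sharp = Q_{u^\sharp}v^\sharp$ and not merely a linearisation of the defining axioms. Once that identity and Definition \ref{def: cubic norm pair}.4 are in hand, the $l=2$ and $l=3$ rows are immediate and the $l=1$ rows are routine.
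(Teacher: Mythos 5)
Your proposal is correct and follows essentially the same route as the paper: reduce to the ``moreover'' part via Lemma \ref{lem: dprep is JP} (with $\rho^\pm_1$ injective because $j$ itself appears as a matrix entry), cut Equation (\ref{def: dprep}) down to finitely many $(k,l)$ by the superdiagonal-support argument, and verify each position by matrix multiplication using Lemma \ref{lem: basic properties cnp}, Lemma \ref{lem: sharp is non linear endo}, and axioms (2)--(4), with the polynomial-ring reduction handling arbitrary scalar extensions. Your list additionally includes the case $(5,2)$, which the paper's enumeration omits but which indeed holds trivially (the two surviving terms cancel by symmetry of $T$), so if anything your bookkeeping is slightly more complete.
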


\begin{remark}
	\label{rem: necessity def}
	This lemma explains why we defined cubic norm pairs in the way we did.
	In this work, we will restrict ourselves to Jordan pairs that admit \textit{a divided power representation as in Example} \ref{ex: dprepr}. Only the axiom $j^{\sharp\sharp} = N(j)j$ does not immediately follow from having such a representation.
	In Lemma \ref{lem: necessity def} we make having a divided power representation as in Example \ref{ex: dprepr} precise.

	The reason that this divided power representation is so important, is that this representation will be the one that allows us to define an action as automorphisms on the TKK Lie algebra of a skew-dimension one structurable algebra $\mathcal{A} \cong R \oplus J' \oplus J \oplus R$.
\end{remark}

\begin{lemma}
	\label{lem: necessity def}
	Let $(J,J')$ be a quadratic Jordan algebra with a divided power representation $\rho^\pm$ in $\End(R \oplus M\oplus N \oplus R)$ with $\rho^+_{[t]}(x)$ upper triangular for all $x \in J$, $\rho^-_{[t]}(y)$ lower triangular for all $y \in J'$, and $t$ tracking the distance to the diagonal in both cases.
	Suppose, additionally, that $j \mapsto \rho^+_1(j) \cdot (0,0,0,1)$ and $j' \mapsto \rho^-_1(j') \cdot (1,0,0,0)$ define bijections $J \longrightarrow N$ and $J' \longrightarrow M$. Assume that $\rho^-_1(k) \cdot j =T(j,k) = \rho^+_1(j) \cdot k$ under these bijections.
	Then $(J,J')$ is a Jordan pair with representation as in Example \ref{ex: dprepr}. This $(J,J')$ is a cubic norm pair if and only if $N(j)j = j^{\sharp\sharp}$ for all $j \in J \cup J'$.
	\begin{proof}
		We compose all $(\rho^\pm_i)^K$ with the map \[\End(R \oplus J'\oplus J \oplus R) \otimes K \longrightarrow \End_K( (R \oplus J'\oplus J \oplus R) \otimes K).\]
		
		We define \[N(j) = \rho^\pm_3(j) \cdot (0,0,0,1)\] for $j \in J \otimes K$ and \[N(j') = \rho^\pm_3(j') \cdot (1,0,0,0)\] for $j' \in J' \otimes K.$
		We define $j^\sharp$ similarly by evaluating $\rho^\pm_2(j)$ on a $1$ contained in a suitable copy of $R$. 
		We also define $\times$ as the polarization of $\sharp$.
		
		Axiom (1) for cubic norm pairs is the linearisation $N^{(1,2)}(j,k) = \rho^\pm_{(1,2)}(j,k) = T(j,k^\sharp)$.
		Moreover, having a divided power representation guarantees that we have binomial divided power representations as in Example \ref{ex: dprepr}. Hence, it remains only verify that the other cubic norm pair axioms hold.
		
		The analogous equalities to the equalities (3,1,2,4), and (6,3,1,4) of Lemma \ref{lem: dprep is dprep}, are axioms (3), and (4).
	\end{proof}
\end{lemma}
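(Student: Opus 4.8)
The plan is to read the matrix entries of the $\rho^\pm_i$ off the hypotheses, use them to \emph{define} candidate operators $N,\sharp,T$, and then recover the cubic norm pair axioms from the two structural relations that $\rho^\pm$ satisfies: the binomial relation $\rho^\pm_{[t]}(x)\rho^\pm_{[t]}(y)=\rho^\pm_{[t]}(x+y)$ and the divided power relation \eqref{def: dprep}. As in the proof of Lemma \ref{lem: dprep is dprep}, every computation below is carried out over polynomial rings, where the $\rho^\pm_i$ may be multiplied as matrices, and then transferred to all $K\in R\textbf{-alg}$ since homogeneous maps agreeing on all polynomial rings agree everywhere.

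First I would use triangularity. As the ambient module has four blocks and $t$ tracks the distance to the diagonal, $\rho^+_i(x)$ is supported on the $i$-th superdiagonal, $\rho^-_i(y)$ on the $i$-th subdiagonal, $\rho^\pm_0=\text{Id}$, and $\rho^\pm_i=0$ for $i\ge 4$. The two bijections in the hypothesis let me identify the block $N$ with $J$ via $j\mapsto\rho^+_1(j)\cdot(0,0,0,1)$ and the block $M$ with $J'$ via $j'\mapsto\rho^-_1(j')\cdot(1,0,0,0)$, so that the extreme first-order block of $\rho^+_1(j)$, resp.\ $\rho^-_1(j')$, becomes multiplication by $j$, resp.\ $j'$. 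The assumed identity $\rho^-_1(k)\cdot j=T(j,k)=\rho^+_1(j)\cdot k$ then puts a bilinear form $T:J\times J'\to R$ in the first-order blocks valued in an outer copy of $R$. Finally I define $j^\sharp$ (a quadratic map, being degree-$2$ homogeneous), resp.\ $N(j)$ (cubic), by evaluating $\rho^\pm_2(j)$, resp.\ $\rho^\pm_3(j)$, on the appropriate unit vector, and let $\times$ be the polarization of $\sharp$; all of these are natural transformations, hence defined over every scalar extension.

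Next I would feed the block decomposition of $\rho^+_{[t]}(x)\rho^+_{[t]}(y)=\rho^+_{[t]}(x+y)$, and its analogue for $\rho^-$, back into these definitions. Comparing the $(2,4)$-block at order $t^2$ identifies the middle block of $\rho^+_1(j)$ with $j\times\cdot$; comparing the $(1,4)$-block at order $t^3$, evaluating on the last unit vector, and separating the bihomogeneous components of $N(x+y)-N(x)-N(y)$, identifies the remaining block of $\rho^+_2(j)$ with $N^{(2,1)}(j,\cdot)$ and yields $N^{(1,2)}(j,k)=T(j,k^\sharp)$, which is cubic norm pair axiom (1); a one-line manipulation with axiom (1) then rewrites $N^{(2,1)}(j,\cdot)$ as $T(\cdot,j^\sharp)$. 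At this point $\rho^\pm$ visibly has the exact shape of the representation in Example \ref{ex: dprepr} for the operators just defined (the $\rho^-$ side producing the primed operators and the mirrored axioms), so $(J,J')$ is a Jordan pair carrying such a representation --- the unconditional half of the statement.

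Finally I would invoke that $\rho^\pm$ is, by hypothesis, a divided power representation with respect to the Jordan pair operators $Q$, so Equation \eqref{def: dprep} holds over all scalar extensions. Reading off entries exactly as in the proof of Lemma \ref{lem: dprep is dprep}, now in the reverse direction, the case $(3,1,2,4)$ is cubic norm pair axiom (3) and the case $(6,3,1,4)$ is axiom (4); the other cases produce no new axiom, being either consequences of bilinearity, the shape of $\rho^\pm$, and axiom (1), or --- as with $(3,1,1,3)$ --- merely \emph{linearisations} of the identity $(a^\sharp)^{\sharp'}=N(a)a$ rather than that identity itself, which is exactly why this axiom is the one left over, in agreement with Remark \ref{rem: necessity def}. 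Under the names assigned above, that last axiom reads $j^{\sharp\sharp}=N(j)j$ for $j\in J$ together with its primed counterpart for $j\in J'$, so $(J,J')$ with these operators is a cubic norm pair precisely when $N(j)j=j^{\sharp\sharp}$ for all $j\in J\cup J'$, as asserted. I expect the main obstacle to be the middle step: the bookkeeping needed to pin every off-diagonal block down to the prescribed function of $N,\sharp,T$ with no residual freedom --- the bihomogeneity argument is what controls the otherwise dangerous behaviour of $\rho^\pm_2$ and $\rho^\pm_3$ in small characteristic --- together with checking that the translation of Equation \eqref{def: dprep} at $(3,1)$ and $(6,3)$ into axioms (3) and (4) really is a two-way equivalence.
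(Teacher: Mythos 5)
Your proposal is correct and follows essentially the same route as the paper: define $N$, $\sharp$, $\times$, $T$ from the entries of $\rho^\pm$, obtain axiom (1) from the degree-$(1,2)$ part of the binomial relation, identify the representation with the one of Example \ref{ex: dprepr}, and read axioms (3) and (4) off the divided power relation at the positions $(3,1,2,4)$ and $(6,3,1,4)$, leaving $j^{\sharp\sharp}=N(j)j$ as the only axiom that must be assumed. The extra bookkeeping you supply (bihomogeneous splitting to pin down the $(2,3)$ and $(1,3)$ blocks) is exactly what the paper compresses into the sentence that the hypotheses yield binomial divided power representations as in Example \ref{ex: dprepr}.
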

 
\begin{remark}
	We also consider a divided power representation for the base ring $R$, considered as a quadratic Jordan algebra with $Q_x y = x^2 y$.
	Namely, we can consider \[\rho^+_{[t]}(r) = \begin{pmatrix}
		1 &tr \\
		0& 1 
	\end{pmatrix} \text{ and }\rho^-_{[t]}(r) = \begin{pmatrix}
		1  & 0\\
		-tr & 1\\
	\end{pmatrix},\] which are natural transformations $\rho^\pm_{[t]} : R \otimes K \longrightarrow (\text{Mat}_2(R) \otimes K) [[t]]$. This has natural actions on $J^2$ and $R^2$, induced by the action of $M_2(R)$.
\end{remark}

So, in what follows we will be considering two Jordan pairs. Namely, $(J,J')$ and $(R,R)$ both of which have a ``natural" representation. We remark that this ``natural" representation is context dependent, since some cubic norm pairs $(J,J')$ can be seen as coming from an associative commutative algebra.

\section{Structurable algebras, associated Lie algebras, and associated inner automorphisms}

\label{sec: stru alg}

In this section, we construct a structurable algebra from a cubic norm pair. This structurable algebra has an associated Lie algebra, which we will construct as well. We also use the ``natural" divided power representation of a cubic norm pair to construct automorphisms of this Lie algebra. 
We also determine commutators of certain automorphisms.
With these tools, we prove the main theorem advertised in the introduction for cubic norm pairs.

\subsection{Kantor triple systems}

In this subsection, we restate parts of \cite[section 3]{All99} in such a way that avoids making any assumption on the base ring $R$. 
This will be useful in the construction of the Lie algebra. 
In the next section, we will apply the Lie algebra construction we introduce here to a class of structurable algebras.

\begin{definition}
	Consider a module $M$ endowed with a trilinear map \[\{\cdot,\cdot,\cdot\} : M^3 \longrightarrow M\] and define $V_{x,y} z = \{x,y,z\}$ and $K_{x,z} y = V_{x,y} z - V_{z,y} x$. We call $(M,V)$ a \textit{Kantor triple system} if
	\begin{enumerate}
		\item $V_{x,y} V_{u,v} - V_{u,v} V_{x,y} = V_{V_{x,y} u,v} - V_{u,V_{y,x} v}$,
		\item $K_{x,y} V_{v,u} + V_{u,v} K_{x,y} = K_{K_{x,y} v, u}$,
	\end{enumerate}
	for all $x,y,u,v \in M$.
\end{definition} 

\begin{definition}
	Consider a module $M$ with a trilinear map $[\cdot,\cdot,\cdot]$. We define $W : M \times M \longrightarrow \End(M)$ as $(x,y) \mapsto (z \mapsto [x,y,z])$.
	We call $M$ a \textit{Lie triple system} if
	\begin{enumerate}
		\item $W_{x,x} = 0$,
		\item $W_{x,y} z + W_{y,z} x + W_{z,x} y = 0$,
		\item $[W_{x,y}, W_{u,v}] = W_{W_{x,y} u,v} + W_{u,W_{x,y} v}$,
	\end{enumerate}
	hold for all $x,y,u,v$ and $z$ in $M$.
\end{definition}

We remark that for any Lie triple system, there exists an inner derivation Lie algebra $\InDer(M)$ that coincides with the linear span $\langle W_{x,y} | x,y \in M \rangle \subset \End(M).$ From the third axiom, it follows that this linear span is closed under $(f,g) \mapsto [f,g] =  fg - gf$.

\begin{construction}
	Any Lie triple system $M$ has a corresponding Lie algebra $L = \InDer(M) \oplus M$. Any products involving elements of $\InDer(M)$ are the obvious ones, for $M \times M \longrightarrow (\InDer M \oplus M)$ we use $(x,y) \mapsto W_{x,y}$. An easy check shows that $L$ definitely is a Lie algebra.
\end{construction}

Now, we associate a Lie triple system to a Kantor triple system $M$, using $N = M_+ \oplus M_-$ as underlying module (using signs to distinguish distinct copies of the same module).
We represent an element of $\End(N)$ as a matrix.
So, we define 
\begin{align*}
	W_{x_+,y_-} = \begin{pmatrix}
		V_{x,y} & 0 \\ 0 & - V_{y,x}
	\end{pmatrix}, \\
W_{x_-,y_+} = \begin{pmatrix}
	- V_{y,x} & 0 \\ 0 & V_{x,y}
\end{pmatrix}, \\
	W_{x_+,y_+} = \begin{pmatrix}
		0 & K_{x,y} \\ 0 & 0
	\end{pmatrix}, \\
	W_{x_-,y_-} = \begin{pmatrix}
	0	& 0 \\ K_{x,y}  & 0
	\end{pmatrix}
\end{align*}
and extend using bilinearity.

We remark that the first and second axioms for Lie triple systems follow immediately from the definition of $W$ and the definition of $K$.

\begin{lemma}
	If $(M,V)$ is a Kantor triple system, then $(N,W)$ is a Lie triple system.
	\begin{proof}
		This follows from \cite[Theorem 7]{All99}. The assumption $1/2 \in R$ made in the section containing \cite[Theorem 7]{All99} plays no role in the proof.
	\end{proof}
\end{lemma}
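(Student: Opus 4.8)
The plan is to verify the third Lie triple system axiom, since the first two follow at once from the defining matrix formulas for $W$ together with the manifest skew-symmetry of $K_{x,z}y = V_{x,y}z - V_{z,y}x$ in its two non-argument slots (so that $W_{a,b} = -W_{b,a}$, directly from the four formulas). Write $N = M_+ \oplus M_-$ and grade $N$ by placing $M_+$ in degree $+1$ and $M_-$ in degree $-1$; correspondingly grade $\End(N)$ by blocks, so that the diagonal part has degree $0$, the strictly upper block $\begin{pmatrix} 0 & * \\ 0 & 0 \end{pmatrix}$ has degree $+2$, and the strictly lower block has degree $-2$. Inspecting the four cases one sees that $W_{a,b}$ is homogeneous of degree $\deg a + \deg b$ whenever $a,b$ are homogeneous, so both sides of $[W_{x,y},W_{u,v}] = W_{W_{x,y}u,v} + W_{u,W_{x,y}v}$ are homogeneous of the same total degree. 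By bilinearity of $W$ it therefore suffices to check the identity when $x,y,u,v$ each lie in $M_+$ or in $M_-$, and I would organize the remaining work by the total degree $\deg x+\deg y+\deg u+\deg v \in \{-4,-2,0,2,4\}$. (In the language of the introduction, this is just Jacobi for the $5$-graded Lie algebra $L_{-2}\oplus\cdots\oplus L_2$ built from $(N,W)$, checked degree by degree.)

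First I would dispose of the extreme cases. When all four arguments lie in $M_+$ (total degree $4$) or all in $M_-$ (total degree $-4$) the identity is trivial: the degree-$4$ block of $\End(N)$ vanishes, $[W_{x_+,y_+},W_{u_+,v_+}] = 0$ because the product of two strictly upper-triangular block matrices is zero, and $W_{W_{x_+,y_+}u_+,v_+} = W_{0,v_+} = 0$ since $W_{x_+,y_+}$ annihilates the $M_+$-summand; likewise for the companion term. This leaves the sign patterns of total degree $0$ and $\pm 2$. Using $W_{a,b}=-W_{b,a}$ and the fact that interchanging the two copies $M_+ \leftrightarrow M_-$ is a symmetry of the whole setup (it swaps the $\pm 2$ blocks and conjugates the diagonal ones), I would cut the surviving patterns down to a handful of representatives, and for each one substitute the block formulas and read off what the identity asserts. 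The contributions involving only diagonal $W$'s reproduce Kantor axiom (1) (for suitably relabelled arguments, which is harmless since that axiom is stable under the relevant relabelling), while the contributions in which a $K$-block meets a diagonal block reproduce Kantor axiom (2), $K_{x,y}V_{v,u} + V_{u,v}K_{x,y} = K_{K_{x,y}v,u}$, together with its polarizations (in particular the linearized identity describing how $V$ acts on $K$-terms).

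This last case analysis is the laborious part: correctly matching every block entry and sign and recognizing each resulting relation as Kantor (1), Kantor (2), or a polarization thereof. It is exactly the computation carried out in \cite[Theorem 7]{All99}, so rather than redo it I would invoke that reference; the one point worth a remark is that the running hypothesis $1/2 \in R$ in the surrounding section of \cite{All99} is never actually used — the symmetrizations it is there to support are already subsumed by our reduction to homogeneous arguments, and no step of the verification divides by $2$ — so the proof goes through verbatim over an arbitrary commutative unital base ring. I expect the main obstacle to be purely organizational bookkeeping, above all keeping track of the argument-transposition hidden in the diagonal blocks $\begin{pmatrix} V_{x,y} & 0 \\ 0 & -V_{y,x} \end{pmatrix}$ and $\begin{pmatrix} -V_{y,x} & 0 \\ 0 & V_{x,y} \end{pmatrix}$, where a swap of entries in $V$ is easy to miss.
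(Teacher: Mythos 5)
Your proposal is correct and follows essentially the same route as the paper: the first two axioms are read off from the definitions of $W$ and $K$, and the substantive third axiom is delegated to \cite[Theorem 7]{All99} together with the observation that the $1/2\in R$ hypothesis of that section is never used. The grading/bilinearity reduction you add is harmless framing but does not change the argument.
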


\begin{remark}
	When we speak of the Lie algebra associated to a Kantor triple system, we mean the Lie algebra associated to the Lie triple system associated to it.
	We remark that this Lie algebra has a $\mathbb{Z}$-grading by construction, with $M_+$ being $1$-graded and $M_-$ being $-1$-graded.
\end{remark}

\subsection{Construction of the structurable algebra and associated Lie algebra}

In this subsection, we take the structurable algebra constructed in \cite[Example 6.4]{Allison1993NonassociativeCA} from maps $T$ and $\sharp$ and check whether it defines a Kantor triple system for cubic norm pairs, to construct a Lie algebra. 
In \cite{Allison1993NonassociativeCA}, one also considers Lie algebras defined from these structurable algebras, but these Lie algebras do not always match the ones we need.

\begin{construction}
	\label{con: A}
	We can associate a structurable algebra, in the sense of \cite{Allison1993NonassociativeCA}, to an arbitrary cubic norm pair.
	Namely, set 
	\[ \mathcal{A} = \begin{pmatrix}
		R & J \\
		J' & R \\
	\end{pmatrix},\]
	with multiplication
	\[ \begin{pmatrix}
		a & b \\
		c & d
	\end{pmatrix} \begin{pmatrix}
		e & f \\
		g & h
	\end{pmatrix} = \begin{pmatrix}
		ae + T(b,g) & a f + b h + c \times g \\
		c e + d g + b \times f & d h + T(c,f)\\
	\end{pmatrix}\]
	and involution
	\[ \begin{pmatrix}
		a & b \\ c & d
	\end{pmatrix} \mapsto \begin{pmatrix}
		d & b \\ c & a
	\end{pmatrix}\]
	which is denoted by $x \mapsto \bar{x}$.
	This coincides with the construction using only $T,\times$ and $\times'$ described in \cite[Example 6.4]{Allison1993NonassociativeCA}.
	The necessary and sufficient conditions formulated over there for $\mathcal{A}$ to be structurable are
	\begin{itemize}
		\item $N^{(1,1,1)}(a,b,c) = N^{(1,1,1)}(b,c,a)$,
		\item the $((1,1,1),1)$ linearisation of Definition \ref{def: cubic norm pair}.2,
		\item the $((1,1,1),1)$ linearisation of  Definition \ref{def: cubic norm pair}.3.
	\end{itemize}
	Define the space $\mathcal{S}$ as \[\mathcal{S} = \left\{ \begin{pmatrix}
		r & 0 \\ 0 & -r
	\end{pmatrix} : r \in R \right\}.\]
We remark that $a - \bar{a} \in \mathcal{S}$ for all $a \in \mathcal{A}$. This space is usually called the space of skew elements (hence the name skew dimension $1$ structurable algebra one typically uses for these algebras and their forms, whenever one works over fields of characteristic different from $2$ and $3$).
\end{construction}

Now, we construct the Lie algebra.
\begin{construction}
	
	\label{con: lie}
	We define $V_{x,y} z = (x\bar{y})z + (z \bar{y})x - (z \bar{x}) y$ and \[\mathfrak{instr}(\mathcal{A}) =  \text{span}\{ (V_{x,y}, - V_{y,x} ) \in \End(\mathcal{A}) \times \End(\mathcal{A})\}.\]
	Since $\mathcal{A}$ is structurable in the sense of \cite{Allison1993NonassociativeCA}, $\mathfrak{instr}(\mathcal{A})$ is a Lie subalgebra of $\End(\mathcal{A})^2$ and the first axiom for Kantor triple systems holds immediately (\cite[Lemma 5.4]{Allison1993NonassociativeCA}). There is a second condition in \cite{Allison1993NonassociativeCA} to be structurable called (sk), which we will not need, since we can control multiplications involving $\mathcal{S}$ very well.
	If, $1/6 \in \Phi$ it is well known that the above guarantees that there exists a $5$-graded Lie algebra
	\[ \mathcal{L}(\mathcal{A}) = \mathcal{S}_{-2}\oplus \mathcal{A}_{-1}\oplus \mathfrak{instr}(A)_{0} \oplus \mathcal{A}_{1} \oplus \mathcal{S}_{2}\]
	where we used $X_i$ for a space $X$ to, first, distinguish between distinct copies of a space and, second, to denote the $\mathbb{Z}$-grading. The even graded subspace space acts as
	\[ s_{-2} \oplus (V_{x,y}, -V_{y,x}) \oplus s_{2} \mapsto \begin{pmatrix}
		V_{x,y} & L_{s_{2}} \\ L_{s_{-2}} & - V_{y,x} \\
	\end{pmatrix},\]
	using $L_s$ to denote the left multiplication action of $s$, on 
	\[ \begin{pmatrix}
		\mathcal{A}_1 \\ \mathcal{A}_{-1}
	\end{pmatrix}.\]
	Furthermore, the Lie bracket on 
	\[ \begin{pmatrix}
		\mathcal{A}_1 \\ \mathcal{A}_{-1}
	\end{pmatrix}\]
	is given by 
	\[ \left[ \begin{pmatrix}
		a_1 \\ a_{-1}
	\end{pmatrix}, \begin{pmatrix}
		b_1 \\ b_{-1}
	\end{pmatrix}\right] = \begin{pmatrix}
		- V_{a_1,b_{-1}} + V_{b_{1},a_{-1}} & L_{- a_1 \bar{b}_1 + b_1 \bar{a}_1} \\
		L_{- a_{-1} \bar{b}_{-1} + b_{-1} \bar{a}_{-1}} & - V_{a_{-1},b_{1}} + V_{b_{-1},a_{1}}
	\end{pmatrix}.\]

To be complete, we now prove that this construction of a Lie algebra $\mathcal{L}(\mathcal{A})$ works over arbitrary rings and not solely over rings containing $1/6$. As a preparation, we first compute some $V$ operators, these computations will help to show that the $\mathbb{Z}$-grading refines to a $\mathbb{Z}^2$-grading.
For notational brevity, we introduce \[t = \begin{pmatrix}
	1 & 0 \\ 0 & 0
\end{pmatrix} \text{ and } \bar{t} = \begin{pmatrix}
0 & 0 \\ 0 & 1
\end{pmatrix}.\] We also write $R_a$ and $L_a$ for the right and left multiplication with $a$.

\begin{lemma}
	\label{lem: Vs}
	Consider the elements $t, \bar{t} \in R[t]/(t^2 - t) \subset \mathcal{A}$, $v, v' \in J \subset \mathcal{A}$, $w, w' \in J' \subset \mathcal{A}$, and $x \in J \oplus J' \subset \mathcal{A}$. We have
	\begin{enumerate}
		\item $R_x R_t = R_{\bar{t}} R_x$,
		\item $R_x R_{\bar{t}} = R_t R_x$,
		\item $V_{t,x} = V_{x,t} = L_{x\bar{t}}$,
		\item $V_{\bar{t},x} = V_{x,\bar{t}} = L_{xt}$,
		\item $0 = V_{t,t} = V_{t,w} = V_{w,t} = V_{\bar{t},v} = V_{v ,\bar{t}}  = V_{\bar{t}, \bar{t}}$,
		\item $V_{t,\bar{t}} = L_t + R_{t - \bar{t}}$,
		\item $V_{\bar{t},t} = L_{\bar{t}} + R_{\bar{t} - t}$,
		\item $L_v = V_{t,v} = V_{v,t}$,
		\item $L_{w \times w'} = V_{w,w'}$,
		\item $L_w = V_{\bar{t},w} = V_{w,\bar{t}},$
		\item $L_{v \times v'} = V_{v,v'}$,
		\item 
		$V_{v,w} \begin{pmatrix}
			a & b \\ c & d
		\end{pmatrix} = \begin{pmatrix}
		0 & D_{v,w} b \\ T(v,w)c - D_{w,v} c & T(v,w) d
	\end{pmatrix},$
		\item $V_{w,v} \begin{pmatrix}
			a & b \\ c & d 
		\end{pmatrix} = \begin{pmatrix}
		T(v,w) a & T(v,w) b - D_{v,w} b \\ D_{w,v}c & 0
	\end{pmatrix}.$
	\end{enumerate}
	\begin{proof}
		For the first two equations, assume that $x = v + w$, compute
		\[  \begin{pmatrix}
			a & b \\ c & d
		\end{pmatrix} \begin{pmatrix}
		 & v \\ w &
	\end{pmatrix} = \begin{pmatrix}
	T(b,w) & c \times w + av \\ b \times v + dw & T(c,v)
\end{pmatrix},\]
and note that the variables $a$ and $c$ occur only in the second column, while the variables $b$ and $d$ occur only in the first column. The operators $R_t$ and $R_{\bar{t}}$ are projection operators on the columns. Hence, the first two equations follow.

For the next equations, we will use
\[ V_{a,b} = L_{a\bar{b}} + R_aR_{\bar{b}} - R_{b} R_{\bar{a}}.\]
Equations (3) and (4) follow from this since $x$ is off-diagonal, i.e., $tx = x\bar{t}$ and $\bar{t}x = xt$, using equations (1) and (2).
The list of equations (5) follows similarly, as these are all $V_{a,b}$ with $L_{a\bar{b}} = 0$ and $R_aR_{\bar{b}} - R_{b} R_{\bar{a}}$ = 0, using equations (1) and (2) or $R_t R_{\bar{t}} = 0 = R_{\bar{t}} R_t$.
Equations (6) and (7) also follow immediately since $R_t^2 = R_t$ and $R_{\bar{t}}^2 = R_{\bar{t}}$.
Equations (8) and (10) are special cases of (3) and (4). 

Note that the expression
\[ R_w R_{w'}  \begin{pmatrix}
	a & b \\ c & d
\end{pmatrix} = R_w \begin{pmatrix}
T(b,w') & c \times w' \\ dw' & 0
\end{pmatrix} = \begin{pmatrix}
T(w \times w', c) & d (w' \times w) \\ 0 & 0
\end{pmatrix}\]
is symmetric in $w$ and $w'$. This implies that equation (9) holds, since $w w' = w \times w'$.
Equation (11) follows similarly.

Now, we compute
\[ R_v R_w \begin{pmatrix}
	a & b \\ c & d
\end{pmatrix} = R_v \begin{pmatrix}
T(b,w) &c \times w \\ dw & 0
\end{pmatrix} = \begin{pmatrix}
 0 &  T(b,w) v\\ (c \times w) \times v & d T(v,w).
\end{pmatrix}\]
Analogously, one obtains
\[ R_w R_v  \begin{pmatrix}
	a & b \\ c & d
\end{pmatrix} = \begin{pmatrix}
a T(v,w) & (b \times v) \times w \\ T(c,v) w & 0
\end{pmatrix}.\]
We conclude that
\[ V_{v,w} = L_{T(v,w) t} + R_v R_w - R_w R_v\]
acts as
\[ \begin{pmatrix}
a & b \\ c & d
\end{pmatrix} \mapsto \begin{pmatrix}
0 & T(v,w) b + T(b,w) v - (b \times v) \times w \\ (c \times w) \times v - T(c,v) w & T(v,w) d 
\end{pmatrix} ,\]
which is what we need to prove for equation (12). Equation (13) follows similarly.
	\end{proof}
\end{lemma}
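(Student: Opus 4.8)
The plan is to work entirely inside the matrix model $\mathcal{A}=\bigl(\begin{smallmatrix} R & J\\ J' & R\end{smallmatrix}\bigr)$ and to reduce each of the thirteen identities to bookkeeping with the explicit multiplication law. The key preliminary observation is the operator identity $V_{a,b}=L_{a\bar b}+R_aR_{\bar b}-R_bR_{\bar a}$, which follows at once from $V_{a,b}z=(a\bar b)z+(z\bar b)a-(z\bar a)b$; thus every $V$ in the statement is a short sum of left and right multiplications. Before anything else I would record the elementary multiplication facts: the involution fixes every off-diagonal element, one has $tx=x\bar t$ and $\bar t x=xt$ for off-diagonal $x$, and $R_t,R_{\bar t}$ act as the two ``column projections'' of $\mathcal{A}$, so in particular $R_t^2=R_t$, $R_{\bar t}^2=R_{\bar t}$, $R_tR_{\bar t}=R_{\bar t}R_t=0$, and more generally $R_xR_t=R_{\bar t}R_x$, $R_xR_{\bar t}=R_tR_x$ for off-diagonal $x$. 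These last two are exactly items (1) and (2), and they come out of writing a generic product $\bigl(\begin{smallmatrix}a&b\\c&d\end{smallmatrix}\bigr)\bigl(\begin{smallmatrix}&v\\w&\end{smallmatrix}\bigr)$ and noticing that $a,c$ feed only the second column while $b,d$ feed only the first.

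Given (1)--(2), items (3)--(11) are formal. For (3) and (4) one uses $\bar x=x$ together with (1)--(2) to cancel the $R$-part of $V_{t,x}$ and of $V_{\bar t,x}$, leaving $L_{tx}=L_{x\bar t}$ and $L_{\bar t x}=L_{xt}$, and the coincidences $V_{t,x}=V_{x,t}$ (and the $\bar t$-analogue) follow the same way. The vanishing statements in (5) are precisely the cases in which both $L_{a\bar b}$ and $R_aR_{\bar b}-R_bR_{\bar a}$ are zero (e.g.\ $t\bar t=0$, $w\bar t=0$, $vt=0$, $R_tR_{\bar t}=0$), and (6)--(7) reduce to $R_t^2=R_t$ and $t^2=t$. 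Items (8) and (10) are (3) and (4) specialised, after checking $v\bar t=v$ and $wt=w$. For (9) and (11) one verifies that $R_wR_{w'}$, resp.\ $R_vR_{v'}$, is symmetric in its two arguments; then $V_{w,w'}=L_{w\bar{w'}}=L_{ww'}=L_{w\times w'}$, since $ww'=w\times w'$ inside $\mathcal{A}$. This symmetry is the first and essentially only place a genuine cubic norm pair axiom is used: expanding $R_wR_{w'}$ on a generic matrix produces an entry of the form $T(c\times w',w)$, which one rewrites as the manifestly symmetric $T(w\times w',c)$ using $T(x\times y,z)=T(x,y\times z)$ from Lemma \ref{lem: basic properties cnp} together with symmetry of $T$ and of $\times$.

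The real content of the lemma is items (12) and (13). Here I would compute the full matrix action of $V_{v,w}=L_{T(v,w)t}+R_vR_w-R_wR_v$ (using $v\bar w=vw=T(v,w)t$) on a generic $\bigl(\begin{smallmatrix}a&b\\c&d\end{smallmatrix}\bigr)$ by composing the two right multiplications and adding the easy action of $L_{T(v,w)t}$, obtaining diagonal entries $0$ and $T(v,w)d$, upper-right entry $T(v,w)b+T(b,w)v-(b\times v)\times w$, and lower-left entry $(c\times w)\times v-T(c,v)w$. To finish, I would recall that for the Jordan-pair product $Q_jk=T(j,k)j-j^{\sharp}\times k$ one has the linearisation $D_{v,w}b=Q^{(1,1)}_{v,b}w=T(v,w)b+T(b,w)v-(v\times b)\times w$, so (by commutativity of $\times$) the upper-right entry is $D_{v,w}b$ and the lower-left entry equals $T(v,w)c-D_{w,v}c$, as claimed; item (13) is the mirror computation with the two off-diagonal slots interchanged. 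I expect the only real obstacle to be the amount of careful bookkeeping --- keeping straight which of the four slots each product term occupies under the non-symmetric multiplication and never conflating the two polarizations $\times$ and $\times'$ --- rather than anything conceptually subtle; in particular no structurability of $\mathcal{A}$ is needed, only the multiplication law and Lemma \ref{lem: basic properties cnp}.
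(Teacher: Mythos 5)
Your proposal is correct and follows essentially the same route as the paper: the identity $V_{a,b}=L_{a\bar b}+R_aR_{\bar b}-R_bR_{\bar a}$, the column-projection description of $R_t,R_{\bar t}$ giving (1)--(2), the formal deduction of (3)--(11) (with the symmetry of $R_wR_{w'}$ via $T(x\times y,z)=T(x,y\times z)$), and the explicit composition $R_vR_w-R_wR_v$ plus $L_{T(v,w)t}$ for (12)--(13). The identification of the resulting entries with $D_{v,w}b$ and $T(v,w)c-D_{w,v}c$ matches the paper's conclusion, so nothing is missing.
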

	
	\begin{lemma}
		\label{lem: lie cnp}
		The algebra $\mathcal{L}(\mathcal{A})$ is a Lie algebra for all cubic norm pairs.
		\begin{proof}
			We define
			\[ t = \begin{pmatrix}
				1 \\ & 0
			\end{pmatrix}, \quad \bar{t} = \begin{pmatrix}
				0 \\ & 1
			\end{pmatrix},\]
			and write $v$ and $w^*$ for matrices of the form
			\[ v = \begin{pmatrix}
				& v \\&
			\end{pmatrix}, \quad w^* = \begin{pmatrix}
				& \\
				w & 
			\end{pmatrix}.\]
			Proving that $\mathcal{L}(\mathcal{A})$ is a Lie algebra is equivalent to proving that $(\mathcal{A},V)$ is a Kantor triple system and that $K_{x,z}= L_{x\bar{z} - z \bar{x}}$, since $\mathcal{L}(\mathcal{A})$ is precisely the Lie algebra associated to such a triple system (slightly modified, as our construction passes through the Lie triple system $(A,-W)$ where $W$ is the operation of the usual Lie triple system). That $A$ is structurable in the sense of \cite{Allison1993NonassociativeCA} implies that the first axiom for Kantor triple system holds. Now, we remark that 
			\[ V_{x,y} z - V_{z,y} x = (x\bar{z} - z\bar{x}) y,\]
			so that $K_{x,z} = L_{(x\bar{z} - z \bar{x})}$.
			We use $[x,z]$ to denote $x\bar{z} - z \bar{x}$ (this is the Lie bracket with respect to $W$).
			Hence, to check the second axiom, it is sufficient to check whether
			\[ L_{t - \bar{t}} V_{x,y} + V_{y,x} L_{t - \bar{t}} = L_{[(t - \bar{t})x,y]}.\] 
			This equation is linear in $x$ and $y$, hence it is sufficient to check it for all pairs $(x,y)$ with $x$ and $y$ part of $\{t,\bar{t}\} \cup  J \cup J'$.
			Direct computations show that
			\begin{itemize}
				\item $0 = V_{t,t} = V_{t,v^*} = V_{v^*,t} = V_{\bar{t},\bar{t}} = V_{\bar{t},v} = V_{v,\bar{t}}$ by Lemma \ref{lem: Vs} while $[t,t] = [t,v^*] = [\bar{t},\bar{t}] = [\bar{t},v^*] = 0$,
				so that the axiom holds for these pairings;
				\item $L_v = V_{t,v} = V_{v,t}$, $L_{v^*} = V_{v^*,\bar{t}} = V_{\bar{t},v^*}$, $L_{(v \times w)^*} = V_{v,w} = V_{w,v}$, $L_{v \times w} = V_{v^*,w^*} = V_{w^*,v^*}$ by Lemma \ref{lem: Vs}, $L_{t - \bar{t}} L_v + L_v L_{t - \bar{t}} = 0$ using that $a \mapsto \bar{a}$ is an isomorphism to $\mathcal{A}^\text{op}$ and Equations (1) and (2) proved in Lemma \ref{lem: Vs}, and $[a,b] = 0$ for all $(a,b)$ for which we consider $V_{a,b}$;
				\item $L_{t - \bar{t}}V_{t,\bar{t}} + V_{\bar{t},t} L_{t - \bar{t}} = L_{t - \bar{t}}$ while $[L_{t - \bar{t}} t, \bar{t}] = L_{[t, \bar{t}]}$ is readily verified. The analogous equation for $V_{\bar{t},t}$ follows similarly.
				\item For the final two cases, we first use that
				\[ L_{t - \bar{t}} V_{v,w^*} + V_{w^*,v} L_{t - \bar{t}} = T(v,w) L_{t - \bar{t}}\]
				and $L_{[v,w^*]} = T(v,w) L_{t - \bar{t}}$. The final case
				\[ L_{t - \bar{t}} V_{w^*,v} + V_{v, w^*} L_{t - \bar{t}} = T(v,w) L_{t - \bar{t}}\]
				follows similarly.
			\end{itemize}
			This proves that the second axiom for Kantor triple systems holds.
		\end{proof}
	\end{lemma}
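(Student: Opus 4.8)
The plan is to identify $\mathcal{L}(\mathcal{A})$ with the Lie algebra attached to a Kantor triple system and then verify the Kantor axioms by hand, taking care never to invoke $1/6 \in R$. Recall from Construction \ref{con: lie} that $\mathcal{L}(\mathcal{A})$ is, up to the sign modification coming from passing through the Lie triple system $(\mathcal{A},-W)$, exactly the $5$-graded Lie algebra associated to the triple product $V_{x,y}z = (x\bar y)z + (z\bar y)x - (z\bar x)y$, \emph{provided} that $(\mathcal{A},V)$ is a Kantor triple system and that the antisymmetrization $K_{x,z} = V_{x,y}z$'s $(x\leftrightarrow z)$-difference equals $L_{x\bar z - z\bar x}$. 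So it suffices to check (i) the first Kantor axiom and (ii) the second Kantor axiom together with the identification $K_{x,z} = L_{x\bar z - z\bar x}$. Since $\mathcal{A}$ is structurable in the sense of \cite{Allison1993NonassociativeCA} — the relevant input being the three linearised identities listed in Construction \ref{con: A}, all consequences of Definition \ref{def: cubic norm pair} — part (i) is \cite[Lemma 5.4]{Allison1993NonassociativeCA}, whose proof uses no division; equivalently it is the associativity of $\mathfrak{instr}(\mathcal{A})$ under the commutator bracket, which I would simply quote.

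For (ii), a direct expansion gives $V_{x,y}z - V_{z,y}x = (x\bar z - z\bar x)y$, so $K_{x,z} = L_{[x,z]}$ with $[x,z] := x\bar z - z\bar x$. Since $a - \bar a \in \mathcal{S}$ for every $a \in \mathcal{A}$ and $\mathcal{S} \cong R \cdot (t - \bar t)$, every $[x,z]$ is a scalar multiple of $t - \bar t$; hence the second Kantor axiom $K_{x,z}V_{v,u} + V_{u,v}K_{x,z} = K_{K_{x,z}v,u}$ reduces, by linearity in the arguments, to the single identity
\[ L_{t - \bar t}\,V_{x,y} + V_{y,x}\,L_{t - \bar t} = L_{[(t-\bar t)x,\, y]} \]
for all $x,y \in \mathcal{A}$. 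This is bilinear in $x$ and $y$, so it is enough to verify it when $x$ and $y$ each run through the spanning set $\{t,\bar t\}\cup J \cup J'$ of $\mathcal{A}$, a finite list of cases. Each case is dispatched using the explicit $V$-operator formulas of Lemma \ref{lem: Vs}: the numerous vanishing $V$'s (together with $[t,t]=[\bar t,\bar t]=0$ etc.) handle the mixed cases pairing $t,\bar t$ with an element of $J$ or $J'$; the identities $V_{t,v}=L_v=V_{v,t}$, $V_{w,w'}=L_{w\times w'}$, and their analogues reduce the $J$-$J$ and $J'$-$J'$ cases to the fact that $a\mapsto\bar a$ is an anti-isomorphism plus equations (1)--(2) of Lemma \ref{lem: Vs}; and the $t$-$\bar t$ case and the $J$-$J'$ case both come out to $T(v,w)\,L_{t-\bar t}$ (resp.\ $L_{t-\bar t}$) on the two sides, matching $L_{[v,w^*]}$ resp.\ $L_{[t,\bar t]}$.

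The main obstacle here is bookkeeping rather than conceptual content. Two points need care: first, that the standard division-free reduction to a Kantor triple system genuinely applies — this is precisely why the construction is routed through the Lie triple system $(\mathcal{A},-W)$, so that the only structurability input required is the three polynomial identities of Construction \ref{con: A} plus the directly verified $K = L_{[\,\cdot\,,\,\cdot\,]}$, and the extra Allison condition (sk) is sidestepped because multiplications involving $\mathcal{S}$ are completely explicit; and second, that the case analysis for the displayed identity really is exhausted by the pairs drawn from $\{t,\bar t\}\cup J \cup J'$. Once those are settled, everything reduces to the finite computation above, powered entirely by Lemma \ref{lem: Vs}.
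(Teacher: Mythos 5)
Your proposal is correct and follows essentially the same route as the paper: quote structurability (Allison's Lemma 5.4) for the first Kantor axiom, identify $K_{x,z}=L_{x\bar z - z\bar x}$ with image in $\mathcal{S}=R(t-\bar t)$, and reduce the second axiom by bilinearity to the single identity $L_{t-\bar t}V_{x,y}+V_{y,x}L_{t-\bar t}=L_{[(t-\bar t)x,y]}$ checked case by case on $\{t,\bar t\}\cup J\cup J'$ via Lemma \ref{lem: Vs}. The only difference is that you leave the finite case check schematic where the paper writes it out, but the cases you indicate are the same ones and are handled by the same computations.
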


Note that $[v,w] = - L_{v\bar{w} - w \bar{v}}$ in $\mathcal{L}(\mathcal{A})$ whenever $v, w \in \mathcal{A}_1$.

\begin{remark}
	\label{rmk: grading}
	The computations of Lemma \ref{lem: Vs} more ore less establish the existence of another grading.
	First, introduce $X$ as the span of all elements $( V_{v,w}, - V_{w,v}), (V_{w,v}, - V_{v,w})$, $(V_{t, \bar{t}}, - V_{\bar{t},t})$, and $( - V_{\bar{t},t}, V_{t, \bar{t}})$.
	Note that $\mathfrak{instr}(\mathcal{A}) \cong L_{J'} + X + L_J$ by Lemma \ref{lem: Vs}.
	Now, the usual $\mathbb{Z}$-grading can be refined into a $\mathbb{Z}^2$-grading of the following form:	
	\begin{center}
		\begin{tikzpicture}[scale=0.4]
			\node[] at (-12,0) {$=$};
			
			\node[] at (-16,6.96) {$\mathcal{S}_2$};
			\node[] at (-16,3.48) {$\mathcal{A}_1$};
			\node[] at (-16,0) {$\mathfrak{instr}(A)$};
			\node[] at (-16,-6.96) {$\mathcal{S}_{-2}$};
			\node[] at (-16,-3.48) {$\mathcal{A}_{-1}$};
			
			\node[] at (0,0) {$X_0$};
			\node[] at (4,0) {$J_{\alpha}$};
			\node[] at (-4,0) {$J'_{- \alpha}$};
			\node[] at (2,3.48) {$J'_{\beta + 2 \alpha}$};
			\node[] at (-2,-3.48) {$J_{-\beta - 2 \alpha}$};
			\node[] at (2,-3.48) {$J'_{-\beta - \alpha}$};
			\node[] at (-2,3.48) {$J_{\beta + \alpha}$};
			
			\node[] at (6,3.48) {$R_{\beta + 3 \alpha}$};
			\node[] at (-6,3.48) {$R_\beta$};
			\node[] at (6,-3.48) {$R_{-\beta}$};
			\node[] at (-6,-3.48) {$R_{- \beta - 3 \alpha}$};
			\node[] at (0,6.96) {$\mathcal{S}_{2 \beta + 3 \alpha}$};
			\node[] at (0,-6.96) {$\mathcal{S}_{- 2 \beta - 3 \alpha}$};
			
			\draw[-,dotted] (0,6.96) -- (6,-3.48);
			\draw[-,dotted] (0,-6.96) -- (-6,3.48);	
		\end{tikzpicture}
	\captionof{figure}{Different gradings on $\mathcal{L}(\mathcal{A})$}
\end{center}
We will formalize this observation in Lemma \ref{lem: dotted grading} by proving that the dotted grading, i.e., the spaces with roots $\lambda \beta + \mu \alpha$ for which $\mu - \lambda$ is constant, form a $\mathbb{Z}$-grading of the Lie algebra.
The nonzero graded components form a root system $G_2$, which we used to label the roots (for what we consider the $G_2$-root system to be, see Definition \ref{def: G2 grading}).

For uniformity of notation, we will write $R_{\pm (2 \beta + 3 \alpha)}$ for the corresponding $\mathcal{S}_{\pm (2 \beta + 3 \alpha)}$.
We use $r_{2 \beta + 3 \alpha } = r (t - \bar{t})_{2}$ and and $r_{-(2 \beta + 3 \alpha )} = -r (t - \bar{t})_{-2}$, using 
\[ t = \begin{pmatrix}
	1 & 0\\ 0  &0 
\end{pmatrix}.\] 
This sign convention is chosen to guarantee that \[2 Q_{r (t - \bar{t})_2} s (t - \bar{t})_{-2} = - r^2 s[(t - \bar{t})_2, [(t - \bar{t})_2, (t - \bar{t})_{-2}]] = 2 r^2 s (t - \bar{t})_2,\] which will guarantee that $(R_{2\beta + 3 \alpha}, R_{- (2 \beta + 3 \alpha)})$ defines a divided power representation for the Jordan pair $(R,R)$.
\end{remark}

\begin{lemma}
	\label{lem: dotted grading}
	The map $x \alpha + y \beta \mapsto x - y$, when applied to the subspaces identified in Remark \ref{rmk: grading}, induces a $\mathbb{Z}$-grading on the Lie algebra $L(\mathcal{A})$.
	\begin{proof}
		We remark that $t$ is $2$-graded, $v$ is $0$-graded, $w$ is $1$-graded and $\bar{t}$ is $-1$-graded for $t, \bar{t}, v, w$ as used in Lemma \ref{lem: lie cnp}, whenever they are considered as elements of $\mathcal{A}_1$.
		Whenever we look at $t, v, w$, and $\bar{t}$ in $\mathcal{A}_{-1}$, we obtain elements of degree $1, -1, 0$ and $-2$.
		This describes the grading on $\mathcal{A}_1 \times \mathcal{A}_{-1}$.
		We will show that this decomposition defines a Lie algebra grading.
		
		An easy computation shows that $L_v \in \mathfrak{instr}(\mathcal{A})$ acts on the degrees as $+1$, and that $L_{w} \in \mathfrak{instr}(\mathcal{A})$ acts on the degrees as $-1$, while $X$, which is spanned by the elements of the form $[v_1, w_{-1}], [v_{-1}, w_1]$, and $[t_1,\bar{t}_{-1}]$ and $[\bar{t}_1, t_{-1}]$, using $a_{\pm 1}$ for elements of $A_{\pm 1}$, preserves the grading by Lemma \ref{lem: Vs}.
		We remark that the computations of Lemma \ref{lem: Vs} also show that $\mathfrak{instr}(A)$ is linearly generated by $X$ and elements of the form $L_v$ or $L_{w^*}$.
		
		Now consider $f(\lambda)$ mapping $e \in (\mathcal{A}_1 \oplus \mathcal{A}_{-1}) \otimes R[\lambda, \lambda^{-1}]$ contained in the space $L_{x \alpha + y \beta}$ to $\lambda^{ x - y} e$.
		The computations of $V$ performed in Lemma \ref{lem: Vs} (equations (8) and (9) determine brackets ending up in $L_J$, equations (9) and (10) determine the ones ending up $L_{J'}$, equations (12), (13), (6), and (7) the ones in $X$, and equation (5) says that other brackets are $0$) establish that 
		\[f(\lambda ) V_{x,y} f(\lambda)^{-1} = V_{f(\lambda) x, f(\lambda) y}\]
		whenever $x \in \mathcal{A}_{\pm 1}$ and $y \in \mathcal{A}_{\mp 1}$ are homogeneous with respect to the dotted grading.
		
		We conclude that $f(\lambda)$ defines an automorphism of $\mathcal{L}(\mathcal{A} \otimes R[\lambda,\lambda^{-1}])$. Using $R[\lambda,\mu, \mu^{-1} \lambda^{-1}])$ and $f(\lambda) f(\mu) = f(\lambda \mu)$ yields that $f(\lambda) u = \sum_{i = -2}^2 \lambda^i u_i$ defines projections onto the grading components. That $f$ is an automorphism implies that the grading is a Lie algebra grading.
	\end{proof}
\end{lemma}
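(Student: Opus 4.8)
The plan is to realize the dotted decomposition as the weight-space decomposition of a $\mathbb{G}_m$-action on $\mathcal{L}(\mathcal{A})$ by Lie algebra automorphisms; once such an action is in hand, the containments $[L_i,L_j]\subseteq L_{i+j}$ follow formally.

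First I would record the weights. Writing each $G_2$-root as $\mu\alpha+\lambda\beta$, the standard $5$-grading is the $\lambda$-grading and the dotted map is $\mu-\lambda$; reading off Remark \ref{rmk: grading}, the summands $Rt,J,J',R\bar t$ of $\mathcal{A}=Rt\oplus J\oplus J'\oplus R\bar t$ get dotted weights $2,0,1,-1$ inside $\mathcal{A}_{1}$ and $1,-1,0,-2$ inside $\mathcal{A}_{-1}$, the copies $\mathcal{S}_{\pm2}$ get weight $\pm1$, and $L_{J},X,L_{J'}$ inside $\mathfrak{instr}(\mathcal{A})=L_{J'}+X+L_{J}$ get weights $1,0,-1$. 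Grouping the pieces of Remark \ref{rmk: grading} by dotted weight yields a decomposition into submodules $\mathcal{L}(\mathcal{A})=\bigoplus_{i=-2}^{2}L_i$. Over the ring $R[s,s^{-1}]$, let $f(s)$ act on $L_i\otimes R[s,s^{-1}]$ as multiplication by $s^{i}$; it is invertible, defined functorially in $s$, and satisfies $f(s)f(s')=f(ss')$.

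The core step is to check that $f(s)$ is a Lie algebra automorphism of $\mathcal{L}(\mathcal{A})\otimes R[s,s^{-1}]$. Recall that $\mathcal{L}(\mathcal{A})=\InDer(N)\oplus N$ for $N=\mathcal{A}_{1}\oplus\mathcal{A}_{-1}$, with $\InDer(N)=\langle W_{x,y}\rangle\subseteq\End(N)$ the even part; $f(s)$ is block-diagonal on $N$ and acts on $\InDer(N)$ by the conjugation $g\mapsto f(s)gf(s)^{-1}$, so compatibility with the commutator on $\InDer(N)$ and with the action of $\InDer(N)$ on $N$ is automatic, and it remains to see that conjugation by $f(s)$ preserves $\InDer(N)$ with
\[
f(s)\,W_{x,y}\,f(s)^{-1}=W_{f(s)x,\,f(s)y}.
\]
Since each $W_{x,y}$ is assembled from the triple products $V_{x,y}$ and $K_{x,z}=L_{x\bar z-z\bar x}$, and the $K$-terms act only on the $\mathcal{S}_{\pm2}$-components, which $f(s)$ merely scales by $s^{\pm1}$, this reduces to the conjugation identity for the $V$-operators, $f(s)V_{x,y}f(s)^{-1}=V_{f(s)x,f(s)y}$, for $x\in\mathcal{A}_{\pm1}$ and $y\in\mathcal{A}_{\mp1}$ homogeneous for the dotted grading. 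That identity I would read off from the table in Lemma \ref{lem: Vs}: equations (8)--(11) give the brackets landing in $L_{J}$ and $L_{J'}$, equations (6), (7), (12), (13) those landing in $X$, and equation (5) records the vanishing ones; in each case the $s$-exponents on the two sides agree by the weight bookkeeping of the first step.

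Finally, with $f\colon\mathbb{G}_m\to\Aut(\mathcal{L}(\mathcal{A}))$ established, applying it over $R[s,s',(ss')^{-1}]$ and comparing $f(s)f(s')=f(ss')$ coefficient by coefficient shows that $f(s)u=\sum_i s^iu_i$ with $u_i$ the $L_i$-component of $u$; thus $\mathcal{L}(\mathcal{A})=\bigoplus_iL_i$ is exactly the eigenspace decomposition of $f$, and for $u\in L_i$, $v\in L_j$ the automorphism property forces $f(s)[u,v]=[s^iu,s^jv]=s^{i+j}[u,v]$, whence $[u,v]\in L_{i+j}$. Hence the dotted decomposition is a $\mathbb{Z}$-grading. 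I expect the main obstacle to be the $V$-operator intertwining check itself: confirming that every bracket genuinely respects the $\mathbb{Z}^2$-grading of Remark \ref{rmk: grading} needs the full table of $V_{x,y}$ from Lemma \ref{lem: Vs} together with a careful, case-by-case matching of $s$-exponents, while the rest is bookkeeping.
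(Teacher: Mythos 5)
Your proposal is correct and follows essentially the same route as the paper: assign the dotted weights to the pieces of Remark \ref{rmk: grading}, define the scaling map $f(s)$ over $R[s,s^{-1}]$, reduce the automorphism check to the intertwining identity $f(s)V_{x,y}f(s)^{-1}=V_{f(s)x,f(s)y}$ (plus the easy $K$-/$\mathcal{S}_{\pm2}$ bookkeeping) verified via the table in Lemma \ref{lem: Vs}, and then use $f(s)f(s')=f(ss')$ over a two-variable Laurent ring to identify the summands as eigenspaces and conclude $[L_i,L_j]\subseteq L_{i+j}$. This matches the paper's argument step for step, with only cosmetic differences in how the reduction through the Lie triple system structure is phrased.
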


\end{construction}

\begin{lemma}
	\label{lem: generators X}
	The space $X$ can be written as \[[J'_{\beta + 2 \alpha},J_{- \beta - 2 \alpha }] + R[t_{\beta + 3 \alpha},\bar{t}_{- \beta - 3 \alpha }] + R[t_{-\beta },\bar{t}_{\beta}].\] This sum is not necessarily direct.
	\begin{proof}
		We know that $X$ is linearly generated by the elements $\alpha_{v,w} = (V_{v,w^*}, - V_{w^*,v}),$ $\beta_{v,w} = (V_{w^*,v}, - V_{v,w^*}),$ $\gamma = (V_{t,\bar{t}}, - V_{\bar{t},t}),$ $\delta = (V_{\bar{t},t}, - V_{t,\bar{t}})$.
		Using that $\alpha_{v,w}+ \beta_{v,w} = T(v,w) (\gamma + \delta)$, we conclude that the lemma holds.
	\end{proof}
\end{lemma}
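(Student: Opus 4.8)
The plan is to match each of the three summands on the right-hand side with the span of one of the natural generating families of $X$, and then use a single bilinear identity to absorb the remaining family into that span. Recall from Remark \ref{rmk: grading} that $X$ is linearly spanned by the four families $\alpha_{v,w}=(V_{v,w^*},-V_{w^*,v})$ and $\beta_{v,w}=(V_{w^*,v},-V_{v,w^*})$, with $v\in J$ and $w\in J'$, together with the two single elements $\gamma=(V_{t,\bar{t}},-V_{\bar{t},t})$ and $\delta=(V_{\bar{t},t},-V_{t,\bar{t}})$. So it suffices to exhibit each of these four families inside the right-hand side.

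First I would compute the three bracket spaces using the explicit bracket on $\mathcal{A}_1\oplus\mathcal{A}_{-1}$ from Construction \ref{con: lie}. Reading the $G_2$-labels off the diagram of Remark \ref{rmk: grading} (with the degree bookkeeping of Lemma \ref{lem: dotted grading}), $w^*\in J'$ sits in $J'_{\beta+2\alpha}\subset\mathcal{A}_1$, $v\in J$ sits in $J_{-\beta-2\alpha}\subset\mathcal{A}_{-1}$, $t$ sits in $R_{\beta+3\alpha}\subset\mathcal{A}_1$ and in $R_{-\beta}\subset\mathcal{A}_{-1}$, and $\bar{t}$ sits in $R_{\beta}\subset\mathcal{A}_1$ and in $R_{-\beta-3\alpha}\subset\mathcal{A}_{-1}$. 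In each of the three brackets one argument lies purely in $\mathcal{A}_1$ and the other purely in $\mathcal{A}_{-1}$, so by the $\mathbb{Z}$-grading the result lies in $\mathfrak{instr}(\mathcal{A})$ and the two off-diagonal $L$-terms in the bracket formula vanish; the diagonal part then gives $[w^*,v]=-\beta_{v,w}$, $[t_{\beta+3\alpha},\bar{t}_{-\beta-3\alpha}]=-\gamma$, and $[t_{-\beta},\bar{t}_{\beta}]=\delta$. Hence $[J'_{\beta+2\alpha},J_{-\beta-2\alpha}]=\text{span}\{\beta_{v,w}\}$ and the other two summands are $R\gamma$ and $R\delta$; since all of these lie in $\mathfrak{instr}(\mathcal{A})$ and in the $0$-component of the dotted grading, the whole right-hand side sits inside $X$.

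Second, and this is the only step with real content, I would prove the identity $\alpha_{v,w}+\beta_{v,w}=T(v,w)(\gamma+\delta)$. Both sides equal $T(v,w)\cdot(\text{Id}_{\mathcal{A}},-\text{Id}_{\mathcal{A}})$: adding equations (12) and (13) of Lemma \ref{lem: Vs} gives $V_{v,w}+V_{w,v}=T(v,w)\,\text{Id}_{\mathcal{A}}$, while adding equations (6) and (7) of the same lemma gives $V_{t,\bar{t}}+V_{\bar{t},t}=L_{t}+L_{\bar{t}}+R_{t-\bar{t}}+R_{\bar{t}-t}=L_{t+\bar{t}}=\text{Id}_{\mathcal{A}}$, the $R$-terms cancelling. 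Consequently $\alpha_{v,w}=T(v,w)(\gamma+\delta)-\beta_{v,w}$ already lies in the right-hand side, so the right-hand side contains all four spanning families of $X$ and therefore equals $X$. The fact that the sum need not be direct is then a separate elementary observation (for strongly degenerate pairs, or over rings with small torsion, the displayed identity together with the explicit $V$-formulas forces overlaps among the three summands) which is not needed later.

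The main obstacle is purely organizational rather than conceptual: one must keep straight the two notational conventions for off-diagonal elements (the $v,w$ of Lemma \ref{lem: Vs} versus the $v,w^*$ of Lemma \ref{lem: lie cnp}) and correctly identify the $G_2$-gradings of $J'_{\beta+2\alpha}$, $J_{-\beta-2\alpha}$, and the four placements of $t$ and $\bar{t}$ off the diagram. Once those identifications are pinned down, every computation reduces to a direct substitution into the formulas already recorded in Lemma \ref{lem: Vs} and Construction \ref{con: lie}, so there is no analytic or combinatorial difficulty.
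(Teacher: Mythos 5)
Your proof is correct and follows essentially the same route as the paper: both rest on the spanning set $\alpha_{v,w},\beta_{v,w},\gamma,\delta$ of $X$ together with the identity $\alpha_{v,w}+\beta_{v,w}=T(v,w)(\gamma+\delta)$, which the paper asserts and you verify directly from equations (6), (7), (12), (13) of Lemma \ref{lem: Vs}. Your explicit identification of the three bracket spaces with the spans of $\beta_{v,w}$, $\gamma$, $\delta$ is the same bookkeeping the paper leaves implicit, so there is nothing to correct.
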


\begin{remark}
	The elements $\zeta_{\beta + 3 \alpha } = (V_{\bar{t},t}, - V_{t,\bar{t}})$ and $\zeta_\beta = (V_{t,\bar{t}}, - V_{\bar{t},t})$ are grading derivations. For example, $\zeta_{\beta + 3 \alpha }$ acts as $[\zeta_{\beta + 3 \alpha} , x] = i x$ whenever $x$ is $i$-graded in the dotted grading, i.e., $x \in \bigoplus_{\lambda \in \mathbb{Z}} U_{i \alpha + \lambda( \beta + \alpha)}$ with $U_\delta$ the $\delta$-graded component of $\mathcal{L}(\mathcal{A})$.
	
	We will use $\zeta = \zeta_{2 \beta + 3\alpha}$ to denote the grading derivation associated to the usual $\mathbb{Z}$-grading.
	Note that $\zeta = \zeta_\beta + \zeta_{\beta + 3 \alpha}$.
\end{remark}

\begin{remark}
	The previous lemma says that $X$ is the sum of a central cover of the inner derivation algebra of $(J'_{\beta + 2 \alpha},J_{- \beta - 2 \alpha})$ with the torus corresponding to the $R_\gamma$.
\end{remark}

\begin{remark}
	Any group grading of the Lie algebra $\mathcal{L}(\mathcal{A})$ induces a grading on $\End(\mathcal{L}(\mathcal{A}))$ with an endomorphism being $g$ graded if it acts as $+g$ on the grading of the Lie algebra.
	As such, the $\mathbb{Z}^2$-grading components of $\End(\mathcal{L}(\mathcal{A}))$ are have grading $x \alpha + y \beta$ for $x,y \in \mathbb{Z}^2$.
\end{remark}

\subsection{Divided power representations and automorphisms}

In this subsection, we construct divided power representations for all Jordan pairs $(R_\gamma,R_{- \gamma})$ and $(J'_{\gamma},J_{\gamma})$, using the ``natural" divided power representations of these Jordan pairs, and show that these divided power representations correspond to automorphisms of the Lie algebra $\mathcal{L}(\mathcal{A})$.

\begin{remark}
	In this section and the following ones, we will need to verify equalities between homogeneous maps mapping to $\End(\mathcal{L}(\mathcal{A})) \otimes K$.
 Observe that $\End(\mathcal{L}(\mathcal{A})) \otimes K$ embeds into $\End_K(\mathcal{L}(\mathcal{A} \otimes K))$ whenever $K$ is a polynomial ring. 
 In addition, note that two homogeneous maps that agree with each other on all polynomial rings, have to agree on all $R$-algebras.
	Thus, we can as well verify all equalities in $\End_K(\mathcal{L}(\mathcal{A} \otimes K))$. Using that cubic norm pairs define cubic norm pairs over all scalar extensions, it is sufficient to verify the equalities in $\End(\mathcal{L}(\mathcal{A}))$ for arbitrary cubic norm pairs.
\end{remark}

\begin{lemma}
	\label{lem: TKK rep}
	Suppose that $\gamma$ is a non-zero-graded root and write $U_\gamma$ for the subspace of $\mathcal{L}(\mathcal{A})$ which is $\gamma$ graded, then 
	\[ \exp_{\pm \gamma,[t]}(x) : U_{-\gamma} \oplus X \oplus U_{\gamma} \longrightarrow U_{-\gamma} \oplus X \oplus U_{\gamma} \]
	defined by $\exp_{\pm \gamma,[t]}(x) = 1 + t\ad x + t^2Q_x$
	is a divided power representation. 	Moreover, for $y \in U_{-\gamma}$ we have
	\[ \exp_{\gamma, [t]}(x) (\ad y) \exp_{\gamma, [t]}(-x) = (\ad y) + t \ad [x,y] + t^2 \ad (Q_x y) = \ad (\exp_{\gamma, [t]}(x) \cdot y)\]
	in $\End(\mathcal{L}(\mathcal{A}))$.
	\begin{proof}
		One easily computes that $V_{x,y} z = D_{x,y} z$, using the operator $D$ for quadratic Jordan pairs whenever $\gamma$ is $\pm 1$-graded in the usual $\mathbb{Z}$-grading, using Lemma \ref{lem: Vs}. Using the identification $(R_{2 \beta + 3\gamma}, R_{-2 \beta - 3 \gamma})$ with $(\mathcal{S}_2, \mathcal{S}_{-2})$, allows one to show that $[s_{+2},s'_{-2}] = - (ss') \zeta$.		
		This shows that $Q^{(1,1)}_{r,s} = (\ad r)(\ad s)$. Hence, we obtain a binomial divided power representation. 
		
		The defining Equation (\ref{def: dprep}) for divided power representations, follows for $(k,l) = (3,1)$ and $(k,l) = (4,2)$ from the same equations for the usual TKK representation \cite[Example 3]{faulkner2000jordan}. 
		The case $(k,l) = (2,1)$ corresponds to verifying \cite[Equation (12)]{faulkner2000jordan}.
		The verification on zero graded $D$, follows since the space $X$ acts as derivations of the Jordan pair.
		On $-1$-graded $w$, the resulting computation yields that it holds if and only if		
		\begin{equation} \label{eq: help eq} [Q_x y, w] + [Q_x w, y] = [x, D_{y,x} w].\end{equation}
		
		Whenever $U_\gamma \cong R$, this follows immediately from (JP2) since $[x,y] \mapsto D_{x,y}$ is injective for $(x,y) \in R_{\gamma} \times R_{-\gamma}$.
		Whenever $U_\gamma = J_\gamma$, we need to use that
		\[ V_{v,w} \begin{pmatrix}
			a & b \\ c & d
		\end{pmatrix} = \begin{pmatrix}
		0 & D(v,w) b \\ - D_{w,v} c + T(v,w)c & T(v,w) d
	\end{pmatrix}\]
		and
		\[ V_{w,v}  \begin{pmatrix}
			a & b \\ c & d
		\end{pmatrix} = \begin{pmatrix}
		T(v,w) a & T(v,w) b - D_{v,w} b \\ D_{w,v} c & 0
	\end{pmatrix},\]
		to see that $[v,w] \mapsto ((D_{v,w} , -D_{w,v}), T(v,w))$ is injective for $v \in J_\gamma$ and $w \in J'_{-\gamma}$. 
		For the projection $(D_{v,w} , - D_{w,v})$, Equation \ref{eq: help eq} follows from (JP2).
		The other projection is also compatible, since
		\begin{align*} T(Q_x y,w) + T(Q_x w, y) & = T(T(x,y)x - x^\sharp \times y, w) + T(y, T(x,w)x - x^\sharp \times w) \\ & = 2 T(x,y)T(x,w) - T(x \times x, y \times w) \\ & = T(T(y,x)w + T(w,x)y - (y \times w) \times x, x) \\ & = T(x, D_{y,x} w).\end{align*}
		The moreover-part follows from the definition of a divided power representation.
	\end{proof}
\end{lemma}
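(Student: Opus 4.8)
The plan is to work throughout inside the submodule $U_{-\gamma}\oplus X\oplus U_{\gamma}$. By the $\mathbb{Z}^{2}$-grading of Remark \ref{rmk: grading} this submodule is stable under $\ad x$ for $x\in U_{\gamma}$; moreover $\ad x$ annihilates $U_{\gamma}$ (since $2\gamma$ is never a root of the ambient $G_{2}$ system, $[U_{\gamma},U_{\gamma}]=0$), so $(\ad x)^{3}=0$ on this submodule, and symmetrically $(\ad y)^{3}=0$ for $y\in U_{-\gamma}$; note also that $Q_{x}$ (resp.\ $Q_{y}$) is supported on $U_{-\gamma}$ (resp.\ $U_{\gamma}$). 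The relevant Jordan pair is $(U_{\gamma},U_{-\gamma})$: a copy of $(R,R)$ with $Q_{r}s=r^{2}s$ when $\gamma$ is a long root, and a copy of $(J,J')$ (Lemma \ref{lem: dprep is dprep}) when $\gamma$ is short. First I would check that $(\exp_{\gamma,[t]},\exp_{-\gamma,[t]})$ is a pair of binomial divided power representations: expanding $\exp_{\gamma,[t]}(x)\exp_{\gamma,[t]}(x')$, the $t^{3}$- and $t^{4}$-coefficients vanish by the nilpotence and the support of $Q_{x}$, and the $t^{2}$-coefficient reduces the identity to $\ad x\,\ad x'=Q^{(1,1)}_{x,x'}$ as operators on $U_{-\gamma}$. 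Concretely this is the equality $V_{x,y}=D_{x,y}$ of operators on $U_{\gamma}$, for $x\in U_{\gamma}$ and $y\in U_{-\gamma}$; for short $\gamma$ it is read off from the $V$-computations of Lemma \ref{lem: Vs}, and for long $\gamma$ it follows from the bracket $[R_{\gamma},R_{-\gamma}]$ being a scalar multiple of the corresponding grading derivation.

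Next I would verify the defining identity (\ref{def: dprep}) of Definition \ref{def: dprep2}. Since $\ad x$ and $\ad y$ raise and lower the degree by one on a three-step filtration, the left-hand side of (\ref{def: dprep}) vanishes for all but finitely many $(k,l)$, leaving $(k,l)\in\{(2,1),(3,1),(4,1),(4,2)\}$; the case $(4,1)$ is immediate since $Q_{x}\circ(\ad y)\circ Q_{x}=0$ by the support of $Q_{x}$. The cases $(3,1)$ and $(4,2)$ are the usual TKK identities for a Jordan pair and follow from \cite{faulkner2000jordan}. For $(2,1)$ I would test the identity on each graded piece: on $U_{\gamma}$ it holds for degree reasons; on $X$ it is exactly the statement that every element of $X$ acts on $(U_{\gamma},U_{-\gamma})$ as a derivation, which is built into the description of $X$ in Remark \ref{rmk: grading}; and on $U_{-\gamma}$ it comes down to the auxiliary relation
\[ [Q_{x}y,w]+[Q_{x}w,y]=[x,D_{y,x}w]\qquad\text{for all }x\in U_{\gamma},\ y,w\in U_{-\gamma}. \]

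To prove this relation I would use that $[u,v]\mapsto D_{u,v}$ (when $U_{\gamma}\cong R$), respectively $[u,v]\mapsto\bigl((D_{u,v},-D_{v,u}),T(u,v)\bigr)$ (when $U_{\gamma}\cong J$), is injective, so that it suffices to verify the relation after composing with these maps. On the $D$-component it is precisely the Jordan axiom (JP2). On the $T$-component (present only in the short case) it is a short direct computation: substituting $Q_{x}y=T(x,y)x-x^{\sharp}\times y$ and expanding, both sides become $2\,T(x,y)\,T(x,w)-T(x\times x,\,y\times w)$ once one invokes the linearisations of the axioms in Definition \ref{def: cubic norm pair}. Granting all this, the moreover-part follows by conjugating $\ad y$ by $\exp_{\gamma,[t]}(x)$ and expanding: using $(\ad x)^{3}=0$ together with the $(2,1)$-relation just established to rewrite the quadratic-in-$x$ term as $\ad(Q_{x}y)$, one gets $\ad y+t\,\ad[x,y]+t^{2}\ad(Q_{x}y)=\ad\bigl(\exp_{\gamma,[t]}(x)\cdot y\bigr)$, which is the standard passage from a divided power representation to its adjoint representation already used in the proof of Lemma \ref{lem: dprep is JP}. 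The statement for $\exp_{-\gamma,[t]}$ is obtained by the symmetric argument with $\gamma$ and $-\gamma$ exchanged.

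I expect the real obstacle to be the $(2,1)$ case of (\ref{def: dprep}), and specifically the auxiliary relation above — in particular its $T$-component, since this is the only place where the cubic-norm-pair axioms genuinely enter beyond the Jordan-pair axioms and it forces the separate trace computation. Most of the remaining work is bookkeeping: tracking which graded component of $U_{-\gamma}\oplus X\oplus U_{\gamma}$ one is testing against, and keeping the identifications $U_{\gamma}\cong R$ versus $U_{\gamma}\cong J$ (with its $T$-pairing) straight.
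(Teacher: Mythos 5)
Your proposal is correct and follows essentially the same route as the paper: establish $V_{x,y}=D_{x,y}$ (via Lemma \ref{lem: Vs} for short roots and the bracket with the grading derivation for long roots) to get a binomial divided power representation, reduce Equation (\ref{def: dprep}) to the cases $(2,1),(3,1),(4,2)$ with the latter two coming from the TKK representation of \cite{faulkner2000jordan}, and settle $(2,1)$ on $U_{-\gamma}$ via the auxiliary relation $[Q_x y,w]+[Q_x w,y]=[x,D_{y,x}w]$, proved through the injectivity of $[u,v]\mapsto D_{u,v}$ (long case) respectively $[u,v]\mapsto((D_{u,v},-D_{v,u}),T(u,v))$ (short case), with (JP2) on the $D$-component and the same trace computation on the $T$-component. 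No substantive differences from the paper's argument.
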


\begin{construction}
	\label{def: dp rep R}
	Consider a pair $(R_\gamma, R_{-\gamma})$. We can define a divided power representation of the Jordan pair $(R,R)$ on $\mathcal{L}(\mathcal{A})$.
	First, on $R_{-\gamma} \oplus X \oplus R_{\gamma}$ we act as in Lemma \ref{lem: TKK rep}, using
	\[ \exp_{\gamma,[t]}(r) = 1 + t \ad r + t^2Q_r\]
	with $Q_r : R_{-\gamma} \longrightarrow R_{\gamma}$ and the similarly defined $\exp_{-\gamma}$.
	
		Now, write $U_\delta$ subspace of $\mathcal{L}(\mathcal{A})$ which is $\delta$-graded.
	For the short roots $\delta$ which are orthogonal to $\gamma$, we use the trivial representation on $U_\delta$.
	Third, other roots come in pairs $\delta, \gamma + \delta$.
	On $U_{\gamma + \delta} \oplus U_\delta$, we act as
	\[ \exp_{\gamma,{[t]}(r)} = \begin{pmatrix}
	1 & t\ad r\\
	& 1\\
\end{pmatrix}, \quad \exp_{-\gamma,[t]}(r) = \begin{pmatrix}
1 \\ t\ad r & 1
\end{pmatrix}.\]
We remark that this representation is a quotient of the ``natural" represenation in $M_2(R)$.
	This defines a divided power representation on $\mathcal{L}(\mathcal{A})$, as it is the direct sum of divided power representations.
\end{construction}

Since $\exp_{\pm \gamma,[t]}$ is a polynomial in $t$, we will often write $\exp_{\pm \gamma}$ for the $\exp_{\pm \gamma,[1]}$.

\begin{lemma}
	For each pair $(R_\gamma,R_{-\gamma})$, consider the grading on $\mathcal{L}(\mathcal{A})$ induced by the root $\gamma$. There exists a grading reversing map \[\tau_\gamma = \exp_\gamma(1) \exp_{-\gamma}(1) \exp_{\gamma}(1).\]
	Moreover, since the $\exp_{\pm \gamma}(r)$ are automorphisms for all $r$, $\tau_\gamma$ is an automorphism as well.
	\begin{proof}
		A straightforward computation shows that $d \in X$ maps to the element $\tau_i(d) = d + [[d,1_\gamma],1_{- \gamma}] \in X$. A similar computation shows $\tau_i(r_{\pm \gamma}) = r_{\mp \gamma}$. 
	Hence $\tau_i$ reverses the grading on $R_{- \gamma} \oplus X \oplus R_{\gamma}$.
		
		An easy computation involving the matrices that define the divided power representation, which were introduced in Definition \ref{def: dp rep R}, also shows that $\tau_i$ reverses the grading on the rest of the Lie algebra.
		
		So, what is left to prove is that the $\exp_{\pm \gamma}(r)$ are automorphisms.		
		We prove it first for $Rt \cong R_{\beta + 3 \alpha}$. We remark that $\exp_+(r) = \exp_\gamma(r) = 1 + \ad rt + Q_{rt}$, where we wrote $rt$ instead of $r$ to make it clear that we are working with $Rt \subset \mathcal{A}_1$. 
		Note that $\exp_+(r)$ is of the form $1 + r_2 + r_4$ with $r_2 = \ad rt$ and $Q_{rt} = r_4$ where $r_i$ acts as $+i$ on the grading corresponding to $\beta + 3 \alpha$ (i.e., the dotted grading).

		For $\ad rt$ we know that the Jacobi identity holds, hence we know that
		\[ r_2 [a,b] = [r_2a,b] + [a,r_2b].\]
		We also know that $[r_4 a, r_2 b] + [r_2 a, r_4 b] = 0$, since either term being nonzero implies that we can assume $a,b \in R_{- \beta - 3 \alpha}$ and $D_{r,a} Q_r b = Q_r D_{a,r} b = Q_r D_{b,r} a = D_{r,b} Q_r a$ in any quadratic Jordan pair by (JP1). 
		We also note that $[r_4a,r_4 b] = 0$ for all $a$ and $b$ by the grading.
		If we can show that
		\[ r_4[a,b] = [r_4a,b] + [r_2a,r_2b] + [a,r_4 b],\]
		then we will have shown that $\exp_+(r)$ is an automorphism, since all homogeneous parts of
		\[ \exp_+(r) [a,b] = [\exp_+(r)a, \exp_+(r)b]\]
		match.

		We remark that this definitely holds if either $a,b \in R_{- \beta - 3 \alpha}$, i.e., either $a$ or $b$ is $-2$-graded, by Lemma \ref{lem: TKK rep}
		
		Suppose now that neither $a$ nor $b$ is $-2$-graded. If they are not both $-1$-graded, both sides are $0$ due to the grading. Hence, assume that $a$ and $b$ are $-1$-graded.
		We can assume that $a$ and $b$ are homogeneous with respect to the $G_2$-grading.
		We note that $a \in U_{-\alpha - k(\beta + \alpha)}$ and $b \in U_{-\alpha - (l - k)(\beta + \alpha)}$ for some $k$ and $l$. If $l \neq 1$, both sides are $0$ due to the grading. 
		If $k = 0$ (or $k = 1$ and if one reverses the role of $a$ and $b$), we have \[Q_{rt} [ (-L_{a},L_{a}) , b] = T(ra,rb) t = [ ra, (-L_{rb}, L_{rb})] = [[rt, (-L_{a},L_{a})],[rt,b]].\]
		The case $k = -1$ or (k = 2) follows similarly.
		
		We conclude that
		\[ \exp_{\gamma}(r) [a,b] = [\exp_{\gamma}(r)a, \exp_{\gamma}(r)b]. \]
		For $(R_{-\beta },R_{\beta})$ we can argue analogously.
		For $(R_{2 \beta + 3 \alpha },R_{- (2 \beta + 3 \alpha)})$ we can use that $\tau_\beta \cdot R_{\pm (\beta + 3 \alpha)} = R_{\pm (2 \beta + 3 \alpha)}$ and $\tau_{2\beta + 3 \alpha} = \tau_{\beta} \tau_{\beta + 3 \alpha} \tau_{\beta}^{-1}$.
	\end{proof}
\end{lemma}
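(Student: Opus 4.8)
The plan is to establish the two assertions separately, the substantive one being that each $\exp_{\pm\gamma}(r)$ is an automorphism of $\mathcal{L}(\mathcal{A})$; the automorphism claim for $\tau_\gamma$ then follows immediately, since it is a composite of such maps.

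\emph{Grading reversal.} I would split $\mathcal{L}(\mathcal{A})$ according to the description of the action of $\exp_{\pm\gamma}$ in Construction~\ref{def: dp rep R}. On the ``$\mathfrak{sl}_2$-part'' $U_{-\gamma}\oplus X\oplus U_{\gamma}$, where $\exp_{\pm\gamma,[t]}(x)=1+t\ad x+t^2Q_x$, this is the classical computation that $\exp(e)\exp(-f)\exp(e)$ realizes the Weyl reflection of the triple $(1_\gamma,[1_\gamma,1_{-\gamma}],1_{-\gamma})$: one checks directly, using the bracket relations recorded in the proof of Lemma~\ref{lem: TKK rep}, that $\tau_\gamma(r_{\pm\gamma})=r_{\mp\gamma}$ and $\tau_\gamma(d)=d+[[d,1_\gamma],1_{-\gamma}]\in X$ for $d\in X$. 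On the rest of $\mathcal{L}(\mathcal{A})$, Construction~\ref{def: dp rep R} tells us $\exp_{\pm\gamma}$ acts either trivially (on short roots orthogonal to $\gamma$, which $\tau_\gamma$ then fixes) or by elementary upper/lower-triangular $2\times 2$ matrices on blocks $U_{\gamma+\delta}\oplus U_\delta$, on which a one-line $2\times 2$ matrix computation shows $\tau_\gamma$ interchanges $U_{\gamma+\delta}$ and $U_\delta$. Assembling the pieces, $\tau_\gamma$ reverses the $\gamma$-grading.

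\emph{The automorphism property.} It suffices to prove $\exp_\gamma(r)[a,b]=[\exp_\gamma(r)a,\exp_\gamma(r)b]$ for homogeneous $a,b$ and an arbitrary cubic norm pair, working in $\End(\mathcal{L}(\mathcal{A}))$ (the reduction to polynomial rings, and then to all $R$-algebras, is routine); the case of $\exp_{-\gamma}(r)$ is symmetric. I would first treat $\gamma=\beta+3\alpha$ (and, identically, $\gamma=\pm\beta$), and deduce $\gamma=2\beta+3\alpha$ from $\tau_\beta\cdot R_{\pm(\beta+3\alpha)}=R_{\pm(2\beta+3\alpha)}$, whence $\exp_{\pm(2\beta+3\alpha)}(r)=\tau_\beta\,\exp_{\pm(\beta+3\alpha)}(r)\,\tau_\beta^{-1}$ is an automorphism and $\tau_{2\beta+3\alpha}=\tau_\beta\tau_{\beta+3\alpha}\tau_\beta^{-1}$. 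For $\gamma=\beta+3\alpha$, write $\exp_\gamma(r)=1+r_2+r_4$ with $r_2=\ad(rt)$ and $r_4=Q_{rt}$, of $\gamma$-degrees $2$ and $4$ (Construction~\ref{def: dp rep R}). Expanding both sides and matching $\gamma$-graded components reduces the claim to: degree~$0$, trivial; degree~$2$, $r_2[a,b]=[r_2a,b]+[a,r_2b]$, the Jacobi identity; degree~$8$, $[r_4a,r_4b]=0$, which holds by the grading since $r_4$ is supported on $U_{-\gamma}$ and $[U_\gamma,U_\gamma]\subset U_{2\gamma}=0$; degree~$6$, $[r_4a,r_2b]+[r_2a,r_4b]=0$, which by the grading reduces to $a,b\in U_{-\gamma}$, where it follows from axiom (JP1) and the symmetry of the polarized operator $Q^{(1,1)}$; and the one real identity, at degree~$4$,
\[ r_4[a,b]=[r_4a,b]+[r_2a,r_2b]+[a,r_4b].\]

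\emph{Main obstacle.} This degree-$4$ identity is where the work lies. If $a$ or $b$ lies in $U_{-\gamma}$ it is exactly the moreover-part of Lemma~\ref{lem: TKK rep}. Otherwise $r_4a=r_4b=0$, and $Q_{rt}[a,b]\neq 0$ forces $[a,b]\in U_{-\gamma}$; since neither $a$ nor $b$ is $-2$-graded in the $\gamma$-grading, both must be $-1$-graded, say $a\in U_{-\alpha-k(\beta+\alpha)}$ and $b\in U_{-\alpha-(1-k)(\beta+\alpha)}$. In the finitely many remaining cases $k\in\{-1,0,1,2\}$ (the interchange $a\leftrightarrow b$ halving the work), I would verify $Q_{rt}[a,b]=[[rt,a],[rt,b]]$ by hand, using the explicit $V$-operator formulas of Lemma~\ref{lem: Vs} and the identification of the $-1$-graded components with the relevant copies of $J$, $J'$, $\mathfrak{instr}(\mathcal{A})$-pieces and $\mathcal{S}$; the underlying cubic-norm-pair identity in each case is a linearisation of one of the axioms in Definition~\ref{def: cubic norm pair}. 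Everything else — the grading bookkeeping and the $\mathfrak{sl}_2/\mathrm{SL}_2$ formalism — is straightforward given Lemma~\ref{lem: TKK rep} and Construction~\ref{def: dp rep R}.
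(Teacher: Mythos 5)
Your proposal is correct and follows essentially the same route as the paper: grading reversal checked directly on $U_{-\gamma}\oplus X\oplus U_\gamma$ and via the $2\times 2$ blocks of Construction \ref{def: dp rep R}, then the automorphism property for $\gamma=\beta+3\alpha$ by matching graded components (Jacobi in degree $2$, (JP1) in degree $6$, grading in degree $8$, Lemma \ref{lem: TKK rep} plus the case analysis on the $-1$-graded components for degree $4$), and finally the long root $2\beta+3\alpha$ handled by conjugation with $\tau_\beta$. The only differences are cosmetic (slightly different bookkeeping of the indices $k,l$ in the $-1$-graded case).
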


\begin{construction}
	\label{def: action J2}
	Now, we construct the divided power representation for $(J'_{\beta + 2 \alpha}, J_{- \beta - 2 \alpha})$. We write $\gamma$ for $\beta + 2 \alpha$.
	
	As before, on $J_{-\gamma} \oplus X \oplus J_{\gamma}$ we use the TKK representation as $\exp_{\gamma,[t]}$.
	Now, consider a root $\delta$ orthogonal to $\gamma$. On the corresponding spaces $R_\delta$ we use the trivial representation.
	
	On
	$R_{- \beta - 3 \alpha} \oplus J'_{- \alpha} \oplus J_{\beta + \alpha} \oplus R_{2 \beta + 3\alpha}$ we use a divided power representations isomorphic to the one of Example \ref{ex: dprepr}.
	The isomorphism is determined by
	\[ \exp_{{\beta + 2 \alpha,[x]} }(j^*)(\bar{t}_{- \beta - 3 \alpha}) = \bar{t}_{- \beta - 3 \alpha} + x (-L_{j^*},L_{j^*})_{- \alpha} + x^2 (j^{*\sharp})_{\beta + \alpha} - x^3 (N(j)L_{(t - \bar{t})})_{2 \beta + 3 \alpha}\]
	and
	\[\exp_{- \beta - 2 \alpha,[x]}(j)((t - \bar{t})_{2 \beta + 3 \alpha}) = (L_{(t - \bar{t})})_{2 \beta + 3 \alpha}+ x j_{ \beta + \alpha } + x^2 (L_{j^\sharp},- L_{j^\sharp})_{- \alpha} +  x^3 N(j) \bar{t}_{- \beta - 3 \alpha}.\]
	on $R_{- \beta - 3 \alpha} \oplus J'_{- \alpha} \oplus J_{\beta + \alpha} \oplus R_{2 \beta + 3\alpha}$.
	When we act on \[R_{- 2 \beta - 3 \alpha } \oplus J'_{- \beta - \alpha} \oplus J_{\alpha} \oplus R_{\beta + 3 \alpha},\] we use $\tau_\beta \exp_{- \beta - 2 \alpha}(j) \tau_\beta^{-1}$ as definition.
	
	The signs used in these representations are precisely the ones such that
	\[ \exp_{{\beta + 2 \alpha, [\epsilon]}}(j) = 1 + \epsilon (\ad j) \mod (\epsilon^2).\]
\end{construction}

\begin{lemma}
	The divided power representation of $(J'_{\beta + 2 \alpha },J_{- \beta - 2 \alpha})$ is a representation as automorphisms $\exp_{\pm(\beta + 2 \alpha)}(j)$ of the Lie algebra.
	\begin{proof}
		We remark that the Lie algebra is generated by $\mathcal{A}_1$ and $\mathcal{A}_{-1}$.
		In fact, only the subspaces $R_{\pm \beta}, R_{\pm ( \beta + 3 \alpha)},$ and $J'_{\beta + 2 \alpha}$ and $J_{- \beta - 2 \alpha}$ are necessary to generate the Lie algebra.
		The action on $R_{\beta}$ and $R_{- \beta}$ of $(J'_{\beta + 2 \alpha },J_{- \beta - 2 \alpha})$ is trivial. It is easily checked that the action of $(J'_{\beta + 2 \alpha},J_{- \beta - 2 \alpha})$ commutes with $\ad R_\beta$ and $\ad R_{- \beta}$, using the definition of $\exp_{\pm (\beta + 2 \alpha)}$ and the grading.
		
		For brevity, we will write $\gamma$ for $\beta + 2 \alpha$.		
		Furthermore, the action of $(J'_{\gamma},J_{- \gamma})$ interacts correctly with $\ad J'_\gamma$ and $\ad J_{- \gamma}$, i.e., \[\exp_\gamma(j) (\ad \; u) \exp_{\gamma}(j)^{-1} = \ad \; \exp_{\gamma}(j) \cdot u,\]  by Lemma \ref{lem: TKK rep}.
		
		So, we only have to prove that $\exp_\gamma(j) \ad r \exp_\gamma(- j) = \ad (\exp_\gamma(j) \cdot r)$ for $r \in R_{\beta + 3 \alpha} \oplus R_{- \beta - 3 \alpha}$ and similarly for $\exp_{-\gamma}(j)$.
		
		 For $r \in R_{\beta + 3 \alpha}$ this follows from \[\exp_\gamma(j) \ad r \exp_\gamma(- j) = \ad r + f\] with $f$ acting as $+ 2 \beta + 5 \alpha$ on the grading, as other parts are at least $3 \beta + 7 \alpha$ graded and therefore act trivially. The element $f$ has to be $\ad [j,r]$ by the Jacobi identity. Hence, $f = 0$. 
		 
		 Now, we verify that
		 \[ \exp_\gamma(j) (\ad a) \exp_{\gamma} (-j) = \ad (\exp_\gamma(j) \cdot a) \]
		 for $a \in R_{- \beta - 3 \alpha}$.
		 This is equivalent to verifying that $\exp_\gamma(j)$ acts as an automorphism on elements of the form $[a,b]$ with $a \in R_{- \beta - 3 \alpha}$ and $b$ an arbitrary element contained in the Lie algebra. We do this by assuming that $b$ is homogeneous with respect to the grading and considering each of the possibilities for $b$ individually, in increasing order of difficulty. 
		
		For $b \in R_{\beta}$ and $R_{2 \beta + 3\alpha}$ this holds trivially by the grading. For $b \in R_{- \beta - 3 \alpha}$ we note that $[\exp(j_\gamma) a, \exp(j_\gamma) b] = 0$ since $a$ and $b$ are linearly dependent.
		
		For $b \in J_{\beta + \alpha}, R_{\beta + 3 \alpha}$ this is easily checked using the Jacobi identity, since
		\[ \exp_\gamma(j) [a,b] \in [a,b] + L_{\delta + \gamma}\]
		with $\delta$ root corresponding to $[a,b]$.
		
		For $b \in R_{-\beta}$ it follows from the definition of $\exp(j_\gamma)$ used on $R_{- \beta - 3 \alpha}$ involving $\tau_\beta$, since the restriction to $R_{-\beta - 3 \alpha}$ of $\tau_\beta$ is $\ad \pm 1_{- \beta}$.
		
		For $b \in J'_{2 \alpha + \beta}$ and $J_{- 2 \alpha - \beta}$ it follows from the fact that we have a divided power representation by Lemma \ref{lem: TKK rep}. For $b \in X$ we can use that $X = [J'_{\gamma},J_{-\gamma}] + R^2$ with $R^2$ the natural torus corresponding to the $R_\delta$, with which $\exp_{\gamma}(j)$ interacts nicely, so that $\exp_{\gamma}(j)$ interacts nicely with $X$ by Lemma \ref{lem: TKK rep}.
		
		For $b \in J_{\alpha}, J'_{- \beta - \alpha}$ and $R_{- 2 \beta - 3 \alpha}$ the check is a bit more involved.
		A direct computation shows that \[\exp_\gamma(j) [\bar{t}_{- \beta - 3 \alpha}, (L_{v},- L_{v})] = \exp_\gamma(j) v_{- \gamma} = v_{- \gamma} + (- V_{j,v}, V_{v,j}) + (Q_{j} v)_{\gamma}. \]
		On the other hand, if we compare this to 
		\[[\exp_\gamma(j) \cdot \bar{t}_{- \beta - 3 \alpha}, \exp_\gamma(j) \cdot (L_v,- L_v)] =   - L_{v} (j^\sharp) + L_{j^*} T(j,v)  \mod X \oplus J_{- \beta - 2 \alpha},\]
		using \[ - L_{v} (j^\sharp) + L_{j^*} T(j,v) =-  (j^\sharp \times v - T(j,v)j)^* = (Q_j v)^*,\]
		we obtain that $\exp_\gamma(j)$ interacts as an automorphism with $[a,b]$ when $b \in J_\alpha$ (the part in the spaces we dropped will always be correct, as the only check reduces to the Jacobi identity).
		A similar computation works for $b \in J'_{- \beta - \alpha}$. Namely, one should only check whether the parts of the results in $X \oplus J'_{\beta + 2 \alpha}$ are what they should be. For $J'_2$ this follows from $(a^\sharp)^\sharp = N(a)a$. 
		A direct computation for the $X$-part yields the same, if one uses that $- T(a,b) V_{t,\bar{t}} + [L_a,L_{b^*}] = V_{a,b^*}$.
		
		Lastly, we should check what happens if $b \in R_{- 2 \beta - 3 \alpha}$.
		Using the automorphism $\tau_{\beta + 3 \alpha}$ and $R$-linearity, we can assume that we want to check
		\[ [ t_{\beta + 3 \alpha} + j^*_{\beta + 2\alpha} + j^\sharp_{\beta + \alpha} + N(j)\bar{t}_{- \beta}, t_{- \beta} - j^*_{ - \beta - \alpha} + j^\sharp_{- \beta - 2 \alpha} - N(j)\bar{t}_{- \beta - 3 \alpha}] = 0.\]
		Checking the parts which are homogeneous in $j$ and not trivially zero by the grading, we obtain that we should verify
		\[ V_{t,j^\sharp} - V_{j^*,j^*} + V_{j^\sharp,t} = 0,\]
		\[ N(j)(V_{\bar{t},t} - V_{t,\bar{t}}) + V_{j^*,j^\sharp} - V_{j^\sharp,j^*} = 0,\]
		and
		\[ - N(j) (V_{\bar{t},j^*} + V_{j^*,\bar{t}}) + V_{j^\sharp,j^\sharp} = 0. \]
		For the first equation, we have
		\[ V_{t,j^\sharp} + V_{j^\sharp,t} - V_{j^*,j^*} = L_{j^\sharp} + L_{j^\sharp} - L_{j \times j} = 0.\]
		For the second, we remark that $D_{j^\sharp,j} a = - (a \times j^\sharp) \times j + T(a,j) j^\sharp + 3 N(j) a = 2 N(j) a$ by the third axiom for cubic norm pairs and that $D_{j,j^\sharp} a = 2 N(j) a$ as well, so that
		\[ V_{j^*,j^\sharp} - V_{j^\sharp,j^*} = N(j) M(\begin{pmatrix}
			3 & 1 \\ -1 & -3
		\end{pmatrix})\]
		where $M$ stands for entry-wise matrix multiplication instead of the ordinary matrix multiplication. Computing $N(j)(V_{\bar{t},t} - V_{t,\bar{t}})$, yields the desired result.
		The last equation follows from the first equation if one substitutes $j^\sharp$ for $j$ and uses that $(j^\sharp)^\sharp = N(j) j$.
		
		This finalizes the proof that $\exp_\gamma(j) (\ad \; a )\exp_\gamma(j)^{-1} = \ad \;(\exp_\gamma j \cdot a)$ for $a$ in a certain generating set. Hence, $\exp_\gamma(j)$ is an automorphism. Analogous argumentation shows that $\exp_{- \gamma}(j)$ is an automorphism as well.
	\end{proof}
\end{lemma}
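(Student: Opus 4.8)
The plan is to use the standard fact that a bijective linear endomorphism $\phi$ of a Lie algebra $L$ is an automorphism as soon as $\phi([a,b]) = [\phi(a),\phi(b)]$ holds for all $a$ in a generating set of $L$ and all $b \in L$ (induct on the word length needed to express an element via the generators). Each $\exp_{\pm\gamma}(j)$ with $\gamma := \beta+2\alpha$ is a unipotent polynomial transformation, hence bijective, so it suffices to verify the intertwining relation $\exp_\gamma(j)\,(\ad a)\,\exp_\gamma(j)^{-1} = \ad\!\big(\exp_\gamma(j)\cdot a\big)$ for $a$ ranging over a generating set, and analogously for $\exp_{-\gamma}(j)$. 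From Lemma \ref{lem: Vs} and the explicit bracket formulas one sees that $\mathcal{A}_1$ and $\mathcal{A}_{-1}$ generate $\mathcal{L}(\mathcal{A})$, and in fact already $R_{\pm\beta}$, $R_{\pm(\beta+3\alpha)}$, $J'_{\beta+2\alpha}$ and $J_{-\beta-2\alpha}$ suffice; these are the cases I would treat.

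The routine cases are dispatched through the $G_2$-grading. The action of $(J'_\gamma,J_{-\gamma})$ on $R_{\pm\beta}$ is trivial and commutes with $\ad R_{\pm\beta}$ for grading reasons; for $a \in J'_\gamma \cup J_{-\gamma}$ the relation is exactly the ``moreover'' part of Lemma \ref{lem: TKK rep}, since $\exp_{\pm\gamma}$ restricted to $U_{-\gamma}\oplus X \oplus U_\gamma$ is the TKK divided power representation; and for $a \in R_{\beta+3\alpha}$, conjugation modifies $\ad a$ only by a summand of grading $\gamma + (\beta+3\alpha) = 2\beta+5\alpha$ (the higher terms vanishing as operators on $\mathcal{L}(\mathcal{A})$ for grading reasons), and this summand equals $\ad[j,a]$ by the Jacobi identity, hence is $0$ because $U_{2\beta+5\alpha}=0$.

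The real content is the case $a \in R_{-\beta-3\alpha}$. Here I would instead verify directly that $\exp_\gamma(j)$ respects every bracket $[a,b]$ with $b$ homogeneous, going through the possibilities for $b$ in increasing order of difficulty: $b \in R_\beta$ or $R_{2\beta+3\alpha}$ gives $0$ on both sides by the grading; $b \in R_{-\beta-3\alpha}$ is trivial since $a,b$ are then proportional; for $b \in J_{\beta+\alpha}$ or $R_{\beta+3\alpha}$, $\exp_\gamma(j)$ alters $[a,b]$ only in a single higher summand, so the Jacobi identity suffices; $b \in R_{-\beta}$ follows from the definition of $\exp_\gamma(j)$ on $R_{-\beta-3\alpha}$ through $\tau_\beta$, whose restriction there is $\ad(\pm 1_{-\beta})$; $b \in J'_{\beta+2\alpha}$ or $J_{-\beta-2\alpha}$ is the divided power identity of Lemma \ref{lem: TKK rep}; and $b \in X$ reduces, via $X = [J'_\gamma,J_{-\gamma}] + R^2$ (the natural torus corresponding to the $R_\delta$), to the previous cases together with Lemma \ref{lem: TKK rep}.

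The step I expect to be the main obstacle is the remaining subcase $b \in J_\alpha \cup J'_{-\beta-\alpha}\cup R_{-2\beta-3\alpha}$. For this one computes $\exp_\gamma(j)$ applied to explicit brackets such as $[\bar t_{-\beta-3\alpha},(L_v,-L_v)] = v_{-\gamma}$ and compares it with $[\exp_\gamma(j)\cdot \bar t_{-\beta-3\alpha},\, \exp_\gamma(j)\cdot (L_v,-L_v)]$; matching the homogeneous components then forces the cubic norm pair axioms of Definition \ref{def: cubic norm pair} into play, e.g.\ $j^\sharp\times v - T(j,v)j = -Q_j v$ for the $J_\gamma$-component, the identities $V_{t,j^\sharp}+V_{j^\sharp,t} = V_{j^*,j^*}$ and $D_{j^\sharp,j} = 2N(j)\,\mathrm{Id}$ (the third axiom) for the $X$-component, and $(j^\sharp)^\sharp = N(j)j$ for the $J'_\gamma$-component, while for $b \in R_{-2\beta-3\alpha}$ one first conjugates by $\tau_{\beta+3\alpha}$ to reduce to the vanishing of one explicit bracket of two $\exp$-images, which again unwinds into the same list of identities. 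The delicate part is controlling the signs introduced by the $\tau_\beta$- and $\tau_{\beta+3\alpha}$-conjugations in Construction \ref{def: action J2}. Once $a \in R_{-\beta-3\alpha}$ is settled, the argument for $\exp_{-\gamma}(j)$ is entirely symmetric, which finishes the proof.
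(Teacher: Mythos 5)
Your proposal is correct and follows essentially the same route as the paper: reduce to the intertwining relation on the generating set $R_{\pm\beta}, R_{\pm(\beta+3\alpha)}, J'_{\beta+2\alpha}, J_{-\beta-2\alpha}$, dispatch the easy cases via the grading and Lemma \ref{lem: TKK rep}, and settle $a \in R_{-\beta-3\alpha}$ by the same case-by-case check over homogeneous $b$, using the same identities (including the $\tau_\beta$ and $\tau_{\beta+3\alpha}$ reductions and the cubic norm pair axioms) that the paper invokes.
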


\begin{construction}
	For two pairs of opposite short roots $(\gamma, - \gamma)$ and $(\delta, - \delta)$ that correspond to the pair $(J',J)$, there exists a unique $\tau_\rho$ for a long root $\rho$ which maps one pair to the other pair. Hence, we have divided power representations for all $(J'_\gamma, J_\gamma)$.
\end{construction}

\subsection{Commutator relations} Now, we determine how the groups $R_\gamma, J_\gamma$ and $J'_\gamma$, as long as these spaces do not have opposite grading, commute. First, we prove some lemmas that diminish the amount of necessary computations. Afterwards, we determine the commutators.

First, we try to get a grip on the possible derivations of our Lie algebra.
To begin, it is useful to prove that homogeneous parts of derivations are derivations. Afterwards, we can decompose possibly annoying derivations into manageable parts.
Once we have a solid grasp on the derivations, we can start to compute the commutator relations. 

\begin{lemma}
	Suppose that $d$ is a derivation of a $\mathbb{Z}$-graded algebra $A = \bigoplus A_i$ with projection operators $\pi_i : A \longrightarrow A_i$, then each of the homogeneous components of $d$ is a derivation as well.
	\begin{proof}
			If $a,b \in A$ are homogeneous of degree $k$ and $l$, then we have
		\[ d_i(ab) = \pi_{k + l + i}d(ab) = \pi_{k + l + i}(d(a)b + ad(b) ) = d_i(a)b + ad_i(b),\]
		using $d_i$ to denote the part of $d$ that maps $A_j \longrightarrow A_{i + j}$ for all $j \in \mathbb{Z}$ and $\pi_i$ for the projection $A \longrightarrow A_i$.
	\end{proof}
\end{lemma}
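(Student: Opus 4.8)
The plan is to first make precise what the homogeneous components of $d$ are, and then reduce the derivation identity to homogeneous inputs, where the grading does all of the work. For each $i \in \mathbb{Z}$ I would define a map $d_i \colon A \longrightarrow A$ by declaring $d_i|_{A_j} = \pi_{i+j} \circ d|_{A_j}$ for every $j$, and extending $R$-linearly; since every element of $A = \bigoplus_j A_j$ is a finite sum of homogeneous elements, $d_i$ is well defined, and by construction $d_i$ maps $A_j$ into $A_{i+j}$. Note that one does not need (nor, for an infinite grading, can one expect) the sum $\sum_i d_i$ to converge to $d$; it is these individual maps $d_i$ that are the homogeneous components, and it is each of them separately that must be shown to be a derivation.

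The second step is to check the Leibniz rule for $d_i$ on homogeneous arguments $a \in A_k$ and $b \in A_l$. Here the key observation is that both $d(a)b$ and $a\,d(b)$ are ``block triangular'' for the grading: if $x = \sum_p \pi_p(x)$ then $\pi_p(x)\,b \in A_{p+l}$ because $A_p A_l \subseteq A_{p+l}$ and the decomposition is direct, so $\pi_m(xb) = \pi_{m-l}(x)\,b$, and symmetrically $\pi_m(ax) = a\,\pi_{m-k}(x)$. Applying $\pi_{k+l+i}$ to $d(ab) = d(a)b + a\,d(b)$ therefore yields
\[
 d_i(ab) = \pi_{k+l+i}\bigl(d(a)b\bigr) + \pi_{k+l+i}\bigl(a\,d(b)\bigr) = \pi_{k+i}\bigl(d(a)\bigr)\,b + a\,\pi_{l+i}\bigl(d(b)\bigr) = d_i(a)\,b + a\,d_i(b),
\]
which is exactly the displayed computation. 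Passing to arbitrary $a, b \in A$ then follows from bilinearity of the multiplication together with bilinearity of $(x,y) \mapsto d_i(x)y + x\,d_i(y)$.

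There is essentially no obstacle here; the only points requiring (routine) care are formal. One uses only the grading compatibility $A_p A_q \subseteq A_{p+q}$ and directness of the decomposition, together with $R$-linearity of $d$ and of the $\pi_i$; in particular associativity, commutativity, and the presence of a unit play no role, so the lemma holds for an arbitrary $\mathbb{Z}$-graded algebra, which is the generality needed for the later applications to $\mathcal{L}(\mathcal{A})$ and to $\End(\mathcal{L}(\mathcal{A}))$ with its various $\mathbb{Z}$- and $\mathbb{Z}^2$-gradings. The same argument works verbatim with any grading group in place of $\mathbb{Z}$.
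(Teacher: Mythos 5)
Your proof is correct and follows essentially the same route as the paper's: define $d_i$ via $\pi_{i+j}\circ d|_{A_j}$, then for homogeneous $a\in A_k$, $b\in A_l$ apply $\pi_{k+l+i}$ to the Leibniz identity and use that $\pi_{k+l+i}(d(a)b)=\pi_{k+i}(d(a))b$ and symmetrically. The paper compresses this into one displayed line; you merely make the block-triangularity of multiplication and the extension by bilinearity explicit.
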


\begin{lemma}
	\label{lem: no deriv with deg > 2}
	Consider the $\mathbb{Z}$-grading of the Lie algebra $\mathcal{L}(\mathcal{A})$ induced by a long root $\gamma$ and let $\delta, \delta'$ be short roots such that $\delta + \delta' = \gamma$. Each derivation of homogeneous degree $> 2$ or $< -2$ decomposes into $\pm 3\delta$-graded and $\pm 3\delta'$-graded derivations.
	Any such derivation is uniquely determined by its restriction to $R_{\pm\gamma}$.
	\begin{proof}
		Suppose, without loss of generality, that we look at the usual $\mathbb{Z}$-grading (i.e., $\gamma = \pm (2 \beta + 3 \alpha)$).
		The Lie algebra is generated by $R_{\beta}$, $R_{\beta + 3 \alpha}$, and $\mathcal{A}_{-1}$.		
		Hence, any derivation which acts as zero on these spaces, acts trivially on the whole Lie algebra. In particular, any derivation that acts as at least $-3$ on the grading must be $- 3 \delta$ or $-3 \delta'$ graded for the two short roots such that $\delta + \delta' = 2 \beta + 3 \alpha$, since these are $- (2 \beta + 3 \alpha) - \beta$ and $- (2 \beta + 3 \alpha) - (\beta + 3 \alpha)$.
		Considering the generating set $\mathcal{A}_{-1} \oplus R_{2 \beta + 3 \alpha}$, shows that the derivation is uniquely determined by its restriction to $R_{2 \beta + 3 \alpha}$.
		
		A similar argument works for derivations that are at least $+3$-graded. 
	\end{proof}
\end{lemma}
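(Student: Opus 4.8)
The plan is to reduce to the usual $\mathbb{Z}$-grading and then run a short degree count against the explicit brackets of Construction~\ref{con: lie}. Conjugating with a suitable automorphism $\tau_\rho$ — the long roots of $G_2$ form a single orbit under these maps — I would first arrange $\gamma = 2\beta + 3\alpha$, so that the induced grading is the usual one, $\mathcal{L}(\mathcal{A}) = \mathcal{S}_{-2}\oplus\mathcal{A}_{-1}\oplus\mathfrak{instr}(\mathcal{A})_0\oplus\mathcal{A}_1\oplus\mathcal{S}_2$, with $\mathcal{S}_{\pm 2} = R_{\pm\gamma}$. Since the homogeneous components of a derivation (with respect to any group grading) are again derivations, and the $\mathbb{Z}^2$-grading of Remark~\ref{rmk: grading} refines this $\mathbb{Z}$-grading, it then suffices to treat a single $\mathbb{Z}^2$-homogeneous derivation $d$ of some degree $v$; conjugating with $\tau_\gamma$ if necessary, I may assume its $\mathbb{Z}$-degree $k$ is positive, so $k \ge 3$.

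Next comes the degree count. Because $L_i = 0$ for $|i| > 2$, a derivation of degree $k \ge 3$ annihilates $\mathfrak{instr}(\mathcal{A})\oplus\mathcal{A}_1\oplus\mathcal{S}_2$, and if $k \ge 4$ it annihilates $\mathcal{A}_{-1}$ as well; in that case it also kills $\mathcal{S}_{-2} = [\mathcal{A}_{-1},\mathcal{A}_{-1}]$, since the bracket $\mathcal{A}_{-1}\times\mathcal{A}_{-1}\to\mathcal{S}_{-2}$ of Construction~\ref{con: lie} already hits a generator of $\mathcal{S}_{-2}$ through $[t_{-1},\bar{t}_{-1}]$, and hence $d = 0$. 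This disposes of the cases $|k| \ge 4$ (with empty decomposition), so I may assume $k = 3$.

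For $k = 3$ the derivation $d$ kills every component of nonnegative degree, in particular $d|_{R_\gamma} = 0$. The key remark is that $\mathcal{A}_{-1} = [\mathcal{S}_{-2},\mathcal{A}_1]$: the action of $\mathcal{S}_{-2}$ on $\mathcal{A}_1$ is left multiplication by a generator of $\mathcal{S}$, and a direct check with the multiplication rule of $\mathcal{A}$ shows that $L_{t - \bar{t}}$ is bijective. Then for $x = [s_{-2}, b_1]$ with $b_1 \in \mathcal{A}_1$ one gets $d x = [d s_{-2}, b_1] + [s_{-2}, d b_1] = [d s_{-2}, b_1]$, so $d|_{\mathcal{A}_{-1}}$ — and therefore all of $d$ — is determined by $d|_{\mathcal{S}_{-2}} = d|_{R_{-\gamma}}$. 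Together with $d|_{R_\gamma} = 0$ this is the uniqueness assertion, and it reduces the decomposition statement to proving that $d|_{R_{-\gamma}} = 0$ whenever $v \notin \{3\delta, 3\delta'\}$.

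For that last step I would use $R_{-\gamma} = \mathcal{S}_{-2} = [R_{-\beta}, R_{-\beta-3\alpha}]$, where $R_{-\beta}$ and $R_{-\beta-3\alpha}$ are the two diagonal summands $Rt$ and $R\bar{t}$ of $\mathcal{A}_{-1} \subset L_{-1}$ and this bracket is again surjective by Construction~\ref{con: lie}. As $d$ has $\mathbb{Z}^2$-degree $v$ and sends $\mathcal{A}_{-1}$ into $\mathcal{S}_2 = R_\gamma$, its restriction to $R_{-\beta}$ can be nonzero only when the target root $-\beta + v$ equals $\gamma$, that is $v = \gamma + \beta = 3(\beta+\alpha) = 3\delta$; likewise $d|_{R_{-\beta-3\alpha}} \ne 0$ forces $v = \gamma + \beta + 3\alpha = 3(\beta+2\alpha) = 3\delta'$. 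Hence if $v \notin \{3\delta, 3\delta'\}$ both restrictions vanish, so $d|_{R_{-\gamma}} = d([R_{-\beta},R_{-\beta-3\alpha}]) = 0$ and $d = 0$ by the previous step; thus every degree-$3$ derivation is the sum of its $3\delta$- and $3\delta'$-graded parts, and conjugating by $\tau_\gamma$ gives the case $k = -3$. I expect the only genuine work to be the bookkeeping: matching $R_{\pm\beta}$ and $R_{\pm(\beta+3\alpha)}$ with the four ``$R$''-summands of $\mathcal{A}_{\pm1}$ and verifying the surjectivity of $\mathcal{A}_{-1}\times\mathcal{A}_{-1}\to\mathcal{S}_{-2}$ and $\mathcal{S}_{-2}\times\mathcal{A}_1\to\mathcal{A}_{-1}$ from the multiplication on $\mathcal{A}$; the degree count and the two derivation identities are routine.
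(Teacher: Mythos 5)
Your argument is correct and follows essentially the same route as the paper's: reduce to the usual $\mathbb{Z}$-grading, split the derivation into $\mathbb{Z}^2$-homogeneous pieces, and use a degree count against a small generating set of root spaces (you use $\mathcal{A}_1$ together with $R_{-\beta},R_{-\beta-3\alpha}$, i.e.\ the mirror image of the paper's $\mathcal{A}_{-1}$, $R_{\beta}$, $R_{\beta+3\alpha}$) to force the degree to be $\pm 3\delta$ or $\pm 3\delta'$ and to get uniqueness from the restriction to $R_{\mp\gamma}$. The only cosmetic difference is that you treat positive degrees directly and transfer the negative case by conjugating with $\tau_\gamma$, where the paper argues the negative case and says the other is similar; both are fine.
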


\begin{lemma}
	\label{lem: deriv deg 2 easily computed}
	Let $\gamma = 2 \alpha + \beta$.
	Suppose that $d$ is a derivation with grading $2 \gamma$, then $d$ is uniquely determined by $d(1)$ for $1 \in R_{- 2\beta - 3 \alpha}$ (or $1 \in R_{- \beta - 3 \alpha}$).
	The same holds for all other short roots $\gamma$, i.e., then the action of any $2 \gamma$-graded derivation on $1_\delta$ for any $\delta$ which is $-3$-graded with respect to $\gamma$ determines the derivation uniquely.
	\begin{proof}
		Using that $R_{- \beta - 3 \alpha}$ and $\mathcal{A}_1$ generate the Lie algebra and that $d$ is a derivation, shows that the lemma holds. For different roots, one finds similar generating sets.
	\end{proof}
\end{lemma}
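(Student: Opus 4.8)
The plan is to exploit that a derivation of bidegree $2\gamma$ annihilates most of $\mathcal{L}(\mathcal{A})$ for grading reasons, so that its restriction to one one-dimensional homogeneous component already determines it on a generating set. First I would use the $\mathbb{Z}^2$-grading of Remark~\ref{rmk: grading} to locate where $d$ can be nonzero: a derivation of bidegree $2\gamma = 2\beta+4\alpha$ sends $L_{x\alpha+y\beta}$ into $L_{(x+4)\alpha+(y+2)\beta}$, which vanishes unless the shifted label is again one of the thirteen roots of the $G_2$-grading. Running through the thirteen components, one finds that $d$ is forced to vanish on $\mathcal{A}_1$, on $\mathcal{S}_{2\beta+3\alpha}$, on all of $\mathfrak{instr}(\mathcal{A})$ except the piece $J'_{-\alpha}$, and on the component $R_{-\beta}$ of $\mathcal{A}_{-1}$; the only components on which $d$ may act nontrivially are $R_{-2\beta-3\alpha}$, $R_{-\beta-3\alpha}$, $J_{-\beta-2\alpha}$, $J'_{-\beta-\alpha}$, and $J'_{-\alpha}$. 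The crucial consequence is $d|_{\mathcal{A}_1}=0$ (equivalently: $2\gamma$ raises the $\gamma$-grading by $4$ while $\mathcal{A}_1$ lies in $\gamma$-grades $0,1,2,3$).

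Next I would propagate from $\mathcal{S}_{-2}=R_{-2\beta-3\alpha}$. Writing $1_{\mathcal{S}}$ for the generator of the rank-one module $\mathcal{S}_{-2}$, Construction~\ref{con: lie} shows that $\ad 1_{\mathcal{S}}$ acts on $\mathcal{A}_1$ through the left multiplication $L_{1_{\mathcal{S}}}$ on $\mathcal{A}$; a short computation in $\mathcal{A}$ shows $L_{1_{\mathcal{S}}}$ is $\pm\mathrm{id}$ on each of the four graded pieces of $\mathcal{A}$, hence $[\mathcal{S}_{-2},\mathcal{A}_1]=\mathcal{A}_{-1}$. Since $\mathcal{L}(\mathcal{A})$ is generated by $\mathcal{A}_1\oplus\mathcal{A}_{-1}$ — indeed already by $R_\beta$, $R_{\beta+3\alpha}$ and $\mathcal{A}_{-1}$, as in the proof of Lemma~\ref{lem: no deriv with deg > 2} — it follows that $\mathcal{A}_1\cup\mathcal{S}_{-2}$ generates $\mathcal{L}(\mathcal{A})$. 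Then, once $d|_{\mathcal{S}_{-2}}$ is known, for $z=[1_{\mathcal{S}},a]\in\mathcal{A}_{-1}$ (with $a\in\mathcal{A}_1$) we get $d(z)=[d(1_{\mathcal{S}}),a]+[1_{\mathcal{S}},d(a)]=[d(1_{\mathcal{S}}),a]$ by the first step, so $d|_{\mathcal{A}_{-1}}$ is determined, and hence $d$ is determined on the generating set $\mathcal{A}_1\cup\mathcal{A}_{-1}$ and therefore on all of $\mathcal{L}(\mathcal{A})$. This settles the case $1\in R_{-2\beta-3\alpha}$.

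For the variant with $1\in R_{-\beta-3\alpha}$ I would note, again from the bracket formula of Construction~\ref{con: lie}, that $[R_{-\beta-3\alpha},R_{-\beta}]=\mathcal{S}_{-2}$ inside $\mathcal{A}_{-1}$; since $d$ kills $R_{-\beta}$ by the first step, $d([x,y])=[d(x),y]$ for $x\in R_{-\beta-3\alpha}$ and $y\in R_{-\beta}$, so $d|_{\mathcal{S}_{-2}}$ is already determined by $d|_{R_{-\beta-3\alpha}}$ and the previous paragraph applies. Finally, for an arbitrary short root $\gamma'$ I would apply an automorphism from the group generated by the $\tau_\rho$ with $\rho$ a long root: these realise the reflections $s_\rho$ on the $G_2$-grading, and the $s_\rho$ permute the three pairs of short roots transitively, so a suitable product of them carries $\gamma$ to $\pm\gamma'$. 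The corresponding automorphism maps $2\gamma$-graded derivations to $2\gamma'$-graded derivations and the root spaces that are $-3$-graded with respect to $\gamma$ onto those $-3$-graded with respect to $\gamma'$, reducing the general statement to the case already treated.

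The step I expect to be the main obstacle is the generation claim: one must be sure that $\mathcal{A}_1$ together with the single one-dimensional space $\mathcal{S}_{-2}$ generates all of $\mathcal{L}(\mathcal{A})$, and this rests squarely on $L_{1_{\mathcal{S}}}$ being invertible on $\mathcal{A}$ — a feature of this particular structurable algebra, not a formal consequence of the $G_2$-grading. (In particular, $\mathcal{A}_1$ together with $R_{-\beta-3\alpha}$ alone does \emph{not} generate $\mathcal{L}(\mathcal{A})$, which is why the detour through $\mathcal{S}_{-2}$ is needed in that case.) Everything else is routine grading bookkeeping combined with bracket formulas already established.
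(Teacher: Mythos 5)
Your proposal is correct, and it follows the same basic strategy as the paper (the $\mathbb{Z}^2$-grading kills $d$ on almost everything, then the Leibniz rule propagates $d$ along a generating set), but it executes that strategy with a different --- and in fact corrected --- generating set. The paper's one-line proof rests on the claim that $R_{-\beta-3\alpha}$ and $\mathcal{A}_1$ generate $\mathcal{L}(\mathcal{A})$; as you point out, they do not: a weight count (each generator has $\beta$-coefficient $+1$ except $-\beta-3\alpha$, whose $\alpha$-coefficient $-3$ is too negative to be compensated) shows the subalgebra they generate only meets the components with gradings in $\{-\beta-3\alpha,\,-\alpha,\,0,\,\beta,\,\beta+\alpha,\,\beta+2\alpha,\,\beta+3\alpha,\,2\beta+3\alpha\}$, so it misses $R_{-\beta}$, $J'_{-\beta-\alpha}$, $J_{-\beta-2\alpha}$, $J_\alpha$ and $\mathcal{S}_{-2}$. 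Your repair is the right one: $\mathcal{S}_{-2}\cup\mathcal{A}_1$ does generate, because $L_{t-\bar t}$ is invertible on $\mathcal{A}$, so $[\mathcal{S}_{-2},\mathcal{A}_1]=\mathcal{A}_{-1}$, and the case of $1\in R_{-\beta-3\alpha}$ reduces to the case of $\mathcal{S}_{-2}$ via $[R_{-\beta-3\alpha},R_{-\beta}]=\mathcal{S}_{-2}$ together with $d(R_{-\beta})=0$. Your grading bookkeeping at the start (which components $d$ can hit) is also correct.

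The one genuine gap is in your final reduction for the other short roots. The $\tau_\rho$ with $\rho$ long realize only the Weyl group of the long $A_2$-subsystem, and that $S_3$ has two orbits of size three on the short roots: the orbit of $\beta+2\alpha$ is $\{\beta+2\alpha,\,-\alpha,\,-\beta-\alpha\}$, and $-\mathrm{id}$ is not in this group (grade-reversing elements for short roots are not available for a general cubic norm pair, since they would need invertible norms). So conjugation by products of the $\tau_\rho$ proves the statement only for those three short roots; for $\gamma'\in\{\alpha,\,\beta+\alpha,\,-\beta-2\alpha\}$ your automorphism turns a $2\gamma$-graded derivation into a $-2\gamma'$-graded one, which is not the instance of the lemma you need for $\gamma'$. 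This is easily patched: the argument for $\gamma=\beta+2\alpha$ dualizes verbatim to $-\gamma$ (a $-2\gamma$-graded derivation kills $\mathcal{A}_{-1}$, the set $\mathcal{A}_{-1}\cup\mathcal{S}_2$ generates, and $[R_{\beta+3\alpha},R_\beta]=\mathcal{S}_2$ with $d(R_\beta)=0$ handles the $R_{\beta+3\alpha}$ variant), after which transport by the $\tau_\rho$ covers all six short roots --- this is what the paper's phrase ``similar generating sets'' is implicitly doing.
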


\begin{lemma}
	\label{lem: explanation easy computation}
	\begin{enumerate}
		\item Consider $R_\gamma$, An element $e = 1 + e_1 + e_2$ is an automorphism of $\mathcal{L}(\mathcal{A})$ with $e_i$ having grading $i \gamma$, if and only if $e = \exp_\gamma(r)$ with $e_1 = \ad r$.
		\item 	If $\gamma$ is a root corresponding to a $J$ (or $J'$), and $e = 1 + e_1 + e_2 +e_3 + e_4$ is an automorphism with $e_i$ having grading $i \gamma$, then $e$ is uniquely determined by $(e_1,e_2,e_3)$. Furthermore, we can uniquely decompose $e = \exp_\gamma(j) f$ with $j \in J_\gamma$ (or $J'_\gamma$) and a certain automorphism $f = 1 + f_2 + f_3 + f_4$, with $f = 0$ if and only if the derivations $f_2, f_3$ are $0$.
		\item If $\gamma$ is not $0$, not a root, nor twice a short root, nor thrice a short root, then all automorphisms of the form $e = 1 + e_1 + e_2 + e_3 + \dots $ with $e_i$ acting as $+ \gamma i$ on the grading are trivial, i.e., $e = 1$.
	\end{enumerate}	
	\begin{proof}
		This follows from Lemma \ref{lem: no deriv with deg > 2}, using that two automorphisms $e, e'$ satisfying the conditions of this lemma can be used to determine a new automorphism $f = e(e)'^{-1}$ satisfying the same conditions. We apply this technique to each of the cases.
		
		We remark that the first nonzero $g_i$ in $g = 1 + \sum_i g_i$ has to be a derivation for $g$ to be an automorphism, which follows from comparing homogeneous parts in $[g \cdot a, g \cdot b] = g \cdot [a,b]$. 
		
		For the first case, note that $e_1 = \ad r$ where $r$ is the unique element such that $\pm e_1 = [e_1, [1_{\delta},1_{-\delta}]] = \pm r$, as long as $\delta$ is a long root different from $\pm \gamma$, with the sign in front of $e_1$ and $r$ depending on $\delta$ and $\gamma$.
		Now, $e \exp_\gamma( - r) = 1 + \tilde{e}_4$. We conclude that $\tilde{e}_4 = 0$ by Lemma \ref{lem: no deriv with deg > 2}.
		
		For the second case, we can also use $\pm e_1 = [e_1, [1_{\delta},1_{{-\delta}}]] = \pm j$, as long as $J_\gamma$ (or $J'_\gamma$ ) is not trivially graded with respect to $\delta$.
		Now, $e \exp_\gamma(-j) = 1 + f_2 + f_3 + f_4$. One verifies that $f_2$ and $f_3$ are derivations. Moreover, if they are $0$, Lemma \ref{lem: no deriv with deg > 2} shows that $f_4 = 0$.
		
		In the last case, one can show inductively that all $e_i$ are $0$ using Lemma \ref{lem: no deriv with deg > 2}.
	\end{proof}
\end{lemma}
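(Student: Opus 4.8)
The plan is to handle all three parts with a single device: a ``layered'' endomorphism $e = 1 + e_1 + e_2 + \cdots$, with each $e_i$ of $\mathbb{Z}^2$-grading $i\gamma$, which happens to be an automorphism, can be normalized by composing with the already-constructed automorphisms $\exp_\gamma(\cdot)$, after which the surviving tail is pinned down by the rigidity of high-degree derivations established in Lemma~\ref{lem: no deriv with deg > 2} (and Lemma~\ref{lem: deriv deg 2 easily computed}). Two preliminary observations carry the argument. First, comparing the lowest homogeneous component of $[e\cdot a, e\cdot b] = e\cdot[a,b]$ shows that the bottom nonzero layer $e_m$ of such an $e$ is a derivation of grading $m\gamma$; and since for fixed $\gamma$ the layered automorphisms form a subgroup of $\Aut(\mathcal{L}(\mathcal{A}))$ closed under inversion, with $e(e')^{-1}$ again layered and leading layer the difference of the leading layers, I may freely multiply by known automorphisms to cancel layers. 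Second, for a derivation $d$ of grading $\gamma$ and any $z \in \mathcal{L}(\mathcal{A})$ one has $[d, \ad z] = \ad(dz)$; taking $z = [1_\delta, 1_{-\delta}]$ for a long root $\delta \neq \pm\gamma$ along which $\gamma$ is not orthogonal, and using that $[s_{+2}, s'_{-2}]$ is a unit multiple of the $\delta$-grading derivation (computed in the proof of Lemma~\ref{lem: TKK rep}), this gives $d = \pm\ad(dz)$ with $dz \in \mathcal{L}_\gamma$. So every grading-$\gamma$ derivation is inner, arising from $R_\gamma$, $J_\gamma$ or $J'_\gamma$ according to the type of $\gamma$; in particular, if $\gamma$ is not a root then $dz \in \mathcal{L}_\gamma = 0$ and $d = 0$.

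For part~(1), the ``if'' direction is the earlier lemma that each $\exp_\gamma(r)$ is an automorphism of precisely this shape. For the converse, given $e = 1 + e_1 + e_2$ the above yields $e_1 = \ad r$ for a unique $r \in R_\gamma$, whence $e\,\exp_\gamma(-r) = 1 + \tilde e_2$ with $\tilde e_2$ a derivation of grading $2\gamma$. Since $\gamma$ is a long root, $2\gamma$ lies at $\gamma$-degree $4 > 2$ and is not thrice a short root, so Lemma~\ref{lem: no deriv with deg > 2} forces $\tilde e_2 = 0$ and $e = \exp_\gamma(r)$, with $e_1 = \ad r$.

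For part~(2), again $e_1 = \ad j$ for a unique $j \in J_\gamma$ (resp.\ $J'_\gamma$); since $\exp_\gamma$ is additive in its argument, $\exp_\gamma(-j) = \exp_\gamma(j)^{-1}$, and setting $f = \exp_\gamma(-j)\,e$ I obtain $e = \exp_\gamma(j)\,f$ with $f = 1 + f_2 + f_3 + f_4$. Its lowest nonzero layer is a derivation, so $f_2$ is; if $f_2 = 0$ then $f_3$ is a derivation; and if $f_2 = f_3 = 0$ then $f = 1 + f_4$ with $f_4$ a grading-$4\gamma$ derivation, which vanishes by Lemma~\ref{lem: no deriv with deg > 2} since $4\gamma$ is not thrice a short root. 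This gives the decomposition and the criterion $f = 0 \iff f_2 = f_3 = 0$. Uniqueness of $e$ from $(e_1, e_2, e_3)$ follows similarly: if $e$ and $e'$ are two such automorphisms agreeing in the first three layers, then $e(e')^{-1} = 1 + g_4$ with $g_4$ a grading-$4\gamma$ derivation, hence $0$. For part~(3), with $\gamma$ nonzero but not a root, not twice and not thrice a short root, I induct on the lowest nonzero layer $e_m$: it is a grading-$m\gamma$ derivation; choosing a long root $\rho$ for which $m\gamma$ has $\rho$-degree of absolute value at least $3$, Lemma~\ref{lem: no deriv with deg > 2} writes $e_m$ as a sum of $\pm 3\delta$- and $\pm 3\delta'$-graded derivations with $\delta + \delta'$ the relevant long root; but $m\gamma$ is thrice a short root only if $\gamma$ is, which is excluded, so $e_m = 0$, contradicting minimality. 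Hence $e = 1$.

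The main obstacle is the second preliminary observation, that is, excluding exotic grading-$\gamma$ derivations: one must verify that for each relevant $\gamma$ there is a long root $\delta$ along which $\gamma$ has grade $\pm 1$ (so that $[1_\delta, 1_{-\delta}]$ acts as a unit multiple of the grading derivation on $\mathcal{L}_\gamma$), and, for part~(3), that one can always choose a long root $\rho$ placing $m\gamma$ at $\rho$-degree at least $3$; both reduce to bookkeeping in the $G_2$ root lattice. Everything else is either a prior result (the $\exp_\gamma$ are automorphisms; the derivation-rigidity lemmas) or routine. As throughout the paper, these statements are to be read over arbitrary scalar extensions, which is harmless since the identities involved are polynomial and may be checked over polynomial base rings.
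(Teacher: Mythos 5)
Your proposal is correct and takes essentially the same route as the paper's proof: you identify the lowest layer of the layered automorphism as an inner derivation $\ad r$ (resp.\ $\ad j$) by bracketing with the grading element $[1_\delta,1_{-\delta}]$ of a suitable long root, strip it off by composing with $\exp_\gamma(-r)$ (resp.\ $\exp_\gamma(-j)$), and kill the surviving high-degree layers via Lemma \ref{lem: no deriv with deg > 2}, exactly as the paper does. The $G_2$-lattice bookkeeping you defer (existence of a long root pairing $\pm1$ with $\gamma$, and $m\gamma$ never being $\pm3$ times a short root under the hypotheses of (3)) is routine and is left at a comparable level of detail in the paper itself.
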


\begin{definition}
	We call $(J,J')$ a \textit{well behaved} cubic norm pair if $N(v^\sharp) = N(v)^2$ holds over all scalar extensions.
	
	We remark that all unital cubic norm pairs are well behaved, using \[N(v)^4 = N(Q_v v^\sharp) = N(v)^2 N(v^\sharp)\] on the $v \in J \otimes K$ such that $N(v)$ is invertible and applying \cite[Proposition 12.24]{Skip2024}.
	In the non-unital case, one has $D_{j,j^\sharp} a = T(a,j^\sharp) j + 3N(j) a - (j \times a) \times j^\sharp = 2 N(j) a$ and $D_{j^\sharp,j} a = 2 N(j) a$.
	From $V_{N(j)j,j^\sharp} = V_{(j^\sharp)^\sharp,j }$ as computed in Lemma \ref{lem: Vs}, one concludes that  $x = N(j^\sharp) - N(j)^2$ acts trivially on the spaces $J$ and $J'$, and $3x = 0$.
\end{definition}

\begin{lemma}
	\label{lem: lin Nvsharp}
	Suppose that $(J,J')$ is a well behaved cubic norm pair.
	We have
	\[ N(v \times w) = - N(v)N(w) + T(v,w^\sharp)T(w,v^\sharp).\]
	\begin{proof}
		First, observe that
		$N(v^\sharp + v \times w + w^\sharp) = (N(v) + T(v,w^\sharp) + T(w,v^\sharp) + N(w))^2$.
		Comparing terms of the same degree yields \[N(v \times w) + T(v \times w, v^\sharp \times w^\sharp) = 2 N(v)N(w) + 2 T(v,w^\sharp) T(w,v^\sharp)\]
		Now, $v^\sharp \times (v \times w) = N(v) w + T(w,v^\sharp )v$ by the $(3,1)$-linearisation of axiom (2).
		Hence, $T(v \times w, v^\sharp \times w^\sharp) = 3N(w)N(v) + T(w,v^\sharp) T(v,w^\sharp)$.
		Thus, we obtain the desired result.
	\end{proof}
\end{lemma}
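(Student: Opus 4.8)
The plan is to linearize the defining relation $N(v^\sharp) = N(v)^2$ of a well behaved cubic norm pair. Concretely, I would apply this identity (which by hypothesis holds over every scalar extension) to the element $v + w \in J \otimes K$. Using Lemma \ref{lem: basic properties cnp}.4, the left-hand side becomes $N(v^\sharp + v\times w + w^\sharp)$, and using Lemma \ref{lem: basic properties cnp}.5 the right-hand side becomes $\bigl(N(v) + T(v^\sharp,w) + T(v,w^\sharp) + N(w)\bigr)^2$. Both sides are polynomial in $v$ and $w$, so one compares the bihomogeneous component of bidegree $(3,3)$ in $(v,w)$.

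On the left, the only terms of bidegree $(3,3)$ are $N(v\times w)$ and the mixed term $N^{(1,1,1)}(v^\sharp, v\times w, w^\sharp)$; since the trilinear form attached to $N$ satisfies $N^{(1,1,1)}(a,b,c) = T(a\times b,c)$ and is symmetric, this mixed term equals $T(v\times w, v^\sharp\times w^\sharp)$. On the right, the only bidegree-$(3,3)$ contributions are $2N(v)N(w)$ and $2T(v^\sharp,w)T(v,w^\sharp)$. Hence
\[ N(v\times w) + T(v\times w, v^\sharp\times w^\sharp) = 2N(v)N(w) + 2T(v,w^\sharp)T(w,v^\sharp). \]

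It remains to evaluate the cross term. Using the symmetry of $T$ and of $\times$ together with Lemma \ref{lem: basic properties cnp}.3, rewrite $T(v\times w, v^\sharp\times w^\sharp) = T(v^\sharp\times(v\times w), w^\sharp)$; then Lemma \ref{lem: basic properties cnp}.8 gives $v^\sharp\times(v\times w) = N(v)w + T(v^\sharp,w)v$, so that, invoking Lemma \ref{lem: basic properties cnp}.6 in the form $T(w,w^\sharp) = 3N(w)$, the cross term equals $3N(v)N(w) + T(w,v^\sharp)T(v,w^\sharp)$. Substituting this back and cancelling $3N(v)N(w)$ against the $2N(v)N(w)$ on the right yields $N(v\times w) = -N(v)N(w) + T(v,w^\sharp)T(w,v^\sharp)$, as claimed. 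There is no serious obstacle: the argument is a bookkeeping exercise in multilinearization, and the only point requiring care is that well-behavedness is exactly what licenses the substitution $v \mapsto v+w$ in $N(v^\sharp)=N(v)^2$ over the scalar extension $K$.
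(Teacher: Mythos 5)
Your proof is correct and is essentially the paper's own argument: linearize $N(v^\sharp)=N(v)^2$ at $v+w$, extract the bidegree-$(3,3)$ component, and evaluate the cross term $T(v\times w, v^\sharp\times w^\sharp)$ via $v^\sharp\times(v\times w)=N(v)w+T(v^\sharp,w)v$ and $T(w,w^\sharp)=3N(w)$. The only difference is that you spell out the bidegree bookkeeping more explicitly than the paper does.
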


Recall that $r \in R_{2\beta + 3 \alpha}$ represents $ r L_{t - \bar{t}} = -r [t,\bar{t}]$ and $[v,w] = - T(v,w) L_{t - \bar{t}}$ for $v \in J$ and $w \in J'$.
We also note that we use $j \in J_{\pm \alpha}$ to represent $(L_j,-L_j)$.

\begin{lemma}
	\label{lem: com formulas}
	Assume that $(J,J')$ is a well behaved cubic norm pair.
	We have the following relations in $\Aut(\mathcal{L}(\mathcal{A}))$, in which we wrote $x_\gamma$ for $\exp_\gamma(x)$ when $x = j,k,r,r'$ for $j \in J$, $k \in J'$, $r, r' \in R$:
	\begin{enumerate}
		\item $j_{\beta + \alpha} k_{\beta + 2 \alpha} = k_{\beta + 2 \alpha} (-T(j,k))_{2 \beta + 3 \alpha} j_{\beta + \alpha}$,
		\item $j_{\beta + \alpha} k_{\alpha} = k_{\alpha} (T(k^\sharp,j))_{\beta + 3 \alpha} (- k \times j)_{\beta + 2 \alpha} (T(k,j^\sharp))_{2 \beta + 3 \alpha } j_{\beta + \alpha}$,
		\item $j_\alpha r_{- \beta - 3 \alpha } = r_{- \beta - 3 \alpha } (- rj)_{-\beta - 2 \alpha} ( r^2 N(j))_{- 2 \beta - 3 \alpha} (r j^\sharp)_{- \beta - \alpha} (- r N(j))_{- \beta }j_\alpha$,
		\item $j_{\alpha} r_{2 \beta + 3 \alpha } = r_{2 \beta + 3 \alpha} j_{\alpha}$,
		\item $ r_{\beta + 3\alpha} r'_{\beta} = r'_{\beta} (-rr')_{2 \beta + 3 \alpha } r_{\beta + 3 \alpha}$,
		\item $j_\alpha r_{\beta + 3 \alpha } = r_{\beta + 3 \alpha} j_\alpha$.
	\end{enumerate}
	Use $U_\gamma$ to denote $R_\gamma, J_\gamma$, or $J'_\gamma$ depending on $\gamma$. 
	Whenever we have groups $U_\gamma$ and $U_\delta$ such that $\gamma + \delta \neq 0$, there exists an analogous relation showing that
	\[ U_{\gamma} U_\delta \subset U_\delta \left(\prod_{(k,l )\in \mathbb{N}^2 \setminus \{(0,0)\}} U_{k \delta + l \gamma}\right)U_\gamma\]
	going over all roots $k \delta + l \gamma$ between $\delta$ and $\gamma$ in increasing order\footnote{This is not the order employed in the relations. However, such mismatches correspond to commuting groups.} of $k/l$.
	Moreover, equations not involving $\times$ or $\sharp$ hold for arbitrary cubic norm pairs.
	\begin{proof}
		We first prove (5).
		Consider rescaled automorphism \[t \cdot r'_{\beta} = 1 + (\ad r'_\beta) \otimes t + (r'_\beta)_2 \otimes t^2\] of $\mathcal{L}(\mathcal{A}) \otimes R[s,t]$, with $(r'_\beta)_i$ acting as $+ i \beta$ on the grading.
		In this way, we obtain an automorphism $({sr})_{\beta + 3 \alpha} ({tr'})_{\beta}$ of $\mathcal{L}(\mathcal{A}) \otimes R[s,t]$.
		We can write
		\[ ({sr})_{\beta + 3 \alpha} ({tr'})_{\beta} = \sum_{k,l \in \mathbb{N}} s^kt^l f_{k,l}\]
		with $f_{k,l}$ acting as $+ (k + l) \beta + 3 k \alpha$ on the grading. 
		We can decompose this into a product
		\[ \prod_{(k,l) \in S} \left(1 + \sum_{i = 1}^\infty s^{ki}t^{li} \tilde{f}_{ki,li}\right)\]
		where $S$ denotes the ordered set of coprime $(k,l)$ with $(a,b) < (c,d)$ if $a/b < c/d$ (and maximum $(1,0)$). Induction on $a + b$ shows $\tilde{f}_{a,b}$ to be well defined.
		Reduction $\mod (s^{k + 1}, t^{l + 1})$ in the induction, also shows that the formal power series
		\[ a_{k,l}(s,t) = \left(1 + \sum_{i = 1}^\infty s^{ki}t^{li} \tilde{f}_{ki,li}\right)\]
		in $\End(\mathcal{L}(\mathcal{A}))[[s,t]]$ define automorphisms of $\mathcal{L}(\mathcal{A})[[st]]$.
		Using the grading, it is easy to verify that $a_{k,l}(s,t)$ is a polynomial of degree at most $4(k + l)$, so that the automorphism $a_{k,l}(1,1)$ exists.
		Hence, the first nonzero $\tilde{f}_{ik,il}$ with $i > 0$ is a derivation for each $(k,l) \in S$.
		Lemma \ref{lem: explanation easy computation} shows that only $a_{1,0}, a_{0,1},$ $a_{1,1}$, $a_{2,1}$ and $a_{1,2}$ can differ from $1$, these are the only options such that $(k + l) \beta + 3k \alpha$ corresponds to a long root or a multiple of a short root. Lemma \ref{lem: explanation easy computation} shows that $a_{1,1}$ lies in $R_{2 \beta + 3 \alpha}$. Using $f_{1,0} f_{0,1} = f_{1,1}$ and $\tilde{f}_{1,1} = f_{1,1} - f_{0,1} f_{1,0}$, shows that $\tilde{f}_{1,1} = \ad [r t, r' \bar{t}] = \ad - r r' L_{t - \bar{t}}$.
		The elements $\tilde{f}_{2,1}$ and $\tilde{f}_{1,2}$ have to be $0$ since $r_{\beta + 3\alpha}$ and $r'_{\beta}$ are automorphisms. Namely, for $\tilde{f}_{2,1}$ one has 
		\[ \ad \left( r'_{\beta} + [r,r']_{2 \beta + 3 \alpha} \right) =  r_{\beta + 3\alpha} (\ad r'_{\beta})  (-r)_{\beta + 3\alpha} = \ad r'_{\beta} + \tilde{f}_{1,1} + \tilde{f}_{2,1}.\]
		For $\tilde{f}_{1,2}$ one argues analogously.
		
		Now, we consider the other cases.
		The same trick we used before of decomposing a modified left hand side of the equation we want to prove in the reverse order over $R[s,t]$, can be used to show the other cases.
		Lemma \ref{lem: explanation easy computation} is useful, since all $a_{k,l}(s,t)$ lie in groups \[F_{\gamma} = \{ 1 + f_1 + f_2 + f_3 + f_4 : f_i \text{ acts as } + \gamma \text{ on the grading}\}\] with $\gamma$ a root. Whenever $\gamma$ is long, we should only compute $\tilde{f}_{k,l}$ to fully determine $a_{k,l}$. If $\gamma$ is short, we should compute $\tilde{f}_{k,l} = \ad j$ and verify whether $\tilde{f}_{2k,2l}$ and $\tilde{f}_{3k,3l}$ coincide with the parts of $\exp_\gamma(j)$ that act as $+ 2 \gamma$ and $+3 \gamma$ on the grading.
		If we look at $x_\gamma y_{\gamma'}$ all $\tilde{f}_{k,1}$ coincide with the $k\gamma + \gamma'$-graded part of $\ad (x_\gamma \cdot y)$ where we consider $y$ as an element of the Lie algebra, since $x_\gamma$ is an automorphism of the Lie algebra. Similarly, $\tilde{f}_{1,k}$ has to coincide with the $\gamma + k \gamma'$ graded part of $\ad(y^{-1}_{\gamma'} \cdot x)$.
		
		These considerations immediately show that (1) and (6) hold, since there are no short roots between the roots $\gamma$ and $\gamma'$.
		For (4), note that only the short root $\beta + 2 \alpha$ can lead to problems.
		Observe that $a_{1,1}(s,t) = 1 + st \tilde{f}_{1,1} + \tilde{f}_{2,2}$ with $\tilde{f}_{i,i}$ being $2i (\beta + 2 \alpha)$-graded. Hence, the equation also follows since $\tilde{f}_{1,1} = \ad [j_\alpha , r_{2 \beta + 3 \alpha}] = 0$.
		
		Only equations (2) and (3) remain. These equations involve $\times$ or $\sharp$. Hence, the moreoverpart is proven.		
		For (2), we only need to verify that $\tilde{f}_{2,2}$ and $\tilde{f}_{3,3}$ act correctly since $\beta + 2 \alpha$ is the only short root between $\beta + \alpha$ and $\alpha$, since the terms occurring in the long roots are either of degree $1$ in $j$ or of degree $1$ in $k$ (i.e., correspond to $\tilde{f}_{1,i}$ or $\tilde{f}_{i,1}$). That $a_{1,2}$ and $a_{2,1}$ occur in the wrong order in (2) is not a problem, using an analogous equation to (6). 
		For (3), we need to verify that that all $a_{i,j}$ with $(i,j) \notin \{(1,0), (0,1), (3,1)\}$ coincide with what we wrote there.
		
		\textbf{Equation (2)}
		We start by proving (2). We note that \[\tilde{f}_{2,2} = f_{2,2} - \tilde{f}_{0,1}\tilde{f}_{2,1} - \tilde{f}_{1,2} \tilde{f}_{1,0} -  \tilde{f}_{0,2}\tilde{f}_{2,0}  - \tilde{f}_{0,1} \tilde{f}_{1,1} \tilde{f}_{1,0},\]
		where
		\begin{enumerate}
			\item $f_{2,2} \cdot  1_{R_{-\beta - 3 \alpha}} = \tilde{f}_{2,0} \tilde{f}_{0,2} \cdot  1_{R_{-\beta - 3 \alpha}} = Q_j k^\sharp$,
			\item $\tilde{f}_{0,1} \tilde{f}_{2,1} \cdot  1_{R_{-\beta - 3 \alpha}} = (\ad k_\alpha)(\ad T(k,j^\sharp)_{2 \beta + 3 \alpha}) \cdot  1_{R_{-\beta - 3 \alpha}} = (L_k,-L_k) \cdot T(k,j^\sharp) \bar{t}_1 = - T(k,j^\sharp) k$,
			\item $\tilde{f}_{1,2} \tilde{f}_{1,0} \cdot  1_{R_{-\beta - 3 \alpha}}  = 0$,
			\item $\tilde{f}_{0,2}\tilde{f}_{2,0} \cdot  1_{R_{-\beta - 3 \alpha}} = 0$,
			\item $\tilde{f}_{0,1} \tilde{f}_{1,1} \tilde{f}_{1,0} \cdot 1_{R_{-\beta - 3 \alpha}} = 0$,
		\end{enumerate}
		of which the last $3$ are forced to be $0$ by the grading.
		Combining these yields
		\[ \tilde{f}_{2,2} \cdot  1_{R_{-\beta - 3 \alpha}} = T(j,k^\sharp) j - j^\sharp \times k^\sharp + T(k,j^\sharp)k = (j \times k)^\sharp\]
		using the (2,2)-linearisation of axiom (2).
		
		Hence $\tilde{f}_{2,2}$ is what it should be by Lemma \ref{lem: explanation easy computation}, Lemma \ref{lem: deriv deg 2 easily computed}. 
		Now, for $\tilde{f}_{3,3}$, we once again evaluate on $1_{R_{-\beta - 3 \alpha}}$.
		As before, we will use that $\tilde{f}_{i,0} \cdot 1_{R_{-\beta - 3 \alpha}} = 0$ if $i > 0$. This shows that
		\[ (\tilde{f}_{3,3} + \tilde{f}_{1,2}\tilde{f}_{2,1} + \tilde{f}_{0,1} \tilde{f}_{1,1} \tilde{f}_{2,1}) \cdot 1_{R_{-\beta - 3 \alpha}} = f_{3,3} \cdot  1_{R_{-\beta - 3 \alpha}},\]
		since $\tilde{f}_{3,1} = 0$ and since $\tilde{f}_{3,2} = 0$ since it is a $3 \beta + 5 \alpha$-graded derivation. 
		The contribution of $\tilde{f}_{0,1} \tilde{f}_{1,1} \tilde{f}_{2,1}$ is $0$ by the grading.
		We conclude
		\[ \tilde{f}_{3,3} \cdot 1_{R_{-\beta - 3 \alpha}} = N(k)N(j)(t - \bar{t}) - (\ad T(k^\sharp,j)t) \cdot (- T(k,j^\sharp) \bar{t}) = - N(k \times j) (t - \bar{t})\]
		using Lemma \ref{lem: lin Nvsharp}. This shows that $\tilde{f}_{3,3}$ is what it should be by Lemma \ref{lem: deriv deg 2 easily computed}.
		
		\textbf{Equation (3)}
		Equation (3) is proved similarly. We should compute $\tilde{f}_{2,2}$, $\tilde{f}_{3,2}$, $\tilde{f}_{4,2}$, $\tilde{f}_{3,3}$, and $\tilde{f}_{6,3}$.
		We start with \[\tilde{f}_{2,2} = f_{2,2} - \tilde{f}_{0,1}\tilde{f}_{2,1} - \tilde{f}_{1,2} \tilde{f}_{1,0} - \tilde{f}_{0,2} \tilde{f}_{2,0} - \tilde{f}_{0,1} \tilde{f}_{1,1} \tilde{f}_{1,0}\]
		using $f_{2,2} = \tilde{f}_{2,0}\tilde{f}_{0,2}$.
		This element has grading $- 2 \beta - 4 \alpha$. We evaluate on $R_{2 \beta + 3 \alpha}$ and obtain
		\[ \tilde{f}_{2,2} \cdot 1_{2\beta + 3 \alpha} = 0 - (\ad r)_{-\beta - 3 \alpha} (-r j^\sharp)_{\beta + 2\alpha}  - 0 - 0 - 0\]
		using the grading.
		We conclude 
		\[ \tilde{f}_{2,2} \cdot 1_{2\beta + 3 \alpha} = r^2(L_{j^\sharp}, - L_{j^\sharp}).\]
		As before, this shows that $a_{1,1}$ coincides with what was claimed if $\tilde{f}_{3,3}$ is of the right form.
		
		Now,
		\[ (\tilde{f}_{3,2} + \tilde{f}_{0,1}\tilde{f}_{3,1} + \tilde{f}_{1,1}\tilde{f}_{2,1}) \cdot 1_{\beta + 3 \alpha} = f_{3,2} \cdot 1_{\beta + 3 \alpha} \]
		since all $\tilde{f}_{i,0}$ with $i > 0$ act trivially on $1_{\beta + 3 \alpha}$.
		We note that $\tilde{f}_{3,2} = \ad x_{- 2 \beta - 3 \alpha}$ for some $x$. This $x$ can be determined using the above evaluation. We also note that $\tilde{f}_{3,1}$ and $\tilde{f}_{2,1}$ are $-\beta$-graded and $-\beta - \alpha$-graded, hence the terms containing these $\tilde{f}_{i,j}$ will not contribute.
		We conclude
		\[ \tilde{f}_{3,2} \cdot 1_{\beta + 3 \alpha} = (j_\alpha)_3 (r_{- \beta - 3 \alpha})_2 \cdot 1_{\beta + 3 \alpha} = (L_j, - L_j)_3 \cdot (r^2 \bar{t})_{-1} = - N(j)r^2t.\]
		Hence, $\tilde{f}_{3,2} \cdot t_{1} = - N(j)r^2 L_{t - \bar{t}} \cdot t_{1} = ( N(j) r^2)_{- 2\beta - 3 \alpha} \cdot t_1$.
		
		The element $\tilde{f}_{4,2}$ can be determined by evaluating on $R_{2 \beta + 3 \alpha}$, as was the case for $\tilde{f}_{2,2}$.
		In this case 
		\[\tilde{f}_{4,2} \cdot 1_{2\beta + 3 \alpha} = - \tilde{f}_{1,1}\tilde{f}_{3,1} \cdot 1_{2\beta + 3 \alpha} = (\ad rj)_{- \beta - 2 \alpha} \cdot ( r N(j))_{\beta + 3 \alpha} = r^2N(j) ( L_j, - L_j)\]
		since $\tilde{f}_{3,1}$ is the only $\tilde{f}_{k,l}$ with $k/l > 2$ which can contribute and since $\tilde{f}_{4,0} = 0$.
		Hence, it acts as the part of $\exp_{- \beta - \alpha}(r j^\sharp)$ that acts as $- \beta - 2 \alpha$ on the grading.
		
		Now, we evaluate $\tilde{f}_{3,3}$ on $R_{\beta + 3 \alpha}$.
		We see that
		\[ (\tilde{f}_{3,3} + \tilde{f}_{0,1}\tilde{f}_{3,2} ) \cdot 1_{2 \beta + 3\alpha} = f_{3,3}  \cdot 1_{\beta + 3\alpha} = 0\]
		since all other $\tilde{f}_{k,l}$ with $k/l > 3/2$ act trivially.
		We conclude
		\[ \tilde{f}_{3,3} \cdot 1_{\beta + 3 \alpha} = - r^2N(j) (\ad r\bar{t})_{-1} \cdot t_{-1} = r^3 N(j) (t - \bar{t}).\]
		
		Finally, $\tilde{f}_{6,3}$ can be determined by evaluating on $R_{2 \beta + 3 \alpha}$.
		Using the grading, one sees
		\[ (\tilde{f}_{6,3} + \tilde{f}_{3,2} \tilde{f}_{3,1} + \tilde{f}_{1,1} \tilde{f}_{2,1} \tilde{f}_{3,1} + \tilde{f}_{2,2}\tilde{f}_{3,1} ) \cdot 1_{2 \beta + 3 \alpha} = 0\]
		using that $\tilde{f}_{i,0}$ acts trivially and thus forcing contributions of $\tilde{f}_{3i,i}$ with $i > 0$ for terms different from $\tilde{f}_{6,3}$ and $i < 2$ follows from the grading. We also note that $\tilde{f}_{2,2}\tilde{f}_{3,1} \cdot 1_{2 \beta + 3 \alpha} = 0 = \tilde{f}_{1,1} \tilde{f}_{2,1} \tilde{f}_{3,1} \cdot 1_{2 \beta + 3 \alpha} $.
		We conclude that $\tilde{f}_{6,3} \cdot  1_{2 \beta + 3 \alpha}$ equals
		\[ -  \tilde{f}_{3,2} \tilde{f}_{3,1} \cdot 1_{2 \beta + 3 \alpha} =(- r^2N(j) L_{t - \bar{t},-2}) (rN(j)t)_{\beta + 3 \alpha} = - r^3 N(j)^2 t_{-\beta - 3 \alpha}.\]
		Since $(J,J')$ is well behaved, this shows that $\tilde{f}_{6,3}$, and the part of $\exp_{-\beta - \alpha}(rj^\sharp)$ that act as $-3(\alpha + \beta)$ on the grading, act as $r^3N(j^\sharp)$.
		
		We conclude that (3) follows.

		The more general remark for groups $U_\gamma$ and $U_\delta$ can be proved analogously.
	\end{proof}
\end{lemma}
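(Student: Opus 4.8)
The plan is to run the standard ``generic computation over a polynomial ring'' argument for commutator relations in a graded automorphism group. For a relation of the shape $x_\gamma y_\delta = y_\delta(\cdots)x_\gamma$ I would first replace $\exp_\gamma(x)$ by the rescaled automorphism $\exp_{\gamma,[s]}(x)$ of $\mathcal{L}(\mathcal{A})\otimes R[s,t]$ and $\exp_\delta(y)$ by $\exp_{\delta,[t]}(y)$, so that $\exp_{\gamma,[s]}(x)\exp_{\delta,[t]}(y) = \sum_{k,l} s^k t^l f_{k,l}$ is an automorphism whose coefficient $f_{k,l}$ acts on the $G_2$-grading as $+(k\gamma+l\delta)$. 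A routine induction on $k+l$ (reducing modulo $(s^{k+1},t^{l+1})$ at each step) then lets me refactor this product as $\prod_{(k,l)} a_{k,l}(s,t)$ over the coprime pairs $(k,l)$ ordered by the slope $k/l$, where $a_{k,l}(s,t) = 1 + \sum_{i\ge 1} s^{ki}t^{li}\tilde f_{ki,li}$; by the $G_2$-grading each $a_{k,l}(s,t)$ is a polynomial of degree at most $4(k+l)$, hence specialises to an honest automorphism $a_{k,l}(1,1)$, so the first nonzero $\tilde f_{ki,li}$ is a derivation.

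Next I would feed this into the derivation analysis of Lemmas~\ref{lem: no deriv with deg > 2}, \ref{lem: deriv deg 2 easily computed} and~\ref{lem: explanation easy computation}: the only factors $a_{k,l}$ that can differ from $1$ are those whose weight is a long root or an integer multiple of a short root lying angularly between $\gamma$ and $\delta$, and each such factor is pinned down uniquely by the action of its leading term on a single extreme weight vector — typically the canonical generator $1_{R_{-\beta-3\alpha}}$ or $1_{R_{\pm(2\beta+3\alpha)}}$ of a one-dimensional root space — because $\mathcal{A}_{\pm1}$ together with $R_{\pm\beta}$ and $R_{\pm(\beta+3\alpha)}$ generate $\mathcal{L}(\mathcal{A})$. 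When the intermediate root is long I only need the $\ad$-part; when it is short I must in addition match the $2\gamma$- and $3\gamma$-graded pieces of $a_{k,l}$ against the corresponding pieces of $\exp_\gamma(\cdot)$ from Construction~\ref{def: action J2}. Relations~(1), (4), (5) and~(6) then fall out almost immediately: for (1) and (6) no short root lies strictly between the two given roots, so the unique possible middle factor is long and is read off from $\ad(\exp_\gamma(x)\cdot y)$; for (4) the only danger is the short root $\beta+2\alpha$, but its obstruction $\tilde f_{1,1} = \ad[j_\alpha,r_{2\beta+3\alpha}]$ vanishes by the grading; and (5) is the $A_2$-type Chevalley relation among the long roots $\beta+3\alpha$, $\beta$, $2\beta+3\alpha$, with $\tilde f_{1,1} = \ad[rt,r'\bar t] = \ad(-rr'L_{t-\bar t})$ and $\tilde f_{2,1}=\tilde f_{1,2}=0$ forced by the fact that $r_{\beta+3\alpha}$ and $r'_\beta$ are automorphisms.

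The substance of the proof is Relations~(2) and~(3), the only ones involving $\sharp$ and $\times$. In both cases the three middle groups that occur pairwise commute (for (2): $R_{\beta+3\alpha}$, $J'_{\beta+2\alpha}$, $R_{2\beta+3\alpha}$), so I just evaluate the relevant $\tilde f_{k,l}$ on $1_{R_{-\beta-3\alpha}}$ (respectively on $1_{R_{2\beta+3\alpha}}$ and $1_{R_{\beta+3\alpha}}$ for (3)) and match. For (2) the degree-two factor comes out as $\tilde f_{2,2}\cdot 1_{R_{-\beta-3\alpha}} = T(j,k^\sharp)j - j^\sharp\times k^\sharp + T(k,j^\sharp)k = (j\times k)^\sharp$ by the $(2,2)$-linearisation of axiom~(2), and the degree-three factor as $\tilde f_{3,3}\cdot 1_{R_{-\beta-3\alpha}} = -N(k\times j)(t-\bar t)$ via Lemma~\ref{lem: lin Nvsharp}, all competing contributions $\tilde f_{1,2}\tilde f_{1,0}$, $\tilde f_{0,1}\tilde f_{1,1}\tilde f_{1,0}$, and so on vanishing by the grading. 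For (3) I would compute, in order, $\tilde f_{2,2}$, $\tilde f_{3,2}$, $\tilde f_{4,2}$, $\tilde f_{3,3}$ and $\tilde f_{6,3}$; the only place well behavedness is used is the top factor, where $\tilde f_{6,3}\cdot 1_{R_{2\beta+3\alpha}}$ works out to $-r^3 N(j)^2\,t_{-\beta-3\alpha}$, which equals the $N$-term $r^3 N(j^\sharp)$ of $\exp_{-\beta-\alpha}(rj^\sharp)$ exactly because $N(j^\sharp)=N(j)^2$. The general statement for arbitrary $U_\gamma,U_\delta$ with $\gamma+\delta\ne 0$ follows by the identical scheme, the $G_2$-grading again bounding both the possible middle roots and the number of evaluations.

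The step I expect to be the main obstacle is the bookkeeping inside this scheme: pinning down exactly which of the many pairwise and triple products $\tilde f_{a,b}$, $\tilde f_{a,b}\tilde f_{c,d}$, $\tilde f_{a,b}\tilde f_{c,d}\tilde f_{e,g}$ survive into each nontrivial factor, checking that every other one is annihilated by the $G_2$-grading, and then pushing the few genuinely nonzero evaluations through the cubic norm pair axioms — in particular the top-degree evaluations in~(3), which is precisely where the well behavedness hypothesis is indispensable.
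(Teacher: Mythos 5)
Your proposal follows the paper's own proof essentially step for step: the same rescaling over $R[s,t]$, the same slope-ordered refactorisation into factors $a_{k,l}(s,t)$ with the degree bound $4(k+l)$, the same appeal to the derivation lemmas to limit the possible nontrivial middle factors, and the same key evaluations for (2) and (3) (including $\tilde f_{2,2}\cdot 1_{R_{-\beta-3\alpha}}=(j\times k)^\sharp$, the use of Lemma \ref{lem: lin Nvsharp} for $\tilde f_{3,3}$, and well-behavedness entering only through $\tilde f_{6,3}$). This is the correct argument and the same route as the paper, so nothing further is needed beyond carrying out the bookkeeping you already describe.
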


\begin{remark}
	The lemma above holds if and only if the cubic norm pair is well behaved, since being well behaved was necessary for the $\tilde{f}_{6,3}$ of equation (3) to act correctly.
\end{remark}

\begin{remark}
	\label{rmk: commutator relations}
	We will refer to the relations and the generalizations to arbitrary pairs of root groups proved in the previous lemma as the \textit{commutator relations}.
	This is justified, as we proved relations of the form $ xy = y g x$, which are equivalent to $y^{-1} x y x^{-1} = g$. Using $aba^{-1}b^{-1}$ as the commutator $[a,b]$, shows that $[y^{-1},x] = g$.
\end{remark}

\subsection{Tighter control using power series}

\begin{assumption}
	In this subsection, we assume that we work with a \textit{well behaved cubic norm pair}, i.e., $N(v)^2 = N(v)^\sharp$, so that we can use the commutator relations.
\end{assumption}

In this subsection, we try to get a better grasp on the interaction on the groups
\[G^+ = R_{\beta} R_{\alpha + \beta} R_{2 \beta + 3 \alpha} R_{2 \alpha + \beta} R_{3 \alpha + \beta}\] and \[G^- = R_{-\beta} R_{-\alpha - \beta} R_{-2 \beta - 3 \alpha} R_{-2 \alpha - \beta} R_{-3 \alpha - \beta}.\]
What we prove here will be used to prove the main theorem and basically amounts to proving that we have an operator Kantor pair; such pairs will be introduced and studied in the final section.

\begin{lemma}
	\label{lem: exp prop}
	For each Jordan pair $(J,J',Q)$ and each divided power representation $\rho^\pm$ in an algebra $A$, we have
	\[ \rho^+_{[t]}(x) \rho^{-}_{[s]}(y) = \rho^-_{[s]}(y + st Q_y x + s^2t^2 Q_yQ_x y + \dots ) \beta_{[st]} \rho^+_{[t]}(x + st Q_x y + s^2t^2 Q_xQ_y x + \dots)\]
	where the $\dots$ denote sums of terms of the form $Q_xQ_y\dots Q_yx$ or $Q_xQ_y\dots Q_xy$ and $\beta_{[st]}$ stands for $1 + \sum_{i = 1}^\infty (st)^i u_i$ for certain $u_i$ in $A$.
	\begin{proof}
		This is precisely the exponential property proved in \cite{faulkner2000jordan}.
	\end{proof}
\end{lemma}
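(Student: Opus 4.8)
The plan is to turn the statement into a formal power-series identity and then establish it by an inductive ``Bruhat-type'' refactorization driven by the defining relation of a divided power representation. First I would reduce to the generic situation: both sides are natural transformations with values in $(A\otimes\cdot)[[s,t]]$ assembled from the homogeneous maps $\rho^\pm_i$ and the quadratic operators $Q,Q'$, so by the principle that homogeneous maps are determined on polynomial rings and extend to all scalar extensions (cf.\ Remark \ref{rem: homogeneous faulkner}) it suffices to verify the identity in $(A\otimes K)[[s,t]]$ for $K$ a polynomial ring, i.e.\ to treat $x\in J$ and $y\in J'$ as generic symbols. One then expects the arguments of $\rho^\mp$ on the right to be $\xi := x + stQ_xy + s^2t^2Q_xQ_yx + \cdots$ and $\eta := y + stQ_yx + s^2t^2Q_yQ_xy + \cdots$, which are well-defined elements of $(J\otimes K)[[s,t]]$ and $(J'\otimes K)[[s,t]]$ — exactly the Jordan-pair quasi-inverse series graded by the product $st$. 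The substance of the lemma is then the existence of the middle factor $\beta_{[st]}$ and the fact that it depends only on $st$.

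The engine of the induction is the generating-function form of Equation~(\ref{def: dprep}). Using $\rho^\pm_{[-t]}(u) = \rho^\pm_{[t]}(u)^{-1}$ (an immediate consequence of the binomial property), Equation~(\ref{def: dprep}) says
\[ \rho^+_{[t]}(u)\,\rho^-_{[s]}(v)\,\rho^+_{[t]}(u)^{-1} \;=\; \sum_{l\ge 0} s^l\!\left( t^{2l}\rho^+_l(Q_uv) + \sum_{0\le k<2l} t^k c_{k,l}\right) \]
for suitable $c_{k,l}\in A$, together with the symmetric statement obtained by interchanging $+\leftrightarrow-$ and $(u,t)\leftrightarrow(v,s)$. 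In words: conjugating a ``lower'' factor $\rho^-_{[s]}(v)$ past an ``upper'' factor $\rho^+_{[t]}(u)$ produces, in each $s$-degree $l$, only terms of $t$-degree at most $2l$, whose top part is again an upper factor, with argument $Q_uv$. Assigning $t$ weight $+1$ and $s$ weight $-1$, I would then move the upper factor of $\rho^+_{[t]}(x)\rho^-_{[s]}(y)$ to the right past the lower factor, one total $(s,t)$-degree at a time: having matched both sides modulo degree $n$, the discrepancy at degree $n$ is absorbed by choosing the next coefficients of $\eta$, of $\beta_{[st]}$, and of $\xi$, the $k=2l$ case of Equation~(\ref{def: dprep}) supplying precisely the $Q$-monomials that occur in $\eta$ and $\xi$. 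The lowest corrections come out as $\eta = y + stQ_yx + \cdots$, $\xi = x + stQ_xy + \cdots$, and $\beta_{[st]} = 1 + st[\rho^+_1(x),\rho^-_1(y)] + \cdots$, matching the claimed form.

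The main obstacle is making this induction close cleanly — that is, verifying that after peeling off the $\rho^-$- and $\rho^+$-parts the remaining middle factor really is a power series in $st$ alone, rather than merely ``weight-balanced'' in a weaker sense. This is exactly the point where both halves of Equation~(\ref{def: dprep}) are needed at once: the vanishing for $k>2l$ stops an upper factor from spilling ``too far'' past a lower one, and its $+\leftrightarrow-$ mirror does the same on the other side, so the only mixed contributions that survive pair equally many upper and lower factors. Since all of this is precisely the exponential property for divided power representations established by Faulkner, the efficient route is to invoke \cite{faulkner2000jordan} and simply check that our sign convention (the $(-1)^b$ in Equation~(\ref{def: dprep})) and our use of the two parameters $(s,t)$ match the normalization there.
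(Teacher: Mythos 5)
Your proposal is correct and ultimately rests on the same justification as the paper: Lemma \ref{lem: exp prop} is exactly the exponential property established in \cite{faulkner2000jordan}, and the paper's proof consists of precisely that citation. The bidegree-induction sketch you give is a reasonable outline of the mechanism behind Faulkner's result, but once you invoke the citation (as you do) it plays no load-bearing role, so the two arguments coincide.
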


Now, we reintroduce the elements $a_{i,j}$ and $\tilde{f}_{i,j}$ that popped up in the proof of Lemma \ref{lem: com formulas} in a more general setting.

\begin{definition}
	\label{def: oij}
	Suppose that $x(t) = 1 + \sum_{i = 1}^\infty t^i x_i$ and $y(t) = 1 + \sum_{j = 1} t^j y_j$ are formal power series in $A[[t]]$ for some associative unital algebra $A$. We remark that $x(t)$ and $y(t)$ are invertible, with inverses $x^{-1}(t)$ and $y^{-1}(t)$.
	For all $(a,b) \in \mathbb{N}^2$, we define elements $\nu_{i,j}(x,y) \in A$ and for all coprime $i,j \in \mathbb{N}$ we define new formal power series $o_{i,j}(x,y,t) = \sum t^k \nu_{ki,kj}(x,y)$, as the unique elements such that
	\[ y^{-1}(t) x(s) y(t) x^{-1}(s) = \prod_{i/j \in \mathbb{Q}_{> 0}} o_{i,j}(x,y,s^it^j),\]
	where the order of the product is the order of $i/j \in \mathbb{Q}_{> 0}$ (we assume $i$ and $j$ to be coprime).
	In Lemma \ref{lem: nu well def}, we prove this to be a good definition.
\end{definition}

\begin{remark}
	If we have binomial divided power representations $\sigma_{[s]}$ and $\rho_{[t]}$ of some modules $J$ and $K$ into an algebra $A$, then we also see $o_{i,j}(\cdot,\cdot,t)$ as maps $J \times K \longrightarrow A[[t]]$.
\end{remark}

\begin{lemma}
	\label{lem: nu well def}
	All $\nu_{i,j}$ are well defined.
	\begin{proof}
		This follows from induction on $i + j$ using the evaluation morphisms from $A[[t,s]]$ to $ A[t,s]/(t^{i+1},s^{j+1})$ to determine $\nu_{i,j}$.
	\end{proof}
\end{lemma}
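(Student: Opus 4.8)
The plan is to turn the defining identity into a comparison of coefficients in the power series ring $A[[s,t]]$ and then solve for the $\nu_{i,j}$ recursively. Set $P(s,t):=y^{-1}(t)\,x(s)\,y(t)\,x^{-1}(s)$. Since $x(0)=y(0)=1$, putting $s=0$ gives $P(0,t)=y^{-1}(t)y(t)=1$ and putting $t=0$ gives $P(s,0)=x(s)x^{-1}(s)=1$, so $P(s,t)=1+\sum_{m,n\ge 1}c_{m,n}s^mt^n$ for unique $c_{m,n}\in A$. First I would note that the right-hand side $\prod_{i/j\in\mathbb{Q}_{>0}}o_{i,j}(x,y,s^it^j)$ makes sense for any choice of elements $\nu_{i,j}$: modulo $(s^{N+1},t^{N+1})$ the factor $o_{i,j}(x,y,s^it^j)=1+\sum_{k\ge 1}\nu_{ki,kj}s^{ki}t^{kj}$ is congruent to $1$ unless $i\le N$ and $j\le N$, and there are only finitely many coprime pairs $(i,j)$ with $i,j\le N$, so each truncation is an honest finite product of polynomials, these truncations are compatible as $N$ grows, and they assemble to a well-defined element of $A[[s,t]]$.

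Next I would expand this ordered product and read off the coefficient of $s^mt^n$ for $m,n\ge 1$. It is a $\mathbb{Z}$-linear combination of words $\nu_{c_1,d_1}\cdots\nu_{c_r,d_r}$, each obtained by choosing from $r$ distinct factors one nontrivial term apiece (and $1$ from every other factor), subject to $\sum_\ell(c_\ell,d_\ell)=(m,n)$ with all $c_\ell,d_\ell\ge 1$. If $r=1$ then $(c_1,d_1)=(m,n)$ and the word is $\nu_{m,n}$ itself; it arises in exactly one way, as the $\gcd(m,n)$-th term of the unique factor $o_{i,j}$ with $(i,j)=(m,n)/\gcd(m,n)$, so its total coefficient is $+1$. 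If $r\ge 2$ then $c_\ell+d_\ell=(m+n)-\sum_{\ell'\ne\ell}(c_{\ell'}+d_{\ell'})\le(m+n)-2(r-1)<m+n$ for every $\ell$. Hence the coefficient of $s^mt^n$ in the product equals $\nu_{m,n}+G_{m,n}$, where $G_{m,n}$ is a fixed $\mathbb{Z}$-linear combination of products of the $\nu_{c,d}$ with $c,d\ge 1$ and $c+d<m+n$, depending only on $m$ and $n$.

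Equating the two expansions gives the recursion
\[ \nu_{m,n}=c_{m,n}-G_{m,n}\bigl(\{\,\nu_{c,d}: c,d\ge 1,\ c+d<m+n\,\}\bigr), \]
which determines every $\nu_{m,n}$ uniquely by induction on $m+n$, the base case being $(m,n)=(1,1)$, where $G_{1,1}=0$ and $\nu_{1,1}=c_{1,1}$. Equivalently, $\nu_{a,b}$ and all $\nu_{c,d}$ with $c\le a$, $d\le b$ are pinned down by the evaluation morphism $A[[s,t]]\to A[s,t]/(s^{a+1},t^{b+1})$, in which only finitely many factors and terms intervene; this is the phrasing indicated in the excerpt. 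The only mildly delicate point is the bookkeeping in the second paragraph — listing which products of lower-degree $\nu$'s can contribute to bidegree $(m,n)$ and checking that the bare $\nu_{m,n}$ occurs with coefficient exactly $1$ — and there is no conceptual obstacle.
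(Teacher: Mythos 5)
Your argument is correct and is exactly the paper's proof, just written out in full: the paper's one-line justification is the same induction on $m+n$ via truncation to $A[s,t]/(s^{m+1},t^{n+1})$, and your bookkeeping (the $r=1$ term contributing $\nu_{m,n}$ with coefficient $1$, all $r\ge 2$ terms involving only $\nu_{c,d}$ with $c+d<m+n$) is the substance behind it.
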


\begin{lemma}
	Suppose that $\rho^\pm$ is a divided power representation of a Jordan pair $(J,J')$.
	Consider the $o_{i,j}(x,y,s^it^j)$ defined from $\rho^-_{[t]}(-y) \rho^+_{[s]}(x) \rho^-_{[t]}(y) \rho^+_{[s]}(-x)$ for $x \in J$ and $y \in J'$.
	For $i, k, l \in \mathbb{N}$, we have
	\begin{itemize}
		\item $o_{2i + 1, 2i}(x, y,t) = \rho^+_{[t]}(Q_x Q_y \dots Q_y x),$
		\item $o_{2i + 2, 2i + 1}(x, y,t) = \rho^+_{[t]}(Q_x Q_y \dots Q_x y),$
		\item $o_{2i + 1, 2i+2}(x, y,t) = \rho^-_{[t]}(Q_y Q_x\dots Q_y x),$
		\item $o_{2i, 2i + 1}(x, y,t) = \rho^-_{[t]}(Q_y Q_x \dots Q_x y),$
		\item $o_{k,l}(x,y,t) = 1$, if $|k - l| \neq 1$ and $(k,l) \neq (1,1)$.
	\end{itemize}
	\begin{proof}
		This is precisely Lemma \ref{lem: exp prop}.
	\end{proof}
\end{lemma}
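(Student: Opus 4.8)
The plan is to deduce every one of the five families of identities from the exponential property of Lemma \ref{lem: exp prop}, by rewriting the word $\rho^-_{[t]}(-y)\rho^+_{[s]}(x)\rho^-_{[t]}(y)\rho^+_{[s]}(-x)$ into the canonical ordered product appearing in Definition \ref{def: oij}. Since $\rho^\pm$ are binomial divided power representations, $\rho^-_{[t]}(y)\rho^-_{[t]}(-y) = \rho^-_{[t]}(0) = 1$ and likewise for $\rho^+$, so $\rho^-_{[t]}(-y) = \rho^-_{[t]}(y)^{-1}$ and $\rho^+_{[s]}(-x) = \rho^+_{[s]}(x)^{-1}$; hence the word above is exactly the product $y^{-1}(t)x(s)y(t)x^{-1}(s)$ of Definition \ref{def: oij} for $x(\cdot) = \rho^+_{[\cdot]}(x)$ and $y(\cdot) = \rho^-_{[\cdot]}(y)$, and the $o_{i,j}$ in the statement are precisely those of that definition.

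First I would apply Lemma \ref{lem: exp prop} to the two inner factors $\rho^+_{[s]}(x)\rho^-_{[t]}(y)$ (invoking that lemma with the names of its two formal variables interchanged), which gives
\[ \rho^+_{[s]}(x)\rho^-_{[t]}(y) = \rho^-_{[t]}\!\left(y + \sum_{k\ge 1}(st)^k z_k\right)\beta_{[st]}\,\rho^+_{[s]}\!\left(x + \sum_{k\ge 1}(st)^k w_k\right), \]
where, reading off Lemma \ref{lem: exp prop}, $z_1 = Q_y x$, $z_2 = Q_yQ_x y$, $z_3 = Q_yQ_xQ_y x,\dots$ and $w_1 = Q_x y$, $w_2 = Q_xQ_y x$, $w_3 = Q_xQ_yQ_x y,\dots$ are alternating words whose innermost letter is $x$ when the index is odd and $y$ when it is even. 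Multiplying on the left by $\rho^-_{[t]}(-y)$ and on the right by $\rho^+_{[s]}(-x)$ and applying the binomial property over the scalar extension $R[[s,t]]$ (so that $\rho^-_{[t]}(-y)\rho^-_{[t]}(y+u) = \rho^-_{[t]}(u)$ and $\rho^+_{[s]}(x+u)\rho^+_{[s]}(-x) = \rho^+_{[s]}(u)$), I obtain
\[ \rho^-_{[t]}(-y)\rho^+_{[s]}(x)\rho^-_{[t]}(y)\rho^+_{[s]}(-x) = \rho^-_{[t]}\!\left(\sum_{k\ge 1}(st)^k z_k\right)\beta_{[st]}\,\rho^+_{[s]}\!\left(\sum_{k\ge 1}(st)^k w_k\right). \]

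Next I would factor the two outer terms. Using the binomial property again, $\rho^-_{[t]}\!\left(\sum_{k\ge 1}(st)^k z_k\right) = \prod_{k\ge 1}\rho^-_{[t]}\!\left((st)^k z_k\right)$, and homogeneity of $\rho^-_m$ gives $\rho^-_{[t]}\!\left((st)^k z_k\right) = 1 + \sum_{m\ge 1} s^{km}t^{m(k+1)}\rho^-_m(z_k)$, which is a power series in the single monomial $s^k t^{k+1}$; similarly $\rho^+_{[s]}\!\left((st)^k w_k\right) = 1 + \sum_{m\ge 1} s^{m(k+1)}t^{mk}\rho^+_m(w_k)$, a power series in $s^{k+1}t^k$. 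All the $\rho^-$-factors commute with one another and all the $\rho^+$-factors commute with one another, the ratios satisfy $k/(k+1) < 1 < (k+1)/k$, and $\beta_{[st]}$ lies at bidegree $(1,1)$; therefore the right-hand side of the last display is already the canonical ordered product of Definition \ref{def: oij}. By the uniqueness established in Lemma \ref{lem: nu well def} I would then read off $o_{k,k+1}(x,y,t) = \rho^-_{[t]}(z_k)$, $o_{k+1,k}(x,y,t) = \rho^+_{[t]}(w_k)$, $o_{1,1}(x,y,t) = \beta_{[t]}$, and $o_{k,l}(x,y,t) = 1$ for every other coprime pair $(k,l)$. Substituting the alternating words $z_k$ and $w_k$ and bookkeeping the parity of $k$ turns the first two identities into the four displayed families (the indices $2i+1$, $2i+2$ merely encode whether the word ends in $x$ or in $y$), and the remaining factors give the vanishing clause.

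The computation is entirely formal, and I do not expect a genuine obstacle: all the content sits in Lemma \ref{lem: exp prop}, of which the present lemma is just a transcription into the $o_{i,j}$-notation. The three points that deserve care are keeping the $s$--$t$ relabelling consistent when invoking the exponential property, checking that the parity of the index $k$ of $z_k$ and $w_k$ matches the alternation in the four stated formulas, and noting that no two of the factors $\rho^-_{[t]}\!\left((st)^k z_k\right)$, $\beta_{[st]}$, $\rho^+_{[s]}\!\left((st)^k w_k\right)$ sit at bidegrees with equal ratio $i/j$, so that — together with the commutativity within each of the two blocks — the product is genuinely in the order prescribed by Definition \ref{def: oij}.
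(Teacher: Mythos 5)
Your proposal is correct and is exactly the argument the paper intends: its entire proof of this lemma is the single line ``This is precisely Lemma \ref{lem: exp prop}'', and your write-up is a faithful unpacking of that reduction (cancelling the outer factors via the binomial property, splitting the resulting $\rho^\pm$ of a sum into commuting factors supported on the distinct monomials $s^kt^{k+1}$ and $s^{k+1}t^k$, and invoking the uniqueness of the ordered decomposition from Lemma \ref{lem: nu well def}). The only point worth recording is that your derivation yields $o_{1,0}=o_{0,1}=1$ for the commutator word, so the first and fourth displayed families should be read with $i\ge 1$ --- a boundary convention in the statement rather than a defect of your argument.
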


\begin{remark}
	For a divided power representation, we note that $\nu_{i,j}$ is homogeneous of degree $i$ in the first component and of degree $j$ in the second component. That these are natural transformations, follows since these are formed by multiplying and adding natural transformations to $A \otimes \cdot$.
\end{remark}

\begin{definition}
	We use $I_{[t]}(\rho^+, \rho^-)$ to denote the group generated by the elements $o_{1,1}(x,y,t)$. 
	These correspond exactly to the $\beta_{[t]}$ of Lemma \ref{lem: exp prop}.
	It also shows that
	\[ \beta_{[st]}(x,y) = \rho^-_{[t]}(u) \rho^+_{[s]}(x) \rho^-_{[t]}(y) \rho^+_{[s]}(v)\]
	for certain $u \in t J'[[st]]$ and $v \in s J[[st]]$ (note that $(J,J')$ can mean $(R,R)$ in this sentence, since it represents a Jordan pair).
\end{definition}

Now, we use these notions on the algebra $A = \End(\mathcal{L}(\mathcal{A}))$.
\begin{lemma}
	\label{lem: I stabilizes}
	Let $\gamma$ short be a root.
	The groups $I_{[t]}(\exp_{\gamma}, \exp_{- \gamma})$ for the divided power representations of exponentials $(J'_{\gamma},J_{- \gamma}) \longrightarrow \Aut(\mathcal{L}(\mathcal{A}))$ stabilize, under the conjugation action, all $\exp_{\delta}(J)$, $\exp_{\delta}(J')$ and $\exp_\delta(R)$. The same holds for the group $I_{[t]}(\exp_\gamma, \exp_{ - \gamma})$ obtained from the exponentials $(R_{-\gamma},R_{\gamma}) \longrightarrow \Aut(\mathcal{L}(\mathcal{A}))$, whenever $\gamma$ is a long root.
	Moreover, this holds over arbitrary scalar extensions.
	\begin{proof}
		The commutator relations proved in Lemma \ref{lem: com formulas} and the definition of $I_{[t]}$ guarantees this if we are not looking at the conjugation action on $J'_{\gamma}$ or $J_{- \gamma}$.
		
		Take a long root $\delta$ orthogonal to $\gamma$. Consider groups $G^+_{\delta}$ and $G^-_{\delta}$ generated by root groups positively graded with respect to $\delta$ and the ones negatively graded by $\delta$. For $\delta = 2 \beta + 3 \alpha$, we note that
		\[G^+_{\delta} = R_{\beta} R_{\alpha + \beta} R_{2 \beta + 3 \alpha} R_{2 \alpha + \beta} R_{3 \alpha + \beta}\]
		as a set. For each $\delta$ one can find such a decomposition.
		This shows that $G^\pm_\delta$ are stabilized by $I_{[t]}$, as can be proved using commutator relations proved in Lemma \ref{lem: com formulas}, noting that any element of $I_{[t]}$ is a product of elements in $\exp_{\gamma,[t]}(J'[[t]])$ and $\exp_{-\gamma,[t]}(J[[t]])$.
		This action is compatible with scalar extensions, since the commutator relations are equalities of homogeneous maps.
			We also note that $I_{[t]}$ is $0$-graded by construction. Hence, it preserves all root groups with root different from $\pm \gamma$. 
		
		Suppose that $\gamma = \beta + 2 \alpha$ and $\delta = - \beta$.
		We can act on Equation (3) of Lemma \ref{lem: com formulas}. Take $i(t) \in I_{[t]}$ and suppose that $i(t) \cdot j_\alpha = j(t)_\alpha$ and $i(t) \cdot r_{-\beta - 3 \alpha} = r(t)_{-\beta - 3 \alpha}$. 
		The commutator relation on which we act, forces
		\[ i(t) \cdot (-rj)_{- \gamma} = (-r(t) j(t))_{- \gamma}.\]
		For all other short roots one argues analogously.
		This argument works for arbitrary scalar extensions, since the commutator relations are equalities of natural transformations.
		
		The same argument works for long roots, if one uses the grading with respect to a short root orthogonal to the long root and Equation (5) of Lemma \ref{lem: com formulas}.
	\end{proof}
\end{lemma}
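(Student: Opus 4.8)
The plan is to distinguish the ``generic'' root groups $\exp_\delta(U_\delta)$ with $\delta\neq\pm\gamma$ from the two ``critical'' ones, namely $\exp_\gamma(J')$ and $\exp_{-\gamma}(J)$ in the short case and $\exp_{\pm\gamma}(R)$ in the long case. Two structural facts drive the argument. First, by the exponential property (Lemma \ref{lem: exp prop}) and the description of $\beta_{[st]}$, every element of $I_{[t]}$ is a finite product of factors from $\exp_{\gamma,[t]}(J'[[t]])$ and $\exp_{-\gamma,[t]}(J[[t]])$, so $I_{[t]}$ sits inside the group generated by all the root groups. Second, $I_{[t]}$ is $0$-graded with respect to the $\mathbb{Z}$-grading of $\mathcal{L}(\mathcal{A})$ induced by $\gamma$ (the $o_{1,1}$ live on the $s^i t^i$-diagonal), so conjugation by $I_{[t]}$ preserves every $\gamma$-graded component. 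Since all the identities used below are equalities of homogeneous maps, every conclusion transfers verbatim to an arbitrary $K \in R\textbf{-alg}$, and I will not mention naturality again.

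For $\delta\neq\pm\gamma$: choose a long root $\rho$ orthogonal to $\gamma$, so that $\gamma$, and hence every generating factor of $I_{[t]}$, is $0$-graded for the $\mathbb{Z}$-grading induced by $\rho$. Consequently $I_{[t]}$ normalises the subgroups $G^+_\rho$ and $G^-_\rho$ generated respectively by the root groups of positive and of negative $\rho$-degree; here one uses that such a subgroup is the ordered set-product of the root groups it contains (for $\rho = 2\beta+3\alpha$ one has $G^+_\rho = R_\beta R_{\alpha+\beta} R_{2\beta+3\alpha} R_{2\alpha+\beta} R_{3\alpha+\beta}$, and likewise for the other $\rho$) together with the commutator relations of Lemma \ref{lem: com formulas} to rewrite a conjugate back into this normal form. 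Combining the stability of the $G^\pm_\rho$, the preservation of the $\gamma$-grading, and the explicit commutator relations then pins down $i(t)\cdot\exp_\delta(U_\delta)\subseteq\exp_\delta(U_\delta)$ for every such $\delta$.

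The remaining, and hardest, point is that $I_{[t]}$ stabilises $\exp_{\pm\gamma}$ itself; since $I_{[t]}$ is built out of $\exp_{\pm\gamma}$ there is no formal reason for this, and one must read a commutator relation ``from the outside''. I would treat the representative short root $\gamma = \beta+2\alpha$ (every other short root is conjugate to it under one of the automorphisms $\tau_\rho$, which carry root groups to root groups). Fix $i(t)\in I_{[t]}$; by the generic case $i(t)\cdot j_\alpha = j(t)_\alpha$ and $i(t)\cdot r_{-\beta-3\alpha} = r(t)_{-\beta-3\alpha}$ for suitable $j(t), r(t)$, and $i(t)$ stabilises the groups $R_{-2\beta-3\alpha}$, $J'_{-\beta-\alpha}$, $R_{-\beta}$. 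Conjugating Equation (3) of Lemma \ref{lem: com formulas} by $i(t)$: the left-hand side becomes $j(t)_\alpha r(t)_{-\beta-3\alpha}$, to which Equation (3) applies once more, while the right-hand side becomes a product with outer factors $r(t)_{-\beta-3\alpha}$ and $j(t)_\alpha$, second factor $i(t)\cdot(-rj)_{-\gamma}$, and three intermediate factors that, being $i(t)$-images of elements of $R_{-2\beta-3\alpha}$, $J'_{-\beta-\alpha}$, $R_{-\beta}$, again lie in those same groups. Both sides are now decompositions of one automorphism along the ordered product of the root groups lying between $-\beta-3\alpha$ and $\alpha$, and this decomposition is unique (Lemma \ref{lem: com formulas}); comparing the $\exp_{-\gamma}$-factors gives $i(t)\cdot(-rj)_{-\gamma} = (-r(t)j(t))_{-\gamma}$, so that, letting $r=1$ and $j$ range over $J$, $i(t)\cdot\exp_{-\gamma}(J)\subseteq\exp_{-\gamma}(J)$. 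The $+\gamma$ direction $\exp_\gamma(J')$ follows by the mirror argument, using the version of the commutator relations with the roles of $+$ and $-$ exchanged (available by Lemma \ref{lem: com formulas}). Finally, for $\gamma$ a long root the identical scheme works, now grading $\mathcal{L}(\mathcal{A})$ by a short root orthogonal to $\gamma$ and using Equation (5) of Lemma \ref{lem: com formulas}, which exhibits an arbitrary element of $R_\gamma$ as a product of elements of root groups different from $\pm\gamma$, in place of Equation (3). The genuine obstacle throughout is this last ``critical'' step; the generic cases are bookkeeping with the commutator relations.
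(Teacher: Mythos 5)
Your proposal is correct and follows essentially the same route as the paper's proof: establish the generic case via the $0$-gradedness of $I_{[t]}$ and the stability of the ordered products $G^\pm_\rho$ for a root $\rho$ orthogonal to $\gamma$, then handle the critical groups $\exp_{\pm\gamma}$ by conjugating Equation (3) (resp.\ Equation (5)) of Lemma \ref{lem: com formulas} by $i(t)$ and comparing the $\mp\gamma$-factors of the resulting unique decompositions. The only additions beyond the paper's argument are the explicit remark that setting $r=1$ lets $-rj$ range over all of $J$, and the use of $\tau_\rho$-conjugacy to reduce the other short roots to $\beta+2\alpha$, both of which the paper leaves implicit.
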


\begin{construction}
	\label{con: H[t]}
	Set $H_{[t]}$ to be the group generated by the $I_{[t]}$ for all pairs $(J',J)$ and $(R,R)$.
	Note that $H_{[t]}$ stabilizes all root groups over all scalar extensions.
\end{construction}

\begin{lemma} \label{lem: oij roots} Use $U_\gamma$ to denote the root group corresponding to $\gamma$. Set $W_\gamma = U_\gamma$ whenever $\gamma$ is a root, $W_0 = H_{[t]}$, and $W_x = 1$ otherwise.
	Consider $x,y$ which are either roots or $0$ and $(i,j) \in \mathbb{N}^2_{> 0}$ with $\gcd(i,j) = 1$. The map $o_{i,j}(\cdot ,\cdot ,t)$ restricted to $U_x \times U_y$ maps to $W_{ix + jy}$
	\begin{proof}
		This follows from the commutator relations of nonopposite $J$, $J'$, and $R$, Lemma \ref{lem: exp prop} for opposite $J$ and $J'$ or oppositely lying $R$. \qedhere
		
	\end{proof}
\end{lemma}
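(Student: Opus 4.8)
The plan is to settle the statement by a short case analysis on the mutual position of $x$ and $y$, each case being reduced to a result already established: the commutator relations of Lemma~\ref{lem: com formulas} when $x,y$ are non-opposite roots; the exponential property of Lemma~\ref{lem: exp prop}, together with the resulting description of the $o_{i,j}$ in terms of $Q$-chains, when $y=-x$; and the normalization property of Construction~\ref{con: H[t]} in the degenerate case. Two combinatorial facts about the $G_2$ root system will be used throughout: it is reduced, so for a root $\gamma$ and $c\in\mathbb{Z}$ the element $c\gamma$ is a root only when $c=\pm 1$; and if $x,y$ are roots and $i,j>0$, then $ix+jy=0$ forces $y=-x$ (and then $i=j=1$). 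Consequently, for non-opposite roots $x,y$ the index $ix+jy$ is never $0$, and $W_{ix+jy}$ equals $U_{ix+jy}$ if that is a root and is trivial otherwise.

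\textbf{The case of non-opposite roots.} Let $u\in U_x$, $v\in U_y$ with $x,y$ roots and $x+y\neq 0$. Definition~\ref{def: oij} expresses $v^{-1}(t)u(s)v(t)u^{-1}(s)$ as the ordered product $\prod_{i/j} o_{i,j}(u,v,s^it^j)$, and Lemma~\ref{lem: nu well def} shows this factorization to be unique. On the other hand, the general form of the commutator relations in Lemma~\ref{lem: com formulas}, as its proof makes explicit, writes the same element as an ordered product (in increasing order of $i/j$) of elements of the root groups $U_{ix+jy}$, with one factor for each coprime $(i,j)$ for which $ix+jy$ is a root and no other factors. Matching the two products factor by factor and invoking uniqueness, $o_{i,j}(u,v,t)\in U_{ix+jy}$ whenever $ix+jy$ is a root, and $o_{i,j}(u,v,t)=1$ otherwise; since $ix+jy\neq 0$, this is precisely $W_{ix+jy}$.

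\textbf{The case $y=-x$.} Here the relevant pair of divided power representations is that of $(J'_x,J_{-x})$, or of $(R_{-x},R_x)$ when $x$ is long — in every case a divided power representation of the associated Jordan pair, by the constructions of this section and Lemma~\ref{lem: TKK rep} — so Lemma~\ref{lem: exp prop} applies and yields the $Q$-chain description of the $o_{i,j}(u,v,t)$. Reading it off: for $|i-j|\neq 1$ and $(i,j)\neq(1,1)$ one has $o_{i,j}=1$, which is $W_{(i-j)x}=1$ since $|i-j|\geq 2$ makes $(i-j)x$ a nonzero non-root; for $i-j=1$ the operator $o_{i,j}(u,v,t)$ equals $\exp_{x,[t]}$ of a $Q$-chain, hence lies in $U_x=W_x=W_{ix+jy}$, and symmetrically $j-i=1$ yields an element of $U_{-x}=W_{-x}$; finally $o_{1,1}(u,v,t)$ is by definition an element of $I_{[t]}\subseteq H_{[t]}=W_0=W_{x+y}$.

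\textbf{The degenerate case and the obstacle.} If $x$ or $y$ is $0$, with the convention $U_0=H_{[t]}=W_0$, the statement follows from the fact that $H$ normalizes every root group and $H_{[t]}$ itself (Construction~\ref{con: H[t]}, Lemma~\ref{lem: I stabilizes}): the defining product then lies in a single group, namely $U_y$ (resp.\ $U_x$, resp.\ $H_{[t]}$), and a count of the Lie-algebra grading identifies the possibly nontrivial factors as exactly those $o_{i,j}$ with $ix+jy$ equal to the corresponding index, which is $W_{ix+jy}$. The only genuine content of the whole argument is this index bookkeeping: aligning the formal operators $o_{i,j}$, defined by the universal factorization in $A[[s,t]]$, with the concrete factorizations supplied by Lemmas~\ref{lem: com formulas} and~\ref{lem: exp prop}, and checking, case by case within the $G_2$ root system, for which coprime $(i,j)$ the combination $ix+jy$ is again a root. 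All the substantive input is already contained in those lemmas.
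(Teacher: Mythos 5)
Your proposal is correct and follows essentially the same route as the paper: the paper's one-line proof cites exactly the two ingredients you use, namely the commutator relations of Lemma \ref{lem: com formulas} for non-opposite root groups and the exponential property of Lemma \ref{lem: exp prop} (via the $Q$-chain description of the $o_{i,j}$) for opposite ones, and your case analysis is a faithful elaboration of that. The root-system bookkeeping you add explicitly (that $ix+jy=0$ with $i,j>0$ forces $y=-x$ in the reduced system $G_2$, and that non-root, nonzero indices give trivial factors by uniqueness of the ordered factorization from Lemma \ref{lem: nu well def}) is exactly what the paper leaves implicit.
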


\begin{definition}
	\label{def: vec group}
	Consider a pair of $R$-modules $A, B$, a bilinear map \[\psi : A \times A \longrightarrow B\] and the group $A \times B$ with operation $(x,y)(u,v) = (x + u, y + v + \psi(x,u))$.
	A subgroup $G \le A \times B$ is called an \textit{almost vector group} if $\lambda \cdot_1 (g_1,g_2) = (\lambda g_1, \lambda^2 g_2)$ is an element of $G$ if $(g_1,g_2) \in G$ and $\lambda \in R$, and if $\lambda \cdot_2 (0,g_2) = (0, \lambda g_2) \in G$ whenever $(0,g_2) \in G$ and $\lambda \in R$.

	For an almost vector group $G$ and $K \in R\textbf{-alg}$, we define $\hat{G}(K)$ as the smallest almost vector group contained in $(A \times B) \otimes K$ containing $(1 \otimes \eta) (G \otimes 1)$ with $\eta$ the structure morphism.	
	We use $\Lie(G)$ to denote $\{(\epsilon a, \epsilon b) \in \hat{G}(R[\epsilon]/(\epsilon^2))\}.$
	
	We call $G$ a \textit{nice vector group} if $\Lie(\hat{G}(K)) \cong \Lie(G) \otimes K$ for all $K$.
	
	For what we try to achieve here, we only have to look at nice vector groups. For the relation to the terminology of \cite{OKP} and some basic properties, we refer to Appendix \ref{ap: vec groups}.
\end{definition}

\begin{remark}	
The subsets \[G^+ = R_{\beta} R_{\alpha + \beta} R_{2 \beta + 3 \alpha} R_{2 \alpha + \beta} R_{3 \alpha + \beta}\] and \[G^- = R_{-\beta} R_{-\alpha - \beta} R_{-2 \beta - 3 \alpha} R_{-2 \alpha - \beta} R_{-3 \alpha - \beta}\] of the group $\Aut(\mathcal{L}(\mathcal{A}))$ are easily shown to be subgroups using the commutator relations. We now show that these define nice vector groups.

Write $G(K)$ for the analogous subgroup of $\left(\End(\mathcal{L}(\mathcal{A})) \otimes K\right)^\times$, we note that $G(K)$ can as well be identified with a subgroup of $\Aut(\mathcal{L}(\mathcal{A}))(K) = \Aut(\mathcal{L}(\mathcal{A} \otimes K))$. However since all the equalities we proved are in $\End(\mathcal{L}(\mathcal{A})) \otimes K$, we prefer to stay there, as any equality in $\Aut(\mathcal{L}(\mathcal{A}))(K)$ results from that.

We proved the commutator relations for well behaved cubic norm pairs and noted that some relations also hold for not necessarily well behaved cubic norm pairs. The commutator relations one needs to verify that $G^+$ and $G^-$ are groups, do not need well-behavedness, as only (2) and (3) of Lemma \ref{lem: com formulas} require this assumption. Hence, $G^+$ and $G^-$ make sense for arbitrary cubic norm pairs.
\end{remark}

\begin{lemma}
	\label{lem: descr G+}
	Let $(J,J')$ be a cubic norm pair (not necessarily well behaved).
	There is an isomorphism
	\[G^+ \cong U = \{ ((a\bar{t},v,w^*,b t),(c \bar{t}, bv + w^\sharp, aw + (v^\sharp)^*, dt ) \in \mathcal{A}^2) | c + d = ab + T(v,w) \} \]
	with $(x,y) \cdot (u,v) = (x + u, y + v + x\bar{u})$ as the group operation for the latter group.
	Each $g \in G^+$ maps to the unique $(x,y) \in \mathcal{A}^2$ such that
	\[ \exp(g) = 1 + (\ad x_1) + g_2 + g_3 + g_4\]
	with $g_i$ acting as $+i$ on the grading, $x_1 \in \mathcal{A}_1$, and $g_2 \cdot b_{-1} = ((x\bar{b})x - y b)_1$ for all $b_{-1} \in \mathcal{A}_{-1}.$
	
	Moreover, this isomorphism is compatible with scalar extensions, i.e, $G^+(K) \cong U(K)$ for all $K \in R\textbf{-alg}$ and $G^+$ is nice.
	\begin{proof}
		First, we note that $U$ is obviously a nice vector group.		
		The idea is to construct a morphism $G^+ \longrightarrow \mathcal{A}^2$ that maps all generators of $G^+$ into $U$. We will write $x_g$ and $y_g$ for the $x$ and $y$ determined by $g$. 
		
		So, we first construct part of the map $G^+ \longrightarrow \mathcal{A}^2$ using the identification $G^+ \subset \Aut(L)$.
		To do that, recall the grading element $\zeta = \zeta_{2 \beta + 3 \alpha}$ associated to the usual $\mathbb{Z}$-grading.
		So, by computing $g \cdot \zeta = \zeta - x_g + z_g \in \mathfrak{instr}(A)_{0} \oplus \mathcal{A}_1 \oplus \mathcal{S}_2$ for $g \in G^+$ we can determine $x$.
		The map $g \mapsto x_g$ has as kernel $R_6 \cong \mathcal{S}_2$.
		
		Now, we claim that for each $g$, there exists a unique $y$ and certain $b$ and $c$ such that \begin{equation}
			\label{eq: g_2 related to Qxy}
			g \cdot a = a + b + ((x_g \bar{a}) x_g - y_g a) + c \in \mathcal{A}_{-1}\oplus \mathfrak{instr}(A)_{0} \oplus \mathcal{A}_{1} \oplus \mathcal{S}_{2} \quad \text{for } a \in \mathcal{A}_{-1}.
		\end{equation}
		These $b$ and $c$ always exist. Evaluating in $a = 1$ shows that $y_g$ is unique if it exists.
		
		First, we show that $y_g$ exist for each of the generating root groups of $G^+$.
		Namely, direct computations show that
		\begin{itemize}
			\item $R_{\beta} \ni \lambda \mapsto (\lambda \bar{t}, 0) \in U \subset \mathcal{A}^2$,
			\item $R_{\beta + 3 \alpha } \ni \lambda \mapsto( \lambda t, 0) \in U$,
			\item $R_{2 \beta + 3 \alpha} \ni \lambda \mapsto (0, - \lambda(t - \bar{t}))$,
			\item $J_{\beta + \alpha} \ni j \mapsto (j,j^\sharp)$,
			\item $J_{\beta + 2 \alpha} \ni j \mapsto (j,j^\sharp)$.
		\end{itemize}
		
		Second, we note that if there exists such an $y_g$ and $y_h$ for $g$ and $h$, then there exists a $y_{gh} = y_g + y_h + x_g \overline{x_h}$ using the definition of $V$ and $\mathcal{L}(A)$ combined with the adjoint action of $R_{2\beta + 3\alpha}$ on $\mathcal{A}_{-1}$. So, we obtain a homomorphism $G^+ \longrightarrow U$.
		Now, remark that $R_{2\beta + 3\alpha} \longrightarrow U: r(t - \bar{t}) \longrightarrow (0,- r(t - \bar{t}))$ is injective and thus that $G^+ \longrightarrow U$ is an isomorphism.
		
		Note that any $g \in G^+(K)$ is of the form
		\[ \exp_\beta(1 \otimes k) \exp_{\beta + \alpha}(\sum x_i \otimes k_i) \exp_{2 \beta + 3 \alpha}(1 \otimes m) \exp_{\beta + 2 \alpha}(\sum y_i \otimes l_i) \exp_{\beta + 3 \alpha}(1 \otimes l).\]
		Using evaluation on $\zeta$, we obtain \[x = \bar{t} \otimes k + \sum x_i \otimes k_i+ \sum y_i \otimes l_i + t \otimes l.\]
		If $x = 0$, we obtain $g = \exp_{2 \beta + 3 \alpha}(1 \otimes m)$.
		Evaluating on $1$ yields $1 \otimes m$ in this case.
		Hence, this isomorphism is compatible with scalar extensions.
		
		This also shows that $G^+$ is nice.
	\end{proof}
\end{lemma}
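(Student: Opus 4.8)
The plan is to produce an explicit group homomorphism $\Phi\colon G^+ \to U$ by reading off the two lowest graded pieces of the action of each $g \in G^+$ on $\mathcal{L}(\mathcal{A})$, and then to check that $\Phi$ is bijective and survives base change. As a preliminary I would record that $U$ is a nice vector group: writing $\mathcal{A}^2$ as $A \times B$ with $A = B = \mathcal{A}$ and bilinear form $\psi(x,u) = x\bar u$, the set $U$ is the image of $R \oplus J \oplus R \oplus J' \oplus R$ under a map assembled entirely from the operations $\sharp$, $\sharp'$, $T$ and the multiplication of $\mathcal{A}$, subject to the single relation $c + d = ab + T(v,w)$; a short computation with the Heisenberg law shows $U$ is a subgroup closed under $\cdot_1$ and $\cdot_2$, and since all the defining data are natural in the scalar ring, $\hat U(K)$ is the analogous subset over $K$, giving niceness at once.

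Next I would construct $\Phi$. Each of the five generating root groups of $G^+$ acts on $\mathcal{L}(\mathcal{A})$ as $1$ plus strictly positive-degree terms for the usual $\mathbb{Z}$-grading, so every $g \in G^+$ decomposes as $g = 1 + g_1 + g_2 + g_3 + g_4$ with $g_i$ raising the $\mathbb{Z}$-degree by $i$. Comparing homogeneous components in $g[a,b] = [ga,gb]$ shows $g_1$ is a derivation, hence inner, $g_1 = \ad x_g$ for a unique $x_g \in \mathcal{A}_1$; concretely $x_g$ is recovered from $g\cdot\zeta = \zeta - x_g + (\text{element of }\mathcal{S}_2)$ via the grading derivation $\zeta = \zeta_{2\beta+3\alpha}$, and since $G^+ = R_\beta R_{\alpha+\beta}R_{2\beta+3\alpha}R_{2\alpha+\beta}R_{3\alpha+\beta}$ as a set, $g \mapsto x_g$ has kernel exactly $R_{2\beta+3\alpha} \cong \mathcal{S}_2$. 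Evaluating the candidate identity $g_2\cdot b_{-1} = ((x_g\bar b)x_g - y_g b)_1$ at $b = 1 \in \mathcal{A}_{-1}$ shows that a $y_g \in \mathcal{A}$ making it hold for all $b$, if it exists, is unique. Existence, together with $(x_g,y_g)\in U$, I would get in two steps: a direct computation on the five generators, giving images $R_\beta \ni \lambda \mapsto (\lambda\bar t,0)$, $R_{\beta+3\alpha}\ni\lambda\mapsto(\lambda t,0)$, $R_{2\beta+3\alpha}\ni r\mapsto(0,-r(t-\bar t))$, $J_{\beta+\alpha}\ni j\mapsto(j,j^\sharp)$, $J'_{\beta+2\alpha}\ni j\mapsto(j,j^\sharp)$, each visibly in $U$; and multiplicativity, namely that if $(x_g,y_g)$ and $(x_h,y_h)$ exist then $x_{gh} = x_g + x_h$ and $y_{gh} = y_g + y_h + x_g\overline{x_h}$, which is precisely the group law of $U$. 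From $(gh)_2 = g_2 + h_2 + g_1 h_1$ the multiplicativity reduces to the single identity $[x_g,[x_h,b]] = (x_h\bar b)x_g + (x_g\bar b)x_h - (x_g\overline{x_h})b$ for $b \in \mathcal{A}_{-1}$, and this follows by unwinding the bracket on $\mathcal{A}_1 \oplus \mathcal{A}_{-1}$ from Construction \ref{con: lie}: $[x_h,b]\in\mathfrak{instr}(\mathcal{A})$ acts on $\mathcal{A}_1$ by $-V_{x_h,b}$, so $[x_g,[x_h,b]] = V_{x_h,b}(x_g)$, and $V_{u,v}z = (u\bar v)z + (z\bar v)u - (z\bar u)v$.

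Finally I would assemble the conclusion: the two steps make $\Phi\colon g\mapsto(x_g,y_g)$ a well-defined homomorphism into $U$, surjective because the five generators of $G^+$ hit a generating set of $U$, and injective because its restriction to the kernel $R_{2\beta+3\alpha}$ of $g\mapsto x_g$ is the injective map $r(t-\bar t)\mapsto(0,-r(t-\bar t))$. For base change, every $g\in G^+(K)$ is a product $\exp_\beta(\cdot)\exp_{\beta+\alpha}(\cdot)\exp_{2\beta+3\alpha}(\cdot)\exp_{\beta+2\alpha}(\cdot)\exp_{\beta+3\alpha}(\cdot)$, the element $x_g$ is again read off from $g\cdot\zeta$, and when $x_g = 0$ one has $g = \exp_{2\beta+3\alpha}(m)$ with $m$ obtained by evaluating $g$ on $1$; so the same recipe identifies $G^+(K)$ with $U(K)$ over every $K$, and niceness of $G^+$ follows from that of $U$. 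I expect the main place care is needed to be the multiplicativity computation — pinning down the signs in the $\mathfrak{instr}(\mathcal{A})$-action and confirming that the relation $c + d = ab + T(v,w)$ (and the formulas for the $J$- and $J'$-components of $y$) are preserved under the group law, which uses Lemma \ref{lem: basic properties cnp} for the $\sharp$-terms.
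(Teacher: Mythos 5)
Your proposal is correct and follows essentially the same route as the paper's proof: extract $x_g$ from the action on the grading element $\zeta$, characterize $y_g$ via the degree-$2$ part acting on $\mathcal{A}_{-1}$ with uniqueness from evaluation at $1$, verify the five generators land in $U$, check multiplicativity reduces to the $V$-operator identity, and conclude via injectivity on the kernel $R_{2\beta+3\alpha}$ plus the same base-change argument. Your explicit reduction of multiplicativity to $[x_g,[x_h,b]] = V_{x_h,b}(x_g)$ is a correct unwinding of what the paper states more tersely.
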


\begin{proposition}
	\label{lem: oij}
	Let $(J,J')$ be a well behaved cubic norm pair.
	All $o_{i,j}(\cdot,\cdot,t) : G^+(K) \times G^-(K) \longrightarrow (\End(\mathcal{L}(\mathcal{A})) \otimes K)^\times$ with $i > j$ map to $G^+(K)$ and those $o_{i,j}(\cdot,\cdot,t)$ with $i < j$ map to $G^-(K)$. Moreover, $o_{1,1}$ maps $G^+(K) \times G^-(K) \longrightarrow P$ with $P$ the group generated by $\exp_{-\alpha}( (J' \otimes K)[[st]])$, $\exp_{\alpha}((J \otimes K)[[st]])$, and $H_{[[st]]}$.
	\begin{proof}
		In this proof, we first pretend that $K = R$. Afterwards, we explain why it works for arbitrary scalar extensions.

		Write $sG^+[[st]]$ for the subgroup $\{ (s x(st), s^2 y(st)) \in U[[s,t]] \subset \mathcal{A}[[s,t]]^2\}$, using $a(st)$ to denote a formal power series in the variable $st$, and use $(t G^+[[st]])$ for the similar subgroup of $G^-[[s,t]]$.		
		First, note that $P$ stabilizes $sG^+[[st]]$ and $tG^-[[st]]$, as can be proved using the commutator relations and the fact that $H_{[st]}$ stabilizes these groups.
		We first show that $(s G^+[[st]])(t G^-[[st]]) \subset (t G^-[[st]]) P (s G^+[[st]])$. Afterwards, we will recover the $o_{i,j}$ from that.
		
		Using this, Lemma \ref{lem: exp prop}, and the commutator relations, one can show that $(s V[[st]])_\gamma G^- \subset (t \cdot G^-[[st]])P (s \cdot G^+)$ for $\gamma$ positively graded and $V$ the space corresponding to the root $\gamma$. 
		 In what follows, we do not write the $s$, $t$, and $st$ for brevity.
		For $R_{\beta}$ this is easy, since
		$G^- = T R_{-\beta} R_{-\beta - \alpha }$ 
		with $T$ stabilized under $R_{\beta}$, so that
		\[ R_{\beta} G^- \subset T R_{- \beta} H_{[t]} R_{\beta} R_{- \beta - \alpha}\subset  T R_{-\beta} H_{[t]} R_{- \beta - \alpha} R_{- \alpha} R_{\beta} \subset T R_{-\beta} R_{-\beta - \alpha} P R_{\beta}.\]
		The same holds for $R_{\beta + 3\alpha}$.
		Now, using $R_{2 \beta + 3\alpha} = [R_{\beta}, R_{\beta + 3\alpha} ]$, proves it for all $R$.
		
		Finally, one can prove it for $J'_{2\alpha + \beta}$ using 
		\[ G^- =  R_{- \beta} R_{- 2\beta - 3 \alpha } J'_{- \beta - \alpha} J_{- \alpha - 2 \beta} R_{- \beta - 3 \alpha}.\]
		Namely, only the part $J_{- \alpha - 2 \beta} R_{- \beta - 3 \alpha}$ is not positively graded with respect to $- \beta$, which shows that
		\[ J'_{2\alpha + \beta} R_{- \beta} R_{- 2\beta - 3 \alpha } J'_{- \beta - \alpha} \subset R_{- \beta} R_{- 2\beta - 3 \alpha } J'_{- \beta - \alpha} J_\alpha R_{\beta + 3 \alpha }J'_{\alpha + 2 \beta} \subset  G^- P J'_{\alpha + 2 \beta} R_{\beta + 3 \alpha}\]
		follows from the commutator relations.
		This shows that
		\begin{align*}
			J'_{2\alpha + \beta} R_{- \beta} R_{- 2\beta - 3 \alpha } J'_{- \beta - \alpha} J_{-2\alpha - \beta} & \subset G^- P J_{-2\alpha - \beta} H_{[st]} J_\alpha J'_{2\alpha + \beta} R_{\beta + 3 \alpha} \\ & \subset G^- P J'_{2\alpha + \beta} R_{\beta + 3 \alpha}
		\end{align*}
		using the commutator relations and Lemma \ref{lem: exp prop}.
		Finally, one sees that
		\[ J'_{2\alpha + \beta} G^- \subset G^- P J'_{2\alpha + \beta } R_{- \beta - 3 \alpha} H_{[st]} R_{\beta + 3 \alpha} \subset G^- P  G^+, \]
		once again by Lemma \ref{lem: exp prop} and the commutator relations.
		
		To recover the $o_{i,j}$ note that
		\[ (sx)(ty) = ( t y_1(st), t^2 y_2(st)) p (s x_1(st), s^2 x_2(st))\]
		with $x_i(st), y_i(st) \in \mathcal{A}[[st]]$ and $p \in P$.
		One can decompose $(s x_1(st), s^2 x_2(st))$ as
		\[  \left( \prod_{i = \infty}^1 o_{i + 1,i}(x,y,s^{i + 1} t^i) \right) \left( \prod_{i = \infty}^0 o_{2i + 3,2i + 1}(x,y,s^{2i + 3} t^{2i + 1}) \right) (sx)\]
		in $sG^+[[st]]$ using recursion on the degree of $s$ (that $G^+$ is a vector group guarantees that the found $o_{i,j}$ also lie in $G^+$). Observe that $o_{i,j}(x,y,t) \in G^+[[t]]$. Using these $o_{i,j}$ as the only nonzero $o_{i,j}$ when $i > j$, it is obvious that $p = o_{1,1}(x,y,st)$.

		Now, note that the argument builds on the equalities of homogeneous maps of	Lemma \ref{lem: exp prop} and the homogeneous maps that express the commutator relations, to rewrite an element of $G^+(K) G^-(K)$. Hence, this shows that the element $(s x_1(st), s^2 x_2(st))$ is natural in the scalar extension $K$. The $o_{i,j}(x,y,t)$ one finds are also natural in $K$.
	\end{proof}
\end{proposition}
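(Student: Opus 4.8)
The plan is to work first over the base ring itself, $K=R$, and reduce the statement to a single ``Bruhat-type'' inclusion inside $\Aut(\mathcal{L}(\mathcal{A}) \otimes R[[s,t]])$. Using the vector-group description $G^+ \cong U$ of Lemma \ref{lem: descr G+}, put $sG^+[[st]] = \{(s\,x(st),\, s^2\,y(st)) \in U[[s,t]]\}$ and let $tG^-[[st]]$ be the analogue inside $G^-[[s,t]]$; the rescaling by the formal variables $s,t$ is bookkeeping that keeps the $\mathbb{Z}^2$-grading visible, so that homogeneous pieces can be isolated afterwards. The bulk of the argument is then the inclusion
\[ (sG^+[[st]])\,(tG^-[[st]]) \;\subseteq\; (tG^-[[st]])\;P\;(sG^+[[st]]), \]
from which the $o_{i,j}$ are read off.

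To prove this inclusion, first observe that $P$ --- the group generated by $\exp_{\pm\alpha}$ over $R[[st]]$ and by $H_{[st]}$ --- stabilizes both $sG^+[[st]]$ and $tG^-[[st]]$ under conjugation: for $H_{[st]}$ this is Construction \ref{con: H[t]} (via Lemma \ref{lem: I stabilizes}), and for $\exp_{\pm\alpha}$ it follows from the commutator relations of Lemma \ref{lem: com formulas}, since adding $\pm\alpha$ to a root occurring in $G^{\pm}$ again gives a root occurring in $G^{\pm}$ or no root at all. Hence $P$ normalizes both outer factors of $(tG^-[[st]])\,P\,(sG^+[[st]])$, and it suffices to push $G^+$ through $G^-$ one generating root subgroup at a time, i.e.\ to show $U_\gamma\,(tG^-[[st]]) \subseteq (tG^-[[st]])\,P\,(sG^+[[st]])$ for each of the five root groups $R_\beta$, $J_{\beta+\alpha}$, $R_{2\beta+3\alpha}$, $J'_{\beta+2\alpha}$, $R_{\beta+3\alpha}$ that generate $G^+$. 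For the long roots $R_\beta$ and $R_{\beta+3\alpha}$ one reorders $G^- = T\,R_{-\beta}\,R_{-\beta-\alpha}$ with $T$ stabilized by $R_\beta$, moves $R_\beta$ past $R_{-\beta}$ at the cost of a factor of $H_{[st]}$ and past $R_{-\beta-\alpha}$ at the cost of a factor of $\exp_{-\alpha}$, both inside $P$; the remaining long root is handled by $R_{2\beta+3\alpha} = [R_\beta, R_{\beta+3\alpha}]$. For the short roots one argues analogously with the reordering $G^- = R_{-\beta}\,R_{-2\beta-3\alpha}\,J'_{-\beta-\alpha}\,J_{-\alpha-2\beta}\,R_{-\beta-3\alpha}$, applying relations (2) and (3) of Lemma \ref{lem: com formulas} together with the exponential property (Lemma \ref{lem: exp prop}) for the opposite pair to absorb the correction terms into $G^-P$ on the left and $G^+$ on the right. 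Iterating over the five generators and using that $P$ normalizes both sides yields the displayed inclusion.

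It remains to extract the $o_{i,j}$. Writing the outcome of the above as
\[ (sx)(ty) \;=\; (t\,y_1(st),\, t^2\,y_2(st))\; p\; (s\,x_1(st),\, s^2\,x_2(st)), \qquad p \in P, \]
one peels the last factor apart, by recursion on the $s$-degree, into a product of the $o_{i,j}(x,y,\cdot)$ with $i>j$: because $G^+$ is a nice vector group (Lemma \ref{lem: descr G+}), every factor so extracted again lies in $G^+$, which is exactly the assertion for those $o_{i,j}$. Symmetrically the $tG^-[[st]]$-factor decomposes into the $o_{i,j}$ with $i<j$, each lying in $G^-$. What is left, $p$, must then be $o_{1,1}(x,y,st)$ by comparison with Definition \ref{def: oij}, and $p \in P$; and $P$ is visibly generated by $\exp_{-\alpha}((J'\otimes K)[[st]])$, $\exp_\alpha((J\otimes K)[[st]])$, and $H_{[[st]]}$, which is the moreover-part. (Here one should also record, using Lemma \ref{lem: oij roots}, that all the intermediate $o_{i,j}$ with $|i-j|\neq 1$ collapse, so that the products are finite and legitimate.)

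Finally, every step above --- the commutator relations, the exponential property of Lemma \ref{lem: exp prop}, and the vector-group structure --- is an equality of homogeneous maps, i.e.\ of natural transformations compatible with scalar extension; hence the entire rewriting of an element of $G^+(K)\,G^-(K)$ is natural in $K \in R\textbf{-alg}$, and the conclusion holds over an arbitrary base change. I expect the main obstacle to be the short-root step: there the commutator relations genuinely involve the maps $\sharp$ and $\times$ and the exponential property contributes infinitely many correction terms, so checking that the rewriting keeps everything inside $G^-\,P$ on the left and $G^+$ on the right is the delicate part --- and it is precisely at this point that well-behavedness of $(J,J')$ enters, being what makes the top term (the $\tilde f_{6,3}$ of relation (3) in Lemma \ref{lem: com formulas}) come out correctly.
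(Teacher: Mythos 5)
Your proposal is correct and follows essentially the same route as the paper's proof: the same Bruhat-type inclusion $(sG^+[[st]])(tG^-[[st]])\subseteq (tG^-[[st]])\,P\,(sG^+[[st]])$ established one generating root group at a time using the identical decompositions of $G^-$, followed by the same recursive extraction of the $o_{i,j}$ from the $sG^+[[st]]$-factor and the same naturality argument. Your closing remark correctly locates where well-behavedness enters (the top term of relation (3) in Lemma \ref{lem: com formulas}), which matches the paper's remark following that lemma.
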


\subsection{Main theorem for cubic norm pairs}

Here, we work toward proving the cubic norm pair version of main theorem advertised in the introduction.

We use $G^+(K)[[st]]$ to denote the inverse limit of the groups $G^+(K[st]/(s^nt^n))$.
We note that any $h \in H[t]$ is of the form $1 + t h_1 + t^2 h_2 + \dots$.

In Lemma \ref{lem: oij} we used the group $P$ generated by $J'_{-\alpha}[[st]], J_{\alpha}[[st]]$ and $H_{[[st]]}$, we will now denote this group as $P_{[[st]]}$.

Recall that $\lambda \cdot (a,b) = (\lambda a, \lambda^2 b)$ for $(a,b)$ in a vector group $G$.

\begin{lemma}
	\label{lem: omega[[st]] subgroup}
	Suppose that $(J,J')$ is a well behaved cubic norm pair.
	The subset 
	\[ (t \cdot G^-(K)[[st]]) P_{[[st]]} ( s\cdot G^+(K)[[st]])\]
	of $\left((\End((\mathcal{L}(\mathcal{A}))) \otimes K)[[s,t]]\right)^\times $ is a subgroup.
	\begin{proof}
		The group $P_{[[st]]}$ stabilizes $G^+(K)[[st]]$ and $G^-(K)[[st]]$ under the conjugation action, since $J'_{- \alpha}[[st]]$ and $J_{\alpha}[[st]]$ stabilize them, as follows from the commutator relations, and since $H_{[[st]]}$ stabilizes them as well.
		The desired result is a direct application of Lemma \ref{lem: oij} and the fact that $P_{[[st]]}$ stabilizes $G^+(K)[[st]]$ and $G^-(K)[[st]]$, using the definition of the $o_{i,j}$:
		\[ (s \cdot x)(t \cdot y) = (t \cdot y) \left( \prod_{i/j\in \mathbb{Q}_{>0}} o_{i,j}(x,y,s^it^j) \right) (s \cdot x). \qedhere\]
	\end{proof} 
\end{lemma}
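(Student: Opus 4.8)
The plan is to reduce the statement to the purely formal fact that a product $APB$ of three subgroups is itself a subgroup as soon as $P$ normalizes $A$ and $B$ and $BA \subseteq APB$; all the genuine content then sits in Lemma~\ref{lem: oij} and in the stabilization statements proved along the way there.

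First I would set $A = t\cdot G^-(K)[[st]]$, $P = P_{[[st]]}$, and $B = s\cdot G^+(K)[[st]]$, so that the set in question is $\Omega = APB \subseteq \left((\End(\mathcal{L}(\mathcal{A}))\otimes K)[[s,t]]\right)^\times$, and I would check that $A$, $P$, $B$ are subgroups: $P_{[[st]]}$ is a group by definition, $G^\pm(K)[[st]]$ are groups by construction, and on a (nice) vector group the scaling $\lambda\cdot$ is a group homomorphism, since $(\lambda\cdot g)(\lambda\cdot h)=\lambda\cdot(gh)$ for the twisted operation $(x,y)(u,v)=(x+u,\,y+v+\psi(x,u))$; hence $A$ and $B$ are homomorphic images of groups.

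Next I would record two facts, both already available. (i) $P$ normalizes $A$ and $B$: the proof of Lemma~\ref{lem: oij} shows that $P_{[[st]]}$ stabilizes $s\cdot G^+(K)[[st]]$ and $t\cdot G^-(K)[[st]]$ under the conjugation action, and stabilization by every element of the group $P$ upgrades to normalization. (ii) $BA\subseteq APB$: for $x\in G^+(K)[[st]]$ and $y\in G^-(K)[[st]]$ the definition of the $o_{i,j}$ gives
\[ (s\cdot x)(t\cdot y)=(t\cdot y)\Big(\prod_{i/j\in\mathbb{Q}_{>0}}o_{i,j}(x,y,s^it^j)\Big)(s\cdot x), \]
with the product ordered by increasing $i/j$, and the proof of Lemma~\ref{lem: oij} identifies the ordered subproduct over $i<j$ as an element of $A$, the middle factor $o_{1,1}$ as an element of $P$, and the ordered subproduct over $i>j$ as an element of $B$; equivalently, $(s\cdot G^+(K)[[st]])(t\cdot G^-(K)[[st]])\subseteq APB$.

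Finally I would deduce that $\Omega=APB$ is a subgroup by the routine manipulation: $1\in\Omega$; given a product $(a_1p_1b_1)(a_2p_2b_2)$ one rewrites $b_1a_2\in APB$ via (ii) and then slides each resulting $P$-factor outward past the neighbouring $A$- or $B$-factor using (i) and the group laws, landing back in $APB$; for inverses one uses $(apb)^{-1}=b^{-1}p^{-1}a^{-1}$, moves $p^{-1}$ to the left past $b^{-1}$ by normalization, applies (ii) to the resulting $BA$-block, and slides $P$-factors out once more. I do not expect a real obstacle here: the only slightly delicate point is that conjugation by elements of $P_{[[st]]}$ must commute with the vector-group scalings, so that $P$ stabilizes the scaled groups $A$ and $B$ and not merely $G^\pm(K)[[st]]$ --- but this is precisely what was checked inside the proof of Lemma~\ref{lem: oij}.
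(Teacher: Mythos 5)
Your proposal is correct and follows essentially the same route as the paper: the two ingredients are exactly the ones used there, namely that $P_{[[st]]}$ stabilizes $s\cdot G^+(K)[[st]]$ and $t\cdot G^-(K)[[st]]$ under conjugation (via the commutator relations and Lemma \ref{lem: I stabilizes}), and that Lemma \ref{lem: oij} together with the definition of the $o_{i,j}$ rewrites $(s\cdot x)(t\cdot y)$ as an element of $(t\cdot G^-)P(s\cdot G^+)$. You merely make explicit the routine $APB$ bookkeeping that the paper compresses into ``a direct application.''
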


Let $G(K)$ denote the subgroup of $\Aut(\mathcal{L}(\mathcal{A}))(K)$ or its lift in $(\End((\mathcal{L}(\mathcal{A}))) \otimes K)^\times$ generated by $G^+(K)$ and $G^-(K)$.
We remark that $G(K)$ is closed under the grading automorphism \[\Phi(t) : \sum_{i = -4}^4 g_i \mapsto \sum_{i = -4}^4  t^i g_i\] for $t \in K$ invertible, since its generating subgroups are.
For $g = 1 + g_1 + g_2 + g_3 + g_4 \in G(K)$ with $g_i$ acting\footnote{Being $i$-graded in the $\mathbb{Z}$-grading induced by the $\mathbb{Z}$-grading of $\mathcal{L}(\mathcal{A}) \otimes K$ is what we mean by this for arbitrary $K$.} as $+i$ on the grading of the Lie algebra, we define
\[g_\eta = \Phi(1 + \eta)(g) =  1 + \sum (1 + \eta)^i g_i\]
in $G(K[\eta]/(\eta^m))$.

\begin{lemma}
	\label{lem: g(eta2n) g(-1) in G+}
	Suppose that $(J,J')$ is a well behaved cubic norm pair.
	If \[ g = 1 + g_1 + g_2 + g_3 + g_4 \in (\End((\mathcal{L}(\mathcal{A}))) \otimes K)^\times,\]
	with $g_i$ acting as $+i$ on the grading, is an element of $G(K)$, then there exists an $n$ such that
	\[ g_{\eta^{2^{{2n}}}} g^{-1} \in G^+(K[\eta]/(\eta^m))\]
	for all $m$.
	\begin{proof}
		We can find $g(s,t) = \prod_{i = 1}^n (s \cdot x_i)(t \cdot y_i)$ with $x_i \in G^+(K)$ and $y_i \in G^-(K)$ such that $g(1,1) = g$, since $g \in G(K)$.
		The $n$ which occurs in the definition of $g(s,t)$ is the $n$ for which we will prove the lemma.
		
		First, we introduce some subgroups that will play a role in our inductive argument.
		Define \[H^-_k = \{ (\eta^{2^{k}} tx, \eta^{2^{k}} t^2 y) \in t\eta^{2^{k-1}} \cdot G^-(K[st\eta^{2^{k}}, \eta^{2^{k-1}}]/(\eta^m))\},\]
		i.e., the kernel under $\eta^{2^k} = 0$ of $t \cdot G^-(K[st\eta^{2^{k}}, \eta^{2^{k-1}}]/(\eta^m))$. 
		Note that 
		\[t \cdot G^-(K[st\eta^{2^{k}}, \eta^{2^{k-1}}]/(\eta^m)) \subset G^-(K[s,t, \eta^{2^{k - 1}}]).\]
		Secondly, we introduce the analogous subgroup
		\[H^+_k = \{ (s\eta^{2^{k}} x, s^2\eta^{2^{k}} y) \in s\eta^{2^{k - 1}} \cdot G^+(K[st\eta^{2^{k}}, \eta^{2^{k - 1}}]/(\eta^m))\}.\]
		Thirdly, we introduce the group $K_k$ that corresponds to $P_{[[st\eta^{2^{k}}, \eta^{2^{k - 1}}]]}$. To be precise, it is the image of this group under $\eta^m = 0$ and we also assume that it only contains elements that map to $1$ under $\eta = 0$.
		Set 
		\[ \Omega_k(\eta) = \Omega_k = H^-_k K_k H^+_k.\]
		By Lemma \ref{lem: omega[[st]] subgroup} with $s \eta^{2^{k - 1}}$ and $t \eta^{2^{k - 1}}$ instead of $s$ and $t$, $\Omega_k$ is a subgroup; the additional assumption that $K_k \equiv 1 \mod (\eta)$ corresponds to working with the kernel under $\eta^{2^{{k - 1}}} \mapsto 0$ of the image of the group of Lemma \ref{lem: omega[[st]] subgroup}.
		
		We shall prove using induction on $n$ that for each
		\[ g(s,t) = \prod_{i = 1}^n (s \cdot x_i)(t \cdot y_i)\]
		one has
		\[ \left( \Phi(1 + \eta^{2^{2n}}) \cdot g(s,t) \right) g(s,t)^{-1} \in \Omega_1(\eta) . \]

		We prove first that $(s \cdot x) \Omega_k \subset \Omega_{k - 1}(s \cdot x)$ for $x \in G^+$.
		First, $(s \cdot x) H^-_k$ is contained in
		\[ \left(s \cdot G^+(K[st\eta^{2^{k - 1}}, \eta^{2^{k - 1}}]/(\eta^m))\right) \left(t\eta^{2^{k - 1}} \cdot G^-(K[st\eta^{2^{k - 1}}, \eta^{2^{k - 1}}]/(\eta^m))\right).\]
		Lemma \ref{lem: omega[[st]] subgroup}, shows that it is contained in
		\[ H^-_{k - 1} P^-_{k - 1} \left(s \cdot G^+(K[st\eta^{2^{k - 1}}, \eta^{2^{{k-2}}}]/(\eta^m))\right).\]
		Moreover, $(s \cdot x) H^-_k$ maps to $(s \cdot x)$ if we set $\eta^{2^{k - 1}} = 0$.
		Hence, it lies in
		\[ \Omega_{k - 1} (s \cdot x).\]
		A similar argument shows that $(s \cdot x) K_k \subset K_k H^+_k (s \cdot x) \subset \Omega_k (s \cdot x) \subset \Omega_{k - 1}(s \cdot x)$. We also note that $(s \cdot x) H^+_k = H^+_k (s \cdot x)$ since $H^+_k$ is normal.
		We conclude that $(s \cdot x)\Omega_k \subset \Omega_{k - 1}(s \cdot x)$.		
		Analogously, one proves that $(t \cdot y) \Omega_k \subset \Omega_{k - 1} (t \cdot y)$.
		
		Now, we are ready to perform the induction.
		The element $(s \cdot x_i)_{\eta^{2^{2n}}}$ lies in $H^+_{2n} (s \cdot x_i)$ as it maps to $(s \cdot x_i)$ under $\eta^{2^{2n}} = 0$.
		Hence, $(s \cdot x_i)_{\eta^{2^{2n}}}$ must lie in $\Omega_{2n} (s \cdot x_i)$ for all generators $x_i \in G^+$. We also have $(t \cdot y_i)_{\eta^{2^{2n}}} \in \Omega_{2n} (t \cdot y_i)$.
		
		For the base case, we have
		\[ g(s,t)_{\eta^{2^{2^{2}}}} g(s,t)^{-1} \in \Omega_2 (s \cdot x_1) \Omega_2 (t \cdot y_1) (t \cdot y_1)^{-1} (s \cdot x_1)^{-1} \in \Omega_2 \Omega_1 \subset \Omega_1^2 = \Omega_1.\]
		For the inductive step, we note that $\eta^{2^{2n + 2}} = \eta^{{4 \cdot 2^{2n}}} = (\eta^4)^{2^{2n}}$. Now, $\Omega_1(\eta^4) = \Omega_3(\eta)$, since $(\eta^4)^{2} = \eta^{2^3}$, which yields that 
		\[ g(s,t)_{\eta^{2^{2n+2}}} g(s,t)^{-1} \in \left( \prod_{i = 1}^n \Omega_{2n} (s \cdot x_i) \Omega_{2n} (t \cdot x_i) \right) \prod_{i = 1}^n (t \cdot y_{n + 1 - i})^{-1} (s \cdot x_{n + 1 - i})^{-1}\]
		reduces to
		\[ g(s,t)_{(\eta^4)^{2^{2n}}} g(s,t)^{-1} \in \Omega_{1}(\eta) (s \cdot x_1) \Omega_{2}(\eta) (t \cdot y_1) \Omega_1(\eta^4) (t \cdot y_1)^{-1} (s \cdot x_1)^{-1} \in \Omega_1(\eta).\] 
		This finalizes the induction.
		
		Since $g(1,1) = 1 + g_1 + g_2 + g_3 + g_4$, we see that $g_{\eta^{2^{2n}}}(1,1) g^{-1}(1,1) = y k x$ with $y \in G^-(K[\eta]/(\eta^m)),$ $k$ in the image of $K_1$, and $x \in G^+(K[\eta]/(\eta^m))$. This forces 
		\[ g_{\eta^{2^{2n}}}(1,1) g^{-1}(1,1)  x^{-1} = 1,\]
		since $yk$ cannot increase gradings.
		Thus, we conclude $g_{\eta^{2^{2n}}}(1,1) g^{-1}(1,1) = x \in G^+(K[\eta]/(\eta^m))$.
	\end{proof}
\end{lemma}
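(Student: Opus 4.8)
Since $g \in G(K)$ and $G(K)$ is generated by $G^+(K)$ and $G^-(K)$, the first step is to write $g$ as the value at $s=t=1$ of a word $g(s,t) = \prod_{i=1}^{n}(s\cdot x_i)(t\cdot y_i)$ with $x_i \in G^+(K)$, $y_i \in G^-(K)$, where $\lambda\cdot$ is the vector-group rescaling $(a,b)\mapsto(\lambda a,\lambda^2 b)$. As the grading automorphism $\Phi(1+\eta)$ preserves $G^{\pm}$, the twist $\Phi(1+\eta^{2^{2n}})(g)$ is again such a word; the plan is to push the ``twist corrections'', which are trivial modulo $\eta$, through the factors of $g^{-1}$ and collect them into a triangular subgroup whose only surviving factor at $s=t=1$ is a $G^+$-factor.

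To carry out the bookkeeping I would introduce, for each $k \ge 1$, the subgroup
\[\Omega_k(\eta)\;=\;H^-_k\,K_k\,H^+_k\]
of $\bigl((\End(\mathcal{L}(\mathcal{A}))\otimes K)[[s,t,\eta]]/(\eta^m)\bigr)^{\times}$, where $H^{\pm}_k$ is the image of $s\eta^{2^{k-1}}\cdot G^{\pm}(K[st\eta^{2^k},\eta^{2^{k-1}}]/(\eta^m))$ and $K_k$ is the corresponding copy of $P_{[[st\eta^{2^k},\eta^{2^{k-1}}]]}$, taken modulo $\eta^m$ and restricted to elements trivial modulo $\eta$. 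Applying Lemma~\ref{lem: omega[[st]] subgroup} with $s\eta^{2^{k-1}}$, $t\eta^{2^{k-1}}$ in place of $s,t$ shows that each $\Omega_k$ is a subgroup; moreover $\Omega_k \subseteq \Omega_{k-1}$, $\Omega_1(\eta^{2^j}) = \Omega_{1+j}(\eta)$, the twisted generators lie in $(s\cdot x)_{\eta^{2^k}}\in H^+_k(s\cdot x)$ for $x\in G^+$ (and symmetrically for $G^-$), and — this is the key point — one has the commutation
\[(s\cdot x)\,\Omega_k \subseteq \Omega_{k-1}\,(s\cdot x)\quad\text{for }x\in G^+,\]
together with its $G^-$-analogue. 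The reason for the drop from $k$ to $k-1$ is that conjugating the $H^-_k$-factor past $(s\cdot x)$ produces new $G^-$-, $P$- and $G^+$-factors via Lemma~\ref{lem: omega[[st]] subgroup} and Lemma~\ref{lem: oij}, but only at $\eta$-depth $2^{k-1}$ because that rewriting introduces $st$-factors; since $H^+_k$ is normal and $(s\cdot x)K_k\subseteq K_kH^+_k(s\cdot x)$, the three factors recombine inside $\Omega_{k-1}$.

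The heart of the argument is then an induction on $n$ showing that $\Phi(1+\eta^{2^{2n}})(g(s,t))\cdot g(s,t)^{-1}\in\Omega_1(\eta)$. For $n=1$ this is immediate from $(s\cdot x_1)_{\eta^4}\in\Omega_2(s\cdot x_1)$, $(t\cdot y_1)_{\eta^4}\in\Omega_2(t\cdot y_1)$, the commutation above, and $\Omega_2\Omega_1\subseteq\Omega_1$. For the inductive step one peels off the outermost pair $(s\cdot x_1)(t\cdot y_1)$, applies the inductive hypothesis to the remaining word with $\eta$ replaced by $\eta^4$ — giving membership in $\Omega_1(\eta^4)=\Omega_3(\eta)$ — and then uses the commutation estimate to move all the $\Omega$-factors that arise (from the two outer twisted generators, which lie in $\Omega_{2n+2}$, and from the inductive hypothesis in $\Omega_3$) to the far left through $(s\cdot x_1)$ and $(t\cdot y_1)$, each passage lowering the index by one, so that the product lands in $\Omega_1(\eta)$.

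Finally, setting $s=t=1$ — legitimate because the relevant power series in $st$ are polynomials of bounded degree, by the grading argument already used in Lemma~\ref{lem: com formulas} — gives $g_{\eta^{2^{2n}}}g^{-1}=ykx$ with $y\in G^-(K[\eta]/(\eta^m))$, $k$ in the image of $K_1$ (hence grading-preserving for the $\mathbb{Z}$-grading), and $x\in G^+(K[\eta]/(\eta^m))$. Now $g$, and therefore $g_{\eta^{2^{2n}}}$ and $g^{-1}$, has the form $1+d$ with $d$ strictly raising the $\mathbb{Z}$-grading, so the same holds for $g_{\eta^{2^{2n}}}g^{-1}$ and hence for $g_{\eta^{2^{2n}}}g^{-1}x^{-1}=yk$; but $yk$ cannot raise gradings, so $g_{\eta^{2^{2n}}}g^{-1}x^{-1}$ is grading-preserving, and comparing grading-preserving parts forces $yk=1$. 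Thus $g_{\eta^{2^{2n}}}g^{-1}=x\in G^+(K[\eta]/(\eta^m))$ for all $m$. I expect the main obstacle to be the $\eta$-exponent bookkeeping: checking that each commutation past a $G^{\pm}$-generator costs precisely one halving of $2^k$ — so that $n$ pairs force the initial twist $\eta^{2^{2n}}$ — and that the $\Omega_k$ really are nested subgroups, both of which rest on the full strength of Lemma~\ref{lem: omega[[st]] subgroup}, hence of the commutator relations of Lemma~\ref{lem: com formulas}, where well-behavedness of $(J,J')$ enters.
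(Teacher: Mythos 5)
Your proposal is correct and follows essentially the same route as the paper: the same word decomposition $g=g(1,1)$, the same triangular subgroups $\Omega_k=H^-_kK_kH^+_k$ built from Lemma \ref{lem: omega[[st]] subgroup}, the same commutation estimate $(s\cdot x)\Omega_k\subseteq\Omega_{k-1}(s\cdot x)$ driving the induction on $n$, and the same final grading argument at $s=t=1$ forcing the $G^-$- and $P$-factors to be trivial.
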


\begin{lemma}
	\label{lem: helpp}
	Suppose that $(J,J')$ is a well behaved cubic norm pair. 
	If the element
	\[g = 1 + g_1 + g_2 + g_3 + g_4 \in (\End((\mathcal{L}(\mathcal{A}))) \otimes K)^\times\] with $g_i$ acting as $+i$ on the grading, is an element of the subgroup $G(K)$, then $g \in G^+(K)$.
	\begin{proof}
		From Lemma \ref{lem: g(eta2n) g(-1) in G+}, we obtain that there exists an $n$ such that
		\[ g_{\eta^{2^{2n}}} g^{-1} \in G^+(K[\eta]/(\eta^m))\]
		for all $m$.
		We also see that
		\[ g_{\eta^{2^{2n}}} g^{-1} = 1 + \eta^{2^{2n}} g_1 + (\eta^{2\cdot 2^{2n}} g_2 + \eta^{2^{2n}}(2 g_2 - g_1^2)) + \dots\]
		grouping terms acting homogeneously on the grading. The part $(\eta^{2^{2n}} g_1, \eta^{2\cdot 2^{2n}} g_2 + \eta^{2^{2n}}(2 g_2 - g_1^2))$ we wrote down, determines the element uniquely as an element $G^+(K[\eta]/(\eta^m))$, using Lemma \ref{lem: descr G+}.
		Observe that $G(K[\eta]/(\eta^m))$ is generated by elements $\eta^i \cdot (a_1,a_2) = (\eta^i a, \eta^{2i} b)$ and $\eta^i \cdot_2 (0,b) = (0, \eta^{i} b)$ for $(a_1,a_2) \in G(K)$, $(0,b) \in G(K)$, and $i \in \mathbb{N}$, by \cite[Lemma 1.3 and Lemma 1.5]{OKP}. This shows that $(g_1,g_2) \in G(K)$.
		The higher graded components show that $g \in G(K)$.
	\end{proof}
\end{lemma}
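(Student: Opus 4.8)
The plan is to bootstrap from Lemma~\ref{lem: g(eta2n) g(-1) in G+} together with the description of $G^+$ as a nice vector group in Lemma~\ref{lem: descr G+}; the genuine ``Bruhat-type'' rearrangement has already been carried out inside Lemma~\ref{lem: g(eta2n) g(-1) in G+} (via the commutator relations and the $o_{i,j}$ of Proposition~\ref{lem: oij}), so what remains is a descent from a truncated polynomial extension back to $K$. First I would fix, by Lemma~\ref{lem: g(eta2n) g(-1) in G+}, an $n$ with $g_{\eta^{2^{2n}}}g^{-1}\in G^+(K[\eta]/(\eta^m))$ for all $m$, where $g_{\eta}=\Phi(1+\eta)(g)$. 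Writing $\epsilon=\eta^{2^{2n}}$ and expanding $g_{\epsilon}g^{-1}$ in powers of $\eta$, regrouped by grading degree, one obtains that its degree-$1$ component is $\epsilon\,g_1$, its degree-$2$ component is $\epsilon^{2}g_2+\epsilon(2g_2-g_1^{2})$, and its degree-$3$ and degree-$4$ components are $\epsilon$ times polynomials in $g_1,\dots,g_4$.

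Next I would invoke Lemma~\ref{lem: descr G+}: an element of $G^+$ over any $K$-algebra is exactly one carrying a datum $(x,y)$ in the vector group $U\subset\mathcal{A}^2$, with $x$ recovered from the action on the grading element $\zeta$ (so here $x=-g_1\zeta\in\mathcal{A}_1\otimes K$ and $g_1=\ad x$ automatically, $g$ being an automorphism) and $y$ from the action of the degree-$2$ part on $\mathcal{A}_{-1}$ via $g_2\cdot b=((x\bar b)x-yb)_1$. A direct computation of this datum for $g_{\epsilon}g^{-1}$, using the structure of the $V$-operators, shows that inside $U(K[\eta]/(\eta^m))$ it equals the product of $\epsilon\cdot_1 h$, for $h$ the candidate datum of $g$, with the $\cdot_2$-scaling by $\epsilon$ of a fixed element of $U(K)$. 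Since $G^+$ is a \emph{nice} vector group, $G^+(K[\eta]/(\eta^m))=\widehat{G^+}(K[\eta]/(\eta^m))$ is generated over $G^+(K)$ exactly by the scalings $\eta^{i}\cdot_1 g'$ and $\eta^{i}\cdot_2(0,b)$ with $g',(0,b)\in G^+(K)$, by \cite[Lemma~1.3, Lemma~1.5]{OKP}; comparing the coefficients of $\eta^{2^{2n}}$ and $\eta^{2\cdot 2^{2n}}$ in the resulting $U$-valued identity (with $m$ chosen large) then forces $h\in U(K)$, i.e.\ the pair $(g_1,g_2)$ is the datum of an element of $G^+(K)$. Treating the degree-$3$ and degree-$4$ coefficients of the same $\eta$-expansion identically pins down $g_3$ and $g_4$ as the components of an element of $G^+(K)$, and since these components already determine $g$, we conclude $g\in G^+(K)$.

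The step I expect to be the main obstacle is this descent: ``dividing out $\epsilon$'' to extract $h\in U(K)$ from the mere fact that $\epsilon\cdot_1 h$ (combined with a $\cdot_2$-scaling of something in $G^+(K)$) lies in $G^+$ over every truncation $K[\eta]/(\eta^m)$, together with the bookkeeping that pushes this from the degree-$\le 2$ data to the full element $g$. This is exactly the point at which one needs $G^+$ to be a \emph{nice} vector group, so that $\widehat{G^+}$ over these truncated extensions is controlled by the scalings of $G^+(K)$, and at which the structural lemmas \cite[Lemma~1.3, Lemma~1.5]{OKP} are indispensable; without them the coefficient comparison in $K[\eta]/(\eta^m)$ would be lossy. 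Everything else---the $\eta$-expansion, the identification of the vector-group datum, and closing up with $g_3$ and $g_4$---is routine once Lemmas~\ref{lem: g(eta2n) g(-1) in G+} and~\ref{lem: descr G+} are in hand.
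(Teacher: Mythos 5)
Your proposal is correct and follows essentially the same route as the paper's own proof: invoke Lemma \ref{lem: g(eta2n) g(-1) in G+}, expand $g_{\eta^{2^{2n}}}g^{-1}$ by grading degree, identify its vector-group datum via Lemma \ref{lem: descr G+}, use the generation of $G^+$ over $K[\eta]/(\eta^m)$ by the scalings of $G^+(K)$ from \cite[Lemmas 1.3, 1.5]{OKP} to compare $\eta$-coefficients and place the degree-$\le 2$ datum in $G^+(K)$, and then settle $g_3,g_4$ from the higher graded components. The only difference is expository detail (e.g.\ your explicit factorization of the datum as $\epsilon\cdot_1 h$ times an $\epsilon\cdot_2$-scaling), not a different argument.
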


In the theorem below, we mean by $\Lie(G)$ the kernel of $\epsilon \mapsto 0$ in $G(R[\epsilon]/(\epsilon^2))$.
One way to compute the Lie bracket is $[1 + \epsilon_1 g, 1 + \epsilon_2 h] = 1 + \epsilon_1 \epsilon_2 [g,h]$ over $R[\epsilon_1, \epsilon_2]/(\epsilon_1^2, \epsilon_2^2)$. 

\begin{theorem}
	\label{thm: cnp main}
	Let $(J,J')$ be a well behaved cubic norm pair, and $G(K)$ the subgroup of $\Aut(\mathcal{L}(\mathcal{A}))$ generated by $G^+(K)$ and $G^-(K)$.
	Any element $1 + \sum_{i = 1}^\infty g_i$ with $g_i$ acting as $+i$ on the grading of the Lie algebra that contained in $G(K)$, is contained in $G^+(K)$.
	Moreover, one has $[\Lie(G),\Lie(G)] \cong \mathcal{L}(\mathcal{A})$.
	\begin{proof}
		The main statement follows from Lemma \ref{lem: helpp} (noting that $G^\pm(K)$ are automorphism groups of $\mathcal{L}(\mathcal{A})) \otimes K$).
		
		Now, we are ready to prove the second part of the moreover-claim. 
		Start by observing that each root group $U_\gamma$ is contained in $G$.
		This implies that $\Lie(G^\pm) \subset [\Lie(G),\Lie(G)] \subset \Lie(G)$, using that there exists a long root $\alpha$ and a root $\beta$ such that $U_\gamma = [R_\alpha, U_\beta]$.
		Observe that $\mathcal{L}(\mathcal{A})$ is isomorphic to the Lie subalgebra generated by $\Lie(G^\pm)$, as both are isomorphic to the Lie algebra generated by $\ad l$ for all $l \in L$.
		This proves $\mathcal{L}(\mathcal{A}) \subseteq [\Lie(G),\Lie(G)]$.
		
		On the other hand, $\mathcal{L}(\mathcal{A})$ is an ideal of $\Lie(G)$ (since $G(K)$ acts on $\mathcal{L}(\mathcal{A}) \otimes K$ by construction). Any element $\delta \in \Lie(G)/\mathcal{L}(\mathcal{A})$ corresponds to a zero graded element of $\Lie(G)$, since the parts that carry nonzero grading are contained in $\Lie(G^\pm)$. This observation yields $G^\pm \cdot \delta = \delta$ for all $\delta \in \Lie(G)/\mathcal{L}(\mathcal{A})$.
		Hence, $G$ acts trivially on $\Lie(G)/\mathcal{L}(\mathcal{A})$. We conclude that $[\Lie(G), \Lie(G)] \subset \mathcal{L}(\mathcal{A})$.
	\end{proof}
\end{theorem}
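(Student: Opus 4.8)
Let $(J,J')$ be a well behaved cubic norm pair...

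The plan is to prove the two parts separately. The main statement — that any element $1 + \sum g_i$ of $G(K)$ with $g_i$ acting as $+i$ on the grading lies in $G^+(K)$ — is essentially already done: I would invoke Lemma \ref{lem: helpp}, which handles the case $g = 1 + g_1 + g_2 + g_3 + g_4$. The only thing to check is that the grading of $\mathcal{L}(\mathcal{A})$ really has no components beyond degrees $-4,\dots,4$, so that an element acting as $+i$ for $i \geq 5$ acts trivially and the sum $1 + \sum_{i=1}^\infty g_i$ is automatically finite and of the form treated by Lemma \ref{lem: helpp}. This follows from the $5$-graded structure $\mathcal{L}(\mathcal{A}) = \mathcal{S}_{-2} \oplus \mathcal{A}_{-1} \oplus \mathfrak{instr}(\mathcal{A})_0 \oplus \mathcal{A}_1 \oplus \mathcal{S}_2$: an endomorphism shifting the grading by $i$ vanishes for $|i| > 4$. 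So the first sentence of the proof is a two-line reduction to Lemma \ref{lem: helpp}.

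For the isomorphism $[\Lie(G),\Lie(G)] \cong \mathcal{L}(\mathcal{A})$, I would argue by a double inclusion of submodules of $\Lie(G)$ (using that $\mathcal{L}(\mathcal{A})$ sits inside $\Lie(G)$ because $G$ acts on $\mathcal{L}(\mathcal{A}) \otimes K$, so $\Lie(G) \subseteq \End(\mathcal{L}(\mathcal{A}))$ contains $\ad \mathcal{L}(\mathcal{A})$ via the root groups). First, for ``$\supseteq$'': every root group $U_\gamma$ lies in $G$, hence $\Lie(U_\gamma) \subseteq \Lie(G)$; and since every root $\gamma$ can be written as a sum $\gamma = \alpha + \beta$ with $\alpha$ long and $U_\gamma = [R_\alpha, U_\beta]$ (visible from the $G_2$ root diagram in Remark \ref{rmk: grading}), we get $\Lie(U_\gamma) \subseteq [\Lie(G),\Lie(G)]$. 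Taking the Lie subalgebra generated by all the $\Lie(U_\gamma)$ — equivalently by $\Lie(G^+) \cup \Lie(G^-)$ — recovers all of $\ad \mathcal{L}(\mathcal{A})$ (the Lie algebra is generated by $\mathcal{A}_1$ and $\mathcal{A}_{-1}$, cfr. Lemma \ref{lem: lie cnp} and the arguments around it), so $\mathcal{L}(\mathcal{A}) \subseteq [\Lie(G),\Lie(G)]$.

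For ``$\subseteq$'': $\mathcal{L}(\mathcal{A})$ is an ideal of $\Lie(G)$ since $G$ acts on it. I would argue that the quotient $\Lie(G)/\mathcal{L}(\mathcal{A})$ is concentrated in grading $0$ — any nonzero-graded piece of an element of $\Lie(G)$ lies in some $\Lie(U_\gamma) \subseteq \Lie(G^\pm) \subseteq \mathcal{L}(\mathcal{A})$ — and that $G$ acts trivially on this quotient: for $\delta$ a zero-graded representative, conjugating by $\exp_\gamma(x)$ changes $\delta$ only by strictly-higher-graded terms lying in $\mathcal{L}(\mathcal{A})$, so $G^\pm$ fixes $\delta$ modulo $\mathcal{L}(\mathcal{A})$, hence so does $G$. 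Consequently $[\Lie(G),\Lie(G)]$ acts trivially on $\Lie(G)/\mathcal{L}(\mathcal{A})$ too, which combined with the ideal property gives $[\Lie(G),\Lie(G)] \subseteq \mathcal{L}(\mathcal{A})$.

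The main obstacle is not in this proof itself — it is entirely bookkeeping once Lemma \ref{lem: helpp} and the commutator relations are in hand — but rather in making precise the claim that $\Lie(G)/\mathcal{L}(\mathcal{A})$ is purely zero-graded, i.e.\ that a nonzero-graded derivation in $\Lie(G)$ is forced to come from a root group. This rests on Lemma \ref{lem: no deriv with deg > 2} and the explicit descriptions in Lemma \ref{lem: explanation easy computation} (an automorphism of the form $1 + e_1 + e_2 + \dots$ with $e_i$ acting as $+\gamma i$, for $\gamma$ nonzero, is forced into a root group or is trivial), so I would cite those rather than re-derive anything. I expect the whole proof to be three short paragraphs.
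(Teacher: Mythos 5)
Your proposal is correct and follows essentially the same route as the paper: reduction of the main statement to Lemma \ref{lem: helpp} (the finiteness of the sum being automatic from the $5$-grading), and a double inclusion for $[\Lie(G),\Lie(G)] \cong \mathcal{L}(\mathcal{A})$ using the relation $U_\gamma = [R_\alpha, U_\beta]$ for one direction and the triviality of the $G$-action on the zero-graded quotient $\Lie(G)/\mathcal{L}(\mathcal{A})$ for the other. The extra detail you supply (citing Lemmas \ref{lem: no deriv with deg > 2} and \ref{lem: explanation easy computation} for why nonzero-graded pieces of $\Lie(G)$ land in $\Lie(G^\pm)$) only makes explicit what the paper leaves implicit.
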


\begin{remark}
	In general, one can define $\Lie(G)(K)$ as a certain subgroup of $G(K[\epsilon]/(\epsilon^2))$.
	The statement for Lie algebras more or less holds.
	Namely, one obtains a surjection $\mathcal{L}(\mathcal{A}) \otimes K \longrightarrow [\Lie(G)(K),\Lie(G)(K)]$. The only barrier to be an isomorphism is that $[\Lie(G)(K),\Lie(G)(K)] \subset \End_K(\mathcal{L}(\mathcal{A}) \otimes K)$ and that $(\ad \otimes \text{Id})$ on $\mathcal{L}(\mathcal{A}) \otimes K$ is injective if and only if the Lie algebra has no center.
	Working with $\mathcal{L}(\mathcal{A} \otimes K)$ instead, guarantees that the map is a bijection.
\end{remark}

\section{Hermitian cubic norm structures}

In this section, we first introduce quadratic étale extensions and hermitian cubic norm structures defined over such extensions. Splitting the quadratic étale algebra lets us relate the hermitian cubic norm structure to a cubic norm pair, and allows us to generalize the constructions of the structurable algebra, Lie algebra, and automorphism groups $G^+$ and $G^-$.

We also prove the version of main theorem for hermitian cubic norm structures.

\subsection{Quadratic étale algebras}

We first introduce quadratic étale algebras. These will play a role analogous to $R[t]/(t^2 - t) \cong R^2$, i.e., the diagonal subalgebra of the structurable algebra for a cubic norm pair.

\begin{definition}
	Consider the localization $R_\mathfrak{p}$ for prime ideals $\mathfrak{p}$ of our base ring $R$.
	Let $R(\mathfrak{p})$ denote the quotient field of $R/\mathfrak{p}$, or equivalently the localization $R_\mathfrak{p}$ modulo its maximal ideal $\mathfrak{p}_{\mathfrak{p}}$.
	We call a projective $R$-algebra $K$ \textit{quadratic étale}, following \cite[Definition 19.19]{Skip2024}, if for all prime ideals $\mathfrak{p}$ over $R$, the algebra $K(\mathfrak{p}) \cong K \otimes R(\mathfrak{p})$ is either a quadratic field extension of $R(\mathfrak{p})$ or isomorphic to $R(\mathfrak{p})^2$. In particular, a quadratic étale extension is projective of rank $2$, and can therefore be given the structure of a composition algebra \cite[Definition 19.19]{Skip2024} with quadratic norm $n$. Hence, we have an involution $x \mapsto \bar{x} = n(x,1) 1 - x$ using the linearisation of the norm $n$.
	
	We also remark that any quadratic étale extension is faithfully flat, \cite[Proposition 25.9.(ii)]{Skip2024}.	
	The most basic example of a quadratic étale extension, cfr., \cite[Definition 19.19.(iv)]{Skip2024}, is $K = R[t]/(t^2 - t + \alpha)$ with $1 - 4 \alpha$ invertible.
	We will write $K/R$ to denote a quadratic étale extension, if we want to stress the base ring.
\end{definition}

\begin{lemma}
	\label{lem: quadr etale alg ids}
	Let $K$ be a quadratic étale extension, then $K \otimes K \cong K \times K$ under $k \otimes l \mapsto (kl, \bar{k}l)$. The corresponding involution $\bar{\cdot} \otimes \text{Id}$ is given by $(a,b) \mapsto (b,a)$.
	 Moreover, following subspaces of $K$ can be identified:
	\begin{itemize}
		\item $R \cong R \cdot 1_K \cong \{ x \in K : x = \bar{x}\}$,
		\item $\{ x \in K : x + \bar{x} = 0\} \cong \{ x - \bar{x} \in K\}$.
	\end{itemize}
	\begin{proof}
		The first part is \cite[Exercise 19.36]{Skip2024} and the involution follows from that.
		
		To identify the subspaces, we shall use that $K$ is faithfully flat, i.e., $K$ is flat and $M \longrightarrow M \otimes K$ is injective for all $R$-modules $M$ \cite[Proposition 25.4]{Skip2024}.
		For the first one, we note that $R \longrightarrow R \otimes K \cong K$ is injective, hence $R \cong R \cdot 1_K$.
		Now, consider the short exact sequence
		\[ 0 \longrightarrow \{(k,k) \in K^2\} \longrightarrow K^2 \longrightarrow \{ (a,-a) \in K^2\} \longrightarrow 0\]
		defined by the map $(a,b) \mapsto (a,b) - (b,a) = (a,b) - \overline{(a,b)}$.
		Both $(R \cdot 1_K) \otimes K$ and $\{x \in K : x = \bar{x}\} \otimes K$ coincide with $\{(k,k) \in K^2\}$. Now, the exact sequence $0 \longrightarrow (R \cdot 1_K) \longrightarrow \{x \in K : x = \bar{x}\} \longrightarrow U \longrightarrow 0$ and the faithful flatness of $K$ shows that $U \cong 0$, or in  other words $R \cdot 1_K \cong \{x \in K : x = \bar{x}\}$.
	
		We also note that $\{x \in K : x + \bar{x} = 0\} \otimes K \cong \{ (a,b) \in K \times K : (a,b) + (b,a) = 0 \} \cong \{ (a,-a) \in K \times K\} \cong \{ x - \bar{x} : x \in K \times K\} \cong \{x  -\bar{x} \in K\} \otimes K$. Now, using that each $u = x - \bar{x}$ satisfies $u + \bar{u} = 0$, shows, as before, that $\{ x \in K : x + \bar{x} = 0\} = \{ x - \bar{x} \in K\}$.
	\end{proof}
\end{lemma}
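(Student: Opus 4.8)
The plan is to reduce everything to the single structural fact that $K \otimes_R K \cong K \times K$ and then to exploit the faithful flatness of $K$ to transport the subspace identifications down from $K$ to $R$.

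First I would record the algebra isomorphism $K \otimes_R K \to K \times K$, $k \otimes l \mapsto (kl, \bar k l)$. This is \cite[Exercise 19.36]{Skip2024}: the map is visibly an $R$-algebra homomorphism; it is surjective because the two idempotents of $K \times K$ lie in its image (locally $K = R[t]/(f)$ with $f$ separable, so $f$ splits over $K$ with unit difference of roots and the Chinese remainder theorem applies); and, both $K\otimes_R K$ and $K\times K$ being projective of rank $2$ over $K$, a surjection between them is an isomorphism. Granting this, the claim about the involution is immediate: $\bar{\cdot} \otimes \mathrm{Id}$ sends $k \otimes l$ to $\bar k \otimes l \mapsto (\bar k l,\, k l)$, which is exactly the coordinate swap of the image $(kl, \bar k l)$ of $k \otimes l$.

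Next I would dispose of the easy inclusions $R \cdot 1_K \subseteq \{x \in K : x = \bar x\}$ and $\{x - \bar x : x \in K\} \subseteq \{x \in K : x + \bar x = 0\}$, the first since $\bar 1 = 1$, the second since $\overline{x - \bar x} = -(x - \bar x)$. Faithful flatness of $K$ applied to the $R$-module $R$ shows that the structure morphism $R \to R \otimes_R K = K$ is injective, giving $R \cong R \cdot 1_K$. It then remains only to upgrade the two inclusions to equalities, and here I would tensor with $K$: this is exact because $K$ is flat, and a map of $R$-modules that becomes an isomorphism after $-\otimes_R K$ was already one, since $K$ is faithfully flat (equivalently, its cokernel $U$ satisfies $U \otimes_R K = 0$, hence $U = 0$). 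Using the isomorphism of the previous paragraph together with the explicit involution, $\{x \in K : x = \bar x\}\otimes_R K$ is carried to the fixed space $\{(a,b)\in K^2 : (a,b)=(b,a)\} = \{(k,k)\}$, which also equals the image of $(R\cdot 1_K)\otimes_R K = K\cdot(1,1)$; and $\{x\in K : x+\bar x = 0\}\otimes_R K$ is carried to $\{(a,b)\in K^2 : (a,b)+(b,a)=0\} = \{(a,-a)\}$, while $\{x-\bar x : x\in K\}\otimes_R K$ — the image of the flat base change of the $R$-linear endomorphism $x\mapsto x-\bar x$ of $K$ — is carried to $\{(a,b)-(b,a) : (a,b)\in K^2\} = \{(c,-c)\}$. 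In each case the two submodules of $K\times K$ coincide, so the corresponding inclusion becomes an isomorphism after the faithfully flat base change, hence was an isomorphism to begin with.

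The only point that needs genuine care, rather than bookkeeping, is the identification $\{x-\bar x : x\in K\}\otimes_R K \cong \{(c,-c)\in K^2\}$: one must first observe that forming the image of the $R$-linear map $x\mapsto x-\bar x$ commutes with the flat base change $-\otimes_R K$, and only then compute that image inside $K\times K$ via the explicit involution of the first paragraph. Everything else follows formally from faithful flatness together with the structural isomorphism $K\otimes_R K\cong K\times K$.
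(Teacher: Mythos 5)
Your proof is correct and follows essentially the same route as the paper's: cite (or sketch) the structural isomorphism $K\otimes_R K\cong K\times K$, compute the involution as the coordinate swap, and then use faithful flatness of $K$ to descend the equality of the base-changed submodules of $K\times K$ back to $R$ (your "an inclusion that becomes an isomorphism after faithfully flat base change was already one" is the same descent step the paper phrases via the vanishing of the cokernel $U$). The extra care you take in noting that the image of $x\mapsto x-\bar x$ commutes with the flat base change is a point the paper passes over implicitly, but the argument is otherwise identical.
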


\begin{lemma}
	\label{lem: quadr etale alg mods}
	Take an $R$-module $M$ and a quadratic étale extension $K$. Consider the map $\alpha: m \otimes k \mapsto m \otimes \bar{k}$ defined on $M \otimes K$. We can identify $M \subset M \otimes K$ as the fixpoints under $\alpha$.
	\begin{proof}
		Recall that $K$ is faithfully flat, so that $M$ can be identified with a subset of $M \otimes K$. We also note that $M$ is a subset of the fixpoints of $\alpha$.
		
		Take a short exact sequence \[0 \longrightarrow M \otimes R \longrightarrow \{ m \in M \otimes K : \alpha(m) = m\} \longrightarrow U \longrightarrow 0.\]
		Apply $\cdot \otimes K$, and we obtain
		\[ M \otimes K \longrightarrow \{ (m,m) \in (M \otimes K)^2 \} \longrightarrow U \otimes K \longrightarrow 0.\]
		We see that $U \otimes K = 0$, hence $U = 0$ since $K$ is faithfully flat.
		We conclude, $M \cong \{ m \in M \otimes K : \alpha(m) = m\}.$
	\end{proof}
\end{lemma}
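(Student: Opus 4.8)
The plan is to mimic exactly the faithful-flatness descent argument that was just used for Lemma \ref{lem: quadr etale alg ids}, now for an arbitrary $R$-module $M$ in place of the specific submodules of $K$. First I would recall the two standing facts about a quadratic étale extension $K/R$: it is faithfully flat, so the structure map $M \longrightarrow M \otimes K$ is injective and we may regard $M$ as a submodule of $M \otimes K$; and the conjugation $\alpha \colon m \otimes k \mapsto m \otimes \bar k$ is a well-defined $R$-linear (indeed semilinear) involution of $M \otimes K$ fixing $M$ pointwise, since $\bar{}\colon K \to K$ fixes $R\cdot 1_K$. Thus $M \subseteq \{\,m \in M\otimes K : \alpha(m)=m\,\}$, and the content of the lemma is the reverse inclusion.

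The key step is to compute the fixed module after base change to $K$ itself. Using the isomorphism $K \otimes K \cong K \times K$, $k\otimes l \mapsto (kl,\bar kl)$ of Lemma \ref{lem: quadr etale alg ids}, under which $\bar{}\otimes \mathrm{Id}$ becomes the swap $(a,b)\mapsto(b,a)$, one gets $(M\otimes K)\otimes_R K \cong (M\otimes K)\times(M\otimes K)$ with $\alpha\otimes\mathrm{Id}$ acting as the coordinate swap; hence $\{\,x : (\alpha\otimes\mathrm{Id})(x)=x\,\} \cong \{(m,m)\} \cong M\otimes K$. Then I would form the short exact sequence
\[
0 \longrightarrow M \otimes R \longrightarrow \{\,m \in M\otimes K : \alpha(m)=m\,\} \longrightarrow U \longrightarrow 0
\]
defining $U$ as the cokernel, apply the exact functor $\cdot \otimes_R K$ (flatness preserves the sequence; the inclusion stays injective, and one must check that forming the $\alpha$-fixed submodule commutes with the flat base change $\cdot\otimes_R K$, which holds because taking fixed points of an involution is a kernel and kernels commute with flat base change), and conclude from the previous computation that $M\otimes K \longrightarrow \{\,x:(\alpha\otimes\mathrm{Id})(x)=x\,\}\cong M\otimes K$ is an isomorphism, so $U\otimes_R K = 0$. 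Faithful flatness then forces $U=0$, giving $M \cong \{\,m\in M\otimes K : \alpha(m)=m\,\}$.

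The one genuine subtlety — the step I expect to need the most care — is verifying that the formation of the $\alpha$-fixed submodule is compatible with the base change $\cdot\otimes_R K$, i.e.\ that $\big(\{m\in M\otimes K:\alpha(m)=m\}\big)\otimes_R K$ is identified, inside $(M\otimes K)\otimes_R K$, with the fixed submodule of $\alpha\otimes\mathrm{Id}_K$. This is where flatness of $K$ is used a second time: the fixed submodule is the kernel of $\alpha - \mathrm{Id}$, and tensoring the left-exact sequence $0\to \ker(\alpha-\mathrm{Id})\to M\otimes K \xrightarrow{\alpha-\mathrm{Id}} M\otimes K$ with the flat module $K$ keeps it exact. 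Everything else is a direct transcription of the proof of Lemma \ref{lem: quadr etale alg ids}, so I would keep the write-up short and simply point to that proof for the routine parts.
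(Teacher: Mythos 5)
Your proposal is correct and follows essentially the same faithfully flat descent argument as the paper: embed $M$ into the $\alpha$-fixed points, form the cokernel $U$, tensor with $K$ to identify the fixed points after base change with the diagonal $\cong M\otimes K$, and conclude $U=0$ by faithful flatness. The one point you flag as a subtlety — that forming the fixed submodule (a kernel of $\alpha-\mathrm{Id}$) commutes with the flat base change $\cdot\otimes_R K$ — is indeed used implicitly in the paper's displayed sequence, and your explicit justification via left-exactness is a correct filling-in of that step.
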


We shall write $\mathcal{S}$ for $\{ x \in K : x + \bar{x} = 0\} = \{x - \bar{x} \in K\}$, as we did before for the diagonal subalgebra $R^2$ of the structurable algebra.

\subsection{Hermitian cubic norm structures and structurable algebras}
\label{subsec: hcns}

\begin{definition}
	Let $\Phi$ be an associative unital $R$-algebra with involution $a \mapsto \bar{a}$ and let $A$ be a $\Phi$-module.
	We call an $R$-bilinear map $f : A \times A \longrightarrow \Phi$ \textit{hermitian} if $f(\phi a, b) = \phi f(a,b)$ and $f(a,b) = \overline{f(b,a)}$ for all $a,b \in A$ and $\phi \in \Phi$.
\end{definition}

\begin{definition}
	Consider a quadratic étale extension $K/R$. Let $J$ be a $K$-module endowed with the following maps:
	\begin{itemize}
		\item a $K$-cubic map $N : J \longrightarrow K$,
		\item an $R$-quadratic map $\sharp : J \longrightarrow J$ such that $(k j)^\sharp = \bar{k}^2 j$ for $k \in K$ and $j \in J$,
		\item the linearisation $(a,b) \mapsto a\times b$ of $\sharp$ which is assumed to be $\bar{\cdot}$-antilinear in both arguments,
		\item a hermitian map $T : J \times J \longrightarrow K$. 
	\end{itemize}
	This quadruple $(J,N,\sharp,T)$ is called a \textit{hermitian cubic norm structure} over $K/R$ if
	\begin{itemize}
		\item $T(a,b^\sharp) = N^{(1,2)}(a,b)$,
		\item $(a^\sharp)^\sharp = N(a)a$,
		\item $(a^\sharp \times b) \times a = \overline{N(a)}b + T(b,a) a^\sharp$,
		\item $N(T(j,k)j - j^\sharp \times k) = N(j)^2 \overline{N(k)}$
	\end{itemize}
	hold over all scalar extensions $K \otimes L$ of $K$ induced by extending $R$ to $L$.
	
	As for cubic norm pairs, we call a hermitian cubic norm structure \textit{well behaved} if $N(v)^2 = \overline{N(v^\sharp)}$.
\end{definition}

\begin{lemma}
	\label{lem: split herm cubic norm}
	Let $K = R[t]/(t^2 - t)$ with involution $t \mapsto 1 - t$.
	Suppose that $(J,N,\sharp,T)$ is a hermitian cubic norm structure over $K/R$ and let $J_1 = tJ$, $J_2 = (1 - t) J$, then $(J_1,J_2,N_1,N_2,\sharp,\sharp,T_1, T_2)$ is a cubic norm pair over $R$ using	
	\begin{itemize}
		\item $N_1(j) t = N(j)$ \item $N_2(k) (1 - t) = N(k)$ \item $t T_1(j,k) = T(j,k)$\item  $(1 - t) T_2(k,j) = T(k,j)$ 
	\end{itemize} 
	for $j \in J_1$ and $k \in J_2$.
		Conversely, any cubic norm pair $(J_1,J_2)$ over $R$ defines such a hermitian cubic norm structure $J$ over $K/R$.
		Moreover, the cubic norm pair is well behaved if and only if the hermitian cubic norm structure is well behaved.
	\begin{proof}
		This is not hard to verify.
	\end{proof}
\end{lemma}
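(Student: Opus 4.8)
The plan is to exploit the idempotent decomposition $K = R[t]/(t^2-t) \cong Rt \times R(1-t)$, under which the involution $k \mapsto \bar k$ interchanges the two factors ($\bar t = 1-t$), and every $K$-module $M$ splits canonically as $M = tM \oplus (1-t)M$. Applied to $J$ this gives $J = J_1 \oplus J_2$ with $J_1 = tJ$, $J_2 = (1-t)J$ exactly as in the statement. The whole proof is the observation that under this splitting the four hermitian axioms become, on each summand, precisely the axioms of Definition \ref{def: cubic norm pair}.

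First I would record where the structure maps send the summands, using $t^3 = t$, $\bar t = 1-t$ and $t(1-t) = 0$. Since $N$ is $K$-cubic, for $j \in J_1$ one has $N(j) = N(tj) = t^3 N(j) \in Rt$, so $N$ restricts to $N_1 t$ for a well-defined $R$-cubic $N_1 \colon J_1 \to R$, and likewise $N|_{J_2} = N_2(1-t)$. From $(kj)^\sharp = \bar k^2 j^\sharp$ one gets $j^\sharp = (tj)^\sharp = (1-t)j^\sharp \in J_2$ for $j \in J_1$, so $\sharp$ interchanges $J_1$ and $J_2$ and is $R$-quadratic on each; since $\times$ is $\bar{\cdot}$-antilinear, $a \times b \in J_2$ for $a,b \in J_1$, which is the polarization of $\sharp|_{J_1}$. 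For the hermitian form $T$, $\Phi$-linearity in the first slot together with $T(x,y) = \overline{T(y,x)}$ forces $T(J_1,J_1) = T(J_2,J_2) = 0$ and $T(j,k) \in Rt$, $T(k,j) = \overline{T(j,k)} \in R(1-t)$ for $j \in J_1$, $k \in J_2$, yielding $R$-bilinear $T_1, T_2$ with $T_2(k,j) = T_1(j,k)$. Finally one checks the mixed linearizations vanish: for $a \in J_1$, $b \in J_2$ the term $N^{(2,1)}(a,b)$ is $K$-homogeneous of degree $2$ in $a$ and of degree $1$ in $b$, hence lies in $Rt \cap R(1-t) = 0$, and similarly $N^{(1,2)}(a,b) = 0$; so $N$ is additive across the decomposition, consistent with $T(a,b^\sharp) = 0$ in that case.

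Next I would substitute these expressions into the four hermitian axioms. For arguments taken in $J_1$ (respectively $J_2$) each axiom collapses, after cancelling $t$ or $1-t$, to the corresponding axiom of Definition \ref{def: cubic norm pair} for $(J_1,J_2)$ (respectively the reversed-role version); the conjugations in the third and fourth hermitian axioms are exactly what is needed, e.g. $\overline{N(k)} = N_2(k)t$ turns $N(T(j,k)j - j^\sharp\times k) = N(j)^2\overline{N(k)}$ into $N_1(Q_j k) = N_1(j)^2 N_2(k)$ once one notes $Q_j k \in J_1$. Since the hermitian data is symmetric under $t \leftrightarrow 1-t$, the ``roles reversed'' half of Definition \ref{def: cubic norm pair} comes for free. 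Compatibility with scalar extensions matches because $K \otimes_R L \cong L[t]/(t^2-t)$ and $J \otimes_K (K\otimes_R L) = (J_1\otimes_R L) \oplus (J_2\otimes_R L)$, so the two ``over all scalar extensions'' quantifiers correspond. For the converse, given a cubic norm pair $(J_1,J_2)$ over $R$ I would set $J = J_1 \oplus J_2$ with $K$ acting through the idempotent $t$, and define $N(a\oplus b) = N(a)t + N(b)(1-t)$, $(a\oplus b)^\sharp = b^\sharp \oplus a^\sharp$, $T(a\oplus b, a'\oplus b') = T(a,b')t + T(a',b)(1-t)$, then check directly that $N$ is $K$-cubic, $(kx)^\sharp = \bar k^2 x^\sharp$, $\times$ is $\bar{\cdot}$-antilinear and $T$ is hermitian, and that the four hermitian axioms follow by running the above substitution in reverse. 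The well-behavedness statement is immediate from the same dictionary: $N(v)^2 = \overline{N(v^\sharp)}$ evaluated on $v \in J_1$ reads $N_1(v)^2 = N_2(v^\sharp)$, i.e.\ exactly $N(v^\sharp) = N(v)^2$ for the cubic norm pair, and symmetrically on $J_2$. The only genuine work — and the step where care is needed — is keeping straight the interaction of the $\bar{\cdot}$-antilinearity and the conjugations with the decomposition, together with the vanishing of the mixed linearizations; once that bookkeeping is set up, every axiom correspondence is a one-line substitution.
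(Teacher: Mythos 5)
Your verification is correct and is exactly the routine check the paper leaves implicit ("This is not hard to verify"): the Peirce decomposition $J = tJ \oplus (1-t)J$, the bookkeeping of where $N$, $\sharp$, $\times$, $T$ send each summand, and the cancellation of $t$ or $1-t$ in each axiom. The only detail worth making explicit is that the well-behavedness equivalence for a general $v = v_1 + v_2$ reduces to the two component conditions because $v_1 \times v_2 = 0$ and the mixed linearizations of $N$ vanish, which your setup already supplies.
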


\begin{definition}
	We call a hermitian cubic norm structure $J$ over $R[t]/(t^2 - t) = R \times R$ a \textit{split} hermitian cubic norm structure. The previous lemma establishes a one to one correspondence between split hermitian cubic norm structures over $R \times R$ and cubic norm pairs over $R$.
\end{definition}

\begin{construction}
	To a hermitian cubic norm structure $J$ over $K/R$ we associate an $R$-algebra $\mathcal{B} = \mathcal{B}(J,K)$ with involution.
	Set $\mathcal{B} = K \times J$, use $(k,j)(k',j') = (kk' + T(j,j'), kj' + \overline{k'}j + j \times j')$ as product, and $\bar{\cdot} : (k,j) \mapsto (\bar{k},j)$ as involution. Using that $T$ is hermitian and $\times$ is symmetric, one sees that $\overline{xy} = \bar{y}\bar{x}$ for all $x ,y \in \mathcal{B}$.
\end{construction}

Recall the structurable algebra $\mathcal{A}$ for a cubic norm pair constructed in \ref{con: A}.
Lemma \ref{lem: split herm cubic norm} proves that we can define $\mathcal{A}$ for an arbitrary split hermitian cubic norm structure.

\begin{lemma}
	\label{lem: B and A the same for split hcns}
	For a split hermitian cubic norm pair $J$ over $R \times R$ we have $\mathcal{A} \cong \mathcal{B}(J,K)$.
	\begin{proof}
		We will prove that we have an isomorphism for a hermitian cubic norm structure coming from a cubic norm pair $(J_1,J_2)$ over $R$. Lemma \ref{lem: split herm cubic norm} shows this to be sufficient.
		
		The diagonal elements in $\mathcal{A}$ form $R \times R \cong R[t]/(t^2 - t)$. 
		We use
		\[ \alpha t + \beta (1 - t) \mapsto \begin{pmatrix}
			\alpha \\ & \beta
		\end{pmatrix}\]
		as the isomorphism $R[t]/(t^2 - t) \longrightarrow R \times R \subset \mathcal{A}$.
		The elements with zero diagonal form $J$. We see that $tJ = J_1$ and $(1 - t) J = J_2$ by construction. Hence, the hermitian cubic norm operators are listed in Lemma \ref{lem: split herm cubic norm}.
		
		It is easy to verify that the products and involutions of $\mathcal{B}(J,K)$ and $\mathcal{A}$ match.
	\end{proof}
\end{lemma}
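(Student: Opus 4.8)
The plan is to exhibit an explicit $R$-linear map between the two algebras and then check, by direct expansion, that it respects the product and the involution. By Lemma \ref{lem: split herm cubic norm} the split hermitian cubic norm structure $J$ over $K = R[t]/(t^2-t)$ comes from a cubic norm pair $(J_1,J_2)$ over $R$ with $J_1 = tJ$ and $J_2 = (1-t)J$, and the hermitian data $N,\sharp,T$ are recovered from $N_i,\sharp,T_i$ via the formulas listed there. Since $t$ is an idempotent with $\bar t = 1-t$, the $K$-module $J$ decomposes as $J = J_1\oplus J_2$ over $R$, so $\mathcal{B}(J,K) = K\times J$ is $R$-isomorphic to $R\oplus R\oplus J_1\oplus J_2$; likewise $\mathcal{A} = \left(\begin{smallmatrix} R & J_1 \\ J_2 & R\end{smallmatrix}\right)$ decomposes the same way. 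I would define $\Phi:\mathcal{B}(J,K)\to\mathcal{A}$ by sending $(\alpha t + \beta(1-t),\, j_1 + j_2)$ with $j_i\in J_i$ to $\left(\begin{smallmatrix}\alpha & j_1\\ j_2 & \beta\end{smallmatrix}\right)$; this is manifestly an $R$-linear bijection, restricting on the diagonal to the isomorphism $K\cong R\times R$ of Lemma \ref{lem: B and A the same for split hcns} and on the off-diagonal to the identity $J_1\oplus J_2 = J$.

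The second step is to record the ``compatibility dictionary'' between the two sets of operators. The relations $(kj)^\sharp = \bar k^2 j$ and the $\bar\cdot$-antilinearity of $\times$ force $\sharp$ to interchange $J_1$ and $J_2$, force $\times$ to map $J_1\times J_1\to J_2$ and $J_2\times J_2\to J_1$, and force all mixed products $j_1\times j_2$ to vanish (just multiply through with $t$, using $t(1-t)=0$); the restrictions of $\sharp,\times$ are then exactly the cubic-norm-pair maps $\sharp,\times$ on $J_1$ and $\sharp',\times'$ on $J_2$. Similarly, hermiticity of $T$ gives $T(j_1,j_1') = t\,\overline{T(j_1',j_1)} \in t(1-t)K = 0$, so $T(J_1,J_1) = T(J_2,J_2) = 0$, and $T$ is determined by $T_1:J_1\times J_2\to R$ (valued in $tK$) together with $T_2:J_2\times J_1\to R$ (valued in $(1-t)K$).

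With this dictionary the multiplicativity of $\Phi$ is a mechanical expansion. Writing $k = \alpha t + \beta(1-t)$, $j = j_1+j_2$, and similarly for primed elements, one gets $kk' + T(j,j') = (\alpha\alpha' + T_1(j_1,j_2'))\,t + (\beta\beta' + T_2(j_2,j_1'))(1-t)$ for the $K$-component, while the $J$-component $kj' + \overline{k'}j + j\times j'$ splits as $(\alpha j_1' + \beta' j_1 + j_2\times' j_2')\oplus(\alpha' j_2 + \beta j_2' + j_1\times j_1')$ along $J_1\oplus J_2$. Comparing entry by entry with the matrix product $\left(\begin{smallmatrix}\alpha & j_1\\ j_2 & \beta\end{smallmatrix}\right)\left(\begin{smallmatrix}\alpha' & j_1'\\ j_2' & \beta'\end{smallmatrix}\right)$ read off from Construction \ref{con: A} shows they agree. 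That $\Phi$ intertwines the involutions is immediate, since $\overline{\alpha t + \beta(1-t)} = \beta t + \alpha(1-t)$ corresponds to swapping the two diagonal entries of the matrix, which is exactly the involution of $\mathcal{A}$.

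I do not expect a genuine conceptual obstacle here. The only delicate part is the bookkeeping in the second step: correctly tracking which summand ($J_1$ or $J_2$, resp.\ $tK$ or $(1-t)K$) each term lands in, and confirming that the polarizations $\times$ and $\times'$ of the cubic norm pair are literally the restrictions of the single antilinear $\times$ on $J$. Once that dictionary is in place, the computation in the third step is purely routine.
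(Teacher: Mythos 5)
Your proposal is correct and follows essentially the same route as the paper: the same explicit map sending $\alpha t+\beta(1-t)$ to the diagonal and $j_1+j_2\in J_1\oplus J_2$ to the off-diagonal entries, with the operators matched via Lemma \ref{lem: split herm cubic norm}. The only difference is that you carry out in full the verification (the vanishing of $T(J_i,J_i)$ and of the mixed $\times$-products, and the entrywise comparison of products) that the paper dismisses as "easy to verify," and your computations check out.
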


\begin{remark}
	\label{rmk: B structurable}
	Lemma \ref{lem: B and A the same for split hcns} shows one advantage of working with hermitian cubic norm structures over cubic norm pairs, as the associated algebra $\mathcal{B}(J,K)$ is easier to describe than $\mathcal{A}$. 
	At this moment, it is only justified to call $\mathcal{B}(J,K)$ structurable if $J$ is a split hermitian cubic norm structure. 
	
	In Theorem \ref{thm: split extension}, we shall establish that $J \otimes K$ is split for all hermitian cubic norm structures. Given that $K$ is faithfully flat, this shows that $\mathcal{B}(J,K) \subset \mathcal{B}(J,K) \otimes K \cong \mathcal{B}(J \otimes K, K \otimes K) \cong \mathcal{A}$.
	We are thus justified in calling $\mathcal{B}(J,K)$ structurable for all hermitian cubic norm structures.
\end{remark}

\begin{theorem}
	\label{thm: split extension}
	Consider a hermitian cubic norm structure $J$ over $K/R$.
	Then, $J \otimes K$ is isomorphic to a split hermitian cubic norm structure over $(K \times K)/K$. 
	Moreover, there exists a $K$-semilinear automorphism $\alpha : J \otimes K \longrightarrow J \otimes K$ of the hermitian cubic norm structure such that $J$ coincides with the fixpoints of $\alpha$.
	\begin{proof}
		This is a straightforward application of Lemmas \ref{lem: quadr etale alg ids}, \ref{lem: quadr etale alg mods}, and \ref{lem: split herm cubic norm}.
	\end{proof}
\end{theorem}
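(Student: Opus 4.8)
The plan is to base change the entire hermitian cubic norm structure along the structure morphism $R \to K$ and then read off both assertions from the three cited lemmas. Since $J$ is a $K$-module, the extension $J \otimes K = J \otimes_R K$ is naturally a module over $K \otimes_R K$, and the maps $N$, $\sharp$, $\times$, $T$ base change to maps on $J \otimes K$ which satisfy the hermitian cubic norm axioms over $K \otimes_R K$, precisely because those axioms were required to hold over every scalar extension of $K$ obtained by extending $R$, and $K \otimes_R K$ is such an extension. As quadratic étale extensions are stable under base change, $K \otimes_R K$ is quadratic étale over $K$, so $J \otimes K$ is a hermitian cubic norm structure over $(K \otimes_R K)/K$.

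Next I would apply Lemma \ref{lem: quadr etale alg ids}: the isomorphism $K \otimes_R K \xrightarrow{\sim} K \times K$, $k \otimes l \mapsto (kl, \bar{k} l)$, carries the involution $\bar{\cdot} \otimes \mathrm{Id}$ to the coordinate swap of $K \times K$. Transporting the structure of $J \otimes K$ along this isomorphism yields a hermitian cubic norm structure over $(K \times K)/K$; since $K \times K \cong K[t]/(t^2 - t)$ with involution $t \mapsto 1 - t$, this is by definition a split hermitian cubic norm structure, and via Lemma \ref{lem: split herm cubic norm} it even corresponds to an explicit cubic norm pair over $K$, though this is not needed for the statement.

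For the final assertion I would take $\alpha = \mathrm{Id}_J \otimes \bar{\cdot}$, where $\bar{\cdot}$ is the composition-algebra involution of $K$ acting on the second tensor factor. This map is $K$-semilinear for the new base ring $K$, and by Lemma \ref{lem: quadr etale alg mods}, applied to the $R$-module underlying $J$, its fixpoint set is exactly $J \subset J \otimes K$. It then remains to check that $\alpha$ respects the base-changed operators $N$, $\sharp$, $\times$, $T$: the point is that $\alpha$ is semilinear over the automorphism $\mathrm{Id}_K \otimes \bar{\cdot}$ of $K \otimes_R K$ (which under the isomorphism above is the coordinate swap followed by componentwise conjugation), and since those operators are defined compatibly over all scalar extensions of $K$, functoriality of $-\otimes_R K$ forces them to intertwine with themselves along this automorphism. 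Hence $\alpha$ is a $K$-semilinear automorphism of the hermitian cubic norm structure with fixed point set $J$.

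The work here is organizational rather than computational: the one genuine subtlety is keeping straight the two roles of $K$ — the quadratic étale algebra over which the original structure lives versus the new base ring — together with the two resulting $K$-module structures on $J \otimes K$, and making precise the naturality statement that forces $\alpha$ to be compatible with the cubic norm data, which ultimately reduces to functoriality of scalar extension combined with the fact that the hermitian cubic norm identities were imposed over every scalar extension.
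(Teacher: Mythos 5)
Your proposal is correct and follows essentially the same route the paper intends: base change along $R\to K$, identify $K\otimes_R K\cong K\times K$ with the swap involution via Lemma \ref{lem: quadr etale alg ids}, and take $\alpha=\mathrm{Id}_J\otimes\bar{\cdot}$ with fixed points identified by Lemma \ref{lem: quadr etale alg mods}, the compatibility with $N,\sharp,T$ coming from the fact that these are natural transformations imposed over all scalar extensions. You have in fact supplied the details (notably the careful separation of the two $K$-module structures and the naturality argument for $\alpha$) that the paper's one-line proof leaves implicit.
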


\begin{remark}
	\label{rem: hcns splits}
	The previous theorem says that any hermitian cubic norm structure can be seen as the fixpoints of a split hermitian cubic norm structure under a certain (semilinear) automorphism.
	
	We can apply our knowledge to $\mathcal{B}(J,K)$. In Remark \ref{rmk: B structurable}, we explained how \[\mathcal{B}(J,K) \subset \mathcal{A} \cong \mathcal{B}(J \otimes K, K \times K)\]
	since $K$ faithfully flat.
	The $K$-semilinear automorphism $\alpha$ of Theorem \ref{thm: split extension}, shows that $\mathcal{B}(J,K)$ is formed by the elements of $\mathcal{A}$ fixed under a semilinear automorphism $\tilde{\alpha}$ induced by $\alpha$.
\end{remark}

\begin{construction}
	\label{con: Lie B}	
	Recall the Lie algebra $\mathcal{L}(\mathcal{A})$ constructed in \ref{con: lie}. Since $\mathcal{B} = \mathcal{B}(J,K) \subset \mathcal{A}$, we can apply this Lie algebra construction to $\mathcal{B}$ as well and obtain a subalgebra $\mathcal{L}(\mathcal{B})$ of $\mathcal{L}(\mathcal{A})$. When we apply this construction, we should use the space of skew elements $\mathcal{S} = \{ x - \bar{x} : x \in \mathcal{B} \} = \{x \in \mathcal{B} : x + \bar{x} = 0 \}$ using Lemma \ref{lem: quadr etale alg ids} for the identification. This Lie algebra carries a $\mathbb{Z}$-grading
	\begin{equation}  \mathcal{L}(\mathcal{B}) = \mathcal{S}_{-2} \oplus \mathcal{B}_{-1} \oplus \mathfrak{instr}(\mathcal{B})_0 \oplus \mathcal{B}_1 \oplus \mathcal{S}_2 \label{eq: grading comp Lie b},\end{equation}
	writing $X_i$ for the $i$-th grading component, and is isomorphic to the Lie algebra generated by the Kantor triple subsystem $\mathcal{B}$ of $\mathcal{A}$.
	
	This definition introduces $\mathfrak{instr}(\mathcal{B})$ as a subalgebra of $\mathfrak{instr}(\mathcal{A})$.
	Observe that $\mathfrak{instr}(\mathcal{B}) = \text{span}\{ (V_{x,y}, - V_{y,x}) : x, y \in \mathcal{B} \}$ acts faithfully on $\mathcal{B}^2 \subset \mathcal{A}^2$ since these maps are $K$-linear and since $\mathcal{A}$ is spanned by $\mathcal{B}$ over $K$.

\end{construction}

\subsection{Main theorem}
\label{subsec: main thm hcns}

In the previous subsection we defined hermitian cubic norm structures $J$ over $K/R$, the associated structurable algebra $\mathcal{B} = \mathcal{B}(J,K)$ and the Lie algebra $\mathcal{L}(\mathcal{B}) = \mathcal{L}(\mathcal{B}(J,K))$.

Now, we will define automorphism groups $G^+$ and $G^-$ of $\mathcal{L}(\mathcal{B})$, that can be thought of $\exp(\mathcal{B}_1 \oplus \mathcal{S}_2)$ and $\exp(\mathcal{B}_{-1} \oplus  \mathcal{S}_{-2})$, using the grading components of $\mathcal{L}(\mathcal{B})$ introduced in (\ref{eq: grading comp Lie b}).

\begin{construction}
	\label{con: UKJ}
	We define \[U{(K,J)} = \{((k,j),(u,kj + j^\sharp)) \in (K \times J) \times (K \times J) : u + \bar{u} = k\bar{k} + T(j,j)\}.\]
	This is a group with product
	\[ (a,b)(c,d) = (a + c,b + d + a\bar{c}),\]
	using the algebra structure of $\mathcal{B}(J,K) \cong K \times J$.
	Whenever $J$ is split, this coincides with $G^+$, the automorphism group generated by all the root groups for which the root has positive $\mathbb{Z}$-grading, as proven in Lemma \ref{lem: descr G+}.
	Any element in $G^+ \subset \Aut(\mathcal{L}(\mathcal{A}))$ is thus of the form $1 + g_1 + g_2 + g_3 + g_4$ with $g_i$ acting as $+i$ on the grading of the Lie algebra.
	The isomorphism, was proved by proving that there exist a unique $(x,y) \in U(K,J)$ such that $g_1 = \ad x$ and $g_2 \cdot a_{-1} = (x\bar{a}) x - y a$ for $a \in \mathcal{A}_{-1}$. This is a nice vector group \ref{lem: U nice vector group}.
	
	For an arbitrary hermitian cubic norm structure $J$, this shows that $U(K,J)$ acts as automorphisms of $\mathcal{L}(\mathcal{B} \otimes K)$. We shall prove that it acts on  $\mathcal{L}(\mathcal{B})$.
\end{construction}

\begin{lemma}
	\label{lem: help1}
	Suppose that $u_j = ((0,j),(v,j^\sharp))$ and $u_a = ((a,0),(v,0))$ are elements of $U(K,J)$.
	Write each $g \in U(K,J)$ as $1 + \sum_{i = 1}^4 g_i$ with \[g_i \in \End(\mathcal{L}(\mathcal{B}(K \otimes K, J \otimes K)))\] acting as $+i$ on the grading.
	The equations 
	\begin{equation}
		\label{eq: T((0,j))} 
		(u_j)_3 \cdot (b,k) =N(j)b + T(j,Q_jk) + vT(j,k) - \overline{N(j)b + T(j,Q_jk) + uT(j,k)}.\end{equation}
	and 
	\begin{equation}
		\label{eq: T((a,0))}
		(u_a)_3 \cdot (b,k) = vb \bar{a} - a\overline{bv}.
	\end{equation}
	hold for all $(b,k)$ in the grading component $\mathcal{B}(K \otimes K, J \otimes K)_{-1}$ of the Lie algebra.
	\begin{proof}
		We will show that these equations hold whenever $J$ is split. The general case follows from the embeddings $J \longrightarrow J \otimes K$ and $K \longrightarrow K \otimes K$.

		Write\[ s = \begin{pmatrix}
			1 & \\ & -1
		\end{pmatrix}. \]
		We note that $(u_j)_3$ with $j = \begin{pmatrix}
			0 & v \\ w & 0
		\end{pmatrix}$ is the part of 
		\[ \exp_{\beta + \alpha }(v) \exp_{\beta + 2 \alpha}(w) \exp_{2 \beta + 3\alpha}\left( v_j - T(v,w) \begin{pmatrix}
			1 & \\ & 
		\end{pmatrix}\right),\] that acts as $+3$ on the grading of the Lie algebra. We write $f_j$ for the part of $u_3$ obtained by restricting the domain to $\mathcal{A}_{-1}$.
		Note that $v_j = \begin{pmatrix} (v_j)_1 & \\ & (v_j)_2 \end{pmatrix}$ with $(v_j)_1 + (v_j)_2 = T(v,w)$.
		We obtain
		\begin{itemize}
			\item $f_j \begin{pmatrix}  \\ & 1 \end{pmatrix} = - N(w) s$, using the grading to conclude that only the contribution of $\exp_{\beta + 2 \alpha}(w)$ can be nonzero,
			\item $f_j \begin{pmatrix}
				1 & \\ & 
			\end{pmatrix}$ = $N(v) s$, using the grading to conclude that only the contribution of $\exp_{\beta + \alpha}(v)$ can be nonzero,
			\item $f_j \begin{pmatrix}
				& a \\ &
			\end{pmatrix} = (T(v,Q_w a) - (v_j)_2 T(w,a)) s$, where the nonzero terms are obtained by evaluating $\exp_{\alpha + \beta }(v)_1\exp_{\beta + 2 \alpha}(w)_2$, and \[\exp_{\beta + 2 \alpha}(w)_1 \exp_{2 \beta + 3 \alpha}\left( v_j - T(v,w) \begin{pmatrix}
			1 & \\ & 
		\end{pmatrix}\right)_1\] where $\exp_\gamma(\cdots)_i$ is the part of that exponential that acts on the grading as $+ i \gamma$.
			\item $f_j \begin{pmatrix}
				& \\ b & 
			\end{pmatrix} = (- T(w,Q_v b) + (v_j)_1 T(v,b)) s$.
		\end{itemize}
		From the first two cases, we obtain that $f_j(b,0) = N(j) b - \overline{N(j) b} = [N(j)b,1]$.
		The last two cases yield that $(0,k) \mapsto [T(j,Q_j k) + v_j T(j,k), 1]$ coincides with the action of $f_j$. 
		This establishes that Equation \ref{eq: T((0,j))} holds.
		
		For Equation \ref{eq: T((a,0))}, we immediately get that $T_{((a,0),(u,0))} (0,k) = 0$ if we closely observe the grading. For $T_{((a,0),(u,0))} (b,0)$, one argues analogously to the previous cases.
	\end{proof}
\end{lemma}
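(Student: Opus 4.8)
The plan is to reduce everything to the split case and then compute explicitly with the exponentials. Both sides of Equations (\ref{eq: T((0,j))}) and (\ref{eq: T((a,0))}) are assembled from natural transformations, and under $J \hookrightarrow J \otimes K$, $K \hookrightarrow K \otimes K$ the modules $J$ and $K$ are identified with the fixpoints of the semilinear automorphism of Theorem \ref{thm: split extension} (using Lemmas \ref{lem: quadr etale alg mods} and \ref{lem: quadr etale alg ids}). So it suffices to prove both identities when $J$ is a split hermitian cubic norm structure, i.e.\ when $\mathcal{B}(J,K) = \mathcal{A}$ is the structurable algebra of a cubic norm pair $(J_1, J_2)$, $U(K,J) = G^+$, and every element of $G^+$ is an ordered product of the root group exponentials $\exp_\beta$, $\exp_{\beta+\alpha}$, $\exp_{2\beta+3\alpha}$, $\exp_{\beta+2\alpha}$, $\exp_{\beta+3\alpha}$ by Lemma \ref{lem: descr G+}.

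For (\ref{eq: T((0,j))}), write $j = \left(\begin{smallmatrix} 0 & v \\ w & 0\end{smallmatrix}\right)$ with $v \in J_1$, $w \in J_2$; then $u_j = \exp_{\beta+\alpha}(v)\,\exp_{\beta+2\alpha}(w)\,\exp_{2\beta+3\alpha}(v_j - T(v,w)t)$ for a diagonal $v_j$ with $(v_j)_1 + (v_j)_2 = T(v,w)$. I would then isolate the $+3$-graded part $(u_j)_3$, restrict it to $\mathcal{A}_{-1}$, and evaluate on the four $\mathbb{Z}^2$-homogeneous pieces of $\mathcal{A}_{-1}$: the idempotents $t$, $\bar t$ and the two off-diagonal slots. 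The refined $G_2$-grading of Remark \ref{rmk: grading} kills nearly every term: on $t$ only the $\exp_{\beta+\alpha}(v)$-contribution can survive, giving $N(v)(t - \bar t)$; on $\bar t$ only $\exp_{\beta+2\alpha}(w)$ survives, giving $-N(w)(t-\bar t)$; and on the off-diagonal slots only products of a single homogeneous component of $\exp_{\beta+\alpha}(v)$ or $\exp_{\beta+2\alpha}(w)$ with one component of the remaining factors contribute, which by the explicit formulas of Constructions \ref{def: dp rep R} and \ref{def: action J2} evaluate to $T(v, Q_w a) - (v_j)_2 T(w,a)$ and $-T(w,Q_v b) + (v_j)_1 T(v, b)$. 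Reassembling the four cases over $K = R \times R$ folds $N(v)$, $N(w)$ into the term $N(j)b$ and the two off-diagonal contributions into $T(j, Q_j k) + v_j\, T(j,k)$, each occurring in the skew form $\zeta - \bar\zeta$, which is precisely the right-hand side of (\ref{eq: T((0,j))}).

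For (\ref{eq: T((a,0))}) the same scheme is shorter, since $u_a = ((a,0),(v,0))$ lies in the subgroup generated only by $\exp_\beta$, $\exp_{\beta+3\alpha}$ and $\exp_{2\beta+3\alpha}$. The $\mathbb{Z}^2$-grading then forces $(u_a)_3$ to vanish on the $J$-slot of $\mathcal{A}_{-1}$, and evaluating $(u_a)_3$ on the diagonal slot $(b,0)$ by the same contribution analysis yields $v b \bar a - \overline{v b \bar a}$; recalling $v + \bar v = a\bar a$, this is the asserted formula. The reduction to the split case is formal, and once split everything is a finite matrix computation in $\End(\mathcal{L}(\mathcal{A}))$, so the conceptual content is light. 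The one genuine task — and the place to be careful — is the bookkeeping in the middle step: deciding exactly which homogeneous components of which exponential factors can reach the $+3$-graded part $(u_j)_3$ when it is applied to each homogeneous piece of $\mathcal{A}_{-1}$, and tracking the decomposition $v_j = (v_j)_1 t + (v_j)_2\bar t$ together with the signs produced by the involution $x \mapsto \bar x$. The $G_2$-grading of Remark \ref{rmk: grading} is what makes this feasible, cutting each evaluation down to at most a couple of surviving monomials.
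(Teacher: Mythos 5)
Your proposal is correct and follows essentially the same route as the paper: reduce to the split case via the embeddings $J \hookrightarrow J \otimes K$, $K \hookrightarrow K \otimes K$, factor $u_j$ as the ordered product of root-group exponentials from Lemma \ref{lem: descr G+}, and evaluate the $+3$-graded part on the four $G_2$-homogeneous pieces of $\mathcal{A}_{-1}$, where the refined grading leaves only the contributions $N(v)s$, $-N(w)s$, $T(v,Q_w a) - (v_j)_2 T(w,a)$ and $-T(w,Q_v b) + (v_j)_1 T(v,b)$. The identification of the surviving monomials and the treatment of Equation (\ref{eq: T((a,0))}) via the grading match the paper's argument.
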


\begin{proposition}
	\label{prop: action UKJ}
	Consider a hermitian cubic norm structure $J$ over $K/R$. The group $U(K,J) \subset U(K \times K, J \otimes K)$ acts on $\mathcal{L}(\mathcal{B}) \subset \mathcal{L}(\mathcal{B} \otimes K)$ as automorphisms.
	\begin{proof}
		We can see $U(K,J)$ as an automorphism group acting on $\mathcal{L}(\mathcal{B} \otimes K)$.
		Since $\mathcal{L}(\mathcal{B})$ is generated by $\mathcal{B}_1$ and $\mathcal{B}_{-1}$, it is sufficient to prove that $U(K,J)$ maps these generators to $\mathcal{L}(\mathcal{B}) \subset \mathcal{L}(\mathcal{B} \otimes K)$, to prove that $U(K,J)$ acts on $\mathcal{L}(\mathcal{B})$.
		
		Take $u \in U(K,J)$ and write $u = 1 + u_1 + u_2 + u_3 + u_4$ with $u_i$ acting as $+i$ on the grading of $\mathcal{L}(\mathcal{B} \otimes K)$.
		We know that $u_1 = \ad b$ for a $b \in \mathcal{B}$. Hence $u$ maps $\mathcal{B}_1$ to $\mathcal{L}(\mathcal{B})$.
		We also know that $u_2 \cdot \mathcal{B}_{-1} \in \mathcal{B}$.
		So, if we can show that $u_3 \cdot \mathcal{B}_{-1} \subset \{ x - \bar{x} : x \in \mathcal{B}\}$, then we can conclude that $u \cdot \mathcal{B}_{-1} \subset \mathcal{L}(\mathcal{B})$, which would finalize the proof.
		
		For $u_j$ and $u_a$ as in Lemma \ref{lem: help1} we know that this is the case.
		For arbitrary $u = u_a u_j \in U(K,J)$,
		we use
		\[ u_3 \cdot x = \left( (u_a)_3 + (u_a)_1 (u_j)_2  + (u_j)_3    + (u_a)_2 (u_j)_1 \right) \cdot x. \]
		For the final term $(u_a)_2 (u_j)_1 \cdot x$, we note that $u_a$ and $u_{t j} = ((0,tj),(t^2 u_j, t^2 j^\sharp))$ commute in $U(K[t],J[t])$, and thus $(u_a)_2 (u_j)_1 = (u_j)_1 (u_a)_2$.
		Hence, all terms are of the form $f_3 \cdot x$ or $f_1 g_2 \cdot x$ with $f_3 \cdot x \in \mathcal{L}(\mathcal{B})$ and $[f_1, g_2 \cdot x] \in \mathcal{L}(\mathcal{B})$.		
	\end{proof}
\end{proposition}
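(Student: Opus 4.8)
The plan is to leverage that $\mathcal{L}(\mathcal{B})$ is generated as a Lie algebra by its grading components $\mathcal{B}_1$ and $\mathcal{B}_{-1}$, and that $U(K,J)$ already operates by invertible Lie algebra homomorphisms of the ambient $\mathcal{L}(\mathcal{B}\otimes K)\cong\mathcal{L}(\mathcal{A})$ (Construction \ref{con: UKJ}, Remark \ref{rmk: B structurable}). Once I know that $u\cdot\mathcal{B}_1\subseteq\mathcal{L}(\mathcal{B})$ and $u\cdot\mathcal{B}_{-1}\subseteq\mathcal{L}(\mathcal{B})$ for every $u\in U(K,J)$, the automorphism $u$ maps the subalgebra these generate --- namely $\mathcal{L}(\mathcal{B})$ --- into itself, and applying the same to $u^{-1}\in U(K,J)$ shows $u$ restricts to an automorphism of $\mathcal{L}(\mathcal{B})$. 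Writing $u=1+u_1+u_2+u_3+u_4$ with $u_i$ acting as $+i$ on the grading, the inclusion $u\cdot\mathcal{B}_1\subseteq\mathcal{L}(\mathcal{B})$ is automatic: the terms $u_i\cdot\mathcal{B}_1$ with $i\ge 2$ vanish by the $\mathbb{Z}$-grading, $u_1=\ad b$ with $b\in\mathcal{B}_1$, and $[v,w]=-L_{v\bar w-w\bar v}\in\mathcal{S}_2$ for $v,w\in\mathcal{B}_1$ since $\mathcal{B}$ is a $\bar{\cdot}$-stable subalgebra of $\mathcal{A}$.

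For $\mathcal{B}_{-1}$, I would fix $a_{-1}\in\mathcal{B}_{-1}$ and expand $u\cdot a_{-1}=a_{-1}+u_1a_{-1}+u_2a_{-1}+u_3a_{-1}$ (the $u_4$-term is $0$ by the grading). Here $u_1a_{-1}=[b,a_{-1}]$ is of the form $\pm(V_{x,y},-V_{y,x})$ with $x,y\in\mathcal{B}$, hence lies in $\mathfrak{instr}(\mathcal{B})_0$; and $u_2a_{-1}=(x\bar a)x-ya$ for the pair $(x,y)\in U(K,J)\subseteq\mathcal{B}^2$ describing $u$ (Construction \ref{con: UKJ}), which lies in $\mathcal{B}_1$ because $\mathcal{B}$ is $\bar{\cdot}$-stable and multiplicatively closed. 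So the whole statement reduces to $u_3\cdot\mathcal{B}_{-1}\subseteq\mathcal{S}_2=\{x-\bar x:x\in\mathcal{B}\}$, which is precisely what Lemma \ref{lem: help1} was set up to supply. For the two families of generators $u_j=((0,j),(v,j^\sharp))$ and $u_a=((a,0),(v,0))$ of $U(K,J)$, Equations \eqref{eq: T((0,j))} and \eqref{eq: T((a,0))} exhibit $(u_j)_3\cdot(b,k)$ and $(u_a)_3\cdot(b,k)$ explicitly as $z-\bar z$ with $z\in\mathcal{B}$, so they land in $\mathcal{S}_2$. For a general $u=u_au_j$, collecting the degree-$3$ part gives $u_3=(u_a)_3+(u_a)_2(u_j)_1+(u_a)_1(u_j)_2+(u_j)_3$; the first and last summands are handled by the generator case, the term $(u_a)_1(u_j)_2a_{-1}=[b_a,(u_j)_2a_{-1}]\in[\mathcal{B}_1,\mathcal{B}_1]\subseteq\mathcal{S}_2$, and for the remaining cross term I would use that $u_a$ commutes with $u_{tj}$ in $U(K[t],J[t])$, so that comparing the coefficient of $t$ in the degree-$3$ parts of $u_au_{tj}=u_{tj}u_a$ yields $(u_a)_2(u_j)_1=(u_j)_1(u_a)_2$, whence $(u_a)_2(u_j)_1a_{-1}=[b_j,(u_a)_2a_{-1}]\in[\mathcal{B}_1,\mathcal{B}_1]\subseteq\mathcal{S}_2$.

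The main obstacle is the single point $u_3\cdot\mathcal{B}_{-1}\subseteq\mathcal{S}_2$: a priori one only knows $u_3\cdot a_{-1}\in\mathcal{S}_2(\mathcal{A})$, and seeing that it actually lies among the skew elements of the subalgebra $\mathcal{B}$ requires the explicit computation of Lemma \ref{lem: help1} --- which itself is proved by reducing to the split hermitian case via the faithfully flat embeddings $J\hookrightarrow J\otimes K$, $K\hookrightarrow K\otimes K$ and evaluating the relevant products of exponentials there --- together with the commutation trick above for the cross terms. Everything else is bookkeeping with the five-term $\mathbb{Z}$-grading and the elementary fact that $\mathcal{B}$ is a $\bar{\cdot}$-stable subalgebra of $\mathcal{A}$ on which $\mathfrak{instr}(\mathcal{B})$ acts faithfully.
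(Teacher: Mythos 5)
Your proposal is correct and follows essentially the same route as the paper's proof: reduce to the generators $\mathcal{B}_{\pm 1}$, peel off the degree-$1$ and degree-$2$ parts, reduce everything to $u_3\cdot\mathcal{B}_{-1}\subseteq\mathcal{S}_2$, invoke Lemma \ref{lem: help1} for the generators $u_j$ and $u_a$, and handle the cross term $(u_a)_2(u_j)_1$ via the commutation of $u_a$ with $u_{tj}$ over $K[t]$. The extra bookkeeping you supply for the $\mathcal{B}_1$ and $u_2$ cases is consistent with what the paper leaves implicit.
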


\begin{lemma}
	\label{lem: help hom spaces}
	Suppose that $K$ is an $R$-algebra and that $K$ is projective and finitely generated as an $R$-module, then
	\[ \Hom_R(X,Y) \otimes_R K \cong \Hom_K(X \otimes K, Y \otimes K)\]
	for all $R$-modules $X$ and $Y$.
	\begin{proof}
		Since $K$ is finitely generated projective, we have \[\text{Hom}_R(X,Y) \otimes K \cong \text{Hom}_R(X, Y \otimes K),\] which follows from (\cite[Chapter II, No. 4, Proposition 4]{Bou89} with $K \cong \text{Hom}_R(R,K)$. The tensor-hom adjunction shows
		\[\text{Hom}_R(X, Y \otimes K) \cong \text{Hom}_R(X, \text{Hom}_K(K,Y \otimes K)) \cong \text{Hom}_K(X \otimes K, Y \otimes K) .\qedhere\]
	\end{proof}
\end{lemma}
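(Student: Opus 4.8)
The plan is to obtain the asserted isomorphism as a composite, factoring it through $\Hom_R(X, Y \otimes_R K)$. First I would write down the natural $K$-linear map
\[ \Hom_R(X,Y) \otimes_R K \longrightarrow \Hom_R(X, Y \otimes_R K), \qquad f \otimes k \longmapsto \big(x \mapsto f(x) \otimes k\big), \]
and argue that it is bijective. Since $K$ is finitely generated and projective as an $R$-module, it is a direct summand of a finite free module $R^n$; the displayed map is visibly an isomorphism for $K = R$, it is compatible with finite direct sums, and it respects passage to direct summands, so it is an isomorphism for our $K$. This is exactly what the cited \cite[Chapter~II, No.~4, Proposition~4]{Bou89} gives when the second pair of modules there is taken to be $R$ and $K \cong \Hom_R(R,K)$; the finiteness/projectivity hypothesis on $K$ is precisely what is needed, since without it one only gets an injection in general.

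The second step is the tensor--hom adjunction for the ring homomorphism $R \to K$. Setting $M = Y \otimes_R K$, which is a $K$-module, one has $\Hom_K(K,M) \cong M$ as $K$-modules (hence in particular as $R$-modules), so
\[ \Hom_R(X, Y \otimes_R K) \;=\; \Hom_R\!\big(X, \Hom_K(K, Y \otimes_R K)\big) \;\cong\; \Hom_K\!\big(X \otimes_R K,\ Y \otimes_R K\big), \]
the last isomorphism being the standard adjunction between extension of scalars $X \mapsto X \otimes_R K$ and restriction of scalars along $R \to K$. Composing the two steps yields the lemma.

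The remaining verifications are routine and I would not dwell on them: one checks that each arrow in the composite is $K$-linear for the evident $K$-structures (the $K$-factor on $\Hom_R(X,Y) \otimes_R K$, the target $Y \otimes_R K$ on $\Hom_R(X, Y \otimes_R K)$, and the usual one on $\Hom_K(X \otimes_R K, Y \otimes_R K)$) and that the composite is natural in $X$ and $Y$. The only genuine input — and thus the step I would flag as carrying the weight — is the projectivity-and-finite-generation hypothesis on $K$, which is exactly what makes the first map an isomorphism rather than a mere monomorphism.
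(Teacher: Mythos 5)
Your proposal is correct and follows essentially the same route as the paper: the first isomorphism via the finitely generated projective hypothesis (the same Bourbaki citation, with $K \cong \Hom_R(R,K)$), followed by the tensor--hom adjunction through $\Hom_R(X, \Hom_K(K, Y \otimes K))$. The extra detail you supply on why finite projectivity gives the first isomorphism (reduction to finite free modules and direct summands) is a correct elaboration of what the paper leaves to the citation.
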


\begin{lemma}
	\label{lem: G embeds into larger G}
	Suppose that $J$ over $K/R$ is a hermitian cubic norm pair, then
	\[ \mathcal{L}(\mathcal{B}) \otimes K \cong \mathcal{L}(\mathcal{B} \otimes K)\]
	and
	\[ \End_R(\mathcal{L}(\mathcal{B})) \otimes K \cong \End_K(\mathcal{L}(\mathcal{B} \otimes K)).\]
Moreover, $\End_R(\mathcal{L}(\mathcal{B})) \otimes L \longrightarrow \End_K(\mathcal{L}(\mathcal{B} \otimes K)) \otimes_K (K \otimes L)$ is injective for all $R$-algebras $L$.
	\begin{proof}
		First note that $K$ has constant rank $2$, which implies that $K$ is finitely generated projective \cite[Proposition 1.4]{Vasc69}. Hence, we can use Lemma \ref{lem: help hom spaces}.
		
		In particular, we see that $\End_R(\mathcal{B}) \otimes K \cong \End_K(\mathcal{B} \otimes K)$, which shows that the surjection $\mathfrak{instr}(\mathcal{B}) \otimes K \longrightarrow \mathfrak{instr}(\mathcal{B} \otimes K)$ is an isomorphism. This proves the isomorphism of Lie algebras.
		
		A second application of Lemma \ref{lem: help hom spaces} and the isomorphism of Lie algebras yields the second isomorphism.
		
		The moreover-statement follows since $K$ is faithfully flat as an $R$-algebra and $ M \otimes_R L \cong M \otimes_K (K \otimes_R L)$ for $K$-modules $M$.
	\end{proof}
\end{lemma}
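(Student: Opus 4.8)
The plan is to deduce everything from the single fact that a quadratic étale extension $K/R$ is finitely generated projective as an $R$-module, which is exactly the hypothesis of Lemma \ref{lem: help hom spaces}. First I would record that $K$ is projective of constant rank $2$, hence finitely generated projective over $R$ by \cite[Proposition 1.4]{Vasc69}; consequently $\End_R(M) \otimes K \cong \End_K(M \otimes K)$ for every $R$-module $M$, and more generally $\Hom_R(X,Y) \otimes K \cong \Hom_K(X \otimes K, Y \otimes K)$.

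For the isomorphism $\mathcal{L}(\mathcal{B}) \otimes K \cong \mathcal{L}(\mathcal{B} \otimes K)$ I would argue grading piece by grading piece along the decomposition (\ref{eq: grading comp Lie b}). The outer pieces are harmless: one has $\mathcal{B}(J,K) \otimes K \cong \mathcal{B}(J \otimes K, K \times K)$ with the involution passing to the involution (Remark \ref{rmk: B structurable}, via Lemma \ref{lem: quadr etale alg ids}), so $\mathcal{B}_{\pm 1} \otimes K$ and $\mathcal{S}_{\pm 2} \otimes K$ are identified with the corresponding pieces for $\mathcal{B} \otimes K$, and all the structure transports because the Kantor triple operations are built from natural ($R$-multilinear, homogeneous) maps. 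The one substantial point is the $0$-piece $\mathfrak{instr}(\mathcal{B})$. Since it is the $R$-span of the operators $(V_{x,y}, -V_{y,x})$ and $V$ is $R$-trilinear, there is a natural surjection $\pi : \mathfrak{instr}(\mathcal{B}) \otimes K \twoheadrightarrow \mathfrak{instr}(\mathcal{B} \otimes K)$. On the other hand, flatness of $K$ keeps the inclusion $\mathfrak{instr}(\mathcal{B}) \hookrightarrow \End_R(\mathcal{B})^2$ injective after $\otimes K$; identifying $\End_R(\mathcal{B})^2 \otimes K$ with $\End_K(\mathcal{B} \otimes K)^2$, this injective map factors through $\pi$ followed by the tautological inclusion $\mathfrak{instr}(\mathcal{B} \otimes K) \hookrightarrow \End_K(\mathcal{B} \otimes K)^2$, the latter being injective because $\mathfrak{instr}(\mathcal{B} \otimes K)$ acts faithfully on $\mathcal{B} \otimes K$ (Construction \ref{con: Lie B}). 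Hence $\pi$ is injective, so an isomorphism, and reassembling the five pieces yields the Lie algebra isomorphism.

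The second isomorphism is then immediate: apply Lemma \ref{lem: help hom spaces} with $M = \mathcal{L}(\mathcal{B})$ and feed in $\mathcal{L}(\mathcal{B}) \otimes K \cong \mathcal{L}(\mathcal{B} \otimes K)$, giving $\End_R(\mathcal{L}(\mathcal{B})) \otimes K \cong \End_K(\mathcal{L}(\mathcal{B}) \otimes K) \cong \End_K(\mathcal{L}(\mathcal{B} \otimes K))$. For the moreover-statement I would use that $K$ is faithfully flat over $R$ \cite[Proposition 25.9.(ii)]{Skip2024}: for any $R$-module $N$ the composite $N \otimes_R L \to (N \otimes_R K) \otimes_R L \cong (N \otimes_R K) \otimes_K (K \otimes_R L)$ is injective — the first arrow by faithful flatness of $K$, the second a standard base-change identity for $K$-modules — and taking $N = \End_R(\mathcal{L}(\mathcal{B}))$ together with the second isomorphism gives injectivity of $\End_R(\mathcal{L}(\mathcal{B})) \otimes L \to \End_K(\mathcal{L}(\mathcal{B} \otimes K)) \otimes_K (K \otimes L)$.

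The step I expect to be the genuine obstacle is the $\mathfrak{instr}$-piece: because $\mathfrak{instr}(\mathcal{B})$ is introduced as a submodule of $\End(\mathcal{B})^2$ carved out by a spanning condition, tensoring with $K$ a priori only produces a surjection onto $\mathfrak{instr}(\mathcal{B} \otimes K)$, and one must combine the flatness of $K$, the isomorphism $\End_R(\mathcal{B}) \otimes K \cong \End_K(\mathcal{B} \otimes K)$, and the faithfulness of the $\mathfrak{instr}$-action to see that this surjection has no kernel. Everything else is routine transport of structure along natural maps.
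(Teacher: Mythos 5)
Your proposal is correct and follows essentially the same route as the paper: reduce to the finitely generated projectivity of $K$ so that Lemma \ref{lem: help hom spaces} applies, use $\End_R(\mathcal{B})\otimes K\cong \End_K(\mathcal{B}\otimes K)$ to see that the natural surjection $\mathfrak{instr}(\mathcal{B})\otimes K\to\mathfrak{instr}(\mathcal{B}\otimes K)$ is injective, apply the hom lemma once more for the second isomorphism, and invoke faithful flatness plus the base-change identity for the moreover-part. The only difference is that you spell out the injectivity of the $\mathfrak{instr}$-surjection (via the factorization through $\End_K(\mathcal{B}\otimes K)^2$), which the paper leaves implicit.
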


Lemma \ref{lem: G embeds into larger G} allows us to embed $\Aut_R(\mathcal{L}(\mathcal{B}))(L) \subset \Aut_K(\mathcal{L}(\mathcal{B} \otimes K))(K \otimes L)$. We note that the groups $G^\pm(L)$ are subgroups of $\Aut_R(\mathcal{L}(\mathcal{B}))(L)$, which we prove below.

\begin{lemma}
	The groups $G^\pm(L)$ are subgroups of \[(\End_R(\mathcal{L}(\mathcal{B})) \otimes L)^\times ) \subset (\End_K(\mathcal{L}(\mathcal{B} \otimes K) \otimes_K (L \otimes K))^\times.\]
	Moreover, these define subgroups of $\Aut(\mathcal{L}(\mathcal{B}))(L)$ by applying the map \[\left(\End_R(\mathcal{L}(\mathcal{B})) \otimes L\right)^\times \longrightarrow \End_R(\mathcal{L}(\mathcal{B}) \otimes L)^\times.\]
	\begin{proof}
		This is obviously the case if $L = R$. We also note that $G^\pm(L)$ is a subgroup of $(\End_K(\mathcal{L}(\mathcal{B} \otimes K) \otimes_K (L \otimes K))^\times$, since this is how we constructed $G^\pm$ in the split case.
		
		Consider the generators $l \cdot (g_1 \otimes 1, g_2 \otimes 2)$ and $l \cdot_2 (0,g \otimes 1)$ for all $(g_1,g_2), (0,g) \in G(R)$, and $l \in L$, of $G^\pm(L)$ identified in \cite[Lemma 1.3]{OKP}. Note that these generators have immediate interpretations in $\End_R(\mathcal{L}(\mathcal{B}) \otimes L)^\times$,
		using \[l \cdot_1 (g_1,g_2) = 1 + g_1 \otimes l + g_2 \otimes l^2 + g_3 \otimes l^3 + g_4 \otimes l^4\] and similar for $l \cdot_2 (0, g \otimes 1)$.
		Using the embedding \[\End_R(\mathcal{L}(\mathcal{B})) \otimes L \longrightarrow \End_K(\mathcal{L}(\mathcal{B} \otimes K)) \otimes_K (K \otimes_R L),\] we see that these generators generate a subgroup isomorphic to $G^\pm(L)$.
		This immediately shows that $G^\pm(L)$ defines a subgroup of $\Aut(\mathcal{L}(\mathcal{B}))(L)$ since its generators are automorphisms.
	\end{proof}
\end{lemma}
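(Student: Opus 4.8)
The plan is to pull everything back to the split situation through the faithfully flat extension $K/R$ and to the action over the base ring that we already have. For $L=R$ there is nothing new: Proposition~\ref{prop: action UKJ} says that $G^+(R)=U(K,J)$ acts on $\mathcal{L}(\mathcal{B})$ by automorphisms, and the construction obtained by interchanging the roles of $+$ and $-$ gives the same for $G^-(R)$. What matters for the rest is the consequence that every element of $G^\pm(R)$ is a genuine automorphism $1+g_1+g_2+g_3+g_4$ of $\mathcal{L}(\mathcal{B})$ whose graded components $g_i$ lie in $\End_R(\mathcal{L}(\mathcal{B}))$ and shift the $\mathbb{Z}$-grading of $\mathcal{L}(\mathcal{B})$ by exactly $i$.

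Next I would invoke \cite[Lemma 1.3]{OKP}: since $G^\pm$ are nice vector groups, $G^\pm(L)$ is generated by the elements $l\cdot_1 g=(lg_1,l^2g_2)$ for $g=(g_1,g_2)\in G^\pm(R)$ and $l\in L$, together with the elements $l\cdot_2(0,g)=(0,lg)$ for $(0,g)\in G^\pm(R)$ and $l\in L$. Because $J\otimes K$ is split (Theorem~\ref{thm: split extension}), the split-case construction, applied over the $K$-algebra $K\otimes_R L$, realizes $G^\pm(L)$ as a subgroup of $\Aut_K(\mathcal{L}(\mathcal{B}\otimes K))(K\otimes_R L)=\bigl(\End_K(\mathcal{L}(\mathcal{B}\otimes K))\otimes_K(K\otimes_R L)\bigr)^\times$, using $\mathcal{L}(\mathcal{B})\otimes K\cong\mathcal{L}(\mathcal{B}\otimes K)$ and $\End_R(\mathcal{L}(\mathcal{B}))\otimes K\cong\End_K(\mathcal{L}(\mathcal{B}\otimes K))$ from Lemma~\ref{lem: G embeds into larger G}. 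On the other hand, each generator has an obvious preimage in the smaller ring: writing $g=1+\sum_i g_i$ with $g_i\in\End_R(\mathcal{L}(\mathcal{B}))$ as in the previous paragraph, the element $l\cdot_1 g$ is $1+\sum_i g_i\otimes l^i\in\End_R(\mathcal{L}(\mathcal{B}))\otimes_R L$, which is a unit because it is unipotent; similarly for $l\cdot_2(0,g)$.

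It then remains to match the two descriptions. The moreover-part of Lemma~\ref{lem: G embeds into larger G} provides an injective ring map $\End_R(\mathcal{L}(\mathcal{B}))\otimes_R L\hookrightarrow\End_K(\mathcal{L}(\mathcal{B}\otimes K))\otimes_K(K\otimes_R L)$ carrying the preimages above exactly to the standard generators of $G^\pm(L)$; hence the subgroup they generate inside $(\End_R(\mathcal{L}(\mathcal{B}))\otimes_R L)^\times$ is carried isomorphically onto $G^\pm(L)$, which is the first assertion. For the moreover-claim I would compose with the canonical ring homomorphism $(\End_R(\mathcal{L}(\mathcal{B}))\otimes_R L)^\times\to\End_R(\mathcal{L}(\mathcal{B})\otimes_R L)^\times$; a generator $1+\sum_i g_i\otimes l^i$ maps to a Lie algebra automorphism of $\mathcal{L}(\mathcal{B})\otimes_R L$ because $1+\sum_i g_i$ already is one over $R$ and each $g_i$ is homogeneous of degree $i$, so the identity $\sum_{i+j=n}[g_i(x),g_j(y)]=g_n([x,y])$ translates grading piece by grading piece into $\Psi([x,y]\otimes 1)=[\Psi(x\otimes 1),\Psi(y\otimes 1)]$ for the induced map $\Psi$. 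Thus each generator, and with it all of $G^\pm(L)$, lies in $\Aut(\mathcal{L}(\mathcal{B}))(L)$.

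I expect the only real work to be the bookkeeping that identifies $G^\pm(L)$ — which is defined through the $R$-vector group $U(K,J)$ — with the split-case automorphism group over $K\otimes_R L$, and that keeps track of the fact that its generators descend to $\End_R(\mathcal{L}(\mathcal{B}))\otimes_R L$; faithful flatness of $K/R$, packaged in Lemma~\ref{lem: G embeds into larger G}, is exactly what makes the required injectivity, and hence the isomorphism of subgroups, available.
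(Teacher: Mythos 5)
Your proposal is correct and follows essentially the same route as the paper: reduce to $L=R$ via the action of $U(K,J)$, use the generators $l\cdot_1 g$ and $l\cdot_2(0,g)$ from \cite[Lemma 1.3]{OKP}, interpret them as unipotent elements $1+\sum_i g_i\otimes l^i$ of $\End_R(\mathcal{L}(\mathcal{B}))\otimes_R L$, and match them with the split-case construction through the injection of Lemma \ref{lem: G embeds into larger G}. You merely spell out a few steps the paper leaves implicit (unipotence giving invertibility, and the graded-component check that the image in $\End_R(\mathcal{L}(\mathcal{B})\otimes_R L)$ is an automorphism), which is fine.
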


Recall that we use $G(L)$ to denote the subgroup generated by $G^\pm(L)$.
In the theorem below, we mean by $\Lie(G)$ the kernel of $\epsilon \mapsto 0$ in $G(\Phi[\epsilon]/(\epsilon^2))$.
One way to compute the Lie bracket is $[1 + \epsilon_1 g, 1 + \epsilon_2 h] = 1 + \epsilon_1 \epsilon_2 [g,h]$ over $\Phi[\epsilon_1, \epsilon_2]/(\epsilon_1^2, \epsilon_2^2)$. 

If we write $\Lie(G)(K)$, we mean the analogous subgroup of $G(K[\epsilon]/(\epsilon^2))$.

\begin{theorem}
	\label{thm: hcns lie alg}
	Let $J$ over $K/R$ be a well behaved hermitian cubic norm structure.
	There exist group functors \[R\textbf{-alg} \longrightarrow \textbf{Grp}: L \mapsto G^\pm(L) \subset \Aut_L(\mathcal{L}(\mathcal{B}) \otimes L),\] such that \[\mathcal{L}(\mathcal{B}) \otimes L \cong \Lie(G^-)(L) \oplus L_0 \otimes L \oplus \Lie(G^+)(L)\] with $L_0$ the $0$-graded component of $\mathcal{L}(\mathcal{B})$.
	Moreover, any element $1 + \sum_{i = 1}^\infty g_i$ with $g_i$ being $+i$-graded in $\End(\mathcal{L}(\mathcal{B}) \otimes L)$ that is contained in the subgroup of $\Aut_L(\mathcal{L}(\mathcal{B}) \otimes L)$ generated by $G^+(L)$ and $G^-(L)$, is contained in $G^+(L)$, for all $R$-algebras $L$.
	Furthermore, one has $[\Lie(G),\Lie(G)] \cong \mathcal{L}(\mathcal{B})$.
	\begin{proof}
		The first part is Proposition \ref{prop: action UKJ} with an additional description of $\mathcal{L}(\mathcal{B})$ in terms of the Lie algebras $\Lie(G^\pm)$.
		
		The other claims follow from Theorem \ref{thm: cnp main} since $G(L)$ embeds in the analogous group for the split hermitian cubic norm structure by using that $K$ is faithfully flat and using Lemma \ref{lem: G embeds into larger G} applied to $\mathcal{L}(\mathcal{B}) \otimes L$. We note that $G^\pm(L)$ are the fixpoints of the analogous groups under the semilinear automorphism $\alpha$ of Theorem \ref{thm: split extension}.
	\end{proof}
\end{theorem}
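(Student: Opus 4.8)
The plan is to derive everything from the split case via faithfully flat descent. First I would recall that by Theorem \ref{thm: split extension}, $J \otimes K$ is a split hermitian cubic norm structure over $(K \times K)/K$, hence corresponds to a cubic norm pair over $K$ (Lemma \ref{lem: split herm cubic norm}), which by hypothesis is well behaved. So Theorem \ref{thm: cnp main} applies to this cubic norm pair, giving the groups $G^\pm$ over $K$ and all the assertions — the triangulation-into-$G^+$ statement and the Lie algebra statement $[\Lie(G),\Lie(G)] \cong \mathcal{L}(\mathcal{B} \otimes K)$. The job is then to push these down along the faithfully flat extension $K/R$, using that $\mathcal{L}(\mathcal{B})$ and $\End_R(\mathcal{L}(\mathcal{B}))$ base change correctly to $K$ by Lemma \ref{lem: G embeds into larger G}, and that $J$ (hence $\mathcal{B}$, hence $\mathcal{L}(\mathcal{B})$, hence the root groups) is the fixed-point set of the semilinear automorphism $\alpha$ of Theorem \ref{thm: split extension}.

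The key steps, in order: (1) Establish the functors $G^\pm : L \mapsto G^\pm(L)$. For $L = R$ this is Construction \ref{con: UKJ} plus Proposition \ref{prop: action UKJ}, which shows $U(K,J) = G^+$ acts on $\mathcal{L}(\mathcal{B})$; the functoriality (generators $l \cdot_1(g_1,g_2)$ and $l\cdot_2(0,g)$) and the fact that these are automorphisms of $\mathcal{L}(\mathcal{B})\otimes L$ is the preceding unnumbered lemma. (2) Identify $G^\pm(L)$ inside $\Aut_{K\otimes L}(\mathcal{L}(\mathcal{B}\otimes K)\otimes_K(K\otimes L))$ as exactly the fixed points under (the $L$-linear extension of) $\tilde\alpha$; this uses Lemma \ref{lem: G embeds into larger G}'s injectivity statement so that ``fixed points downstairs'' is literally the intersection with the smaller endomorphism ring. (3) Prove the grading decomposition $\mathcal{L}(\mathcal{B})\otimes L \cong \Lie(G^-)(L) \oplus L_0\otimes L \oplus \Lie(G^+)(L)$: tensor the $\mathbb{Z}$-grading \eqref{eq: grading comp Lie b} up to $L$, note $\Lie(G^+)(L)$ is the positively graded part $\mathcal{B}_1\oplus\mathcal{S}_2$ (read off from the description of $U(K,J)$ as a nice vector group, where $g_1 = \ad b$ and $g_2$ recovers the $\mathcal{S}_2$-component), similarly $\Lie(G^-)(L)$ the negative part, and $L_0 = \mathfrak{instr}(\mathcal{B})_0$. (4) Transfer the two remaining assertions — the triangulation ``$1+\sum g_i \in G \Rightarrow \in G^+$'' and ``$[\Lie(G),\Lie(G)] \cong \mathcal{L}(\mathcal{B})$'' — from $K$ to $R$: an element of $\Aut_L(\mathcal{L}(\mathcal{B})\otimes L)$ of the stated form lies in $G(L)$ iff its image in $\Aut_{K\otimes L}(\mathcal{L}(\mathcal{B}\otimes K)\otimes(K\otimes L))$ lies in the corresponding split group (injectivity from faithful flatness), and that image automatically has the same graded shape, so Theorem \ref{thm: cnp main} places it in the split $G^+$; being also $\tilde\alpha$-fixed it lies in $G^+(L)$. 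The Lie algebra identity descends the same way: $[\Lie(G),\Lie(G)]$ is $\tilde\alpha$-stable and base-changes to $[\Lie(G_{\text{split}}),\Lie(G_{\text{split}})] \cong \mathcal{L}(\mathcal{B}\otimes K) \cong \mathcal{L}(\mathcal{B})\otimes K$, so by faithfully flat descent it equals $\mathcal{L}(\mathcal{B})$.

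I expect the main obstacle to be step (2)/(4): carefully matching the semilinear automorphism $\tilde\alpha$ with the group-functor structure, i.e. checking that $G^\pm(L)$ really is the $\tilde\alpha$-fixed subgroup of the split group over $K\otimes L$ rather than merely contained in it, and that the descent is compatible with the operator-Kantor-pair-style generators from \cite[Lemma 1.3]{OKP}. One has to be slightly careful because $\mathcal{S} \subset K$ for $\mathcal{B}$ is one-dimensional over $R$ but becomes $\mathcal{S}_2$ of rank one over $K$ after splitting, and the identification of $\Lie(G^+)$ with $\mathcal{B}_1 \oplus \mathcal{S}_2$ must be checked to be $\alpha$-equivariant; Lemmas \ref{lem: quadr etale alg ids} and \ref{lem: quadr etale alg mods} are exactly what handle this. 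Everything else is bookkeeping with faithful flatness, for which Lemma \ref{lem: G embeds into larger G} has already done the heavy lifting, so I would keep the written proof short and point to Theorem \ref{thm: cnp main}, Theorem \ref{thm: split extension}, and Lemma \ref{lem: G embeds into larger G} as the three load-bearing inputs.
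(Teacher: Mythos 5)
Your proposal is correct and follows essentially the same route as the paper: the first part via Proposition \ref{prop: action UKJ} and the grading description of $\Lie(G^\pm)$, and the remaining claims by embedding $G(L)$ into the split group over $K\otimes L$ using faithful flatness and Lemma \ref{lem: G embeds into larger G}, applying Theorem \ref{thm: cnp main} there, and descending via the fixed points of the semilinear automorphism from Theorem \ref{thm: split extension}. Your version is more explicit than the paper's (in particular about why $G^\pm(L)$ is exactly, not merely contained in, the $\tilde\alpha$-fixed subgroup), but the load-bearing inputs and the logical structure are identical.
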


\begin{remark}
	If the base ring is a field and $\mathcal{L}(\mathcal{B})$ is finite dimensional, \cite[Exposition VIB, Proposition 7.1,(i) and Corollary 7.2.1]{demazure1970proprietes} guarantees that $G$ is a smooth connected algebraic group, since both $G^+$ and $G^-$ are smooth and connected (and remain so over all field extensions).
	
	In case this $G$ is reductive, the usual $\mathbb{Z}$-grading defines a cocharacter of $G$. Cocharacters define parabolic groups. The groups $G^\pm$ are the unipotent radicals of a pair of opposite parabolics.
\end{remark}

\section{Operator Kantor pairs and recognition theorems}

In this section, we first define operator Kantor pairs and see that any cubic norm pair defines an operator Kantor pair.
Afterwards we characterize the operator Kantor pairs one can obtain in such a way, using $G_2$-gradings.
Finally, we prove that each hermitian cubic norm structure defines an operator Kantor pair and look at the class of operator Kantor pairs $(G^+,G^-)$ such that $(G^+_L,G^-_L$) can be constructed in such a way for faithfully flat $L$.

\subsection{Operator Kantor pairs from cubic norm pairs}

Recall that we defined nice vector groups in Definition \ref{def: vec group}.
For such a group $G \le A \times B$, we use $G_2$ to denote $\{b \in B : (0,b) \in G\}$.
Note that this is a submodule of $B$ and that $G_2 \otimes K \cong  G_2(K) =\{b \in B \otimes K : (0,b) \in G(K)\}$, since $G$ is nice.

\begin{definition}
	Consider a nice vector group $G \le A \times B$, an associative algebra $C$, and natural transformations $\rho_i : G(K) \longrightarrow C \otimes K$ for all $i \in \mathbb{N}_{>0}$.
	If
	\[ g \mapsto \rho_{[t]}(g) = 1 + \sum_{i = 1}^\infty t^i \rho_{i}(g)\]
	is a group homomorphism into $((C \otimes K)[[t]])^\times$, and
	\begin{itemize}
		\item $\rho_{[t]}(\lambda a, \lambda^2 b) = \rho_{[\lambda t]}(a,b)$ for all $\lambda \in K$,
		\item $\rho_{[t]}(0, \lambda b) = 1 + \sum_{i = 1}^\infty \lambda^i t^{2i} \rho_{2i}(g)$ for all $\lambda \in K$,
		\item $\rho_{[t]}(g) = 1 + \sum_{i = 1}^\infty t^{2i} \rho_{2i}(g)$ then $g \in G_2(K) (\ker \rho_{[t]})$,  
	\end{itemize}
	hold for all $K$, then we call $\rho_{[t]}$ a \textit{faithful vector group representation} of the nice vector group $G$ if $(\rho_1 + \rho_2) : \Lie(G) \longrightarrow A$ is injective. A comparison with \cite[Definition 1.21]{OKP} shows that this is a vector group representation.
\end{definition}

Recall the power series $o_{i,j}(x,y,s^it^j)$ introduced in Definition \ref{def: oij} from power series $x(s)$ and $y(t)$. We will write $g(t)$ for $\rho_{[t]}(g)$ in the next definition for brevity.

\begin{definition}
Consider a pair of nice vector groups $(G^+,G^-)$ and recall that $G^+(K)$ is natural in $K$, and consider natural transformations
\begin{itemize}
	\item $Q^\text{grp} : G^\pm \longrightarrow \text{Nat}(G^\mp, G^\pm)$,
	\item $T :  G^\pm \longrightarrow \text{Nat}(G^\mp, [G^\pm,G^\pm])$,
	\item $P :  G^\pm \longrightarrow \text{Nat}(G^\mp, G^\pm/G^\pm_2)$.
\end{itemize}
For example, to $x \in G^\pm(K)$ we associate a natural transformation $T_x$ such that
\[ T_x^L : G^\mp(L) \longrightarrow [G^\pm(L), G^\pm(L)]\]
for all $K$-algebras $L$. 
Suppose that $(a,b)(-a,b) \in [G^\pm,G^\pm]$ for all $(a,b) \in G^\pm$, and that $s \mapsto Q_{(0,s)}$ is injective over $R$.
We call $(G^+,G^-)$ an \textit{operator Kantor pair} if and only if there exists an associative algebra $A$ and a pair of faithful vector group representations $\rho^\pm$ of $G^\pm$ into $A$, such that
\begin{itemize}
	\item $o_{2,1}(x,y,s^2t) = (Q^\text{grp}_xy)(s^2t)$,
	\item $o_{3,1}(x,y,s^3t) = (T_x y)(s^3t)$,
	\item $o_{k,l}(x,y,s^kt^l) = 1$ if $k/l > 2$ and $k/l \neq 3$,
	\item $o_{3,2}(x,y,\epsilon) = (P_x y)(\epsilon)$ over $K[\epsilon]/(\epsilon^2),$
\end{itemize}
whenever $x \in G^\pm(K)$ and $y \in G^\mp(K)$.
\end{definition}

\begin{remark}
	The definition imposes some implicit homogeneity restrictions on the operators.
	For example, it requires that $Q^\text{grp}_x y$ is homogogeneous of degree $2$ in $x$ and of degree $1$ in $y$, since $o_{2,1}(\lambda \cdot x, \mu \cdot y, s^2t) = o_{2,1}(x,y,(\lambda s)^2 (\mu t))$.
\end{remark}

\begin{lemma}
	\label{lem: which okp are jp}
	Consider nice vector groups $G^\pm \subset A^\pm \times B^\pm$ such that $G^\pm \longrightarrow A^\pm$ is a bijection. Any operator Kantor pair on $(G^+,G^-)$ is a quadratic Jordan pair $(A^+,A^-)$. Conversely, any quadratic Jordan pair defines such an operator Kantor pair.
	\begin{proof}
		The assumption on $G^\pm$ forces $T = 0$.
		The definition of an operator Kantor pair reduces to Equation (\ref{def: dprep}) and an additional assumption on $\nu_{3,2}(x,y)$, using that $o_{k,l}(x,y) = 1$ for all $k > 2l$. Hence, Lemma \ref{lem: dprep is JP} shows that $(A^+,A^-)$ with $Q = Q^\text{grp}$ forms a quadratic Jordan pair.
		
		For the converse, Lemma \ref{lem: exp prop} shows that $P_x y = Q_x Q_y x$ satisfies the the assumption on $o_{3,2}(x,y,\epsilon)$.
	\end{proof}
\end{lemma}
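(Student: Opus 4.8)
The plan is to reduce both directions to Lemma \ref{lem: dprep is JP}, which already pairs a divided power representation with a quadratic Jordan pair, once one observes that the bijectivity hypothesis collapses almost all of the operator Kantor pair data. First I would record the consequences of $G^\pm \to A^\pm$ being a bijection: applied to $(0,b) \mapsto 0$ it forces $G^\pm_2 = 0$, and then every commutator $[(x,b(x)),(u,b(u))] = (0,\psi(x,u)-\psi(u,x))$ must lie in $\{0\}\times G^\pm_2 = \{(0,0)\}$, so $[G^\pm,G^\pm]$ is trivial. Consequently the operator $T$ is identically $1$, the hypothesis $(a,b)(-a,b)\in[G^\pm,G^\pm]$ is automatic (from $b(\lambda a)=\lambda^2 b(a)$ one gets $\psi(a,a)=2b(a)$, so $(a,b(a))(-a,b(-a))=(0,0)$), the condition that $s\mapsto Q_{(0,s)}$ be injective is vacuous, and the axiom $o_{3,1}(x,y,\cdot)=(T_xy)(\cdot)=1$ together with $o_{k,l}=1$ for $k/l>2$, $k/l\neq 3$ collapses to $o_{k,l}(x,y,\cdot)=1$ for every $k>2l$. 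Moreover $\Lie(G^\pm)\cong A^\pm$ and, since $G^\pm_2=0$ forces $\rho^\pm_2$ to vanish on $\Lie(G^\pm)$, faithfulness of the vector group representation is exactly injectivity of $\rho^\pm_1$ over $R$.

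With $Q^{\text{grp}}=Q$ the only surviving operator, the remaining operator Kantor pair axiom $o_{2,1}(x,y,s^2t)=(Q^{\text{grp}}_xy)(s^2t)$ together with the vanishing $o_{k,l}=1$ for $k>2l$ is, unwinding the definition of the $o_{i,j}$ from Definition \ref{def: oij}, precisely Equation (\ref{def: dprep}) (the three cases $k>2l$, $k=2l$, $k<2l$ correspond to the vanishing $o_{i,j}$, to $o_{2l',l'}$ being a power series built from the $Q^{\text{grp}}$, and to no constraint). Hence $\rho^\pm$ is a divided power representation with respect to $Q$ and $Q'$ with $\rho^\pm_1$ injective over $R$, and Lemma \ref{lem: dprep is JP} yields that $(A^+,A^-,Q)$ is a quadratic Jordan pair; the datum $o_{3,2}$ plays no role in this implication.

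For the converse, given a quadratic Jordan pair $(A^+,A^-,Q)$ I would take $G^\pm=A^\pm\times 0$, which is trivially a nice vector group with $G^\pm\to A^\pm$ bijective, and equip it with the TKK representations $\rho^\pm_{[t]}(x)=1+t\,\ad x+t^2Q_x$ from the proof of Lemma \ref{lem: dprep is JP}; these are faithful vector group representations. Setting $Q^{\text{grp}}_xy=Q_xy$, $T_xy=1$, and $P_xy=Q_xQ_yx$, the four operator Kantor pair axioms hold: $o_{2,1}$ and the vanishing of $o_{k,l}$ for $k>2l$ follow from Equation (\ref{def: dprep}) for the TKK representation, $o_{3,1}=1$ since $[G^\pm,G^\pm]$ is trivial, and $o_{3,2}(x,y,\epsilon)=\rho^+_{[\epsilon]}(Q_xQ_yx)$ over $K[\epsilon]/(\epsilon^2)$ by the exponential property of Lemma \ref{lem: exp prop}.

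The main obstacle I expect is bookkeeping rather than conceptual: making the translation ``operator Kantor pair axioms with $T=0$ $\Longleftrightarrow$ Equation (\ref{def: dprep})'' fully precise, matching the indices of the $\nu_{i,j}$ and the alternating signs in the two formulations, and verifying that the faithfulness condition on a vector group representation really does specialize to the injectivity of $\rho^\pm_1$ over $R$ needed to invoke Lemma \ref{lem: dprep is JP}, given that $\rho_2$ a priori also appears in the defining injectivity of $\rho_1+\rho_2$.
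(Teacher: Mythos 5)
Your proposal is correct and follows essentially the same route as the paper: the bijectivity of $G^\pm \to A^\pm$ kills $G^\pm_2$ and hence $[G^\pm,G^\pm]$ and $T$, the remaining axioms collapse to Equation (\ref{def: dprep}) so that Lemma \ref{lem: dprep is JP} applies with $Q = Q^{\text{grp}}$, and the converse is the TKK representation with $P_xy = Q_xQ_yx$ via Lemma \ref{lem: exp prop}. You are merely more explicit than the paper about the routine verifications (that $(a,b)(-a,b)$ is trivial and that faithfulness of the vector group representation reduces to injectivity of $\rho^\pm_1$), which is fine.
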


\begin{lemma}
	Any operator Kantor pair in our sense, is an operator Kantor pair in the sense one employed in \cite{OKP}. 
	\begin{proof}
		That our definition implies that it is an operator Kantor pair in the sense of \cite{OKP}, follows from \cite[Theorem 3.28]{OKP}.
	\end{proof}
\end{lemma}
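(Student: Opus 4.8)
The plan is to unwind both definitions and observe that ours has been set up to reproduce exactly the hypotheses of \cite[Theorem 3.28]{OKP}. Most of the groundwork is already in place: a nice vector group in the sense of Definition \ref{def: vec group} is a vector group in the sense of \cite{OKP} (see the comments after that definition and Appendix \ref{ap: vec groups}), and a faithful vector group representation $\rho^\pm$ in our sense is a vector group representation in the sense of \cite[Definition 1.21]{OKP}, as we already remarked right after introducing it. So the only genuine task is to match the operator relations.

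First I would fix the dictionary between the power series $o_{i,j}(x,y,s^it^j)$ of Definition \ref{def: oij}, built from $\rho^+_{[s]}$ and $\rho^-_{[t]}$, and the operators used in \cite{OKP}; the indexing and sign conventions agree once one reinstates the adjoint representation $(\ad)_{[t]}$ that we chose to suppress (cf.\ the footnote to Definition \ref{def: dprep2}). Under this dictionary the four bullet points of our definition --- $o_{2,1}$ computing $Q^{\text{grp}}$, $o_{3,1}$ computing $T$, $o_{k,l} = 1$ whenever $k/l > 2$ and $k/l \neq 3$, and $o_{3,2}$ computing $P$ modulo $\epsilon^2$ --- are precisely the defining relations of an operator Kantor pair in \cite{OKP}, while the side conditions that $(a,b)(-a,b) \in [G^\pm,G^\pm]$ for $(a,b) \in G^\pm$ and that $s \mapsto Q^{\text{grp}}_{(0,s)}$ be injective over $R$ are the nondegeneracy hypotheses imposed there. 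Invoking \cite[Theorem 3.28]{OKP} then gives the conclusion.

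The main obstacle is bookkeeping rather than mathematics. One has to verify that the homogeneity constraints implicit in our definition (spelled out in the Remark following it) coincide with the grading conventions of \cite{OKP}, that the relations we left unconstrained for $k/l < 2$ are genuinely free in their framework as well, and that no further relation of \cite{OKP} has silently been assumed or dropped in passing between the two axiomatizations. Once the notational translation is pinned down, the proof reduces --- as recorded --- to the single application of \cite[Theorem 3.28]{OKP}.
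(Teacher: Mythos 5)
Your proposal is correct and follows the same route as the paper: the paper's proof is simply the citation of \cite[Theorem 3.28]{OKP}, and your expanded version just spells out the dictionary (nice vector groups, faithful vector group representations, and the $o_{i,j}$ matching the operators) that makes that citation applicable. No discrepancy to report.
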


\begin{remark}
	Any operator Kantor pair in the sense of \cite{OKP} has an analog to the TKK representation for Jordan pairs.
	Namely, one can associate a Lie algebra to the operator Kantor pair \cite[Lemma 3.16]{OKP} and vector group representations in the endomorphism algebra of the Lie algebra \cite[Construction preceding Remark 3.19]{OKP}.
	Using this, one can prove that any operator Kantor pair in the sense of  \cite{OKP}, is an operator Kantor pair in our sense. To be precise, this is the verification of the conditions of \cite[Theorem 2.9]{OKP} performed the proof of \cite[Theorem 3.26]{OKP}.
	
	Hence, the definition of an operator Kantor pair of \cite{OKP} is equivalent to the one we use here.
\end{remark}

\begin{theorem}
	\label{thm: cns defines okp}
	Consider a well behaved cubic norm pair, the vector groups $G^\pm$ associated to it, and the action of them on the Lie algebra. 
	The pair $(G^+,G^-)$ with operators $Q^\text{grp} = o_{2,1}, T = o_{3,1}$ and $P = \nu_{3,2}$ forms an operator Kantor pair.
	\begin{proof}
		This follows immediately from Lemma \ref{lem: oij}, using the $o_{i,j}$ as definitions for $Q^\text{grp}$, $T$ and $P$. Now, $G^\pm(K) \subset \Aut(\mathcal{L}(\mathcal{A}))(K)$ for all $K$ lets us recover all the operators.
	\end{proof}
\end{theorem}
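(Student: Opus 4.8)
The statement asserts that for a well behaved cubic norm pair, the pair $(G^+,G^-)$ of vector groups acting on $\mathcal{L}(\mathcal{A})$, equipped with $Q^\text{grp} = o_{2,1}$, $T = o_{3,1}$, $P = \nu_{3,2}$, is an operator Kantor pair. The overall strategy is to verify the defining conditions of an operator Kantor pair one by one, pulling each piece out of results already established earlier — principally Lemma \ref{lem: descr G+} (that $G^\pm$ are nice vector groups), Lemma \ref{lem: TKK rep} and Construction \ref{def: dp rep R}, \ref{def: action J2} (the divided power representations on $\mathcal{L}(\mathcal{A})$), and above all Proposition \ref{lem: oij} (the control of the $o_{i,j}(\cdot,\cdot,t)$ on $G^+(K) \times G^-(K)$).

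First I would record that $G^+$ and $G^-$ act as subgroups of $\Aut(\mathcal{L}(\mathcal{A}))(K) \subset (\End(\mathcal{L}(\mathcal{A})) \otimes K)^\times$, so that taking $A = \End(\mathcal{L}(\mathcal{A}))$ and letting $\rho^\pm$ be the inclusions $G^\pm(K) \hookrightarrow (A \otimes K)^\times$ (with $t$ tracking the $\mathbb{Z}$-grading distance, so $\rho^+_{[t]}(1 + g_1 + g_2 + g_3 + g_4) = 1 + tg_1 + t^2 g_2 + t^3 g_3 + t^4 g_4$) gives candidate faithful vector group representations. The homogeneity conditions $\rho_{[t]}(\lambda \cdot g) = \rho_{[\lambda t]}(g)$ and $\rho_{[t]}(0,\lambda b) = 1 + \sum \lambda^i t^{2i}\rho_{2i}$ follow from the vector group structure of $G^\pm$ as described in Lemma \ref{lem: descr G+}: the $\lambda \cdot_1$ and $\lambda \cdot_2$ scalings there match exactly the grading behaviour. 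The condition that $\rho_{[t]}(g)$ being concentrated in even degrees forces $g \in G_2(K)(\ker\rho_{[t]})$ holds because $\ker\rho_{[t]}$ is trivial here (the representation is literally an embedding) and an element of $G^+$ with $g_1 = \ad x = 0$ has $x = 0$ by Lemma \ref{lem: descr G+}, hence lies in $R_{2\beta+3\alpha} = G^+_2$; faithfulness of $\rho_1 + \rho_2$ on $\Lie(G^\pm)$ is again immediate from the embedding. I would also check the two standing hypotheses in the definition: $(a,b)(-a,b) \in [G^\pm,G^\pm]$ — this follows from the commutator relations of Lemma \ref{lem: com formulas}, since $(a,b)(-a,b)$ lands in $G^\pm_2 = R_{\pm(2\beta+3\alpha)}$, which equals $[R_{\pm\beta}, R_{\pm(\beta+3\alpha)}]$ — and injectivity of $s \mapsto Q_{(0,s)}$ over $R$, which comes from the sign convention in Remark \ref{rmk: grading} making $(R_{2\beta+3\alpha}, R_{-(2\beta+3\alpha)})$ a faithful divided power representation of $(R,R)$.

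The core of the proof is then the four bulleted identities relating $o_{2,1}, o_{3,1}, o_{k,l}, o_{3,2}$ to $Q^\text{grp}, T, o_{k,l}=1$, and $P$. But here the point is that we are \emph{defining} $Q^\text{grp} := o_{2,1}$, $T := o_{3,1}$, $P := \nu_{3,2}$, so these identities hold tautologically \emph{provided} we know the $o_{i,j}$ actually land in the right target groups: $o_{2,1}$ and $o_{3,1}$ must map $G^\pm \times G^\mp$ into $G^\pm$ (for $Q^\text{grp}$ into $\mathrm{Nat}(G^\mp,G^\pm)$ and $T$ into $\mathrm{Nat}(G^\mp,[G^\pm,G^\pm])$), $o_{k,l}$ must be trivial for $k/l > 2$, $k/l \neq 3$, and $o_{3,2}$ must land in $G^\pm/G^\pm_2$. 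This is exactly the content of Proposition \ref{lem: oij}: there it is proved that $o_{i,j}$ with $i>j$ maps $G^+(K) \times G^-(K)$ into $G^+(K)$, those with $i<j$ into $G^-(K)$, and $o_{1,1}$ into the group $P_{[[st]]}$ — and by Lemma \ref{lem: oij roots} the only nonzero $o_{i,j}$ with $i/j > 1$ are $o_{2,1}, o_{3,1}$ (landing in short and long root groups respectively within $G^+$), with $o_{3,2}$ landing in the appropriate subquotient. I would spell out that $o_{3,1}$ lands in $[G^+,G^+]$ because its image is $R_{\pm(2\beta+3\alpha)} = G^\pm_2 \subseteq [G^\pm,G^\pm]$, and that the homogeneity degrees ($o_{2,1}$ quadratic-then-linear, $o_{3,1}$ cubic-then-linear) are automatic from the scaling behaviour of the $o_{i,j}$ under $\lambda\cdot_1, \lambda\cdot_2$. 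Naturality in $K$ of all operators is the last sentence of Proposition \ref{lem: oij}.

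The main obstacle is not really in this proof but sits upstream: essentially all the substance has been front-loaded into Proposition \ref{lem: oij} (and the commutator relations of Lemma \ref{lem: com formulas}, which is where well-behavedness is used, via $\tilde f_{6,3}$ in equation (3)). So the genuine work of the theorem is bookkeeping: matching the abstract axioms of "operator Kantor pair" term-by-term against the already-proven statements, and in particular confirming that the \emph{degree} conditions and the \emph{target-group} conditions in the definition line up with Lemma \ref{lem: oij roots} and Proposition \ref{lem: oij}. The one place requiring a small argument rather than pure citation is verifying the faithful-vector-group-representation axioms for $\rho^\pm$ — but as noted these reduce to Lemma \ref{lem: descr G+} plus the observation that $\rho^\pm$ is an embedding. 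I would therefore structure the written proof as: (1) fix $A$ and $\rho^\pm$, check the vector group representation axioms; (2) check the two standing hypotheses on $(G^+,G^-)$; (3) invoke Lemma \ref{lem: oij} and Lemma \ref{lem: oij roots} to conclude the four operator identities hold with the stated definitions of $Q^\text{grp}, T, P$; (4) remark naturality. That is all that is needed.
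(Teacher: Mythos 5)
Your proposal is correct and follows essentially the same route as the paper, whose proof is simply the citation of Proposition \ref{lem: oij} together with the observation that the embedding $G^\pm(K) \subset \Aut(\mathcal{L}(\mathcal{A}))(K)$ recovers the operators; you merely spell out the bookkeeping (the vector-group-representation axioms via Lemma \ref{lem: descr G+}, the two standing hypotheses, and the matching of targets and degrees) that the paper leaves implicit. The only small imprecision is that the vanishing of $o_{k,l}$ for $k/l>2$, $k/l\neq 3$ on all of $G^+\times G^-$ (not just on root groups, which is what Lemma \ref{lem: oij roots} covers) comes from the explicit decomposition in the proof of Proposition \ref{lem: oij}, but this does not affect the validity of the argument.
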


\begin{remark}
	In \cite{OKP}, we wrote $Q^\text{grp}_x y = (Q_x y, R_x y) \in G^\pm \le A^\pm \times B^\pm$.
	This operator $Q$ is very important.
	
	For cubic norm pairs, we have $Q_{(x,y)} z = (x\bar{z}) x - y z$ for $(x,y) \in G^+$ and $z \in \mathcal{A}$, which was ``used" to derive Lemma \ref{lem: descr G+}.
	
	We did not provide nice formulas for the operator Kantor pair operators yet.
	Later, we shall give formulas for $Q$ and $T$ in terms of the hermitian cubic norm structure.
	These operators determine the other operators uniquely, cfr. \cite[Remark 3.24]{OKP}, and often in an ``algebraic" way \cite[Lemma 5.2]{OKP}.
\end{remark}

\subsection{Operator Kantor pairs with a $G_2$-grading}
In this subsection we look at operator Kantor pairs with a $G_2$-grading.
First, we put an assumption on the long roots, to characterize operator Kantor pairs coming from cubic norm pairs. Second, we look at what happens if we replace the assumption on the long roots with a different assumption on the short roots.

\begin{definition}
	\label{def: G2 grading}
	\begin{itemize}
		\item 
		Consider a basis $\{\alpha,\beta\}$ of $\mathbb{Z}^2$. 
		As before, the $G_2$ root system associated to $\alpha$ and $\beta$ is $R^+\cup R^- \subset \mathbb{Z}^2$ with
		\[ R^+ = \{ \alpha, \beta, \alpha + \beta, 2 \alpha + \beta, 3 \alpha + \beta, 3 \alpha + 2 \beta\}\]
		and $R^- = - R^+$. For $\alpha$ denote with $\alpha^\vee$ the map $\mathbb{Z}^2 \longrightarrow \mathbb{Z}$ given by $\alpha \mapsto 2$ and $\beta \mapsto - 3$. We also define $\beta^\vee$ using $\alpha \mapsto -1$ and $\beta \mapsto 2$. For other roots $\gamma$, one can define similar maps.
		\item 
		Consider a (not necessarily associative) $\mathbb{Z}^2$-graded algebra $A$.
		If \[A = A_0 \oplus \bigoplus_{r \in G_2} A_r\] for a $G_2$ root system, using $A_i$ to denote homogeneous elements of $A$ of degree $i$, we call the algebra $G_2$-graded. If all $A_r$ are nonzero, we call this grading \textit{proper}.
		To a root $\gamma$, we associate an automorphism $\Phi_\gamma(t)$ of $A[t,t^{-1}]$, for $t \in R[t,t^{-1}]$, that acts as $\Phi_\gamma(t) a = t^{ \gamma^\vee(\delta)} a$ for $a \in A_\delta$. We remark that one can recover the $\mathbb{Z}^2$ grading on $A$ from these automorphisms for $\gamma = \alpha$ and $\gamma = \beta$.
		\item We call an operator Kantor pair \textit{(properly) $G_2$-graded} if	
		its $\mathbb{Z}$-graded Lie algebra (\cite[Construction below Lemma 3.15]{OKP}) is (properly) $G_2$-graded with respect to a refinement of this $\mathbb{Z}$-grading.
		We assume that the conjugation action within the automorphism group of the Lie algebra, induces an action of the grading automorphisms act on the operator Kantor pair.
	\end{itemize}
\end{definition}

\begin{remark}
	Our definition of a $G_2$-graded algebra is related though not equivalent to the usual notion of a $G_2$-graded Lie algebra, as studied, for example, in \cite{Ben96}. There it is required that the Lie algebra $G_2$ is a subalgebra, which does not have to be the case for our $G_2$-graded Lie algebras. 
\end{remark}

\begin{lemma}
	For a properly $G_2$-graded operator Kantor pair, we can choose the $\alpha$ and $\beta$ that define the $G_2$ root system in such a way that $\alpha$ is $0$-graded with respect to the usual $\mathbb{Z}$-grading and $\beta$ is $1$-graded. 
	\begin{proof}
		Recall that the $\mathbb{Z}$-grading of an operator Kantor pair only has nonzero grading components with grading in $\{-2,-1,0,1,2\}$ and that the $1$ or $-1$ grading component is definitely nonzero for nonzero operator Kantor pairs. We may assume that $\alpha$ and $\beta$ have grading $\ge 0$, using the action of the Weyl group. If $\alpha$ is strictly positively graded, the element $3 \alpha + \beta$ has grading $\ge 3$. Hence, $\alpha$ must be $0$ graded. Now, $\beta$ must be $1$-graded.
	\end{proof}
\end{lemma}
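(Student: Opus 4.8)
The plan is to use that the $\mathbb{Z}$-grading on the Lie algebra $A$ of the operator Kantor pair is the push-forward of its $G_2$-grading $A = A_0 \oplus \bigoplus_r A_r$ along a group homomorphism $\ell \colon \mathbb{Z}^2 \longrightarrow \mathbb{Z}$ (i.e.\ a linear functional): the component $A_r$ is $\mathbb{Z}$-homogeneous of degree $\ell(r)$, and $A_0$, having $G_2$-degree $0$, is $\mathbb{Z}$-homogeneous of degree $\ell(0) = 0$. First I would record the constraints on $\ell$ coming from the two facts recalled at the start. Since the $G_2$-grading is proper, $A_r \neq 0$ for every root $r$, so the degree-$\ell(r)$ part of the $\mathbb{Z}$-grading is nonzero; as the only nonzero parts of that $\mathbb{Z}$-grading have degree in $\{-2,-1,0,1,2\}$, this forces $\ell(r) \in \{-2,-1,0,1,2\}$ for every root $r$. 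Moreover the degree $1$ or degree $-1$ part is nonzero, and that part equals $\bigoplus_{\ell(r) = \pm 1} A_r$ (since $A_0$ sits in degree $0$), so $\ell$ does not vanish on every root, hence not on all of $\mathbb{Z}^2$.

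Next I would normalise the chosen basis $\{\alpha,\beta\}$ of the $G_2$ root system, with $\alpha$ short and $\beta$ long. Re-choosing $\alpha$ and $\beta$ while keeping the root system standard amounts to applying an element of the Weyl group $W(G_2)$, which acts transitively on the bases, so I may assume $\ell$ is dominant for $\{\alpha,\beta\}$, that is $\ell(\alpha) \geq 0$ and $\ell(\beta) \geq 0$. Concretely, one perturbs $\ell$ to a functional $\ell'$ vanishing on no root, with $\ell' > 0$ at each root where $\ell > 0$ and $\ell' < 0$ at each root where $\ell < 0$; the simple roots of the base determined by $\ell'$ then lie in $\{\ell' > 0\}$ and hence satisfy $\ell \geq 0$. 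This reduction is the only step that is not a one-line computation, though it is entirely standard.

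Finally I would pin down the two values using two specific long roots. If $\ell(\alpha) \geq 1$, then $\ell(3\alpha+\beta) = 3\ell(\alpha) + \ell(\beta) \geq 3$, contradicting $|\ell(3\alpha+\beta)| \leq 2$; hence $\ell(\alpha) = 0$. Then $\ell(\beta) \in \{0,1,2\}$: the value $0$ is impossible, as $\ell$ would then vanish on the basis $\{\alpha,\beta\}$ of $\mathbb{Z}^2$ and hence on every root, against the first paragraph; the value $2$ is impossible, since then $\ell(3\alpha+2\beta) = 2\ell(\beta) = 4$. Hence $\ell(\beta) = 1$, i.e.\ $\alpha$ is $0$-graded and $\beta$ is $1$-graded for the usual $\mathbb{Z}$-grading. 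I do not expect a genuine obstacle: the computation is immediate once $\ell$ is dominant, and the only points needing care are this dominance reduction and the bookkeeping that properness of the $G_2$-grading really forces $A_{3\alpha+\beta}$ and $A_{3\alpha+2\beta}$ to be nonzero, so that the two contradictions are real.
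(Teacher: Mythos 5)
Your proof is correct and takes essentially the same route as the paper: reduce to a dominant choice of base via the Weyl group, then use $3\alpha+\beta$ to force $\ell(\alpha)=0$ and the bound on $3\alpha+2\beta$ together with the nonvanishing of the $\pm1$-component to force $\ell(\beta)=1$. You simply spell out the dominance reduction and the exclusion of $\ell(\beta)\in\{0,2\}$, which the paper leaves implicit.
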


Henceforth, we assume that $\alpha$ is $0$-graded and that $\beta$ is $1$-graded.

We will assume that the long roots form a specific operator Kantor pair, which we will introduce now.

\begin{example}
	\label{example P}
	Consider, for a commutative, unital, associative, $R$-algebra $K$, the groups
	\[ P^+(K) = \left\{ \begin{pmatrix} 1 & k_1 & - k_3 \\ & 1 & - k_2 \\ & & 1 \end{pmatrix} \in \text{Mat}_3(K) \right\} \]
	and 
	\[ P^-(K) = \left\{ \begin{pmatrix} 1 &  &  \\ - k_2 & 1 &  \\ - k_3 & k_1 & 1 \end{pmatrix} \in \text{Mat}_3(K) \right\}. \]
	One can verify that $P^+$ and $P^-$ form an operator Kantor pair, by applying the definition. On the other hand $P^+$ and $P^-$ generate a subgroup of $\mathbf{GL}_3(K)$.
	
	One can also describe \[P^+(K) = \left\{ (k_1 t + k_2 \bar{t}, k_3 t + (k_1k_2 - k_3)\bar{t}) \in \left(K[t]/(t^2 - t) \right)^2\right\}\] using $\bar{t} = 1 -t$. Using the algebra structure of $K[t]/(t^2 - t)$ and involution induced by $t \mapsto \bar{t}$, the main operators are given by
	\[ Q_{(x,y)} z = x\bar{z} x - y z\]
	and 
	\[ T_{(x,y)} z = x\bar{z}\bar{y} -  y z \bar{x}\]
	for $(x,y) \in P^+$ and $z \in K[t]/(t^2 - t)$. These operators are, up to a sign for $Q$ (to be precise $(x,y) \mapsto (-x,y)$ is the isomorphism), the ones derived for the associative algebra with involution $K[t]/(t^2 - t)$ in \cite[Subsection 5.2]{OKP}.
	
	We remark that any natural transformation $(P^+,P^-) \longrightarrow (G^+,G^-)$ that preserves the operators $Q$ and $T$, will preserve all the operator Kantor pair operators, since all other operators are uniquely determined by them by \cite[Lemma 5.2]{OKP}.
\end{example}

\begin{remark}
	\label{rem: idempotents}
	The operator Kantor pair $(P^+,P^-)$ contains $3$ copies of the Jordan pair $(R,R)$ with $Q_k l = k^2 l$ for $(k,l) \in (R,R)$.
	The first copy is formed by $((tR,0),(\bar{t}R,0)) \subset (P^+,P^-)$,
	the second by $((\bar{t}R,0),(tR,0))$, and the third by $(0,(t - \bar{t})R), (0,- (t - \bar{t})R))$.
	These subpairs can be thought of as the $(R_\gamma, R_{-\gamma})$ we used for cubic norm pairs.
\end{remark}

If $\rho^\pm_{[t]}$ is a divided power representation of a Jordan pair mapping to $A[t] \subset A[[t]]$, we can consider $\rho^\pm_{[1]}$.

\begin{lemma}
	\label{lem: grading jordan idempt}
	Suppose that the quadratic Jordan pair $(R,R)$ has a divided power representation $\rho^\pm$ in an associative $R$-algebra $A$ and suppose that \[\rho^\pm_{[t]}(R) \in A[t] \subset A[[t]].\]
	Then $A$ is a $\mathbb{Z}$-graded algebra and there exists an $R$-group homomorphism\[ \Phi : R^\times \longrightarrow A^\times \] such that \[ \Phi(t) a \Phi(t^{-1}) = t^i a \text{ whenever $a$ is $i$-graded,} \] defined by \begin{equation} \label{eq: def Phi} \rho^+_{[1]}(k) \rho^-_{[1]}(l) = \rho^-_{[1]}( l/(1 - kl)) \Phi(1 - kl) \rho^+_{[1]}(k / ( 1 - kl))
	\end{equation} whenever $1 - kl$ invertible.
	Moreover conjugation by
	\[ \tau = \rho^-_{[1]}(1) \rho^+_{[1]}(1) \rho^-_{[1]}(1)\]
	reverses this grading, i.e., $\tau \Phi(t) = \Phi(t^{-1}) \tau$.
	\begin{proof}
			We note that $\Phi(1 - st) = b(s,t)$ with $b$ as in \cite[Lemma 9.9]{loos2019steinberg}. If we can apply \cite[Lemma 9.9]{loos2019steinberg}, we have that $\Phi(1 - sy - xy) = b(s + x,y) = b(s, y/(1 - xy)) b(x,y) = \Phi(1 - sy/(1 - xy)) \Phi(1 - xy)$ if $1 - xy$ and $1 - sy/(1 - xy)$ are invertible. Hence, in that case we can conclude that $\Phi$ is a homomorphism.

		We can apply this lemma, since the relations $\mathfrak{B}(s,t)$ (introduced in \cite[9.8]{loos2019steinberg}), which are the requirement to be able to use this lemma, hold.
		First, comparing the defining Equation \ref{eq: def Phi} of $\Phi(1 - kl)$, with the exponential property \cite{faulkner2000jordan} over $A[[k,l]]$, which contains $A[k,l,(1 - kl)^{-1}]$, yields that $\Phi(1 - kl)$ is the image of a specific element in the universal divided power representation. 		
		Now, \cite[Lemma 26]{faulkner2000jordan} shows that this universal representation is isomorphic to $\mathcal{Y} \otimes \mathcal{H} \otimes \mathcal{X}$ as a coalgebra for the free Jordan pair $(R,R)$, with $\mathcal{Y} \cong \mathcal{X}$ the universal binomial divided power representation of $R$.
		 Moreover there exist projections (e.g., $\pi_{\mathcal{Y}} = \text{Id} \otimes \epsilon \otimes \epsilon$) such that $(\pi_{\mathcal{Y}} \otimes \pi_{\mathcal{H}} \otimes \pi_{\mathcal{X}}) \circ \Delta^{(2)} = \text{Id}.$ This shows that any grouplike element in the universal representation is of the form $y h x$ with with $x = \pi_{\mathcal{X}}(yhx)$, $y = \pi_{\mathcal{Y}}(yhx)$, and $h = \pi_{\mathcal{H}}(yhx)$. Moreover, $x, y$ and $h$ have to be grouplike as well.
		 In particular, if $h \in \mathcal{H}$ and $x \in \mathcal{X}$ are grouplike, $h x h^{-1}$ is grouplike as well. The $\mathbb{Z}$-grading forces $h x h^{-1} \in \mathcal{X}$. Now, $\mathcal{X}$ is the distribution algebra \cite[Lemma 12]{Faulkner04} of the affine group scheme $(R,+)$, which forces grouplike $x \in \mathcal{X} \otimes K$ to correspond to $x \in (K,+)$.
		 Applying this discussion over the ring $R[k,l]/(k^nl^n)$ for arbitrary $n$, yields that the power series $\Phi(1 - kl)$ satisfies $\mathfrak{B}(k,l)$, using formulation \cite[9.8.(5)]{loos2019steinberg}.

	Now, we want to prove that the conjugation action of $\Phi(t)$ defines a grading and that the conjugation action of $\tau$ reverses the grading.
		We will write, for convenience, $g \cdot a = g a g^{-1}$ for $g \in A^\times$ and $a \in A$ and similarly use $f \cdot a = f(a)$ for $f \in \End(A)$.
		Now, we obtain a grading of $A$ satisfying the properties we wished for, since $\sum (st)^i a_i = \Phi(st) \cdot a = \sum t^i \Phi(s) \cdot a_i $.
		
		To finalize the proof, we verify that $\tau$ reverses the grading.		
		Consider the equality (which is equivalent to the equality defining $\Phi$ using $\mathfrak{B}(k,l)$) \[\rho^+_{[1]}(k) \rho^-_{[1]}(l) = \rho^-_{[1]}( l/(1 - kl)) \rho^+_{[1]}(k ( 1 - kl))  \Phi(1 - kl)\] over $A[[k,l]]$ and decompose $\rho^\pm_{[1]}(k) \cdot a$ as $\sum_{i \in \mathbb{N}} k^i (f^\pm_i \cdot a)$.
		Comparing graded components for homogeneous $a \in A$ of degree $u$, we obtain for $c \in \mathbb{Z}$ that
		\begin{equation}
			\label{eq: 1}
			\sum_{i - j = c} k^il^j (f^+_i f^-_j \cdot a) = (1 - kl)^{u + c} \sum_{v - w = c} k^vl^w (f^-_w f^+_v \cdot a).
		\end{equation}
		Terms with $u + c \ge 0$ are polynomials in $k$ and $l$ and can be evaluated in $k = l = 1$ and evaluate to $0$ if $u + c > 0$, which yields
		\[ \rho^+_{[1]}(1) \rho^-_{[1]}(1) \cdot a \equiv 0 \mod A_{\le - u}.\]
		Hence $\tau \cdot a \in A_{\le - u}$.
		Playing the same game for $\rho^-_{[1]}(l) \rho^+_{[1]}(k)$ yields that $\rho^-_{[1]}(1)\rho^+_{[1]} \cdot a \equiv 0 \mod A_{\ge - u}$ and thus $\tau \cdot a \in A_{\ge - u}$.
		Combining both observations yields that
		\[ \rho^-_{[1]}(1) \rho^+_{[1]}(1) \rho^-_{[1]}(1) \cdot a \in A_{-u} \qedhere\]
	\end{proof}
\end{lemma}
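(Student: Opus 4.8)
The plan is to run the Steinberg-relation machinery of Loos--Neher in the divided-power language of Faulkner. The key input is the exponential property of Lemma \ref{lem: exp prop} applied to the Jordan pair $(R,R)$ with $Q_xy=x^2y$: over the ring $R[k,l,(1-kl)^{-1}]$ the product $\rho^+_{[1]}(k)\rho^-_{[1]}(l)$ rewrites as $\rho^-_{[1]}(u)\,\beta\,\rho^+_{[1]}(v)$ where, because all the nested $Q$'s collapse to powers, $u=l/(1-kl)$ and $v=k/(1-kl)$ and the middle factor $\beta=\beta_{[kl]}$ depends only on the product $kl$; one then \emph{defines} $\Phi(1-kl):=\beta$. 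Concretely $\Phi(1-kl)$ is the image under $\rho$ of the distinguished middle element of the universal divided power representation. Using the substitution principle for homogeneous maps together with the polynomiality hypothesis $\rho^\pm_{[t]}(R)\subset A[t]$ (which guarantees $\beta$ lies in $A^\times$ with well-controlled dependence on the parameter), this produces a map $\Phi\colon R^\times\to A^\times$ satisfying $\Phi(t)\rho^\pm_{[1]}(x)\Phi(t)^{-1}=\rho^\pm_{[1]}(t^{\pm1}x)$; in particular $\Phi(t)$ acts by conjugation as multiplication by $t^{\pm i}$ on the image of $\rho^\pm_i$.

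The step I expect to be the real work is showing $\Phi$ is a group homomorphism, i.e. that the power series $\Phi(1-kl)$ satisfies the relations $\mathfrak B(k,l)$ of \cite[9.8]{loos2019steinberg}; granting those, \cite[Lemma 9.9]{loos2019steinberg} gives $\Phi(1-sy-xy)=\Phi(1-sy/(1-xy))\,\Phi(1-xy)$ whenever the relevant units exist, which is the multiplicativity. I would verify $\mathfrak B(k,l)$ by lifting to the universal divided power representation of the free Jordan pair $(R,R)$: by \cite[Lemma 26]{faulkner2000jordan} this object is, as a coalgebra, $\mathcal Y\otimes\mathcal H\otimes\mathcal X$ with $\mathcal Y\cong\mathcal X$ the universal binomial divided power algebra of $R$, and carries projections whose iterated coproduct is the identity. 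This forces every grouplike element to factor uniquely as $yhx$ with grouplike factors; hence conjugating a grouplike $x\in\mathcal X$ by a grouplike $h\in\mathcal H$ produces another grouplike element, which the $\mathbb Z$-grading confines to $\mathcal X$, and $\mathcal X$ being the distribution algebra of $(R,+)$ by \cite[Lemma 12]{Faulkner04} identifies its grouplikes with $(R,+)$ itself. Running this over each $R[k,l]/(k^nl^n)$ yields $\mathfrak B(k,l)$ in the form \cite[9.8.(5)]{loos2019steinberg}.

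With $\Phi$ multiplicative the grading on $A$ falls out formally: from $\Phi(st)\cdot a=\Phi(s)\cdot(\Phi(t)\cdot a)$ one reads off that writing $\Phi(t)\cdot a=\sum_i t^i a_i$ gives $\Phi(s)\cdot a_i=s^i a_i$, so $A=\bigoplus_i A_i$ with $A_i=\{a:\Phi(t)a\Phi(t)^{-1}=t^i a\}$; since conjugation by $\Phi(t)$ is an algebra automorphism this is an algebra grading, and evaluating at $t=1$ (using $\Phi(1)=1$) gives $a=\sum_i a_i$, with the stated conjugation behaviour built in. Finally, to see $\tau=\rho^-_{[1]}(1)\rho^+_{[1]}(1)\rho^-_{[1]}(1)$ reverses the grading, I would expand $\rho^\pm_{[1]}(k)\cdot a=\sum_i k^i(f^\pm_i\cdot a)$ and compare graded components in a suitable form of the Steinberg relation over $A[[k,l]]$: for homogeneous $a$ of degree $u$ the comparison in degree $c$ carries an overall factor $(1-kl)^{u+c}$, so when $u+c>0$ both sides are polynomials vanishing at $k=l=1$, forcing $\rho^+_{[1]}(1)\rho^-_{[1]}(1)\cdot a\in\bigoplus_{j\le -u}A_j$; the mirror computation with the factors reversed gives $\rho^-_{[1]}(1)\rho^+_{[1]}(1)\cdot a\in\bigoplus_{j\ge -u}A_j$. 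Composing the two, $\tau\cdot a\in A_{-u}$, i.e. $\tau\Phi(t)\tau^{-1}=\Phi(t^{-1})$, equivalently $\tau\Phi(t)=\Phi(t^{-1})\tau$. The main obstacle throughout is the coalgebra bookkeeping in the homomorphism step; everything else is formal grading manipulation plus one application of the exponential property.
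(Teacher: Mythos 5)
Your proposal follows the paper's own proof essentially step for step: defining $\Phi(1-kl)$ as the middle factor of the exponential property, verifying the Loos--Neher relations $\mathfrak{B}(k,l)$ via the coalgebra factorization $\mathcal{Y}\otimes\mathcal{H}\otimes\mathcal{X}$ of the universal divided power representation and the grouplike/distribution-algebra argument, and then extracting the grading and the grade-reversal of $\tau$ by the same $(1-kl)^{u+c}$ degree comparison evaluated at $k=l=1$. The argument is correct and matches the paper's approach.
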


Now, when we let an operator Kantor pair act on its Lie algebra, Jordan subpairs have divided power representations.
We can thus define a grading automorphism $\Phi(1 - kl)$ of the Lie algebra of the operator Kantor pair from any subpair isomorphic to $(R,R)$.
The operator Kantor pair $(P^+,P^-)$ of Example \ref{example P} has three such pairs $((tR,0),(\bar{t}R,0)),$ $((\bar{t}R,0),(tR,0))$, and $((0, (t - \bar{t})R),((0, (- t + \bar{t})R)))$.

\begin{lemma}
	\label{lem: G2 grading}
	Consider the operator Kantor pair $(P^+,P^-)$ of Example \ref{example P}.
	Let $(G^+,G^-)$ be an operator Kantor pair over $R$ and suppose that there is a morphism $\alpha : (P^+,P^-) \longrightarrow (G^+,G^-)$ of operator Kantor pairs. Assume that the grading automorphism $\Phi_t(1 - kl)$ corresponding to the subpair $(tR,\bar{t}R)$ stabilizes $G^+, G^-$ under the conjugation action, assume that $\alpha[P^\pm,P^\pm] = G^\pm_2$, and assume that
	\[Q^\text{grp}_{(0,t - \bar{t})} : G^- \longrightarrow G^+\] is bijective.
	The operator Kantor pair $(G^+,G^-)$ is $G_2$-graded, i.e., its Lie algebra is $G_2$-graded and the grading automorphisms stabilize the groups $G^\pm$.	
	Moreover, the grading automorphisms are the grading automorphisms induced by $(P^+,P^-)$.
	\begin{proof}
			From the pair $([P^+,P^+],[P^-,P^-]) \cong (R,R)$, we start by recovering the usual grading of the Lie algebra associated to $(G^+,G^-)$.
		On the $\pm 2$-graded part of the Lie algebra associated to the operator Kantor pair (\cite[Lemma 3.16]{OKP} and \cite[Lemma 3.20]{OKP} for the action), which is $\alpha[P^\pm,P^\pm]$, the grade reversing $\tau$ acts as $Q^\text{grp}_{(0,\pm(t - \bar{t}))}$, since it does so for the pair $(P^+,P^-)$.
		From the definition of the grading associated to $(R,R)$ and Equation \ref{eq: def Phi}, we obtain that $\psi(t) \Phi(1 - kl) = \Phi(1 - kl) \psi(t)$ using the usual grading morphism $\psi(t)$ for arbitrary $t \in K^\times$ invertible. Hence, both gradings are compatible.
		For any $-1$-graded $x$ which is homogeneous of degree $i$ with respect to $\Phi(1 - kl)$, we apply Equation \ref{eq: def Phi} to $x$ and obtain
		\[ x + k Q^+_{(0,t - \bar{t})} x = (1 - kl)^{i}\left(x + kl Q^-_{(0,-(t - \bar{t}))}Q^+_{(0,t - \bar{t})} x\right) + (1 - kl)^{i + 1} k Q^+_{(0,t - \bar{t})} x,\]
		using that $Q^+_{(0,t - \bar{t})} x$ is homogeneous of degree $i + 2$.
		Using that $Q^+_{(0,t - \bar{t})}$ is injective, we derive $i = -1$ and 
		\[ Q^-_{(0,-(t - \bar{t}))}Q^+_{(0,t - \bar{t})} x = -x.\]
		Using that $\tau$ reverses the grading, one easily computes that $\tau$ is given by $Q^\pm_{(0,\pm(t - \bar{t}))}$ on $\mp 1$-graded elements.	
		Moreover, this $\tau$ induces an isomorphism \[Q^\text{grp}_{(0, t - \bar{t})} : G^- \longrightarrow G^+.\] Thus, it also interchanges the subpairs $(\alpha (tR,0),\alpha (\bar{t}R,0))$ and $(\alpha(\bar{t}R,0),\alpha(tR,0))$ of $(\alpha P^+,\alpha P^-)$ in $\Aut(L(G^+,G^-))$.
		
		Moreover, $(\alpha (tR,0),\alpha (\bar{t}R,0))$ also defines a grading on the Lie algebra associated to $(G^+,G^-)$ by Lemma \ref{lem: grading jordan idempt}. The $\tau'$ obtained from such pairs also interchanges the other pairs (at least on the Lie algebra level). For example, using $d$ to denote, at first, the undetermined $0$-graded part, the left hand side of Equation \ref{eq: def Phi} applied to $(t - \bar{t})_2$ becomes
		\begin{align*}
			\rho^+_{[1]}(k t)\rho^-_{[1]}(l \bar{t})  \cdot  (t - \bar{t})_2& = \rho^+_{[1]}(k t) \cdot \left( l^2 d - l \bar{t}_1 + (t - \bar{t})_2 \right) \\
			& = l^2 d - l \bar{t}_1 + (1 - kl) (t - \bar{t})_2,
		\end{align*}
	since $d$ maps to a trivial derivation of $(P^+,P^-)$ under the restriction map. This shows that $(t - \bar{t})_2$ is $1$-graded with $\tau (t - \bar{t})_2 = -\bar{t}_1$. This also shows that $d = 0$.
		This implies that, on the Lie algebra, all three gradings are conjugate to the ordinary $\mathbb{Z}$-grading.
		
		Hence, the $\tau$ corresponding to subpairs act transitively on the identified subpairs (in the Lie algebra).
		Moreover, the grading induced by the subpair containing $(tR,0)$ is conjugate to the usual $\mathbb{Z}$-grading.
		So, fix the grading obtained from the pair containing $[P^+,P^+]$ and the pair containing $(tR,0)$ and the corresponding grading automorphisms $\Phi_1(k)$ and $\Phi_2(k)$ over $K[k,k^{-1}]$. We say that $x$ is $(a,b)$-graded if $x$ is $a$ graded with respect to the usual grading and $b$ graded with respect to $\Phi_2(k)$.
		
		These two gradings combined form a $G_2$-grading. 
		 To be precise, we have grading components $(2,1), (1,2), (-1,1), (1,-1), (-2,-1), (-1,-2)$ (which are the long roots) coming from the subspaces of $(P^+,P^-)$. The only other possible grading components are $(0,0), (1,0), (1,1), (0,1), (-1,0), (-1,-1), (0,-1)$ since each component $(i,j)$ must have $i,j \in \{-2,-1,0,1,2\}$ with $j$ or $i$ being equal to $\pm 2$ implying that the other is $\pm 1$ since only the parts of $P^+$ can have grading $\pm 2$ in either grading (as they are conjugate to $\alpha [P^+,P^+]$ or $\alpha [P^-,P^-]$ under the $\tau$ obtained from the subpairs of $(P^+,P^-)$).
		
		Now, the assumption on $\Phi_t(1 - kl)$ and the usual grading automorphism show that all grading automorphisms, and in particular the ones associated to the roots $\alpha$ and $\beta$, stabilize $G^+$ and $G^-$.
	\end{proof}
\end{lemma}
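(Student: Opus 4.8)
The plan is to bootstrap the entire $G_2$-grading from the three copies of the Jordan pair $(R,R)$ that sit inside $(P^+,P^-)$ (Remark \ref{rem: idempotents}), pushed forward by $\alpha$ into $\Aut$ of the Lie algebra $L=L(G^+,G^-)$, and then to glue the two $\mathbb{Z}$-gradings they produce via Lemma \ref{lem: grading jordan idempt}. First I would recover the ordinary $\mathbb{Z}$-grading of $L$ from the ``long-root'' copy $([P^+,P^+],[P^-,P^-])\cong(R,R)$: its image is $(G^+_2,G^-_2)=\alpha[P^\pm,P^\pm]$, and since $\alpha$ is a morphism of operator Kantor pairs, the grade-reversing element $\tau$ supplied by Lemma \ref{lem: grading jordan idempt} restricts on $G^\pm_2$ to $Q^{\mathrm{grp}}_{(0,\pm(t-\bar t))}$, exactly as it does in $(P^+,P^-)$.

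Next I would check compatibility of this grading with the one produced by the copy $(tR,\bar tR)$. Because $\Phi_t(1-kl)$ stabilizes $G^+$ and $G^-$ by hypothesis, it commutes with the usual grading morphism $\psi$; feeding a $-1$-graded element $x$ that is $i$-homogeneous for $\Phi_t(1-kl)$ into the defining Equation \ref{eq: def Phi} and comparing homogeneous parts then forces $i=-1$ together with $Q^-_{(0,-(t-\bar t))}Q^+_{(0,t-\bar t)}x=-x$, where the assumed bijectivity (in particular injectivity) of $Q^{\mathrm{grp}}_{(0,t-\bar t)}$ is essential. This pins down $\tau$ on the $\mp 1$-graded pieces as $Q^\pm_{(0,\pm(t-\bar t))}$, so $\tau$ interchanges the subpairs $(\alpha(tR,0),\alpha(\bar tR,0))$ and $(\alpha(\bar tR,0),\alpha(tR,0))$ of $(\alpha P^+,\alpha P^-)$.

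Then I would bring in the two ``short-root'' copies $(\alpha(tR,0),\alpha(\bar tR,0))$ and $(\alpha(\bar tR,0),\alpha(tR,0))$: by Lemma \ref{lem: grading jordan idempt} each yields a further $\mathbb{Z}$-grading of $L$ with its own $\tau'$. The key computation is to apply Equation \ref{eq: def Phi} to $(t-\bar t)_2\in G^+_2$ and observe that the a priori undetermined $0$-graded contribution $d$ restricts, under $\alpha$, to a trivial derivation of $(P^+,P^-)$, hence $d=0$; this shows $(t-\bar t)_2$ is $1$-graded for this new grading with $\tau'(t-\bar t)_2=-\bar t_1$, so all three gradings are conjugate to the ordinary $\mathbb{Z}$-grading and the associated $\tau$'s act transitively on the three subpairs at the Lie-algebra level. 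Fixing the grading automorphism $\Phi_1=\psi$ from the long-root copy and the grading automorphism $\Phi_2$ from the copy containing $(tR,0)$, I would call $x$ ``$(a,b)$-graded'' when it is $a$-graded for $\Phi_1$ and $b$-graded for $\Phi_2$: the six long roots $(\pm2,\pm1),(\pm1,\pm2),(\pm1,\mp1)$ are realized by the images under $\alpha$ of the root spaces of $(P^+,P^-)$, and a support argument — each component $(i,j)$ has $i,j\in\{-2,-1,0,1,2\}$, and a $\pm 2$ in either slot forces a $\pm 1$ in the other since only pieces conjugate to $\alpha[P^\pm,P^\pm]$ can carry grade $\pm 2$ — leaves as the only other possibilities $(0,0)$ and the six short roots $(\pm1,0),(0,\pm1),(\pm1,\pm1)$. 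That is exactly a $G_2$-grading.

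Finally, the grading automorphisms attached to the simple roots $\alpha,\beta$ are built from $\Phi_1=\psi$ and $\Phi_2=\Phi_t(\cdot)$, both of which stabilize $G^+$ and $G^-$ (the former because $G^\pm$ are vector groups, hence closed under the rescalings conjugation by $\psi$ induces; the latter by hypothesis), so all grading automorphisms stabilize $G^\pm$; and since $\Phi_1,\Phi_2$ were produced from the pushed-forward $(R,R)$-subpairs of $(P^+,P^-)$, the resulting grading automorphisms are by construction those induced by $(P^+,P^-)$, which gives the moreover-part. I expect the main obstacle to be the short-root step: showing that the two copies $(\alpha(tR,0),\alpha(\bar tR,0))$ and $(\alpha(\bar tR,0),\alpha(tR,0))$ genuinely give gradings conjugate to the standard one — in particular the vanishing of the spurious $0$-graded term $d$ via its triviality on $(P^+,P^-)$ — and then controlling the support of the combined $\mathbb{Z}^2$-grading tightly enough that it lands precisely on the $G_2$ root system and nothing larger.
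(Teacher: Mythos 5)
Your proposal is correct and follows essentially the same route as the paper's proof: recovering the ordinary $\mathbb{Z}$-grading from the long-root copy, forcing $i=-1$ via Equation \ref{eq: def Phi} and the injectivity of $Q^{\mathrm{grp}}_{(0,t-\bar t)}$, killing the spurious $0$-graded term $d$ by its triviality on $(P^+,P^-)$, and then combining the two commuting $\mathbb{Z}$-gradings with the same support argument to land on the $G_2$ root system. No substantive differences to report.
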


In the previous lemma, we identified $G_2$-gradings coming from embeddings of $(P^+,P^-)$ as in Example \ref{example P}. The conditions precisely identify a $G_2$-grading where long roots form an operator Kantor pair $(P^+,P^-)$. A necessary and sufficient condition for the long roots to form an operator Kantor pair $(P^+,P^-)$ is, thus, having an embedding, $G^\pm_2 \cong P^\pm_2$, the grading automorphisms of $(P^+,P^-)$ having an action on $(G^+,G^-)$, with one grading automorphism being the usual $\mathbb{Z}$-grading.

\begin{lemma}
	\label{lem: subsystems}
	Let $(G^+,G^-)$ be a $G_2$-graded operator Kantor pair such that the long roots form an operator Kantor pair $(P^+,P^-)$ as in Example \ref{example P} and suppose that the $G_2$-grading is induced by the grading automorphisms of the subpairs of $(P^+,P^-)$.
	We can rewrite $G^+(K)$ as
	\[ \{ ((a\bar{t}, j, j', b t),u) \in \left((R\bar{t} \oplus J \oplus J' \oplus Rt) \times R[t]/(t^2 - t)\right) \otimes K : u + \bar{u} = a\bar{b} + T(j,j') \} \]
	for a certain bilinear $T$ and certain submodules $J$ and $J'$ of $\Lie(G^+)$ such that
	\[ \Phi_{\beta}(s)   ((a\bar{t}, j, j', b t),u) \Phi_{\beta}(s^{-1}) = ((s^2 a\bar{t}, s j, j', s^{-1 }b t), su).\]
	Furthermore, $(G^+,G^-) \cong (G^-,G^+)$ as operator Kantor pairs under
	\[ \tau = (Q^\text{grp}_{(0,-(t - \bar{t}))} ,Q^\text{grp}_{(0,t - \bar{t})} ).\]
	\begin{proof}
		We remark that $\tau$ is precisely the grading reversing automorphism associated to the long root.
		We prove that $\tau$ preserves the operator Kantor pair operators.
		Using that $P_{(0,t - \bar{t})}$ and $T_{(0,t - \bar{t})}$ are of degree $3$ and thus $0$, \cite[Definition 3.21, conditions 2 and 4]{OKP} show that 
		\[ \tau Q_y \tau^{-1} = - Q_{(0,(t - \bar{t}))}  Q_y Q_{(0,- (t - \bar{t}))} = Q_{\tau y}\]
		and
		\[ \tau T_y \tau^{-1} = - Q^\text{grp}_{(0,(t - \bar{t}))} T_y Q_{(0,- (t - \bar{t}))} = T_{\tau y}\]
		for $y \in G^-$ (writing $Q^\text{grp}_{(0,(t - \bar{t}))} \cdots = R_{(0,(t - \bar{t}))} \cdots$, as $R$ coincides with $\tau$ on $\pm 2$-graded elements). The same holds for $x \in G^+$. Since $P$ and $Q^\text{grp}_x y = (Q_x y , R_x y)$ can be recovered from $Q$ and $T$, cfr. \cite[Remark 3.25]{OKP}, we conclude that $\tau$ preserves all operator Kantor pair operators.	
		
		Now, let $J$ and $J'$ be the subspaces of the Lie algebra with the right grading.
		Define $T(v,w)(t - \bar{t}) = [(v,\cdot),(w, \cdot)]$, using the group commutator of $G^+$.
		We now construct for each $j \in J'$ a unique $j \in G^+$ that interacts correctly with the grading. 
		Take arbitrary $(j,a) \in G^+$ and note that \[\Phi_{\beta}(t) (j,a) \Phi_{\beta}(t^{-1}) = (j,j_2 + t s + \sum_{i \neq 1,2} t^i c_i)\] over $K[t,t^{-1}]$.
		Now, consider
		\[ (j,j_2 + t s + \sum_{i \neq 1,2} t^i c_i) (-j, -a + \psi(j,j)) = (0, (j_2 - a) + ts +  \sum_{i \neq 1,2} t^i c_i)\]
		and reapply $\Phi_\beta(u)$.
		Applying this directly establishes
		\[ (\Phi_{\beta}(tu) \cdot (j,a)) (\Phi_{\beta}(u) \cdot (-j,-a)) = (0, u(j_2 - a + ts + \sum_{i \neq 1,2} t^i c_i) \]
		Recomputing this using the definitions shows that $c_i = 0$ for all $i$, that $j_2 + s = a$, and $s \in G^+_2$.
		Hence, $(j,j_2)$ is fixed under $\Phi_\beta(t)$. The found $j_2$ is unique, as it corresponds to the unique element $(j,j_2)(0,s)$ that remains fixed.
		Similar argumentation works for all root subgroups.
	\end{proof}
\end{lemma}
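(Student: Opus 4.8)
The plan is to treat the two assertions separately, beginning with the statement about $\tau$, which is essentially formal once the $G_2$-grading is in place. By Lemma~\ref{lem: G2 grading} the $G_2$-grading on $(G^+,G^-)$ is the one induced by the three subpairs of $(P^+,P^-)$, and $Q^\text{grp}_{(0,\pm(t-\bar t))}$ acts on the associated Lie algebra as the grading-reversing automorphism $\tau$ attached to the long root $2\beta+3\alpha$. Since $\tau$ is an automorphism that reverses the $\mathbb{Z}$-grading, it carries the positively-graded root subgroups to the negatively-graded ones and hence maps $G^+$ onto $G^-$ and $G^-$ onto $G^+$; in particular it is a group isomorphism $(G^+,G^-)\to(G^-,G^+)$. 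To upgrade this to an isomorphism of operator Kantor pairs it suffices, by \cite[Remark 3.25]{OKP}, to verify that $\tau$ conjugates the operators $Q$ and $T$ correctly, i.e.\ $\tau Q_y\tau^{-1}=Q_{\tau y}$ and $\tau T_y\tau^{-1}=T_{\tau y}$ for $y\in G^-$, together with the symmetric statement for $x\in G^+$. Here I would exploit that $(0,t-\bar t)$ is $\beta$-homogeneous of degree $3$, so that $T_{(0,t-\bar t)}$ and $P_{(0,t-\bar t)}$ act trivially; substituting this into \cite[Definition 3.21, conditions 2 and 4]{OKP} collapses the general conjugation identities to $\tau Q_y\tau^{-1}=-Q_{(0,t-\bar t)}Q_yQ_{(0,-(t-\bar t))}$ and $\tau T_y\tau^{-1}=-Q^\text{grp}_{(0,t-\bar t)}T_yQ_{(0,-(t-\bar t))}$, whose right-hand sides are $Q_{\tau y}$ and $T_{\tau y}$ once one keeps track of the sign relating $Q$ and $Q^\text{grp}$ recorded in Example~\ref{example P}.

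For the normal form of $G^+(K)$ I would use the $G_2$-grading to decompose the positively-graded part of the Lie algebra into its root spaces. The long roots contribute, via the embedded $(P^+,P^-)$, the summands $R\bar t$ and $Rt$ coming from the two degree-$1$ long roots and the line $G^+_2=[G^+,G^+]\cong R(t-\bar t)$ coming from the degree-$2$ long root; the two short degree-$1$ roots contribute submodules which I call $J$ (the $(\beta+2\alpha)$-component) and $J'$ (the $(\beta+\alpha)$-component). I then define the bilinear map $T\colon J\times J'\to R$ by $T(v,w)(t-\bar t)=[(v,\cdot),(w,\cdot)]$, the $G^+$-commutator of the canonical representatives of $v$ and $w$. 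The heart of the proof is the construction of these canonical representatives: for $j$ in a short-root space, starting from an arbitrary lift $(j,a)\in G^+$, one conjugates by $\Phi_\beta(s)$ over $K[s,s^{-1}]$, multiplies by the inverse of the untwisted element using the vector group law $(x,y)(u,v)=(x+u,y+v+\psi(x,u))$, and observes that the outcome lies in $G^+_2$, which is concentrated in a single grading layer; since $\Phi_\beta(s)$ is an automorphism, this forces the off-weight grading components to vanish and pins down the unique $j_2$ for which $(j,j_2)$ is $\Phi_\beta$-homogeneous of the prescribed weight, the residual ambiguity being exactly $G^+_2$. Running the same argument for the other short-root space and for the two degree-$1$ long root groups, and then multiplying the resulting canonical one-parameter subgroups in a fixed order, yields the displayed description of $G^+(K)$; the constraint $u+\bar u=a\bar b+T(j,j')$ emerges from collecting the $G^+_2$-contributions of the pairwise commutators appearing in the multiplication, matching the description of $P^+$ in Example~\ref{example P}, and the $\Phi_\beta$-equivariance is built into the construction.

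The step I expect to be the main obstacle is precisely this canonical-representative construction: making rigorous that conjugating by $\Phi_\beta(s)$, composing with an inverse, and reducing modulo powers of $s$ forces the spurious grading components into $G^+_2$ to vanish and leaves exactly $G^+_2$ as the remaining freedom. This is a bookkeeping argument about the interaction of the vector group multiplication with the grading automorphism, but it carries essentially all the content of the normal form; once the canonical representatives are in hand, bilinearity of $T$, the relation $u+\bar u=a\bar b+T(j,j')$, and uniqueness of the normal form follow by expanding the group law degree by degree, while the $\tau$-part reduces to applying the operator Kantor pair axioms with the $Q$ versus $Q^\text{grp}$ sign kept straight.
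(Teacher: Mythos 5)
Your proposal follows essentially the same route as the paper's proof: the $\tau$-part is handled identically (degree-$3$ vanishing of $T_{(0,t-\bar t)}$ and $P_{(0,t-\bar t)}$ fed into \cite[Definition 3.21, conditions 2 and 4]{OKP}, then recovering the remaining operators from $Q$ and $T$), and the normal form is obtained by the same canonical-representative construction — conjugating an arbitrary lift by $\Phi_\beta$, cancelling against the untwisted element via the group law, and using a second application of the grading automorphism to kill the off-weight components and isolate the $G^+_2$-ambiguity — with $T$ defined by the same commutator formula. The only difference is cosmetic: you spell out the final assembly of the root subgroups and the origin of the constraint $u+\bar u=a\bar b+T(j,j')$ slightly more explicitly than the paper, which leaves that to ``similar argumentation.''
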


Recall that we constructed an operator Kantor pair using a well behaved cubic norm pair.
This construction could make sense for arbitrary cubic norm pairs, i.e., we could look at cubic norm pairs for which the automorphism groups $G^+$ and $G^-$ form an operator Kantor pair.

\begin{lemma}
	\label{lem: well behaved}
	Each cubic norm pair $(J,J')$ that defines an operator Kantor pair is well behaved.
	\begin{proof}
		That the cubic norm pair is well behaved follows from \cite[Definition 3.21.4]{OKP}.
	Namely, one has
	\[ Q^\text{grp}_{(j,j^\sharp)} (t,0) = (j^\sharp, N(j)j) \]
	for $j \in J$ and $t \in R$ by construction.
	One also has $T_{(j,j^\sharp)} (t,0) = (0 , N(j) (t - \bar{t}))$.
	One can also compute \[P_{(j,j^\sharp)}(t,0)  = 2 Q_{T_{(j,j^\sharp)}(t,0)} (t,0) - (V_{j,t})_1 Q_{(j,j^\sharp)} (t,0) = -2 N(j)t + 3N(j) t = N(j)t\] using \cite[Lemma 5.2.3]{OKP}.
	Substituting this in \cite[Definition 3.21.4]{OKP} and evaluating on $\bar{t}$ yields
	\[ - N(j^\sharp) (t - \bar{t}) + N(j)^2(t - \bar{t}) = 0\]
	since the other terms are $0$ by the grading. We conclude that $(J,J')$ is well behaved.
	\end{proof}
\end{lemma}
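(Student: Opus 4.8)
The plan is to obtain the identity $N(v^\sharp)=N(v)^2$ as a single surviving term of the operator Kantor pair compatibility axiom \cite[Definition 3.21.4]{OKP}. Since $(G^+,G^-)$ is assumed to be an operator Kantor pair, its operators $Q^\text{grp}$, $T$ and $P$ exist and are, by the definition of an operator Kantor pair, given by $o_{2,1}$, $o_{3,1}$ and $\nu_{3,2}$; moreover $P$ can be recovered algebraically from $Q^\text{grp}$, $T$ and the instructure action via \cite[Lemma 5.2.3]{OKP}. I would evaluate these on the element $(j,j^\sharp)\in G^+$ (a legitimate element for any cubic norm pair by the description of $G^+$ in Lemma \ref{lem: descr G+}) paired against the long-root element $(t,0)\in G^-$. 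The values of $Q^\text{grp}$ and $T$ there are read off from the commutator relations of Lemma \ref{lem: com formulas} exactly as in the cases that do not rely on well-behavedness, giving
\[ Q^\text{grp}_{(j,j^\sharp)}(t,0) = (j^\sharp, N(j)j), \qquad T_{(j,j^\sharp)}(t,0) = (0, N(j)(t-\bar{t})), \]
and then \cite[Lemma 5.2.3]{OKP} yields $P_{(j,j^\sharp)}(t,0) = 2\,Q_{T_{(j,j^\sharp)}(t,0)}(t,0) - (V_{j,t})_1\,Q_{(j,j^\sharp)}(t,0) = -2N(j)t + 3N(j)t = N(j)t$.

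Next I would substitute these three values into \cite[Definition 3.21.4]{OKP} and evaluate the resulting relation on $\bar{t}\in G^-$. By the $\mathbb{Z}$-grading every term except two is annihilated, leaving $-N(j^\sharp)(t-\bar{t}) + N(j)^2(t-\bar{t}) = 0$, i.e. $N(j^\sharp)=N(j)^2$; by the $J\leftrightarrow J'$ symmetry of the cubic norm pair axioms the same holds on $J'$. Since the operator Kantor pair structure and all of these operator identities are functorial in the base ring, the identity persists over every scalar extension, which is exactly the definition of being well behaved. (This fits the remark preceding the lemma: $N(j^\sharp)-N(j)^2$ is always $3$-torsion and acts trivially on $J$ and $J'$, so the axiom only needs to supply its vanishing as an element of $R$.)

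The mathematical content is just the short computation above; the real work is bookkeeping. The point I expect to require the most care is aligning conventions: checking that the operators attached to $(G^+,G^-)$ are indeed the ones computed (they are uniquely determined by $(G^+,G^-)$, cf. \cite[Remark 3.24]{OKP}), and tracking the $\mathbb{Z}^2$-grading precisely enough to be certain that exactly the two displayed terms survive in \cite[Definition 3.21.4]{OKP} after evaluation on $\bar{t}$.
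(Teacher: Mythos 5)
Your proposal is correct and follows essentially the same route as the paper: evaluate $Q^\text{grp}$, $T$, and (via \cite[Lemma 5.2.3]{OKP}) $P$ on $(j,j^\sharp)$ against $(t,0)$, substitute into \cite[Definition 3.21.4]{OKP}, and read off $N(j^\sharp)=N(j)^2$ from the two surviving graded terms after evaluating on $\bar{t}$. The extra remarks you add about functoriality in the base ring and the $J\leftrightarrow J'$ symmetry are consistent with, and slightly more explicit than, the paper's version.
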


\begin{theorem}
	\label{thm: char 1}
	Consider the operator Kantor pair $(P^+,P^-)$ of Example \ref{example P}.
	Let $(G^+,G^-)$ be an operator Kantor pair over $R$ and suppose that there is an injective morphism $\alpha : (P^+,P^-) \longrightarrow (G^+,G^-)$ of operator Kantor pairs and that the grading automorphism corresponding to the subpair $(tR,\bar{t}R)$ stabilizes $G^+$ and $G^-$ under the conjugation action. Suppose, additionally, that $[G^+,G^+] = \alpha[P^+,P^+]$, that the same holds for $G^-$ and $P^-$, and that \[Q^\text{grp}_{(0,t - \bar{t})} : G^- \longrightarrow G^+\] is bijective. In this case, the operator Kantor pair can be constructed from a well behaved cubic norm pair.
	Conversely, any operator Kantor pair arising from a cubic norm pair is of this form.
	\begin{proof}
		Note that the cubic norm pair has to be well behaved by Lemma \ref{lem: well behaved}, if it exists. The converse statement is more or less obvious, as the long roots form an operator Kantor pair isomorphic to $(P^+,P^-)$. The stabilization condition follows from Theorem \ref{thm: cnp main} and the bijectivity condition holds since $L_{t - \bar{t}}$ is bijective.
		
		Recall that we have a $G_2$-graded algebra and grade reversing automorphisms for the long roots by Lemma \ref{lem: G2 grading}. Lemma \ref{lem: subsystems} finds subgroups $J$, $J'$ and $P^+$ of $G^+$. Using the homogeneity of the grading, $J' \subset G^+$ and $J \subset G^-$ form an operator Kantor subpair, as both are groups with weight $\pm ( \beta + 2 \alpha)$. Moreover, the pair $J \subset G^+$, $J'\subset G^-$ forms an isomorphic subpair due to Lemma \ref{lem: subsystems}.		
		Lemma \ref{lem: which okp are jp} establishes that $(J,J')$ is a quadratic Jordan pair.
		
		We can consider the action of this pair $(J',J)$ on the spaces with grading $-\beta - 3 \alpha, - \alpha , \alpha + \beta , \beta + 3 \alpha$ and note that we can recover operators $N, \sharp, T$ so that this representation coincides with the one of Example \ref{ex: dprepr}, which implies, by Lemma \ref{lem: necessity def}, that $(J',J)$ with these operators forms a cubic norm pair over $K$. The first condition to apply that lemma holds, since $\rho^+_1(j) \cdot 1_{- \beta - 3 \alpha} = - \tau_{\beta + 3 \alpha}(j)$ for $j \in J'$. The second condition to apply the lemma can be checked using the matrix realization of $P^+$ and $P^-$ of Example \ref{example P} and letting it act on $J'_{\beta + 2 \alpha}  \oplus J'_{- \alpha} \oplus J'_{- \alpha - \beta}$ and $J_{- \beta - 2 \alpha} \oplus J_{\alpha} \oplus J_{\alpha + \beta} $.
		Computing $\tau_{\beta + 3 \alpha} \tau_{2\beta + 3 \alpha}$ in this realization, with $\tau_{\gamma}$ the grade reversing automorphism of Lemma \ref{lem: grading jordan idempt} corresponding to a subpair of $(P^+,P^-)$ with grading $\pm \gamma$, yields 
		\[ \tau_{\beta + 3 \alpha} \tau_{2\beta + 3 \alpha} = \begin{pmatrix} & 1 \\ & & 1 \\ 1
		\end{pmatrix}, \]
		which shows that
		\begin{align*}  - \tau_{\beta + 3 \alpha} \tau_{2\beta + 3 \alpha} \cdot T'(j',j) t_{\beta + 3 \alpha} &  = - \tau_{\beta + 3 \alpha} \tau_{2\beta + 3 \alpha} \cdot [j'_{\beta + 2 \alpha}, j_{\alpha}]\\ & = [ j_{- \beta - 2 \alpha}, j'_{- \alpha - \beta}] \\ &= \tau_{\beta} [j_{-\beta - 2 \alpha}, j'_{-\alpha}] \\ &= \tau_\beta T(j,j') \bar{t}_{- \beta - 3 \alpha} \\ & = T(j,j') (t - \bar{t})_{- 2 \beta - 3 \alpha } \\ & = - \tau_{\beta + 3 \alpha} \tau_{2\beta + 3 \alpha} \cdot T(j,j') t_{\beta + 3 \alpha},  \end{align*}
		and thus $T(j,j') = T(j',j)$. 
		The final condition $j^{\sharp\sharp} = N(j)j$ to apply Lemma \ref{lem: necessity def}, holds as well. Namely, write $(j,j^\sharp)$ for elements $j \in J' \subset G^+$.
		We note that $Q^\text{grp}_{(j,j^\sharp)} (\bar{t},0) = (j^{\sharp}, j^{\sharp\sharp})$ lying the analogous subgroup $J \subset G^+$. We note
		\[ Q_{(j^{\sharp}, j^{\sharp\sharp})} t = j^{\sharp\sharp}\]
		(this is the same $\sharp$, since $\tau_{2 \beta + 3 \alpha}$ is an isomorphism).
		On the other hand, \[\nu_{4,2}((j,j^\sharp),(\bar{t},0)) \cdot t_{-1} + \nu_{1,1}((j,j^\sharp), (\bar{t},0)) \nu_{3,1}((j,j^\sharp), (\bar{t},0)) \cdot t_{-1} = 0\]
		so that
		\[ \nu_{4,2}((j,j^\sharp),(\bar{t},0)) \cdot t_{-1} = \ad [j_1, \bar{t}_{-1}] \circ N(j) L_{t - \bar{t},2} \cdot t_{-1} =  N(j)[\zeta_{\beta + 3 \alpha}, j_1] = N(j)j_1.\]
		We conclude that
		\[ N(j)j = Q_{(j^{\sharp}, j^{\sharp\sharp})} t = j^{\sharp\sharp}.\]
		
		We now finalize the proof, by showing that the actions of $G^+$ and $G^-$ on the Lie algebra of the operator Kantor pair coincide with the analogous action of the analogous groups for the cubic norm pair.
		 First note that $(G^+,G^-) \supset (J,J') \cong (J,J') \subset (G^-,G^+)$ under $\tau$.
To verify that the Lie algebras are isomorphic, note that the zero-graded components of the Lie algebra act faithfully on the rest of the Lie algebra and that these are spanned by $(V_{v,w}, -V_{w,v})$ for \[(v,w) \in J' \times J \subset G^+ \times G^-\] and by the elements in $V_{P^+,P^-}$. The actions of these $(V_{v,w}, - V_{w,v}) = - \ad [v,w] = - [\ad v, \ad w]$ match on all non-zero-graded components by construction\footnote{We use the sign convention of this article, since $V_{v,w} x = - Q^{(1,1)}_{x,v} w$ in this article, while $V_{v,w} x = Q^{(1,1)}_{x,v} w$ in \cite{OKP}.}, and the isomorphism $\tau$ relates $V_{v,w}$ to $V_{\tau w, \tau v}$ using \[\tau (V_{v,w}, - V_{w,v}) \tau^{-1} = \tau [\ad w, \ad v] \tau^{-1} = \ad [ \tau \cdot w, \tau \cdot v] = ( - V_{\tau w, \tau v}, V_{\tau v, \tau w}).\] Since $V_{v,w}$ and $\tau$ have the same action on both algebras, this shows that the zero-graded components coincide.
		This immediately shows that $G^+$ and $G^-$ act on the Lie algebra of the operator Kantor pair as the analogous groups $G^+$ and $G^-$ for the cubic norm pair act on their Lie algebra, once again using the isomorphism $\tau$ to relate the actions of the short roots.
	\end{proof}
\end{theorem}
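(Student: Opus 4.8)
The plan is to handle the two directions separately, with the converse being a bookkeeping exercise and the forward direction carrying the weight. For the converse, suppose $(G^+,G^-)$ arises from a cubic norm pair $(J,J')$ as in Theorem \ref{thm: cns defines okp}. Then the long root groups $R_{\pm\beta}$, $R_{\pm(\beta+3\alpha)}$, $R_{\pm(2\beta+3\alpha)}$ generate a copy of $(P^+,P^-)$, visible from the matrix realization in Example \ref{example P} together with Remark \ref{rem: idempotents}; the stabilization of $G^\pm$ under the grading automorphism attached to the subpair $(tR,\bar t R)$ follows from Theorem \ref{thm: cnp main} (the relevant automorphisms preserve $G^\pm$); and $Q^{\text{grp}}_{(0,t-\bar t)}$ is bijective because it acts as $\pm L_{t-\bar t}$ on $\mathcal{A}_{\pm 1}$. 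Well-behavedness is then automatic by Lemma \ref{lem: well behaved}, so there is nothing further to prove in this direction.

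For the forward direction I would chain the structural lemmas. First, Lemma \ref{lem: G2 grading} upgrades the hypotheses to a genuine $G_2$-grading of the Lie algebra of $(G^+,G^-)$, whose grading automorphisms are exactly those induced by the three subpairs $\cong (R,R)$ inside $(P^+,P^-)$. Next, Lemma \ref{lem: subsystems} produces submodules $J,J'$ of $\Lie(G^+)$, lying in the weight spaces for $\pm(\beta+2\alpha)$, a bilinear form $T$, a rewriting of $G^+(K)$ in the familiar $(R\bar t\oplus J\oplus J'\oplus Rt)\times R[t]/(t^2-t)$ shape, and the grade-reversing isomorphism $\tau\colon(G^+,G^-)\to(G^-,G^+)$. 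By homogeneity of the grading the pair with $J'\subset G^+$ and $J\subset G^-$ is an operator Kantor subpair (both are root groups for $\pm(\beta+2\alpha)$), so Lemma \ref{lem: which okp are jp} endows $(J,J')$ with a quadratic Jordan pair structure, and $\tau$ identifies $(J,J')$ with $(J',J)$ as Jordan subpairs.

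The heart of the argument is to extract a cubic norm pair structure on $(J',J)$. Letting $(J',J)$ act on the four graded spaces with roots $-\beta-3\alpha$, $-\alpha$, $\alpha+\beta$, $\beta+3\alpha$, one reads off a cubic $N$, quadratic $\sharp$, and bilinear $T$ so that the resulting divided power representation is precisely the one of Example \ref{ex: dprepr}; this is exactly the setting of Lemma \ref{lem: necessity def}. The bijectivity hypothesis of that lemma holds since $\rho^+_1(j)\cdot 1_{-\beta-3\alpha}=-\tau_{\beta+3\alpha}(j)$ for $j\in J'$; the compatibility $\rho^-_1(k)\cdot j=T(j,k)=\rho^+_1(j)\cdot k$ is checked in the $3\times3$ matrix model of $(P^+,P^-)$; symmetry of $T$ follows from a short $\tau$-conjugation computation comparing $[j'_{\beta+2\alpha},j_\alpha]$ with $[j_{-\beta-2\alpha},j'_{-\alpha-\beta}]$; and the last axiom $j^{\sharp\sharp}=N(j)j$ comes out of $Q^{\text{grp}}_{(j,j^\sharp)}(\bar t,0)=(j^\sharp,j^{\sharp\sharp})$ together with the identity $\nu_{4,2}((j,j^\sharp),(\bar t,0))\cdot t_{-1}=N(j)j_1$, itself obtained from $\nu_{4,2}=-\nu_{1,1}\nu_{3,1}$ on the relevant graded piece. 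Hence $(J',J)$ is a cubic norm pair over $K$, necessarily well behaved by Lemma \ref{lem: well behaved}. Finally I would identify $(G^+,G^-)$ with the operator Kantor pair of this cubic norm pair by checking that the two induced actions on the associated Lie algebra agree: the zero-graded component acts faithfully and is linearly spanned by the $(V_{v,w},-V_{w,v})$ with $(v,w)\in J'\times J$ together with $V_{P^+,P^-}$, and on these generators the actions coincide by construction and are intertwined correctly by $\tau$, forcing agreement on the non-zero-graded pieces as well.

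The step I expect to be the main obstacle is this reconstruction: verifying that the operators $N,\sharp,T$ read off from the graded action genuinely satisfy the cubic norm pair axioms. The technical crux is the identity $j^{\sharp\sharp}=N(j)j$, which is invisible at the level of the representation shape and must be squeezed out of the operator Kantor pair relations via $Q^{\text{grp}}$, $T$ and the $\nu_{i,j}$ — precisely where well-behavedness re-enters — with a close second being the need to show the two Lie algebra actions truly coincide rather than being merely abstractly isomorphic, which is what compels the faithfulness argument on the zero-graded component.
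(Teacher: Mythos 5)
Your proposal follows the paper's own proof essentially step for step: the same converse argument (long roots giving $(P^+,P^-)$, Theorem \ref{thm: cnp main} for stabilization, $L_{t-\bar t}$ for bijectivity, Lemma \ref{lem: well behaved} for well-behavedness), and the same forward chain through Lemmas \ref{lem: G2 grading}, \ref{lem: subsystems}, \ref{lem: which okp are jp}, and \ref{lem: necessity def}, including the identical key identities ($\rho^+_1(j)\cdot 1_{-\beta-3\alpha}=-\tau_{\beta+3\alpha}(j)$, the $\tau$-conjugation for symmetry of $T$, the $\nu_{4,2}$ computation for $j^{\sharp\sharp}=N(j)j$, and the faithfulness of the zero-graded component for identifying the two actions). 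This is correct and matches the paper's argument.
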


\subsubsection{A different kind of $G_2$-grading}

The requirement that the long roots must correspond to the operator Kantor pair $(P^+,P^-)$ is quite strong, but allows us to characterize operator Kantor pairs over $R$ with a $G_2$-graded Lie algebra that can be obtained from a cubic norm pair over $R$. On the other hand, if $K$ is a commutative, associative, unital $R$-algebra and $(J,J')$ a cubic norm pair over $K$, then the operator Kantor pair over $K$ induces an operator Kantor pair over $R$.
So, there are more operator Kantor pairs over $R$ that can be ``constructed" from a cubic norm pair.
When we start thinking of $G_2$-graded operator Kantor pairs in a sense similar to \cite{Torben}, we can almost guarantee that every $G_2$-graded operator Kantor pair comes from a cubic norm structure over an $R$-algebra $K$ (not simply pair). The only problem: if there is $3$-torsion the long roots may not form an associative algebra $K$. 

\begin{remark}
		Consider a properly $G_2$-graded operator Kantor pair $(G^+,G^-)$. 
	Its Lie algebra $L$ has a decomposition, $L = L_0 \oplus \bigoplus_{\gamma \in G_2} L_\gamma$.
	Since the grading automorphisms have an action on $G^\pm$, one can find vector subgroups $\tilde{L}_{\gamma}$ of $G^+$ and $G^-$ with $\tilde{L}_\gamma \cong (L_{\gamma}, +)$ as groups, on which the grading automorphisms act the same.
	
	To be precise, for each $l \in L_\gamma \otimes K$ with $\gamma$ positively graded there exists $(l,c) \in G^+(K)$.
	Take a grading automorphism $\psi(t)$ such that $\psi(t)(\ad l) \psi(t^{-1}) = t^i (\ad l)$ and $\psi(t) (\ad b) \psi(t)^{-1} = t^j \ad b$ for $b \in L_{2 \beta + 3 \alpha} \cong G^+_2$ for certain $i,j \in \mathbb{Z}$, then
	\[ \psi(t) (l,c) \psi(t^{-1}) = (t^i l, t^{2i} l_2 + t^j s + \sum_{k \neq 2i,j} t^k c_k)\]
	over $K[t,t^{-1}]$. One can show that $c = l_2 + s$, that all $c_k$ are $0$, and that $s \in G^+_2$, as in Lemma \ref{lem: subsystems}.
	The $l_2$ is unique if $j \neq 2i$, i.e., if $\psi(t)$ does not act as the usual $\mathbb{Z}$-grading automorphism on $(l,b)$.
\end{remark}

\begin{definition}	We call a properly $G_2$-graded operator Kantor pair \textit{root graded} if each of the individual operator Kantor subpairs $(\tilde{L}_\gamma, \tilde{L}_{-\gamma}, Q)$ can be seen as $(J,J,Q)$ for a Jordan algebra $(J,Q)$ with unit $1$. This means that there exists $1 \in \tilde{L}_{\pm \gamma}$ with $Q_1$ invertible and $Q_1 1 = 1$.
	We also assume that the grading automorphism associated to $\gamma$ coincides with the grading automorphism of Lemma \ref{lem: grading jordan idempt} for the Jordan subpairs $(R 1_{\gamma}, R 1_{-\gamma})$.
\end{definition}

\begin{remark}
	The definition above is necessary for the projective elementary group of an operator Kantor pair to be $G_2$-graded in the sense of \cite{Torben}.
	Namely, $(\tilde{L}_\gamma, \tilde{L}_{-\gamma}, Q)$ will always form a Jordan pair, but not necessarily a Jordan algebra. The existence of a $\gamma$-``weyl-element", which is required in \cite[Definition 2.5.2]{Torben} is asking that the quadratic Jordan pair has a unit, since the $\tau_\gamma$ of Lemma \ref{lem: grading jordan idempt} reverses the grading of the Lie algebra, which implies that $\tau_\gamma$ permutes the root groups. 	
	Namely, any $\gamma$-``weyl-element" is an automorphism
	\[ \tau_\gamma = \exp_{- \gamma}(u) \exp_\gamma(v) \exp_{- \gamma}(w),\]
	which acts as the reflection corresponding to $\gamma$ on the root spaces.
	Restricting to $L_{-\gamma} \oplus L_0 \oplus L_\gamma$ shows that $v$ is invertible \cite[Paragraph 1.10]{loos1975jordan}. One can check that $u = w = v^{-1}$.
	So, \cite[Proposition 1.11]{loos1975jordan} implies that $(L_\gamma, L_{- \gamma})$ is the Jordan pair derived from a Jordan algebra.
	
	Naturally, it is also sufficient, since the existence of ``weyl-elements" is the only part of the \cite[Definition 2.5.2]{Torben} which is not immediately satisfied.
\end{remark}

\begin{lemma}
	\label{lem: long roots}
	Suppose that $(G^+,G^-)$ is a root graded $G_2$-graded operator Kantor pair. There exists an alternative, commutative, unital algebra $K$ such that long roots are parametrized by $K$.
	Moreover, if $K$ is associative, the long roots form an operator Kantor pair as in Example \ref{example P}.
	\begin{proof}
		Let $\gamma_1, \gamma_2, \gamma_3$ be the long roots for which the corresponding root space is contained in $G^+$.
		We can assume that $\gamma_2$ corresponds to $G^+_2$. We also write $G_\alpha$ for the subgroup of $G^+$ or $G^-$ corresponding to the root $\alpha$.
		We write $\exp_{\gamma_i}$ for the action of these root groups on the Lie algebra.
		
		We note that the $E = \exp_{\gamma_i}(e) = 1 + e_{\gamma_i} + e_{2\gamma_{i}}$ with $e_{k\gamma i}$ acting as $k \gamma_i$ on the grading. The $e_{2\gamma_{i}}$ is uniquely determined by $e_{\gamma_i}$, the grading, and $E$ having to be an automorphism. 
		If $\gamma = \pm \gamma_2$, this is obvious since there are no $+4$-graded derivations on the Lie algebra of an operator Kantor pair. For other $\gamma$, note that all gradings with respect to long roots are conjugate to the usual $\mathbb{Z}$-grading.
		In particular, this shows that $\tau_\gamma E \tau_{\gamma}^{-1}$ lies in another root group for any grade reversing automorphism $\tau_\gamma$ associated to a short or long root, as it is $1 + (\tau_\gamma \cdot e_{\gamma_i}) + (\tau_\gamma \cdot e_{2\gamma_{i}})$. 
		
		We use the grade reversing isomorphism $\tau_{\gamma_2} : G_{\gamma_2} \longrightarrow G_{- \gamma_2}$ corresponding to the unit of the Jordan algebra $(G_{\gamma_2},G_{- \gamma_2})$. This $\tau_{\gamma_2}$ is also the what we use to identify $G_{- \gamma_3} \longrightarrow G_{\gamma_1}$. We also identify $G_{- \gamma_3} \longrightarrow G_{ \gamma_2}$ using the isomorphism $\tau_\delta$ corresponding to the unit for the specific short root $\delta$ that induces such an action.
		
		We will need that $\tau_\delta^2(x) = -x$ for $x \in G_{-\gamma_3}$ to prove that the multiplication, which we will define shortly, is commutative.
		To prove this, note that
		\[ \tau_\delta (x) = \exp_{- \delta }(1) \exp_{+ \delta}(1) \exp_{- \delta}(1) \cdot x = \exp_{- \delta }(1) \exp_{+ \delta}(1) \cdot x,\]
		which implies
		$\exp_{- \delta}(1) \cdot (x + [1_\delta,x] + (1_\delta)_2 \cdot x + \tau_\delta \cdot x) = \tau_\delta \cdot x$, by decomposing $\exp_{-\delta}(1)$ into components $(1_\delta)_i$ that are $i \delta$-graded.
		Therefrom, one can derive that $[1_{- \delta }, \tau_\delta \cdot x] = - (1_\delta )_2 \cdot x$ and $[1_\delta ,x] = (1_{-\delta })_2(1_{\delta })_3 \cdot x $, $(1_{-\delta })_3 \cdot \tau_\delta  x = - x$. Using that $\tau_\delta ^2 x = (1_{-\delta })_3 \cdot \tau_\delta  x$ yields what we needed.
		
		From $- [\cdot,\cdot] : G_{\gamma_2 } \times G_{- \gamma_3} \longrightarrow G_{\gamma_1}$ we obtain a multiplication $\mu : K \times K \longrightarrow K$. We remark that $[\cdot, \cdot] : G_{ -\gamma_2} \times G_{\gamma_1} \longrightarrow G_{ -\gamma_3}$ yields the same multiplication, using the identification $\tau_\gamma$.
		Furthermore, it is easy to check that the short root isomorphism fixes $G_{\gamma_1}$, so that \[ab = \tau_\alpha \mu (a, b) = - [\tau_\alpha a, \tau_\alpha b] = \mu(b,a) = ba\] using $a \in \Lie(G_{\gamma_2})$ and $b \in \Lie(G_{-\gamma_{3}})$. Hence, $K$ is commutative.
		
		Now, we can use the Jordan pair character of $(G_{\gamma_2}, G_{- \gamma_2})$ and the divided power representation on the Lie algebra.
		The divided power representation shows that
		\[ (Q_a b) c = a(b(ac)),\]
		for all $a,b,c \in K$ using the action on $c \in G_{- \gamma_3}$.
		So $(a(ba)) c = (a(b(ac)))$ for all $a,b,c$. In particular, we note that $a^2 c = a(ac)$, i.e., $K$ is left alternative.
		
		Since $K$ is left alternative and commutative, it follows that $K$ is alternative.
		
		If $K$ is associative, we use the typical embeddings of the root groups in $P^\pm(K)$.
		The operators $Q$ and $T$ agree on root groups. We also note that $V_{a,b} c = \pm abc$ whenever $a,b$ have opposite gradings that are both distinct from the grading of $c$. This shows that the $Q$ operators agree everywhere. Hence, all operators agree everywhere.
	\end{proof}
\end{lemma}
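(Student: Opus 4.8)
The plan is to build the algebra $K$ out of the three long root groups contained in $G^+$, using the grade-reversing Weyl elements supplied by the root-graded hypothesis to glue them together. Recall that among the positive roots of the $G_2$ system the long roots are $\beta$, $3\alpha+\beta$ and $3\alpha+2\beta$; since $\alpha$ is $0$-graded and $\beta$ is $1$-graded, the root $\gamma_2 := 3\alpha+2\beta$ is $2$-graded and corresponds to $G^+_2$, while $\gamma_1 := \beta$ and $\gamma_3 := 3\alpha+\beta$ are $1$-graded. The root-graded assumption gives each pair $(\tilde{L}_{\gamma_i}, \tilde{L}_{-\gamma_i}, Q)$ the structure of the Jordan pair of a unital Jordan algebra, so in particular a distinguished $1_{\gamma_i}$ with $Q_{1_{\gamma_i}}$ invertible and $Q_{1_{\gamma_i}} 1_{\gamma_i} = 1_{\gamma_i}$, and, via Lemma \ref{lem: grading jordan idempt}, a grade-reversing automorphism $\tau_{\gamma_i}$ of the Lie algebra of the operator Kantor pair. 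One similarly has a grade-reversing $\tau_\delta$ for each short root $\delta$, and each of these acts on the Lie algebra as the reflection in its root, hence permutes the root groups.

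First I would use $\tau_{\gamma_2}$ to identify $\tilde{L}_{-\gamma_3}$ with $\tilde{L}_{\gamma_1}$, and the short-root Weyl element $\tau_\delta$ attached to the short root $\delta$ for which $\tau_\delta$ sends $\tilde{L}_{-\gamma_3}$ onto $\tilde{L}_{\gamma_2}$ to identify those two; I declare $K$ to be this common module, with unit $1 := 1_{\gamma_2}$. A preliminary computation I would carry out — decomposing $\exp_{\pm\delta}(1)$ into its graded components $(1_{\pm\delta})_i$ and using that the zero-graded part restricts to a trivial derivation on the long-root subpairs — shows $\tau_\delta^2(x) = -x$ for $x\in\tilde{L}_{-\gamma_3}$. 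Next I would define a bilinear multiplication on $K$ by transporting, along these identifications, the Lie bracket $-[\cdot,\cdot]\colon \tilde{L}_{\gamma_2}\times\tilde{L}_{-\gamma_3}\to\tilde{L}_{\gamma_1}$; one checks that the bracket $[\cdot,\cdot]\colon \tilde{L}_{-\gamma_2}\times\tilde{L}_{\gamma_1}\to\tilde{L}_{-\gamma_3}$ yields the same product under the same identifications, so the multiplication is unambiguous. Commutativity then follows because the short-root Weyl element $\tau_\alpha$ fixes $\tilde{L}_{\gamma_1}$ and interchanges $\tilde{L}_{\gamma_2}$ with $\tilde{L}_{-\gamma_3}$ up to the sign coming from $\tau_\delta^2 = -\mathrm{id}$, so $ab = ba$. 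Unitality of $1$ is immediate from $Q_1 1 = 1$ and the divided power representation.

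For alternativity I would use the Jordan-pair structure of $(\tilde{L}_{\gamma_2},\tilde{L}_{-\gamma_2})$ together with its divided power representation on the Lie algebra, letting it act on $\tilde{L}_{-\gamma_3}\cong K$: the exponential/divided-power identities (Lemma \ref{lem: exp prop}) give $(Q_a b)c = a(b(ac))$ for all $a,b,c\in K$. Taking $b = 1$ and using $Q_a 1 = a^2$ yields $a^2 c = a(ac)$, so $K$ is left alternative; since $K$ is also commutative, a short direct argument shows $K$ is right alternative, hence alternative. Finally, if $K$ is associative, I would invoke the matrix realization $P^\pm(K)\subset\mathrm{Mat}_3(K)$ of Example \ref{example P}: the homogeneity forced by the $G_2$-grading, together with the Jordan structure on the long root pairs, pins down the operator Kantor pair operators $Q$ and $T$ on the long root groups — one uses the $Q_{1_{\gamma_i}}$ to fix scalings and the relation $V_{a,b}c = \pm abc$, valid whenever $a$ and $b$ have opposite gradings both distinct from the grading of $c$, to determine all $Q$-operators, hence all operators by \cite[Remark 3.25]{OKP} — so the long roots form the operator Kantor pair $(P^+,P^-)$.

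The main obstacle I expect is the bookkeeping of the identifications and signs: making the three long root groups into a single module compatibly with all the relevant Weyl elements (the sign $\tau_\delta^2 = -\mathrm{id}$ is precisely what makes the product commutative, and obtaining it needs a careful graded decomposition of $\exp_{\pm\delta}(1)$), and then reading off the algebra identity $(Q_ab)c = a(b(ac))$ correctly from the divided power representation acting on the off-diagonal long root space $\tilde{L}_{-\gamma_3}$ rather than on the subpair $(\tilde{L}_{\gamma_2},\tilde{L}_{-\gamma_2})$ itself.
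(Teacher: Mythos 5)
Your proposal is correct and follows essentially the same route as the paper's proof: the same identifications of the three long root groups via $\tau_{\gamma_2}$ and a short-root Weyl element, the same sign computation $\tau_\delta^2 = -\mathrm{id}$ yielding commutativity, the same derivation of $(Q_a b)c = a(b(ac))$ from the divided power representation to get left alternativity, and the same matrix realization $P^\pm(K)$ in the associative case. No substantive differences.
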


\begin{remark}
	Using \cite[Theorem 10]{faulkner1983stable}, one can show that there also exist operator Kantor pairs $P^\pm(K)$ for arbitrary alternative $K$. However, this is not relevant for the argument we want to make.
\end{remark}

Commutative alternative algebras which are non-associative are quite rare.

\begin{lemma}
	\label{lem: com alt alg}
	Suppose that $K$ is a commutative, alternative algebra.
	For each $a,b,c \in K$, we have $3((ab)c - a(bc)) = 0$.
	Hence, $K$ is associative if $K$ has no $3$-torsion.
	\begin{proof}
		This is \cite[Exercise 14.7]{Skip2024}.	
	\end{proof}
\end{lemma}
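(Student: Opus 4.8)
The plan is to reduce everything to one structural fact about alternative algebras: the associator $(x,y,z) := (xy)z - x(yz)$ is an alternating trilinear map, and this holds over an arbitrary commutative base ring. First I would recall why: linearizing the left alternative law $(x,x,y)=0$ by the substitution $x \mapsto x+z$ gives $(x,z,y)+(z,x,y)=0$, so the associator is antisymmetric in its first two arguments; linearizing the right alternative law $(y,x,x)=0$ gives antisymmetry in the last two arguments; since adjacent transpositions generate $S_3$, the associator is fully alternating. Crucially, no scalars are inverted in this step, so it is available for our $K$ with no hypothesis beyond alternativity.

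Next I would bring in commutativity and extract two linear relations among the three parenthesized monomials $(xy)z$, $(yz)x$, $(zx)y$. Writing $(x,y,z) = (xy)z - x(yz) = (xy)z - (yz)x$ (using $x(yz) = (yz)x$), the antisymmetry $(x,y,z) = -(y,x,z)$ rearranges to $2(xy)z = (yz)x + (zx)y$, while the antisymmetry $(x,y,z) = -(x,z,y)$ rearranges to $(xy)z + (zx)y = 2(yz)x$. Eliminating $(zx)y$ between these two identities yields $3(xy)z = 3(yz)x = 3x(yz)$, i.e. $3\bigl((xy)z - x(yz)\bigr) = 0$, which is exactly the assertion $3((ab)c - a(bc)) = 0$ with $(a,b,c) = (x,y,z)$. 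The corollary is then immediate: if $K$ has no $3$-torsion, every associator vanishes, so $K$ is associative.

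The only point requiring care — the closest thing to an obstacle — is being scrupulous about the scalar-free linearization (so that the alternating property really is valid over our arbitrary $R$) and about bookkeeping with commutativity, so that each of the three monomials is first rewritten in the common parenthesization $(xy)z$ before the two relations are compared and subtracted. Once that is done the argument is a two-line elimination. (One could instead simply invoke \cite[Exercise 14.7]{Skip2024}, as indicated, but the proof above is short enough to spell out in full.)
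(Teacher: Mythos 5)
Your proof is correct: the linearizations of the two alternative laws are scalar-free, the two skew-symmetry relations combined with commutativity give $2(xy)z=(yz)x+(zx)y$ and $(xy)z+(zx)y=2(yz)x$, and eliminating $(zx)y$ yields $3\bigl((xy)z-x(yz)\bigr)=0$ exactly as claimed. The paper simply cites this as an exercise in \cite{Skip2024}, and your argument is the standard solution to that exercise, so it fills in precisely the proof the paper delegates.
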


\begin{remark}
	There exists non-associative, commutative, alternative algebras over fields of characteristic $3$. See, e.g., \cite[Section 2]{GRI11}.
\end{remark}

\begin{proposition}
	Suppose that $(G^+,G^-)$ is a root graded $G_2$-graded operator Kantor pair over $R$ and that $G^+_2$ does not have $3$-torsion, then the operator Kantor pair can be constructed from a well behaved cubic norm pair. Moreover, the cubic norm pair is unital.
	\begin{proof}
		From Lemmas \ref{lem: com alt alg} and \ref{lem: long roots}, we obtain an associative, commutative unital algebra $K$ such that the long roots form correspond to the operator Kantor pair formed constructed in Example \ref{example P}.
		
		So, if we can show that our original operator Kantor pair over $R$ can be seen as an operator Kantor pair over $K$, Theorem \ref{thm: char 1} implies that $(G^+,G^-)$ can be constructed from an cubic norm pair, since the grading automorphism $\Phi(t)$ has an action on the operator Kantor pair for $G_2$-graded operator Kantor pairs.
		
		In order to show this, we first prove that the pair of Example \ref{example P} induces an action of $\text{Mat}_3(K)$ on the subspace of the Lie algebra corresponding to the short roots and thereafter we use this to see that the Lie algebra is $K$-linear. To start, there are two orbits of short roots under $(P^+,P^-)$ and spaces corresponding to short roots in the same orbit are isomorphic. So, calling one such space $J$, we can realize $P^+$ and $P^-$ as matrices of the form of Example \ref{example P} in $\text{Mat}_3(\End_R(J))$. 
		Using the existence of well defined $k E_{ij} \in \text{Mat}_3(\End_R(J))$ for all $k \in K$, and $(k E_{ij})(l E_{jm}) = kl E_{im}$ if $\{i,j,m\} =\{1,2,3\}$, we will construct a morphism $\text{Mat}_3(K) \longrightarrow \text{Mat}_3(\End_R(J))$, using $(a E_{ij})(E_{ji})$ as the definition for $a E_{ii}$. This yields a well defined morphism since $(a E_{ij}) E_{ji} = (a E_{ij}) E_{jk} E_{ki} = (a E_{ik}) E_{ki}$ and that $(a E_{ij})(b E_{ji}) = (a E_{ij})(b E_{jk})E_{kj} = (ab E_{ik}) E_{ki} = ab E_{ii}$ whenever $\{i,j,k\} = \{1,2,3\}$.
		
		We remark that multiplication by $k$ on a given copy of $J$ is given by $k E_{ii} = (k E_{ij})( E_{ji})$ and corresponds, thus, to $(\ad k)(\ad 1)$, for the subpair of $(P^+,P^-)$ corresponding to $(K E_{ij}, K E_{ji})$. 
		
		Hence, we can define a scalar multiplication on the short roots given by the action of $k \cdot I_3$. On the long roots the scalar multiplication is self-evident (and coincides with the scalar multiplication on $\text{Mat}_3(K))$. We can also rewrite the action on a space corresponding to $K E_{ik}$; the scalar multiplication with $l$ is given by $(\ad l E_{ij})(\ad E_{ji})$ whenever $\{i,j,k\} = \{1,2,3\}$.
		
		Lastly, on the trivially graded elements $\delta$, which can be identified with their adjoint action on the other non-trivially graded spaces, we act as $(k \cdot \delta)(u) = \delta( k \cdot u)$ for $u$ non-trivially graded.
		
		This definition shows that brackets involving at least one element of $P^+$ and $P^-$, are $K$-linear. For other brackets $[x,y]$ (with non-trivially graded result), one can express the scalar multiplication as $k \cdot [x,y] = (\ad k)(\ad 1) [x,y]$ with $(k,1)$ in a Jordan subpair of $(P^+,P^-)$ with trivial action on $y$, so that $k \cdot [x,y] = [k\cdot x,y]$ (another subpair shows $k \cdot [x,y] = [x,k \cdot y])$.
		
		Lastly, if $[x,y]$ is trivially graded and $x$ and $y$ are contained in spaces corresponding to short roots, we should check that $[[x,y], k \cdot u] = [[x,k \cdot  y],u]$ for nonzero graded $u$.
		If $u$ has gradings different from the grading of $x$ and $y$, this follows from the Jacobi identity and the previous cases. 
		We also note that $[k \cdot x,y] - [x, k \cdot y] = 0$, since $k \cdot x = [[k,1],x]$ and $- k \cdot y = [[k,1],y]$ using $k$ and $1$ in opposite root groups of $P^\pm$ with $[1,y] = 0$ and $[k,x] = 0$.
		If $u$ has the same grading as $x$, we have $[[x,y], k \cdot u] = [[x,k \cdot u],y] + [x,[y,k \cdot u]] =[x,[y,k \cdot u]] = [x,[k \cdot y, u]] = [[x, k \cdot y], u]$.
		
		Now, we can use that the scalar multiplication $(k,g) \mapsto k \cdot g$ on the vector group $G^+$ coincides with the endomorphism $Q^\text{grp}_{(0,k(t - \bar{t}))}	 Q^\text{grp}_{(0,t - \bar{t})}$ on root groups, and thus everywhere. From this scalar multiplication, it follows that the maps $(k \cdot x)_i$, which are the parts of $\exp(k \cdot x)$ that act as $+i$ on the grading, are $K$-homogeneous of degree $i$, as they are $R$-homogeneous of degree $i$ and can be expressed using $(\ad k(t - \bar{t}))$ (which is $K$-linear) and \[f(k) : G^-_2 \longrightarrow G^+_2 : l(\bar{t} - t) \mapsto k^2 l (t - \bar{t})\] which is $K$-quadratic.
		Namely, one can show that
		\[(k \cdot x)_i = \sum_{a + b = i} (-1)^b k_a \tau(x)_i k_b\]
		with $k_1 = \ad k(t - \bar{t} )$ , $k_2 = f(k)$ and $k_3 = 0 = k_4$ and $\tau$ the grade reversing automorphism associated to $(t - \bar{t})$.
		This implies that the operators $Q$ and $T$ are $K$-homogeneous. Thus, all operators are homogeneous.
		
		Finally, we note that the cubic norm pair is unital by construction, since we used a $\tau_\delta$ for a short root $\delta$ to identify $G_{-\gamma_3}$ and $G_{\gamma_2}$ in Lemma \ref{lem: long roots}.
	\end{proof}
\end{proposition}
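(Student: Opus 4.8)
The plan is to reduce the statement to Theorem \ref{thm: char 1} after upgrading the base ring from $R$ to the coordinate algebra $K$ of the long roots. First I would invoke Lemma \ref{lem: long roots} to produce a commutative, alternative, unital algebra $K$ parametrizing the long root groups, and then Lemma \ref{lem: com alt alg} together with the hypothesis that $G^+_2$ has no $3$-torsion to conclude that $K$ is actually associative; hence the long roots form the operator Kantor pair $P^\pm(K)$ of Example \ref{example P}. Since the grading automorphisms act on the pair (part of being $G_2$-graded), Theorem \ref{thm: char 1} will apply once I know that $(G^+,G^-)$ is itself an operator Kantor pair \emph{over $K$}: an operator Kantor pair over $K$ induces one over $R$, and the construction of Theorem \ref{thm: char 1} produces a well behaved cubic norm pair over $K$.

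The core of the argument, then, is to exhibit a $K$-structure on everything in sight. I would first use the two orbits of short roots under $P^\pm(K)$ to realize $P^+$ and $P^-$ inside $\mathrm{Mat}_3(\End_R(J))$ for one short-root space $J$, and use the well-defined elementary matrices $kE_{ij}$ (coming from the Jordan subpairs $(KE_{ij},KE_{ji})$ of $P^\pm$) to assemble a ring homomorphism $\mathrm{Mat}_3(K)\longrightarrow \mathrm{Mat}_3(\End_R(J))$; here one must check the consistency of the definition $aE_{ii}=(aE_{ij})(E_{ji})$ against $(aE_{ij})(bE_{ji})=ab\,E_{ii}$, which follows from multiplicativity on the $E_{ij}$ with $i\ne j$. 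The action of $k\cdot I_3$ then defines the scalar multiplication on the short-root spaces; on the long roots it is the obvious one, and on the trivially graded part $L_0$ one sets $(k\cdot\delta)(u)=\delta(k\cdot u)$ for nontrivially graded $u$.

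Next I would verify that this scalar multiplication is compatible with the Lie bracket, i.e.\ that $\mathcal{L}$ becomes a $K$-Lie algebra. Brackets with at least one argument in $P^+\cup P^-$ are $K$-linear essentially by the matrix realization. For a bracket $[x,y]$ with nonzero-graded result, I would rewrite $k\cdot[x,y]=(\ad k)(\ad 1)[x,y]$ with $(k,1)$ in a Jordan subpair of $P^\pm$ acting trivially on $y$ (and another subpair giving the symmetric statement). The delicate case is $[x,y]$ trivially graded with $x,y$ in short-root spaces: here I would split into subcases according to the grading of the test element $u$, using the Jacobi identity and the already-treated cases, together with the identity $[k\cdot x,y]-[x,k\cdot y]=0$ coming from writing $k\cdot x=[[k,1],x]$ and $-k\cdot y=[[k,1],y]$ with $k,1$ in opposite root groups of $P^\pm$. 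I expect this trivially-graded/short-root case to be the main obstacle, since it is where one genuinely uses the coherence of the identifications $\tau_\gamma$ with the $G_2$-combinatorics.

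Finally, to see that the operator Kantor pair operators are $K$-homogeneous I would identify the vector-group scalar multiplication $k\cdot g$ on $G^+$ with $Q^{\mathrm{grp}}_{(0,k(t-\bar{t}))}Q^{\mathrm{grp}}_{(0,t-\bar{t})}$ (legitimate on root groups, hence everywhere), and express the components $(k\cdot x)_i$ of $\exp(k\cdot x)$ through the $K$-linear $\ad k(t-\bar{t})$ and the $K$-quadratic map $l(\bar{t}-t)\mapsto k^2 l(t-\bar{t})$ on $G^-_2$, so that $Q$ and $T$, and therefore all operators, are $K$-homogeneous. Thus $(G^+,G^-)$ is an operator Kantor pair over $K$; Theorem \ref{thm: char 1} yields a well behaved cubic norm pair over $K$, and passing back to $R$ finishes the existence part. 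Unitality is then immediate, because in Lemma \ref{lem: long roots} the identification of $G_{-\gamma_3}$ with $G_{\gamma_2}$ was made through a $\tau_\delta$ attached to a short root, which supplies the unit of the resulting cubic norm pair.
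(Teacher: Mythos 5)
Your proposal follows the paper's own argument essentially step for step: extracting the associative algebra $K$ from Lemma \ref{lem: long roots} and Lemma \ref{lem: com alt alg}, building the morphism $\mathrm{Mat}_3(K)\to \mathrm{Mat}_3(\End_R(J))$ to define the $K$-structure on short roots and on $L_0$, checking $K$-bilinearity of the bracket case by case (including the delicate trivially-graded case via the identity $[k\cdot x,y]=[x,k\cdot y]$), identifying the vector-group scalar action with $Q^{\mathrm{grp}}_{(0,k(t-\bar t))}Q^{\mathrm{grp}}_{(0,t-\bar t)}$ to get $K$-homogeneity of $Q$ and $T$, and then invoking Theorem \ref{thm: char 1}, with unitality coming from the $\tau_\delta$ identification in Lemma \ref{lem: long roots}. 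This is the same route as the paper, so no further comparison is needed.
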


\begin{remark}
	If there is $3$-torsion, the long roots still have an action on the short roots.
	This action of the long roots is ``associative", or more precisely, it factors through an associative quotient $R'$.
	So, the short roots form a cubic norm structure over $R'$.
	
	Moreover, by playing around as in the lemma we defined $K$, one can show that $kT(v,w) = T(kv,w) = T(v,kw)$. This shows that $T$ maps to the nucleus. Commuting actions of long and short roots also force $N$ to be in the nucleus. 
\end{remark}

\subsection{Operator Kantor pairs from Hermitian cubic norm structures}
We first describe the operator Kantor pair operators $Q$ and $T$ in terms of the hermitian cubic norm structure. Recall that we defined the groups $G^+$ and $G^-$ in Construction \ref{con: UKJ}.

We will write $[b,c] = b\bar{c} - c \bar{b}$, noting that  $(0,[b,c]) = [(b,b_2),(c,c_2)]$ for elements $(b,b_2)$ and $(c,c_2)$ in $G^\pm$.
For an operator Kantor pair $(G^+,G^-,Q^\text{grp},T,P)$, we often use operators $Q$ and $R$ such that \[Q^\text{grp}_g h = (Q_{(a,b)} (c,d), R_{(a,b)} (c,d)) \in G^+\] for $(a,b) \in G^+$, $(c,d) \in G^-$

\begin{theorem}
	\label{thm: ops}
	Each well behaved hermitian cubic norm structure $J$ over $K/R$ defines a unique operator Kantor pair $(G^+,G^-,Q^\text{grp},T,P)$ with \[G^\pm \cong \{((a,b),(u, ab + b^\sharp )) \in \mathcal{B} \times \mathcal{B} : u + \bar{u} = a\bar{a} + T(b,b)\},\] and operators	
	\begin{itemize}
		\item $Q_{(x,y)} v = (x\bar{v}) x - y v$,
		\item $T_{((0,j),(u,j^\sharp))} (b,k) = [N(j)b + T(j,Q_jk) + uT(j,k), 1]$,
		\item $T_{(a,0),(u,0)} (b,k) = [ub \bar{a},1],$
	\end{itemize}  
	writing $Q_j k$ for $- j^\sharp \times k + T(j,k) j$, for elements $(x,y) \in G^\pm,$ $v \in \mathcal{B}$, $j \in J$, $a,u \in K$.
	\begin{proof}
		First, we try to compute $T_{g}$ for arbitrary $g \in G^+$.
		Write $g \in G^+$ as a product $g_a \cdot g_j = ((a,0),(u_a,0)) \cdot ((0,j),(u_j, j^\sharp)$ with $g_a$ and $g_j$ group elements.
		One has 
		\[ T_g  x= T_{g_a} x + [g_a, Q_{g_j} x ] + [g_j, Q_{g_a} x] + T_{g_j}x\]
		since $g_a$ and $g_j$ commute, using the linearisation $T^{(1,2)}_{(g,h)} k= [g, Q_h k]$ of $T$ \cite[Definition pre-Kantor pair]{OKP}. Hence $T_g$ can be computed from the given formulas using the structure of the group $G^\pm$.
		
		The equations for $T$ match the right hand sides of Equations (\ref{eq: T((0,j))}) and (\ref{eq: T((a,0))}).
		The formula for $Q$ matches with the action $(x,y)_2 : \mathcal{B}_{-1} \longrightarrow \mathcal{B}_1$, as explained in the construction of $U(K,J)$ \ref{con: UKJ}.		
		For split hermitian cubic norm structures, Theorem \ref{thm: cns defines okp} yields an operator Kantor pair satisfying the conditions of this theorem, given that \[o_{i,1}((x,y),(u,v),t) = 1 + t \nu_{i,1}((x,y),(u,v)) \mod t^2\] for $(x,y) \in G^+$ and $(u,v) \in G^-$ with $\nu_{i,1}((x,y),(u,v)) = \ad \left( (x,y)_i \cdot u_{-1} \right)$ (see Lemma \ref{lem: nu well def} for a way to compute the $\nu$).
		
		We remark that all these operators can be expressed in terms of hermitian cubic norm structure operations.
		On the other hand \cite[Lemma 5.2]{OKP} guarantees the uniqueness of the operator Kantor pair.
	\end{proof}
\end{theorem}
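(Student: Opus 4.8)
The plan is to first handle the split case via Theorem~\ref{thm: cns defines okp}, then descend to an arbitrary $J$ along the faithfully flat extension $K/R$, and finally read off uniqueness from \cite[Lemma 5.2]{OKP}.

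First I would reduce to a split hermitian cubic norm structure. By Lemma~\ref{lem: split herm cubic norm}, a split hermitian cubic norm structure over $R\times R$ is the same datum as a cubic norm pair over $R$, and it is well behaved exactly when that cubic norm pair is; by Lemma~\ref{lem: B and A the same for split hcns} the algebra $\mathcal{B}(J,K)$ coincides with the algebra $\mathcal{A}$ of the cubic norm pair, so $\mathcal{L}(\mathcal{B})$ and the groups $G^{\pm}=U(K,J)$ (Construction~\ref{con: UKJ}) are literally those of the cubic norm pair. Hence Theorem~\ref{thm: cns defines okp} already supplies an operator Kantor pair $(G^{+},G^{-},o_{2,1},o_{3,1},\nu_{3,2})$ in the split case, and what is left there is to identify $o_{2,1}$ with the asserted $Q$ and $o_{3,1}$ with the asserted $T$.

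For $Q$: the operator Kantor pair axiom $o_{2,1}(x,y,s^{2}t)=(Q^{\text{grp}}_{x}y)(s^{2}t)$ together with Lemma~\ref{lem: descr G+}, which recovers $g\in G^{+}$ from the fact that its degree-$2$ component acts on $\mathcal{B}_{-1}$ by $v\mapsto (x\bar v)x-yv$, gives $Q_{(x,y)}v=(x\bar v)x-yv$. For $T$: I would use $o_{3,1}(g,h,t)=1+t\,\nu_{3,1}(g,h)\bmod t^{2}$ with $\nu_{3,1}(g,h)=\ad\bigl(g_{3}\cdot h_{-1}\bigr)$, where $g_{3}$ is the degree-$3$ component of the automorphism attached to $g\in G^{+}$ and $h_{-1}\in\mathcal{B}_{-1}$ is the linear part of $h\in G^{-}$. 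Lemma~\ref{lem: help1} computes $g_{3}\cdot(b,k)$ explicitly for the two generating families $u_{j}=((0,j),(u,j^{\sharp}))$ and $u_{a}=((a,0),(u,0))$, producing exactly the right-hand sides of Equations~(\ref{eq: T((0,j))}) and (\ref{eq: T((a,0))}). A general $g\in G^{+}$ factors as $g_{a}g_{j}$ with $g_{a}$ and $g_{j}$ commuting, and the polarization identity $T^{(1,2)}_{(g,h)}k=[g,Q_{h}k]$ for $T$ from \cite{OKP} gives
\[
T_{g_{a}g_{j}}x=T_{g_{a}}x+[g_{a},Q_{g_{j}}x]+[g_{j},Q_{g_{a}}x]+T_{g_{j}}x ,
\]
so $T_{g}$ is determined by the two generator formulas and by $Q$. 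This completes the split case.

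For an arbitrary well behaved hermitian cubic norm structure $J$ over $K/R$, I would argue by faithfully flat descent. By Theorem~\ref{thm: split extension}, $J\otimes K$ is split over $(K\times K)/K$, so the split case provides an operator Kantor pair for $J\otimes K$ with the stated operators; moreover, by Theorem~\ref{thm: split extension} and Remark~\ref{rem: hcns splits}, both $\mathcal{L}(\mathcal{B})$ and $G^{\pm}=U(K,J)$ are the fixed points of a $K$-semilinear automorphism $\tilde\alpha$ of the corresponding objects for $J\otimes K$, and $G^{\pm}$ acts on $\mathcal{L}(\mathcal{B})$ by Proposition~\ref{prop: action UKJ}. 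Since $K$ is faithfully flat over $R$, the operator Kantor pair axioms, being equalities of natural transformations, hold for $J$ because they hold for $J\otimes K$; and the operators $Q$ and $T$ (hence also $Q^{\text{grp}}$ and $P$) restrict from $J\otimes K$ to $G^{\pm}(L)$ for every $R$-algebra $L$, precisely because the formulas in the statement are written in terms of the hermitian cubic norm structure operations $N$, $\sharp$, $\times$, $T$, which are already defined over $R$ and are compatible with $\tilde\alpha$. Thus the stated tuple $(G^{+},G^{-},Q^{\text{grp}},T,P)$ is an operator Kantor pair attached to $J$. Uniqueness then follows at once from \cite[Lemma 5.2]{OKP}: the operators $Q$ and $T$ determine all remaining operator Kantor pair data.

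The step I expect to be the main obstacle is the explicit matching of the abstract operators $o_{2,1}$, $o_{3,1}$ with the closed formulas for $Q$ and $T$; this is the content of Lemma~\ref{lem: help1}, namely extracting the degree-$3$ component of a product of three exponential automorphisms acting on $\mathcal{B}_{-1}$ and checking that the grading kills all but the expected terms. The descent step is conceptually routine, but it requires care that the operators genuinely live over the base ring $R$ and not merely over the quadratic étale extension $K$, which is exactly why having closed formulas in terms of the hermitian cubic norm structure, rather than only the $o_{i,j}$, is essential here.
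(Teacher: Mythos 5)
Your proposal is correct and follows essentially the same route as the paper: the split case via Theorem~\ref{thm: cns defines okp}, the identification of $Q$ with the degree-$2$ action from Construction~\ref{con: UKJ} and of $T$ with the degree-$3$ computations of Lemma~\ref{lem: help1} combined with the factorization $g=g_ag_j$ and the linearisation of $T$, descent to general $J$ via the closed formulas in hermitian cubic norm structure operations, and uniqueness from \cite[Lemma 5.2]{OKP}. The only difference is presentational: you spell out the faithfully flat descent step explicitly, whereas the paper compresses it into the remark that all operators are expressible in terms of the hermitian cubic norm structure operations.
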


\begin{remark}
	\label{rmk: T OKP}
	In the case that we consider a hermitian cubic norm structure which is anisotropic over a field, the operators $T$ and $Q$ were already determined.
	
	A comparison with the formula for $T$ in \cite[subsection 5.4.3]{OKP} shows that we have obtained the same operator $T$.
	Namely, if one puts $a = 0$ in the formula, one obtains that 
	\[T_{((0,j),(u,j^\sharp))} (b,k) = [N(j)b + T(j^\sharp \times k,j) + u T(k,j), 1]\]
	which equals \[ = [N(j)b + T(j,- j^\sharp \times k + T(j,k)j) + (u - T(j,j))T(k,j)].\]
	Using that $[(u - T(j,j))T(k,j), 1] = [- \bar{u} T(k,j), 1] = [ u T(j,k), 1]$ since $u + \bar{u} = T(j,j)$,
	we obtain that both $T$ are equal.
\end{remark}

\subsection{Operator Kantor pairs coming from hermitian cubic norm structures after base change}

For an operator Kantor pair $(G^+,G^-)$ over $R$ and $L \in R\textbf{-alg}$ we use $(G^+_L,G^-_L)$ to denote the operator Kantor pair with underlying (nice) vector groups $G^+(L)$ and $G^-(L)$ and the obvious operators.

Recall that a hermitian cubic norm structure is well behaved if $N(v)^2 = \overline{N(v)}$ or, equivalently, if the associated cubic norm pair is well behaved.

\begin{assumption}
	\label{assumption: final subsection}
	We assume in this subsection that we work with an operator Kantor pair $(G^+,G^-)$ over $R$ such that $(G^+_L,G^-_L)$ can be defined from a (well behaved) hermitian cubic norm structure $J_L$ over $K_L/L$ for a faithfully flat $R$-algebra $L$.
	We also assume that
	\[(1,0) \in \Lie(G^+) \subset \Lie(G^+) \otimes L \cong \mathcal{B}(J_L,K_L) \times \mathcal{S}_L\]
	and 
	\[(1,0) \in \Lie(G^-) \subset \Lie(G^-) \otimes \mathcal{B}(J_L,K_L) \times \mathcal{S}_L.\]
\end{assumption}

	The assumption that $L$ is faithfully flat, guarantees that $G^\pm(K) \longrightarrow G^\pm_L(K \otimes L)$ is injective for all $R$-algebras $K$. This is true by Lemma \ref{lem: ff for vec group}.

We will work towards proving:

\begin{theorem}
	\label{thm: hcns} If Assumption \ref{assumption: final subsection} holds, $(G^+,G^-)$ can be defined from a (well behaved) hermitian cubic norm structure.
\end{theorem}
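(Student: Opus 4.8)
The plan is to run a faithfully flat descent argument in which the rigidity needed for descent is supplied by the distinguished elements $(1,0)\in\Lie(G^\pm)$ fixed in Assumption \ref{assumption: final subsection}. The key observation is that the passage from an operator Kantor pair equipped with such a distinguished element to a hermitian cubic norm structure is, over any base, given by a recipe built entirely out of natural, base-change-compatible operations, so it can be applied over $R$ and its output checked after tensoring with $L$. Concretely, from $(G^+,G^-)$ over $R$ one forms its $\mathbb Z$-graded Lie algebra $\mathcal L:=\mathcal L(G^+,G^-)$ (\cite[Lemma 3.16]{OKP}); one sets $\mathcal B:=\mathcal L_{1}$ and $\mathcal S:=\mathcal L_{2}$, which by Construction \ref{con: UKJ} and Assumption \ref{assumption: final subsection} satisfy $\mathcal B\otimes L\cong\mathcal B(J_L,K_L)$ and $\mathcal S\otimes L\cong\mathcal S_L$; the structurable product and involution on $\mathcal B$ are recovered from the Lie bracket exactly as in Construction \ref{con: lie} (the action of $\mathcal L_0$ on $\mathcal L_{1}$, together with $[\mathcal L_{1},\mathcal L_{1}]\subseteq\mathcal L_{2}$ and $[\mathcal L_{1},\mathcal L_{-1}]\subseteq\mathcal L_0$), with unit $1:=(1,0)$; the quadratic étale subalgebra $K\subseteq\mathcal B$ and the decomposition $\mathcal B=K\oplus J$ are recovered from the structurable structure (matching the split description $\mathcal B(J,K)=K\oplus J$ of Lemma \ref{lem: B and A the same for split hcns} after $\otimes L$); and $N,\sharp,T$ are recovered from the grading-$+2$ and $+3$ parts of the vector group representation of $\Lie(G^+)$ on the bottom of $\mathcal L$ together with the explicit formula of Theorem \ref{thm: ops}.

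With the recipe in hand I would first check that it produces honest $R$-objects, i.e.\ that $K$ is quadratic étale over $R$ and $\mathcal B/K$ is projective. Both are fppf-local conditions, so they follow from the corresponding statements after the faithfully flat base change $L$, where $K\otimes L\cong K_L$ is quadratic étale and $\mathcal B_L=K_L\oplus J_L$. Next I would verify that $(J,N,\sharp,T)$ is a well behaved hermitian cubic norm structure over $K/R$: each defining identity (including $N(v)^2=\overline{N(v^\sharp)}$) is an equality of homogeneous maps required to hold over all scalar extensions, it holds over $L$, and a homogeneous map that vanishes on all $L$-algebras vanishes in particular on all polynomial rings over $R$, hence everywhere, because $L$ is faithfully flat and every $R$-module $M$ embeds into $M\otimes_RL$. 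Thus $J$ is a well behaved hermitian cubic norm structure and $\mathcal B(J,K)\cong\mathcal B$, since both are outputs of the same recipe.

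Finally I would identify $(G^+,G^-)$ with the operator Kantor pair attached to $(J,K)$. The groups $G^\pm(J,K)$ of Construction \ref{con: UKJ} and their operator Kantor pair structure of Theorem \ref{thm: ops} are again given by natural constructions in $(J,K)$, so after $\otimes L$ they coincide with the operator Kantor pair of $(J_L,K_L)$, which by Assumption \ref{assumption: final subsection} is $(G^+_L,G^-_L)$. Since $L$ is faithfully flat, $G^\pm(K')\hookrightarrow G^\pm_L(K'\otimes L)$ for all $R$-algebras $K'$ by Lemma \ref{lem: ff for vec group}, and the analogous injectivity holds on the modules carrying the operators $Q^{\mathrm{grp}},T,P$, which are in any case determined by $Q$ and $T$ (\cite[Lemma 5.2]{OKP}); hence the identification valid over $L$ descends to an identification of operator Kantor pairs over $R$, proving Theorem \ref{thm: hcns}.

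The main obstacle is the intrinsic, manifestly base-change-compatible reconstruction over $R$ of the quadratic étale algebra $K$ and of the complement $J$ inside $\mathcal B$; once $K\subseteq\mathcal B$ is available over $R$, everything else is routine faithful-flatness descent of polynomial identities and of the vector-group and operator data. An equivalent, perhaps cleaner, route avoids an explicit formula for $K$ altogether: one descends $K_L,J_L,N,\sharp,T$ directly along $L/R$, noting that the two pullbacks to $L\otimes_RL$ are each reconstructed by the recipe applied over $L\otimes_RL$ from $(G^+_{L\otimes_RL},G^-_{L\otimes_RL})$, hence are canonically isomorphic; the cocycle condition over $L\otimes_RL\otimes_RL$ is automatic from the same uniqueness, and effectivity of faithfully flat descent for modules and algebras yields the data over $R$. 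In both approaches the essential input is the \emph{uniqueness} — guaranteed by fixing the unit $1\in\Lie(G^\pm)$ — of the hermitian cubic norm structure reconstructed from an operator Kantor pair of this type.
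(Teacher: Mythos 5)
Your overall strategy coincides with the paper's: use the distinguished elements $(1,0)\in\Lie(G^\pm)$ to reconstruct $K$, $J$, $N$, $\sharp$, $T$ over $R$ from the operator Kantor pair operators, and then transport all identities from $L$ down to $R$ by faithful flatness. Your descent-of-identities step (polynomial identities holding over all $L$-algebras hold over all $R$-algebras because $M\hookrightarrow M\otimes_R L$) and your final identification of the operator Kantor pairs via Lemma \ref{lem: ff for vec group} and the determination of all operators by $Q$ and $T$ are both correct and match what the paper does.

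However, there is a genuine gap exactly where you yourself locate "the main obstacle": the intrinsic construction of $K\subseteq\mathcal A=\Lie(G^+)/G^+_2$ and of the splitting $\mathcal A=K\oplus J$ over $R$. You assert that these are "recovered from the structurable structure" but give no recipe, and this is not routine: it is the content of two of the three lemmas that constitute the paper's proof. Concretely, the paper defines $K$ as the $R$-span of the elements $Q_{(1,\alpha)}1$ for $(1,\alpha)\in G^+$, and the nontrivial point is that $K\otimes_R L\cong K_L$; this rests on the observation that $K_L$ is spanned by a single trace-one element $\alpha$ together with the skew elements $Q_{G^+_{2,L}}1$ (checked after a further base change $K_L\otimes_L K_L\cong K_L\times K_L$). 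Without such an argument your appeal to "$K\otimes L\cong K_L$, hence quadratic étale is fppf-local" has nothing to apply to. Likewise the projection $\mathcal A\to K$ splitting off $J$ is built explicitly from the operator $T$: one shows $x\mapsto T_{(1,\alpha)}x+f(x)\alpha$ is the desired projection, where $f$ is a trace-like functional extracted from the linearisation $T^{(1,2)}_{(1,\alpha),(0,s)}$ and shown to land in $R$ by another faithful-flatness argument. Your alternative "cleaner" descent route has the same gap in disguise: the canonical isomorphism of the two pullbacks over $L\otimes_R L$ and the cocycle condition require precisely the uniqueness/functoriality of the reconstruction of $(K,J,N,\sharp,T)$ from the operator Kantor pair with chosen unit, and that uniqueness is only available once the explicit formulas for $K$ and the projection are in hand. (A smaller issue: recovering the full structurable product and involution on $\mathcal B$ over $R$ from the Lie bracket, as you propose, is neither obviously possible nor needed; the paper avoids it and works only with the module $\mathcal A$ and the operators $Q$, $T$.)
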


Denote with $\mathcal{B}_L$ the structurable algebra associated to the operator Kantor pair over $L$ and identify $\mathcal{B}_L$ with $G^+_L/(G^+_2)_L \cong \text{Lie}(G^+_L)/(G^+_2)_L$. Note that $\Lie(G^+_L) \cong \mathcal{B}_L \oplus (G^+_L)_2$.
We use $\mathcal{A}$ to denote the similar direct summand of $\Lie(G^+)$, which we do not assume to be an algebra.

We can and will use the operators $Q$ and $T$ of Theorem \ref{thm: ops} on $G^\pm_L$.

\begin{lemma}
	Define $K$ as the span of all \[Q_{(1,\alpha)} 1 \in \Lie(G^+)/G^+_2 = \mathcal{A}\] for $(1,\alpha) \in G^+$. This $K$ coincides with $K_L \cap \mathcal{A}$.
	This $K$ is a quadratic étale extension of $R$. Moreover, $\mathcal{A}$ is a $K$-module.
	\begin{proof}
			We will make use of the isomorphism $K_L \otimes_L K_L$ identified in Lemma \ref{lem: quadr etale alg ids}.
		
		We want to use that $K_L = \langle \alpha, Q_{G^+_{2,L}} 1 \rangle$ for $\alpha \in K_L$ with trace $1$, i.e., $\alpha$ such that $(1,\alpha) \in G^+$. This space is definitely part of $K_L$, as it is $\langle \alpha, s | s \in K_L : \bar{s} = -s \rangle$. To observe that $K_L$ is not bigger, note that $\langle (\alpha , 1 - \alpha) , (k, -k) | k \in K_L \rangle$ is a spanning set of $K_L \times K_L \cong K_L \otimes_L K_L$, using that $K_L$ is faithfully flat over $L$.
		
		Hence, we define $K$ as the span of all $Q_{(1,\alpha)} 1$ with $(1,\alpha) \in G^+$ and note that $K \otimes L \cong K_L$. 
		Observe that $Q_{(1,\bar{\alpha})} \beta = \beta - \bar{\alpha} \beta = \alpha \beta$ in $K_L$ whenever $\beta \in K_L$, and thus $\alpha \beta \in K = K_L \cap \mathcal{A}$ whenever $\beta \in K$.
		We also remark that $K$ is closed under conjugation, since $Q_{(1,\alpha)} 1= \bar{\alpha}$ and $(-1) \cdot (1,\alpha)^{-1} = (1, \bar{\alpha})$.
		
		This definition immediately establishes that $K$ is projective \cite[Remark 25.5]{Skip2024} since $K_L$ is projective. 
		Now we want to show that $K$ is quadratic étale. We remark that $K(\mathfrak{p}) = K \otimes R(\mathfrak{p})$ embeds into $K_L(\mathfrak{q}) = (K \otimes L) \otimes_L L(\mathfrak{q}) \cong K \otimes_RL(\mathfrak{q})$ whenever $R(\mathfrak{p})$ embeds into $L(\mathfrak{q})$ since $K$ is flat. Since $L$ is faithfully flat, one can find such a $\mathfrak{q}$ for each $\mathfrak{p}$ \cite[Proposition 25.9]{Skip2024} (or the different formulation \cite[Exercise 9.26]{Skip2024} hinted at in \cite[Remark 25.10]{Skip2024}).
		This shows that $K(\mathfrak{p})$ has dimension $2$ as $K(\mathfrak{p}) \otimes_{R(\mathfrak{p})} L(\mathfrak{q})$ has dimension $2$ over $ L(\mathfrak{q})$. It also shows that  $K(\mathfrak{p})$ is either a quadratic field extension of $R(\mathfrak{p})$ or isomorphic to $R(\mathfrak{p})^2$. Hence $K$ is quadratic étale.

		Observe that $\mathcal{A}$ is a $K$-module, since $\mathcal{A}_L$ is a $K$-module and since
		\[ \mathcal{A} \ni Q_{(1,\bar{\alpha})} x = \alpha x\]
		for all $x \in \mathcal{A}$ and generators $\alpha$ of $K$.
	\end{proof}
\end{lemma}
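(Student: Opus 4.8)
The plan is to obtain everything by faithfully flat descent along $R \to L$ from the quadratic étale algebra $K_L$ sitting inside $\mathcal{A}_L = \mathcal{B}(J_L,K_L)$. First I would pin down $K_L$ as a span of images of the operator $Q$: since $Q_{(0,s)}1 = -s$ for skew $s \in G^+_{2,L}$ and $Q_{(1,\alpha)}1 = \bar\alpha$ for any trace-one $\alpha$ (using $u+\bar u = a\bar a + T(b,b)$ with $a=1$, $b=0$), the $L$-span of the $Q_{(1,\alpha)}1$ together with $Q_{G^+_{2,L}}1$ is $L\alpha_0 + \mathcal{S}_L$ for a fixed trace-one $\alpha_0$; and this is already all of $K_L$, which I would see by comparing with the spanning set $\langle(\alpha_0,1-\alpha_0),(k,-k)\rangle$ of $K_L\times K_L \cong K_L\otimes_L K_L$ from Lemma \ref{lem: quadr etale alg ids} and invoking faithful flatness of $K_L$ over $L$. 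Since $(1,0)\in\Lie(G^+)$ by Assumption \ref{assumption: final subsection}, the $R$-span $K$ of the $Q_{(1,\alpha)}1$ with $(1,\alpha)\in G^+$ is an $R$-submodule of $\mathcal{A}$ with $K\otimes L \cong K_L$, so $K = K_L\cap\mathcal{A}$ inside $\mathcal{A}_L \cong \mathcal{A}\otimes L$.

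Next I would check that $K$ carries the algebra-with-involution structure it inherits from $K_L$. Closure under conjugation is formal: $\bar\alpha = Q_{(1,\alpha)}1 \in K$ and $(1,\bar\alpha) = (-1)\cdot(1,\alpha)^{-1}\in G^+$, so every conjugate of a generator is again a generator. Closure under multiplication I would get from the identity $Q_{(1,\bar\alpha)}\beta = \beta - \bar\alpha\beta = \alpha\beta$, valid in $K_L$: this shows $\alpha\beta \in \mathcal{A}\cap K_L = K$ whenever $\alpha,\beta\in K$. Running the same computation with an arbitrary $x\in\mathcal{A}$ in place of $\beta$ gives $Q_{(1,\bar\alpha)}x = \alpha x \in\mathcal{A}$, and the module axioms descend from $\mathcal{A}_L$ being a $K_L$-module; this is exactly the assertion that $\mathcal{A}$ is a $K$-module.

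It then remains to show $K/R$ is quadratic étale. Projectivity of $K$ follows from faithfully flat descent applied to the projective module $K_L = K\otimes L$, citing \cite[Remark 25.5]{Skip2024}. For the local condition I would argue at each prime $\mathfrak{p}$ of $R$: faithful flatness of $L$ yields a prime $\mathfrak{q}$ of $L$ over $\mathfrak{p}$ with $R(\mathfrak{p})\hookrightarrow L(\mathfrak{q})$ (\cite[Proposition 25.9]{Skip2024}), and flatness of $K$ gives an embedding $K(\mathfrak{p}) \hookrightarrow K(\mathfrak{p})\otimes_{R(\mathfrak{p})}L(\mathfrak{q}) \cong K_L(\mathfrak{q})$. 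Since $K_L(\mathfrak{q})$ is a two-dimensional $L(\mathfrak{q})$-algebra, $K(\mathfrak{p})$ is a two-dimensional $R(\mathfrak{p})$-algebra that embeds into a composition algebra, hence is either a quadratic field extension of $R(\mathfrak{p})$ or isomorphic to $R(\mathfrak{p})^2$; this is precisely the definition of quadratic étale.

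The main obstacle I expect is the very first step: showing that $K$, defined purely over $R$ as the span of the $Q_{(1,\alpha)}1$ with $(1,\alpha)\in G^+(R)$, tensors up to all of $K_L$ rather than to a proper submodule. This forces one to track that the operator $Q$ and the group law on $G^+$ are already defined over $R$ (so they commute with $-\otimes L$, and $G^+(R)\to G^+_L(L)$ is injective by Lemma \ref{lem: ff for vec group}), and that the faithful-flatness spanning argument for $K_L$ genuinely upgrades containment to equality; everything after that is routine descent and a pointwise fibre computation.
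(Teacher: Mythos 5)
Your proposal matches the paper's proof essentially step for step: the same identification of $K_L$ as the span of $Q_{(1,\alpha)}1$ and $Q_{G^+_{2,L}}1$ via the spanning set of $K_L\otimes_L K_L$ and faithful flatness, the same definition of $K$ with $K\otimes L\cong K_L$, the same closure arguments $Q_{(1,\bar\alpha)}\beta=\alpha\beta$ and $(-1)\cdot(1,\alpha)^{-1}=(1,\bar\alpha)$, the same descent of projectivity and fibrewise check of the quadratic étale condition, and the same argument that $\mathcal{A}$ is a $K$-module. It is correct and requires no changes beyond the level of detail the paper itself supplies.
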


\begin{lemma}
	$\mathcal{A} = K \oplus J$ with $J = J_L \cap \mathcal{A}$.
	\begin{proof}
		
		Note that each $a \in \mathcal{A}_L$ can be written as a direct sum of a $k \in K_L$ and $j \in J_L$.
		Furthermore, we know that $J_L$ is contained in the kernel of $x \mapsto T_{(1,\alpha)} x$ for all $(1,\alpha) \in G^+$ by the formula for $T$ in \ref{thm: ops}. 
		We also remark that $T_{(1,\alpha)} x = \alpha x - \bar{\alpha} \bar{x}$ for $x \in K_L$. Writing $\lambda$ for the trace $x + \bar{x}$ of $x$ when $x \in K_L$, shows us that
		\[ T_{(1,\alpha)} x + \lambda \bar{\alpha}  = \alpha x - \bar{x} \bar{\alpha} + \lambda \bar{\alpha}  = \alpha x - \bar{\alpha} \bar{x} + x \bar{\alpha} + \bar{x} \bar{\alpha}  = x. \] We construct a function $f$ on $\mathcal{A}_L$ which coincides with the trace $x \mapsto x + \bar{x}$ on $K_L$ and is $0$ on $J_L$. Note that the $\tilde{f}(x) \in \End_L(\mathcal{S}_L)$ defined by
		\[ T^{(1,2)}_{(1,\alpha),(0,s)} x = [1, sx] = - \bar{x}s - sx = \tilde{f}(x) s,\]
		acts as left multiplication with $f(x)$, since $s$ and $x$ commute if $x \in K_L$ and since $sx = -\bar{x}s$ if $x \in J_L$.
		This also defines $\tilde{f}(x) \in \End_R(\mathcal{S}) \subset \End_L(\mathcal{S}_L)$ for $x \in \mathcal{A}$.
		Now, $\End_R(\mathcal{S}) \longrightarrow \End_K(\mathcal{S} \otimes K) \cong K$ is injective since $K$ is faithfully flat. Hence, $\tilde{f}(x)$ can be represented as $s \mapsto f(x) \cdot s$ for some $f(x) \in K$ if $x \in \mathcal{A}$. 
		Moreover, this $f(x)$ is fixed under $f(x) \mapsto \overline{f(x)}$. Hence, $f(x) \in R$ by Lemma \ref{lem: quadr etale alg ids}. 
		
		We obtain a projection $\mathcal{A}_L \longrightarrow K_L$ mapping
		\[ k_l + j_l \mapsto T_{(1,\alpha)} (k_l + j_l) + f(k_l + j_l) \alpha = k_l\]
		when $k_l \in K_L$ and $j_l \in J_L$, that restricts to a projection $\mathcal{A} \longrightarrow K$.			
		Hence, each $a \in \mathcal{A}$ can be uniquely written as $k + j$ with $k \in K$ and $j \in J = J_L \cap \mathcal{A}$.
	\end{proof}
\end{lemma}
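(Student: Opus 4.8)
\emph{Proof idea.} The plan is to construct an $R$-linear idempotent $\pi\colon\mathcal A\to\mathcal A$ with $\operatorname{im}\pi=K$ and $\ker\pi=J=J_L\cap\mathcal A$, assembled purely from operator Kantor pair operations, so that $\pi$ is defined over $R$ and, after base change, $\pi\otimes L$ becomes the canonical projection of $\mathcal A_L\cong\mathcal B(J_L,K_L)=K_L\oplus J_L$ onto $K_L$ along $J_L$. Granting such a $\pi$, faithful flatness of $L$ forces $\pi^2=\pi$, $\operatorname{im}\pi\subseteq K_L\cap\mathcal A=K$ (the last equality being the identification of $K$ from the previous lemma), $\ker\pi=\mathcal A\cap J_L=J$, and $\pi|_K=\mathrm{id}$; hence $\mathcal A=\operatorname{im}\pi\oplus\ker\pi=K\oplus J$.

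To build $\pi$, fix $(1,\alpha)\in G^+$, so that $\overline\alpha=Q_{(1,\alpha)}1$ generates $K$ and multiplication by $\overline\alpha$ preserves $\mathcal A$. Over $L$ the formula for $T$ in Theorem~\ref{thm: ops} gives $T_{(1,\alpha)}(b,k)=[\alpha b,1]$, which only reads off the $K_L$-component: it is $\alpha x-\overline\alpha\overline x$ on $x\in K_L$ and $0$ on $J_L$. Next, evaluating the linearisation $T^{(1,2)}_{(1,\alpha),(0,s)}x=[1,sx]=-(\overline xs+sx)$ for $s\in\mathcal S_L$ produces, for each $x\in\mathcal A_L$, an endomorphism $\widetilde f(x)\in\End_L(\mathcal S_L)$ which is left multiplication by a scalar: on $K_L$ it is multiplication by $-(x+\overline x)$, and on $J_L$ it is $0$. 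Because $(1,\alpha)$ carries weight, both $T_{(1,\alpha)}$ and $\widetilde f$ are already defined over $R$, so $T_{(1,\alpha)}$ maps $\mathcal A$ to $\mathcal A$ and $\widetilde f(x)\in\End_R(\mathcal S)$ for $x\in\mathcal A$; the embedding $\End_R(\mathcal S)\hookrightarrow\End_K(\mathcal S\otimes K)\cong K$ coming from faithful flatness of $K/R$ then lets us write $\widetilde f(x)$ as multiplication by some $f(x)\in K$, and $\overline{f(x)}=f(x)$ forces $f(x)\in R$ by Lemma~\ref{lem: quadr etale alg ids}. In particular $f\colon\mathcal A\to R$ is $R$-linear, being the descent of the $L$-linear trace-type map above.

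Finally, a direct expansion over $L$ using commutativity of $K_L$ and $\alpha+\overline\alpha=1$ gives $T_{(1,\alpha)}x-f(x)\overline\alpha=x$ for $x\in K_L$, while $T_{(1,\alpha)}j-f(j)\overline\alpha=0$ for $j\in J_L$. Hence $\pi(y):=T_{(1,\alpha)}y-f(y)\overline\alpha$ is an $R$-linear endomorphism of $\mathcal A$ (using $f(y)\overline\alpha\in K\subseteq\mathcal A$) whose base change to $L$ is the canonical projection onto $K_L$, which completes the argument as explained in the first paragraph. The only genuinely delicate point is this descent step: realising the projection $\mathcal A_L\to K_L$ through $R$-defined operator Kantor pair operations and pinning $f(x)$ down inside $R$ via faithful flatness; everything else is routine computation with the explicit $T$-formulas of Theorem~\ref{thm: ops}.
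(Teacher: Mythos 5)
Your proposal is correct and follows essentially the same route as the paper: the projection $\mathcal A_L\to K_L$ is realised over $R$ via the operator $T_{(1,\alpha)}$ together with the scalar $f(x)$ extracted from the linearisation $T^{(1,2)}_{(1,\alpha),(0,s)}x=[1,sx]$ and descended to $R$ by faithful flatness of $K$ and Lemma \ref{lem: quadr etale alg ids}. Your sign bookkeeping for $f$ (multiplication by $-(x+\bar x)$ on $K_L$) and the explicit use of $\alpha+\bar\alpha=1$ are, if anything, slightly more careful than the paper's write-up.
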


\begin{lemma}
	The $R$-module $J$ is a $K$-module.
	The restriction of $\sharp$ maps $J$ to $J$. The restrictions of $T$ and $N$ map to $K$.
	\begin{proof}
		That $J$ is a $K$-module follows since $J_L$ is a $K$-module and $\mathcal{A}$ is a $K$-module.
		
		From the embedding $G^+$ into $G^+_L$, we learn that each element in $G^+$ is of the form \[((a,j),(u,aj + j^\sharp))\]
		with at least $(a,j) \in \mathcal{A}$.
		Observe that
		\[ Q_{((0,j),(u,j^\sharp))} 1 = j^\sharp - u\]
		with $j^\sharp \in J = J_L \cap \mathcal{A}$ and $u \in K = K_L \cap \mathcal{A}$.
		By considering $((a,0),(u,0))$, we learn that $G^\pm$ can be defined as a vector group in $\mathcal{A}^2$.
		From the group operation, we can recover $T$ using \[((0,j)(u,j^\sharp))((0,k),(v,k^\sharp)) = ((0,j+k),(u + v + T(j,k), (j + k)^\sharp).\]
		
		We can recover the norm $N(j) = N_1 t + N_2 (1 - t)$ in case $K \cong R[t]/(t^2 - t)$ from 
		\[ T_{((0,j),(u_j,j^\sharp))} t = N_1 t - N_1 (1 - t)\]
		and \[  T_{((0,j),(u_j,j^\sharp))} (1 - t) = N_2 (1 - t) - N_2 t.\]
		Hence $N(j)$ lies in $K \subset K_L$.
		For more general $K$, we can extend scalars to $K$ and obtain an operator Kantor subpair of $(G^+_{L \otimes K}, G^-_{L \otimes K})$ with $L \otimes K$ faithfully flat. We consider $N(j) \in K_L$ and its image $(N(j), \overline{N(j)})$ in $K_L \otimes K \cong L \otimes (K \otimes K) \cong K_L \times K_L$, which lies in the kernel of $(a,b) \mapsto a - \bar{b}$. On the other hand, we can consider $N(j) \in K \otimes K$ and its image in $L \otimes (K \otimes K) \cong K_L \times K_L$. This $N(j)$ should also lie in the kernel of $(a,b) \mapsto a - \bar{b}$, hence $N(j) \in K \subset K \otimes K$.
	\end{proof}
\end{lemma}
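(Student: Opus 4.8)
The plan is to exploit the two structural facts already established: the decomposition $\mathcal{A} = K \oplus J$ with $K = K_L \cap \mathcal{A}$ and $J = J_L \cap \mathcal{A}$, and the fact that $G^\pm$ is a vector group inside $\mathcal{A}^2$. Combined with the explicit operator formulas of Theorem \ref{thm: ops} and the hypothesis (Assumption \ref{assumption: final subsection}) that $1 = (1,0)$ lies in the $R$-forms $\Lie(G^\pm)$, every quantity I wish to pin down will be produced by an operator defined over $R$ and therefore lands automatically in $\mathcal{A}$, where the direct sum $\mathcal{A} = K \oplus J$ separates its $K_L$- and $J_L$-components. First, for the $K$-module claim, I note that $J_L$ is a $K_L$-module, hence a $K$-module by restriction of scalars along $K \hookrightarrow K_L$, and that $\mathcal{A}$ is a $K$-module whose action agrees with the $K_L$-action on $\mathcal{A}_L$ (the $K$-action being $Q_{(1,\bar{\alpha})} x = \alpha x$). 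Thus $J = J_L \cap \mathcal{A}$ is an intersection of two $K$-submodules of $\mathcal{A}_L$ with matching action, so $\alpha j \in J_L \cap \mathcal{A} = J$ for $\alpha \in K$ and $j \in J$.

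Before handling $\sharp$ and $T$, I record that $G^\pm$ is genuinely a vector group inside $\mathcal{A}^2$: the embedding $G^+ \hookrightarrow G^+_L$ shows every element of $G^+$ has the shape $((a,j),(u,aj + j^\sharp))$ with $(a,j) \in \mathcal{A}$ (this first component is exactly the image in $\Lie(G^+)/(G^+)_2 = \mathcal{A}$ supplied by Construction \ref{con: UKJ}), and testing on elements $((a,0),(u,0))$ forces the second component into $\mathcal{A}$ as well. Since the projection onto the first factor is then surjective, each $j \in J$ lifts to some $((0,j),(u,j^\sharp)) \in G^+$. Evaluating the $R$-defined operator $Q$ on $1 \in \mathcal{A}$ gives $Q_{((0,j),(u,j^\sharp))} 1 = j^\sharp - u \in \mathcal{A}$, the value lying in the $R$-form because both $(0,j) \in G^+$ and $1 \in \Lie(G^-)$ are defined over $R$; decomposing along $\mathcal{A} = K \oplus J$ reads off $j^\sharp \in J$ and $u \in K$ at once. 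For $T$ I multiply two such lifts inside $G^+ \subset \mathcal{A}^2$:
\[ ((0,j),(u,j^\sharp))\,((0,k),(v,k^\sharp)) = ((0,j+k),(u+v+T(j,k),(j+k)^\sharp)). \]
As the product again lies in $\mathcal{A}^2$, its $K_L$-entry $u+v+T(j,k)$ lies in $K$; since $u,v \in K$ by the previous step, I conclude $T(j,k) \in K$.

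The subtle point, which I expect to be the main obstacle, is $N$. When $K \cong R[t]/(t^2 - t)$ is split, the $T$-operator formula of Theorem \ref{thm: ops} gives $T_{((0,j),(u,j^\sharp))}\, t = N_1 t - N_1(1-t)$ and $T_{((0,j),(u,j^\sharp))}(1-t) = N_2(1-t) - N_2 t$ where $N(j) = N_1 t + N_2(1-t)$; since the left-hand sides are $R$-defined operator values lying in $\mathcal{S} = R(t-\bar{t}) \subset K$, one reads off $N_1, N_2 \in R$, hence $N(j) \in K$. For a general quadratic étale $K$, I extend scalars to $K$ so that $K \otimes K \cong K \times K$ splits by Lemma \ref{lem: quadr etale alg ids}, yielding an operator Kantor subpair over the faithfully flat ring $L \otimes K$. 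I then compare the two natural images of $N(j)$ in $K_L \otimes K \cong K_L \times K_L$: the one coming from $N(j) \in K_L$, namely $(N(j), \overline{N(j)})$, which lies in the kernel of $(a,b) \mapsto a - \bar{b}$, and the one coming from regarding $N(j) \in K \otimes K$, which must lie in the same kernel. Faithful flatness makes this comparison faithful, forcing $N(j) \in K \subset K \otimes K$. The care required is entirely in tracking which tensor factor and which conjugation is intended; once the two descriptions of $N(j)$ are matched inside $K_L \times K_L$, the descent is immediate, and the restrictions of $\sharp$, $T$, and $N$ are all accounted for.
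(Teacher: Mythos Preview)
Your proposal is correct and follows essentially the same approach as the paper's own proof: the $K$-module claim via intersection of $K$-submodules, the $\sharp$ and $u$ claims via evaluating $Q_{((0,j),(u,j^\sharp))}$ at $1$, the $T$ claim via the group multiplication in $G^+ \subset \mathcal{A}^2$, and the $N$ claim first in the split case via the operator $T$ of Theorem~\ref{thm: ops} and then in general by base change to $K$ and faithful-flatness descent. Your write-up is slightly more explicit in a few places (e.g.\ explaining why every $j \in J$ lifts to $G^+$), but the structure and the key computations are identical.
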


\begin{proof}[Theorem \ref{thm: hcns}]
	The previous three lemmas yield a quadratic étale algebra $K$, a $K$-module $J$, and maps $N, \sharp$, and $T$ satisfying the hermitian cubic norm structure axioms. Moreover, the operators $Q$ and $T$ have to match the operators of Theorem \ref{thm: ops}.
\end{proof}

\appendix

\section{Vector groups}
\label{ap: vec groups}

Let $A$ and $B$ be $R$-modules and consider a bilinear map \[\psi : A \times A \longrightarrow B.\]
Then, $A \times B$ forms a group under $(a_1,b_1)(a_2,b_2) = (a_1 + a_2, b_1 + b_2 + \psi(a_1,a_2))$.

\begin{definition}[\cite{OKP} Definition 1.1]
	We call a subgroup $G \le A \times B$ an \textit{almost vector group} if
	\begin{enumerate}
		\item for each $(a,b) \in G$ and $\lambda \in R$, $(\lambda a, \lambda^2b) \in G$,
		\item for each $(0,b) \in G$ and $\lambda \in R$, $(0, \lambda b) \in G$.
	\end{enumerate}
	
	The first two conditions of $G$ define \textit{scalar multiplications} on $G$ and $G \cap B$, namely
	\[ \lambda \cdot_1 (a,b) = (\lambda a, \lambda^2 b)\quad  \text{and} \quad \lambda \cdot_2 (0,b) = (0, \lambda b).\]
\end{definition}

\begin{definition}	
	For arbitrary $K$ we define $\hat{G}(K)$ as the smallest almost vector group in $(A \times B) \otimes K$ containing $G \otimes 1$. 
	This is natural in $K$.	
	In \cite{OKP} an almost vector group is said to be a \textit{vector group} if the natural map \[\{ b \in B : (0,b) \in G\} \otimes K \longrightarrow \{(0,b) \in B \otimes K : (0,b) \in \hat{G}(K)\}\] is surjective. 
	We remark that $\{b \in B :(0,b) \in G\}$ is often denoted as $G_2$.
	In that case, the condition to be a vector group becomes: the natural map $G_2 \otimes K \longrightarrow \hat{G}(K)_2$ is surjective.
\end{definition}

\begin{definition}	
	
	We use $\Lie(G)$ to denote $(\epsilon a, \epsilon b) \in \hat{G}(R[\epsilon]/(\epsilon)^2)$.
	This is a Lie algebra with Lie bracket
	\[ [(\epsilon a, \cdot ), (\epsilon b, \cdot)] = (0, \epsilon (\psi(a,b) - \psi(b,a))).\]
	We note that $\Lie(G) \cong A \oplus G_2$ \cite[Lemma 1.7]{OKP}.
\end{definition}

\begin{definition}
	\label{def: nice cover}
	We call a vector group $G$ \textit{nice} if the natural map \[\Lie(G) \otimes K \longrightarrow \Lie(\hat{G}(K))\] is a bijection.
	A vector group $G \le A \times B$ is \textit{proper} if it is nice or there exists a nice vector group $H \le C \times D$ and linear $(f_1 \times f_2): C \times D \longrightarrow A \times B$ such that $H \cong G$.
	We call such a $H$ a \textit{nice cover}.
	
	For a nice vector group $G$ we write $G(K)$ for $\hat{G}(K)$. For a proper vector group we write $G(K)$ for $\hat{H}(K)$. We will immediately prove that this notion of properness and the notion of \cite[Definition 1.16]{OKP} are equivalent and that $G(K)$ is independent of a choice of $H$.
\end{definition}

\begin{lemma}
	Suppose that $G$ is proper and let $H$ be a nice cover.
	The definition of $G(K)$ is independent of $H$ and $G$ is proper in the sense of \cite{OKP}.
	\begin{proof}
		We recall that $G$ is proper in the sense of \cite{OKP} if the kernel $L$ of 
		\[ F(S_1,S_2) \longrightarrow G,\]
		with $F(S_1,S_2)$ the vector group constructed in \cite[Lemma 1.15]{OKP} using certain sets of generators $S_1$ and $S_2$, is a vector group instead of just an almost vector group.
		We note that $F(S_1,S_2) \le F_1 \times F_2$ is nice, as its Lie algebra is proved to be a direct summand of $F_1 \times F_2$.
		
		Now, suppose that $G$ is a vector group with a nice cover $H$. The only thing we need is that $H \cong G$ as abstract groups, $G_2 \cong H_2$, and that the isomorphism is compatible with scalar multiplications. This guarantees that $F(S_1,S_2) \longrightarrow G \cong H$ is defined from maps $S_1 \longrightarrow H$ and $S_2 \longrightarrow H_2$.
		
		We thus have an exact sequence of groups
		\[ 0 \longrightarrow L \longrightarrow F(S_1,S_2) \longrightarrow H \longrightarrow 0.\]
		Observe that $L$ does depend on $G$ and remains the same for all $H$.
		After extending scalars, we obtain an exact sequence
		\[ 0 \longrightarrow \tilde{L}_K \longrightarrow \widehat{F(S_1,S_2)}(K) \longrightarrow \hat{H}(K) \longrightarrow 0\]
		with $\tilde{L}_K$ an almost vector group containing $\hat{L}(K)$,
		and thus
		\[ 0 \longrightarrow \Lie(\tilde{L}_K) \longrightarrow \Lie(\widehat{F(S_1,S_2)}(K)) \longrightarrow \Lie(\hat{H}(K)) \longrightarrow 0.\]
		Using $\Lie(\widehat{F(S_1,S_2)}(K)) \cong \Lie(F(S_1,S_2)) \otimes K$ and $\Lie(\hat{H}(K)) \cong \Lie(H) \otimes K$, we see that $\Lie(L) \otimes K \longrightarrow \Lie(\tilde{L}_K)$ is surjective. Hence, $\tilde{L}_K \cong \hat{L}(K)$. Now, we immediately see that $L$ is a vector group since $L_2 \otimes K \longrightarrow \tilde{L}_{K,2}$ is surjective.
		We conclude that $G$ is proper in the sense of \cite{OKP}.
		
		We also obtained that $\hat{H}(K) \cong \widehat{F(S_1,S_2)}(K)/\hat{L}(K)$. Since the generating sets $S_1$ and $S_2$ depend on $G$ and not on $H$, and since $L$ is defined from $G$ alone, we see that $G(K) = \hat{H}(K)$ does not depend on $H$ (nor on $S_1$ and $S_2$).
	\end{proof}
\end{lemma}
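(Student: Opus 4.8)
The plan is to bootstrap everything from the structure theory of vector groups in \cite{OKP}. Recall from \cite[Lemma 1.15]{OKP} that, having fixed a generating set $S_1$ of $G$ and a generating set $S_2$ of $G_2$, one builds a vector group $F=F(S_1,S_2)$ together with a surjection $\pi\colon F\longrightarrow G$; the group $F$ is automatically nice, because its Lie algebra is exhibited there as a direct summand of its (free) ambient module, and $G$ is proper in the sense of \cite[Definition 1.16]{OKP} precisely when the kernel $L=\ker\pi$, a priori only an almost vector group, is in fact a vector group. The crucial point is that $S_1$, $S_2$, and hence $F$ and $L$, are manufactured from $G$ alone, independently of any nice cover.

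First I would introduce a nice cover $H\le C\times D$ of $G$: a nice vector group together with a linear map $C\times D\to A\times B$ restricting to a group isomorphism $H\cong G$ that is compatible with the two scalar multiplications and identifies $H_2$ with $G_2$. Composing $\pi$ with this isomorphism gives a surjection $F\to H$ with the same kernel $L$, hence a short exact sequence $0\to L\to F\to H\to 0$. Extending scalars to an $R$-algebra $K$ produces $0\to \tilde L_K\to \widehat{F}(K)\to \widehat{H}(K)\to 0$, where $\tilde L_K$ is an almost vector group containing $\widehat{L}(K)$ and surjectivity on the right holds since the image of $\widehat{F}(K)$ is an almost vector group containing $H\otimes 1$, hence all of $\widehat{H}(K)$. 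Taking Lie algebras gives $0\to\Lie(\tilde L_K)\to\Lie(\widehat{F}(K))\to\Lie(\widehat{H}(K))\to 0$; using niceness of $F$ and $H$ to identify the outer terms with $\Lie(F)\otimes K$ and $\Lie(H)\otimes K$, and comparing with $0\to\Lie(L)\to\Lie(F)\to\Lie(H)\to 0$ tensored by $K$, one finds that the natural map $\Lie(L)\otimes K\to\Lie(\tilde L_K)$ is surjective.

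From this surjectivity I would deduce $\tilde L_K\cong\widehat{L}(K)$, since the nested almost vector groups $\widehat{L}(K)\subseteq\tilde L_K$ then have the same image in the ambient $A$-component and the same $B$-component (read off via the decomposition $\Lie(G')\cong A'\oplus G'_2$ of \cite[Lemma 1.7]{OKP}), and an almost vector group is determined by these data together with its quadratic twist, which agrees on the common subgroup. Restricting the isomorphism to the $G_2$-summand then shows $L_2\otimes K\to(\widehat{L}(K))_2$ is surjective, i.e. $L$ is a vector group, so $G$ is proper in the sense of \cite{OKP}. Finally, $\widehat{H}(K)\cong\widehat{F}(K)/\widehat{L}(K)$, and because $S_1$, $S_2$, $F$, $L$ depend only on $G$, the right-hand side---hence $G(K)=\widehat{H}(K)$---is independent of the choice of nice cover $H$, which is the remaining assertion.

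I expect the main obstacle to be the step showing that surjectivity of $\Lie(L)\otimes K\to\Lie(\tilde L_K)$ forces $\tilde L_K=\widehat{L}(K)$: this cannot be argued softly and needs the explicit description of almost vector groups from \cite{OKP}, to the effect that an almost vector group trapped between $\widehat{L}(K)$ and a group with the correct Lie algebra has no room to grow. A secondary point of care is checking that the two short exact sequences remain exact after extending scalars and after applying $\Lie$---in particular the surjectivities, which rely on generators of $G$ lifting through the isomorphism $H\cong G$, and the comparison of kernels, which uses flatness of the relevant (projective) Lie algebras.
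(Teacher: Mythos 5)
Your proposal follows essentially the same route as the paper: the same short exact sequence $0\to L\to F(S_1,S_2)\to H\to 0$, the same scalar extension and Lie-algebra comparison using niceness of $F$ and $H$ to get surjectivity of $\Lie(L)\otimes K\to \Lie(\tilde L_K)$, the identification $\tilde L_K\cong\widehat{L}(K)$, and the same independence argument via $\widehat{H}(K)\cong\widehat{F}(K)/\widehat{L}(K)$. The only difference is that you spell out in slightly more detail why the Lie-algebra equality forces $\tilde L_K=\widehat{L}(K)$, a step the paper treats as immediate; the argument is correct.
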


\begin{lemma}
	\label{lem: ff for vec group}
	Suppose that $L$ is faithfully flat and $G$ is a proper vector group, then $G(K) \longrightarrow G(K \otimes L)$ is injective.
	\begin{proof}
		We may assume that $G \le A \times B$ is nice.
		The result follows directly from the injection $(A \times B) \otimes K \longrightarrow ((A \times B) \otimes K) \otimes L$, using that $L$ is faithfully flat.
	\end{proof}
\end{lemma}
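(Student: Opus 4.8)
The plan is to reduce at once to the case of a \emph{nice} vector group and then to apply faithful flatness. By Definition \ref{def: nice cover}, for a proper vector group $G$ one sets $G(K) = \hat{H}(K)$ for a chosen nice cover $H \le C \times D$, and the preceding lemma shows this is independent of the choice of $H$. Since $K \otimes_R L$ is again an $R$-algebra and $\hat{H}(-)$ is functorial in its argument, the map $G(K) \to G(K \otimes L)$ is exactly the natural map $\hat{H}(K) \to \hat{H}(K \otimes L)$. Hence it suffices to prove the statement for $G = H \le A \times B$ nice.

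Next I would unwind the definition of $\hat{G}$: by construction $\hat{G}(K)$ is the smallest almost vector group inside $(A \times B) \otimes_R K$ containing the image of $G$, so $\hat{G}(K) \subseteq (A \times B) \otimes_R K$, and likewise $\hat{G}(K \otimes_R L) \subseteq (A \times B) \otimes_R (K \otimes_R L)$. Via the canonical isomorphism $(A \times B) \otimes_R (K \otimes_R L) \cong ((A \times B) \otimes_R K) \otimes_R L$, the natural map $\hat{G}(K) \to \hat{G}(K \otimes L)$ is the restriction to $\hat{G}(K)$ of the structure morphism $M \to M \otimes_R L$, $m \mapsto m \otimes 1$, where $M = (A \times B) \otimes_R K$. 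Since $L$ is faithfully flat over $R$, this structure morphism is injective for every $R$-module $M$ — the same fact already invoked for quadratic étale algebras in Lemmas \ref{lem: quadr etale alg ids} and \ref{lem: quadr etale alg mods} — and restricting an injection to a submodule preserves injectivity, so $G(K) \to G(K \otimes L)$ is injective.

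I do not expect a genuine obstacle here. The only points that merit a sentence of care are the legitimacy of the reduction to a nice cover (that $G(K \otimes L)$ is computed with the \emph{same} $H$, which is immediate from functoriality of $\hat{H}$ in the $R$-algebra variable) and the compatibility of the associativity isomorphism $(A \times B) \otimes_R (K \otimes_R L) \cong ((A \times B) \otimes_R K) \otimes_R L$ with the two structure maps; both are routine naturality checks. Consequently the argument is very short.
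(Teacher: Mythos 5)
Your proposal is correct and follows essentially the same route as the paper: reduce to a nice cover (so $G(K)\subseteq (A\times B)\otimes K$), then observe that the map is the restriction of the structure morphism $(A\times B)\otimes K \to ((A\times B)\otimes K)\otimes L$, which is injective by faithful flatness of $L$. You simply spell out the reduction and the naturality identifications in more detail than the paper does.
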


\begin{lemma}
	\label{lem: U nice vector group}
	Suppose that $\mathcal{B} = K \times J$ is a structurable algebra associated to a hermitian cubic norm structure $J$ over $K/R$.
	The group
	\[ U = \{((k,j),(u,kj + j^\sharp)) \in (K \times J)^2 : u + \bar{u}= k\bar{k} + T(j,j)\}\]
	is a nice vector group.
	\begin{proof}
		Note that $\hat{U}(L)$ is $U$ for the hermitian cubic norm structure $J \otimes L$ over $K \otimes L$. 
		This implies $\Lie(\hat{U}(L)) \cong \{ (\epsilon b, (\epsilon u,0)) \in \mathcal{B}_L \times (K \otimes L \times J \otimes L) : u + \bar{u} = 0\}$.
		We conclude that $U$ is nice if and only if
		\[ \{ x \in K : x + \bar{x} = 0\} \otimes L \cong \{ x \in K \otimes L: x + \bar{x} = 0\}.\]
		Taking the tensor product with the faithfully flat $K$, yields that it is an isomorphism if and only if 
		\[ \{ (a,-a) \in K^2\} \otimes L \cong \{ (b,-b) \in (K \otimes L)^2\},\]
		which is the case, as both are isomorphic to $K \otimes L$.
	\end{proof}
\end{lemma}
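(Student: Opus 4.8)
The plan is to unwind the definitions: $U\le\mathcal{B}\times\mathcal{B}$ is a nice vector group exactly when it is an almost vector group and the natural map $\Lie(U)\otimes L\longrightarrow\Lie(\hat U(L))$ is a bijection for every $L\in R\textbf{-alg}$. So I would first record the cheap parts and then isolate the one computation that carries content.

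First I would check that $U$ is an almost vector group. That it is a subgroup of $\mathcal{B}\times\mathcal{B}$ for the bilinear map $\psi(a,c)=a\bar c$ is already noted in Construction \ref{con: UKJ} (bilinearity of $\psi$ being immediate from bilinearity of the product on $\mathcal{B}$ and linearity of the involution). Closure under $\lambda\cdot_1$ for $\lambda\in R$ uses $(\lambda k)(\lambda j)=\lambda^2(kj)$, the relation $(\lambda j)^\sharp=\bar\lambda^2 j^\sharp=\lambda^2 j^\sharp$, and $\lambda^2u+\overline{\lambda^2u}=\lambda^2(k\bar k+T(j,j))=(\lambda k)\overline{(\lambda k)}+T(\lambda j,\lambda j)$; closure under $\lambda\cdot_2$ is clear since on $U\cap(\{0\}\times\mathcal{B})$ the second coordinate runs over $\{(u,0):u+\bar u=0\}$.

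Next I would identify $\hat U(L)$. Since $\mathcal{B}(J,K)\otimes L\cong\mathcal{B}(J\otimes L,K\otimes L)$ compatibly with product and involution, the ``same formula'' subgroup $U_L=\{((k,j),(u,kj+j^\sharp))\in\mathcal{B}_L^2:u+\bar u=k\bar k+T(j,j)\}$ is an almost vector group containing the image of $U$; conversely, writing the entries of a general element of $U_L$ as $L$-linear combinations of entries over $R$ and expanding $\psi$, every element of $U_L$ is built from images of elements of $U$ via the group law and $\cdot_1,\cdot_2$. Hence $\hat U(L)=U_L$ is ``$U$ for the hermitian cubic norm structure $J\otimes L$ over $(K\otimes L)/L$''. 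Taking $L=R[\epsilon]/(\epsilon^2)$ and setting $\epsilon^2=0$ in the defining relations, an element of the form $(\epsilon a,\epsilon b)$ forces $kj+j^\sharp=0$ and $u+\bar u=k\bar k+T(j,j)=0$, so $\Lie(U)\cong\mathcal{B}\oplus\mathcal{S}$ with $\mathcal{S}=\{x\in K:x+\bar x=0\}$, and likewise $\Lie(\hat U(L))\cong\mathcal{B}_L\oplus\{x\in K\otimes L:x+\bar x=0\}$ (with $\bar{\cdot}$ meaning $\bar{\cdot}\otimes\mathrm{id}$). As $\mathcal{B}\otimes L\cong\mathcal{B}_L$ automatically, niceness of $U$ reduces to the natural map $\mathcal{S}\otimes L\longrightarrow\{x\in K\otimes L:x+\bar x=0\}$ being bijective.

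The remaining step, which is where the actual work is, is to prove that isomorphism; I expect it to be the main obstacle, mostly because one must keep the map \emph{natural} rather than merely exhibit an abstract isomorphism. Since $K$ is quadratic étale, hence faithfully flat over $R$, it suffices to check bijectivity after $\cdot\otimes_R K$. Using $K\otimes K\cong K\times K$ from Lemma \ref{lem: quadr etale alg ids}, under which $\bar{\cdot}\otimes\mathrm{id}$ becomes the swap $(a,b)\mapsto(b,a)$, both $\mathcal{S}\otimes K$ and $\{x\in K\otimes L:x+\bar x=0\}\otimes_R K$ are carried to the anti-diagonal $\{(a,-a)\}$ of a product of two copies of $K$, resp.\ of $K\otimes L$, hence are naturally identified with $K$, resp.\ $K\otimes L$; threading these identifications through shows the natural map is a bijection. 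Niceness then in particular yields that $U$ is a vector group, since a bijection on Lie algebras restricts to a bijection on the degree-$2$ parts $\mathcal{S}\otimes L\to\hat U(L)_2$.
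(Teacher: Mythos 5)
Your proof is correct and follows essentially the same route as the paper: identify $\hat{U}(L)$ with the analogous group for $J\otimes L$ over $K\otimes L$, reduce niceness to the natural map $\{x\in K: x+\bar{x}=0\}\otimes L\longrightarrow\{x\in K\otimes L: x+\bar{x}=0\}$ being bijective, and verify this after tensoring with the faithfully flat $K$ using $K\otimes K\cong K\times K$ with the swap involution. The extra details you supply (the almost-vector-group axioms and the generation argument for $\hat{U}(L)$) are routine additions to the paper's argument, not a different method.
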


\section*{Declarations}

\subsection*{Conflict of interest} The author has no conflict of interest to declare that is relevant to this article.

\subsection*{Data availability} Not applicable, as no datasets were generated or analyzed.
	
	\bibliographystyle{plain}
	\bibliography{bib}
	
\end{document}